\def\Xh{{\widehat{X}}}
\def\Th{{\widehat{T}}}
\def\Lh{{\widehat{L}}}
\def\Ntop{{N_{_{\rm top}}}}
\def\Ninf{N_{_{\infty}}}
\def\Xr{\mathring{X}}
\def\Tr{\mathring{T}}
\def\XR{\mathring{X}}
\def\TR{\mathring{T}}
\def\Trh{\widehat{\mathring{T}}}
\def\Zr{{\mathring{Z}}}
\def\Lr{{\mathring{L}}}
\def\ZR{{\mathring{Z}}}
\def\LR{{\mathring{L}}}
\def\Lbr{{\mathring{\underline{L}}}}
\def\Psir{\mathring{\Psi}}
\def\zr{{\mathring{z}}}
\def\yr{{\mathring{y}}}
\def\varrhor{{\mathring{\varrho}}}
\def\kappar{{\mathring{\kappa}}}
\def\mur{{\mathring{\mu}}}
\def\etar{{\mathring{\eta}}}
\def\deltar{{\mathring{\delta}}}
\def\deltasr{{\mathring{\slashed{\delta}}}}
\def\zetar{{\mathring{\zeta}}}
\def\chir{{\mathring{\chi}}}
\def\chibr{{\mathring{\underline{\chi}}}}
\def\E{{\mathcal E}}
\def\Eb{{\underline{\mathcal E}}}
\def\EB{{\underline{\mathscr{E}}}}
\def\F{{\mathcal F}}
\def\Fb{{\underline{\mathcal F}}}
\def\wb{\underline{w}}
\def\chib{\underline{\chi}}
\def\Lb{\underline{L}}
\def\tr{\text{tr}}
\def\Lh{\widehat{{L}}}
\newtheorem*{MainTheorem}{Main Theorem}
\newtheorem{theorem}{Theorem}[section]
\newtheorem{lemma}[theorem]{Lemma}
\newtheorem{proposition}[theorem]{Proposition}
\newtheorem{corollary}[theorem]{Corollary}
\newtheorem{remark}[theorem]{Remark}
\numberwithin{equation}{section}
\begin{document}
\title {On the stability of multi-dimensional rarefaction waves I: the energy estimates}

\author{Tian-Wen LUO and Pin YU}

\address{School of Mathematical Sciences, South China Normal University, Guangzhou, China}
\email{twluo@m.scnu.edu.cn}

\address{Department of Mathematical Sciences, Tsinghua University\\ Beijing, China}
\email{yupin@mail.tsinghua.edu.cn}


\maketitle
\begin{abstract}
We study the resolution of discontinuous singularities in gas dynamics via rarefaction waves. The mechanism is well-understood in the one dimensional case. We will prove the {\color{black}nonlinear} stability of the Riemann problem for multi-dimensional isentropic Euler equations in the regime of rarefaction waves. The proof relies on the new energy estimates \emph{without loss of derivatives}. We also give a detailed geometric description of the rarefaction wave fronts. This is the first paper in the series which provides the \emph{a priori} energy bounds.

\end{abstract}
\tableofcontents

\section{Introduction}

{\color{black}In the first paragraph} of Courant and Friedrichs's {\color{black}classic monograph \cite{CourantFriedrichs} on shocks}, the following observation is made to describe {\color{black}one of most distinctive nonlinear features} of compressible flow: \textit{{\color{black}``}Even when the start of the motion is perfectly continuous, shock discontinuities may later arise automatically. Yet, under other conditions, just the opposite may happen; initial discontinuities may be smoothed out immediately{\color{black}''}}. The first situation refers to the formation of shocks. Inspired by the seminal work \cite{ChristodoulouShockFormation} of Christodoulou, much progress has been made {\color{black}on the formation} and propagation of shocks in multi-dimension (see a more detailed account in Section \ref{Sec:history shock}). The second situation refers to the resolution of discontinuities through rarefaction waves. However, much less is known on multi-dimensional rarefaction waves, apart from {\color{black}the pioneer works of Alinhac}  \cite{AlinhacWaveRare1,AlinhacWaveRare2}. This work is devoted to study the resolution of discontinuous singularities in gas dynamics.

We consider the isentropic motion of a polytropic gas, described by the isentropic compressible Euler system in dimension two,
\begin{equation}\label{eq: Euler in Euler coordinates}
	\begin{cases}
		&(\partial_t + v \cdot \nabla) \rho = -\rho \nabla \cdot v,\def\E{{\mathcal E}}
		\\
		&(\partial_t + v \cdot \nabla)v = -\rho^{-1} \nabla p,
	\end{cases}
\end{equation} 
where $\rho$, $p$ and $v$ are the density, pressure, and velocity of the gas, respectively. The equation of state is given by $p(\rho) = k_0 \rho^{\gamma}$ with constants $\gamma\in (1,3)$ and $k_0 >0$.
The sound speed $c$ is then given by $c=\sqrt{\frac{dp}{d\rho}}=k_0^{\frac{1}{2}}\gamma^{\frac{1}{2}}\rho^{\frac{\gamma-1}{2}}$. {\color{black}For an irrotational motion,}  there exists a velocity potential $\phi$ which satisfies a quasi-linear wave equation
\begin{equation}\label{eq:wave-eqn}
	g^{\mu \nu}(D \phi) \frac{\partial^2 \phi}{\partial x^\mu \partial x^\nu}  = 0,
\end{equation}
where $g = {\color{black} - c^2 dt^2} + \sum_{i=1}^2 (dx^i - v^idt)^2$ is the acoustical metric. Our goal is to study a family of singular solutions called rarefaction waves. The region of rarefaction wave is foliated by characteristic hypersurfaces called rarefaction wave fronts. These rarefaction wave fronts all emanate from an initial surface ({\color{black}a curve in the two-dimensional case}). The expansion of the characteristic hypersurfaces provides the physical mechanism to resolve the discontinuous singularities at the initial surface.

\medskip

The aim of this paper is to establish a stable nonlinear energy estimates of rarefaction waves for ideal polytropic gas,  \emph{without loss of derivatives}. 
In particular, we provide a detailed geometric description of the rarefaction wave fronts. 



\subsection{Review on the problem in one dimension}\label{subsection:1D-Riemann-problem}
In this subsection, we give a brief review of the problem in one spatial dimension. It serves as illustration and motivation of our work. We focus on the Riemann problem and its solutions consisting of elementary waves. The Riemann problem is one of the most fundamental problem in the entire field of non-linear hyperbolic conservation laws. It remains a great challenge to understand the structure of the problem in {\color{black}higher dimensions.}

The early study of nonlinear wave phenomena goes back to Poisson {\color{black}in the 1800s,} who discovered a solution to \eqref{eq: Euler in Euler coordinates} of the form $\partial_x \phi = f(x + (c-v)t)$ for an arbitrary smooth function $f$. Forty years later, Stokes \cite{Stokes} studied extensively the finite time blow-up phenomena implicated in Poisson's solution, recognizing it as waveform breaking. Stokes computed the time of singularity formation, and speculated that the solution can be continued along a surface of discontinuity, but he abandoned this idea in later years in flavor of {\color{black}the viscosity smoothing effect from the Navier-Stokes equations.} 

It was Riemann that first gave a definite and rigorous treatment of nonlinear wave phenomena in one spatial dimension, from a surprisingly modern PDE viewpoint. His monumental work \cite{Riemann} introduces most important basic concepts such as shocks and Riemann invariants, and initiates shock wave theory. In particular, Riemann proposed the Riemann problem and solved it for isentropic gas in terms of shocks and rarefaction waves. Riemann's work eventually became the foundation of the theory of conservation laws in one-dimension developed in the 20th century.

We consider the isentropic motion of a compressible gas where the motion takes place along the $x^1$ direction. The governing equations \eqref{eq: Euler in Euler coordinates} reduce to
\begin{align}\label{eq:Euler-1D}
	\begin{cases}
		\partial_t \rho + v \partial_x \rho = -\rho \partial_x v{\color{black},}\\
		\rho(\partial_t v + v \partial_x v) = -\partial_x p(\rho){\color{black},} 
	\end{cases}
\end{align}
where we denote $v = v^1$ and $x = x^1$.
Riemann introduced the following functions, known as the Riemann invariants:
\[
\begin{cases}
	&\wb = \frac{1}{2}\big(\int^{\rho} \frac{c(\rho')}{\rho'}d\rho ' + v\big) = \frac{1}{2}\big(\frac{2}{\gamma-1}c + v\big), \\
	&w = \frac{1}{2}\big(\int^{\rho} \frac{c(\rho')}{\rho'}d\rho ' - v\big) = \frac{1}{2}\big(\frac{2}{\gamma-1}c - v\big){\color{black}.}
\end{cases}
\]
In terms of the Riemann invariants,  the Euler system \eqref{eq:Euler-1D} takes the diagonal form
\begin{equation}\label{eq:Euler-1D-diagonal}
	\begin{cases}
	L_+(\wb) &:= \partial_t \wb + (v+c)\partial_x\wb = 0, \\
	 L_-(w) &:= \partial_t w + (v-c)\partial_x w = 0.
	 \end{cases}
\end{equation}
More generally,  if we regard \eqref{eq:Euler-1D} as a quasilinear hyperbolic system $\partial_t U + A(U)\partial_x U = 0$ where $U = \begin{pmatrix} \rho \\ v \end{pmatrix}$, the Riemann invariants $r_1(U) = w$ and $r_2(U) = \wb$ constitute a complete set of right eigenvectors with respect to the corresponding eigenvalues $\lambda_1(U) = v - c$ and $\lambda_2(U) = v + c$.

As a hyperbolic system, \eqref{eq:Euler-1D} has a finite speed of propagation. The solutions adjacent to constant states are called {\bf simple waves}. They are characterized by the constancy of one of the Riemann invariants. Consider forward-facing simple waves where $w = \text{const}$. By the first equation of \eqref{eq:Euler-1D-diagonal} the solution stays constant on integral curves of $L_+$. These characteristic curves then must be straight lines. They are categorized into two types: {\bf expansion waves} and {\bf compression waves}. 

\begin{center}
	\includegraphics[width=3in]{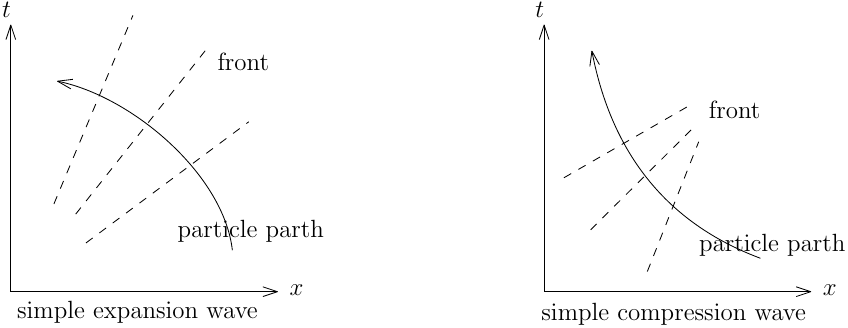}
\end{center}

It is clear that a {\color{black}simple compression wave} must form a singularity in a finite time. As Riemann observed in \cite{Riemann}, this happens for generic smooth data. Therefore, it is imperative to study initial data with discontinuities.

The Riemann problem is the study of the initial value problem connecting two piecewise constant states:
\begin{align}\label{eq:Riemann-problem-1D}
	U(t=0,x) = \begin{cases}
		U_l = \begin{pmatrix} \rho_l \\ v_l \end{pmatrix}, & x < 0;\\
		U_r = \begin{pmatrix} \rho_r \\ v_r \end{pmatrix}, & x > 0.
	\end{cases}
\end{align}
For the system \eqref{eq:Euler-1D}, the Riemann problem can be solved in terms of shocks and rarefaction waves.  

Shock fronts are piecewise continuous solutions that propagate the initial discontinuities \eqref{eq:Riemann-problem-1D}. The conservation of mass and momentum impose the {\bf jump conditions} across the shock front:
\[(v_l - v_r)^2 = (\nu_r - \nu_l)(p(\nu_l) - p(\nu_r)), \]
where $\nu = \rho^{-1}$ is the specific volume. However, such discontinuous solutions are manifestly non-unique. The physical shock waves must satisfy certain stability condition, found by Riemann in \cite{Riemann} and generalized by Lax \cite{Lax1957} as the Lax entropy condition for general hyperbolic conservation laws. Physically, it means the flow velocity relative to the shock front is supersonic at the front side where the gas particle flows into the shock front, and subsonic at the back side. In particular, the shock fronts are non-characteristic hypersurfaces.
\begin{center}
\includegraphics[width=2.8in]{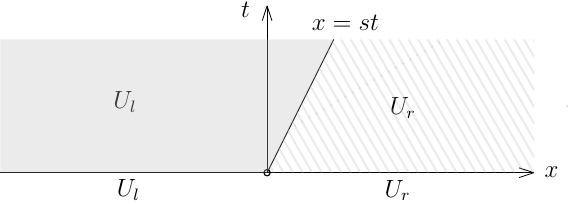}
\end{center}

The (centered) rarefaction waves are solutions that immediately smooth out the initial discontinuities. For the piecewise constant Riemann initial data \eqref{eq:Riemann-problem-1D}, they can be constructed as simple expansion waves where all the forward-facing characteristic lines emanate from the initial discontinuity (the center).
To {\color{black}motivate the multi-dimensional case}  in this paper, we record explicit expressions for the one dimensional rarefaction wave. On the positive axis $x_1=x>0$, we pose constant data $(v,c)\big|_{t=0}=(v_0,c_0)$. We then have a unique family of forward-facing centered rarefaction waves connected to the given data.

\begin{center}
	\includegraphics[width=3in]{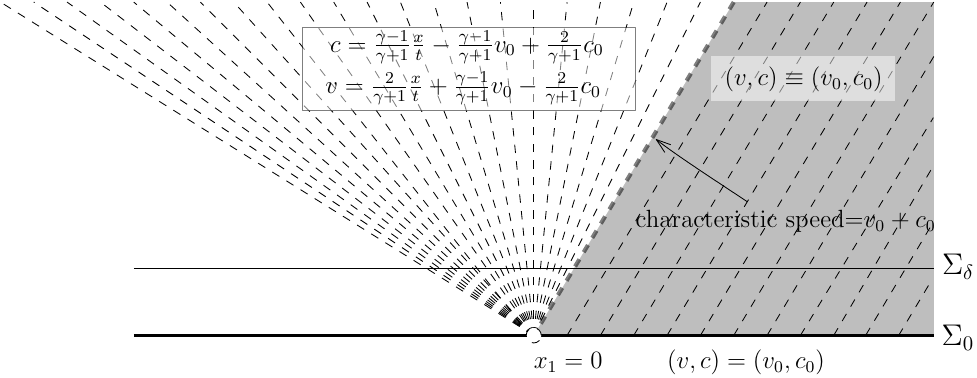}
\end{center}

The dashed lines in the picture denote the characteristics lines of the system. It corresponds to the null hypersurfaces in higher dimensions. The unshaded region is the rarefaction wave zone, where the solution is given by
\begin{align}\label{eq:1D-rarefaction-wave}
	\begin{cases}
		v&=\frac{2}{\gamma+1}\frac{x}{t}+\big(\frac{\gamma-1}{\gamma+1}v_0-\frac{2}{\gamma+1}c_0\big),\\
		c&=\frac{\gamma-1}{\gamma+1}\frac{x}{t}-\big(\frac{\gamma-1}{\gamma+1}v_0-\frac{2}{\gamma+1}c_0\big).
	\end{cases}
\end{align}

In terms of shocks and rarefaction waves, the Riemann problem for \eqref{eq:Euler-1D} is solved explicitly. We refer to Riemann's original paper \cite{Riemann} or the textbooks \cite{CourantFriedrichs, Smoller} for detailed computations. Riemann's work on gas dynamics was generalized by Lax to general hyperbolic conservation laws in his seminal paper \cite{Lax1957}.
Since then, the study of compressible fluids in one spatial dimension has {\color{black}evolved} into a fruitful field of research and it is known nowadays as the theory of one dimensional conservation laws.  In the one dimensional case, the space of functions with bounded variations (BV space) is a suitable functional space to study the evolution problem for compressible Euler equations. With the help of BV space, the theory is fairly complete: we can prove the well-posedness for initial data problem and existence of global unique weak solutions; we can also treat the formation of singularities and the interactions of elementary waves such as shocks and rarefaction waves. The reader may consult the encyclopedic book \cite{Dafermos} of Dafermos and {\color{black}the references therein} for a detailed account.

\subsection{Prior results on multi-dimensional rarefaction waves}\label{subsection:prior-results-rarefaction-waves}
The multi-dimensional theory of compressible Euler equations is much less developed. {\color{black}One of the major} technical obstacles is the breakdown of the BV space approach in a multi-dimensional setting, see \cite{Rauch}. The only effective way to control multi-dimensional systems is through the $L^2$-based energy method. The evolution of hyperbolic systems in one spacial dimension are captured by characteristic curves, which are well adapted to BV spaces. In contrast, the multi-dimensional theory are deeply tied to the characteristic hypersurfaces. The associated spacetime geometry is much more complicated and it requires new insights.

The study of multi-dimensional elementary waves was initiated by the pioneering works of Majda \cite{MajdaShock2,MajdaShock3}. It is known as the shock front problem where the initial data are perturbations of the plane shock \eqref{eq:Riemann-problem-1D}. {\color{black}For an ideal} isentropic gas with $\gamma > 1$, Majda observed the linearized shock front equations satisfy a uniform stability condition and the shock fronts can be obtained in $L^2$-based iteration via Kreiss's symmetrization, \emph{without losing derivatives}. Surprisingly, Majda also showed that the multi-dimensional shock fronts in gas dynamics have stronger stability than the counterparts in multi-dimensional scalar conservation laws (in the latter case the uniform stability assumption is {\emph not} valid).  Majda's work on shock fronts has been extended in multiple directions; see the survey \cite{Metivier2001book} by M\'{e}tivier  and the book \cite{Benzoni-Gavage-Serre2007book} by Benzoni-Gavage-Serre for these developments. We remark that shock fronts are non-characteristic hypersurfaces. 

At the end of his book on compressible flows \cite{MajdaBook}, Majda proposed a few open problems.  The first one is {\bf {\color{black}``}the existence and structure of rarefaction fronts{\color{black}''}}: \textit{{\color{black}``}Discuss the rigorous existence of rarefaction fronts for the physical equations and elucidate the differences in multi-D rarefaction phenomena when compared with the 1-D case{\color{black}''}.} The existing techniques for multi-dimensional shocks fronts can not be applied.  One of the main technical obstacles in constructing rarefaction waves is, according to Majda on page 154 of \cite{MajdaBook}, {\color{black}``}the dominant signals in rarefaction fronts move at characteristic wave speeds{\color{black}''}, i.e., the surfaces bounding the rarefaction wave regions are {\bf characteristic} hypersurfaces. As a matter of fact, rarefaction fronts could not satisfy the uniform stability condition, and the linearized equations would suffer loss of derivatives. These difficulties are coupled with the strong initial singularity at the center, further complicating the analysis.

The first known results on the construction of multi-dimensional rarefaction waves were due to Alinhac in the late 1980's. He proved the local existence and uniqueness of multi-dimensional rarefaction waves for a general hyperbolic system in his seminal papers \cite{AlinhacWaveRare1} and \cite{AlinhacWaveRare2}, which include scalar conservation laws and compressible Euler equations as special examples. Alinhac has introduced several innovative techniques to deal with the singularity of rarefaction waves. He designed an ingenious Nash–Moser type scheme based on non-isotropic Littlewood–Paley decomposition to overcome the derivative loss. He reformulated the problem in an approximate characteristic coordinate system which blows up at the initial discontinuity. He also introduced the celebrated {\color{black}``}good unknown{\color{black}''} for the linearized equations.  A key part of his proof was finding an approximate ansatz for rarefaction waves up to sufficiently large order near the singularity. The treatment of the characteristic boundary was also crucial to the Nash-Moser scheme. 

However, Alinhac's scheme \cite{AlinhacWaveRare1} suffer from loss of normal derivatives, persisting even for one-space-dimensional rarefaction waves and even {\color{black}at the linear level.} In addition, the estimates  were obtained in weighted spacetime norms which are degenerate near the rarefaction fronts.

Alinhac's approach \cite{AlinhacWaveRare1} was employed to study the combinations of shocks and rarefaction waves in \cite{Li1991}. Wang and Yin in \cite{WangYin} adapted Alinhac's scheme to rarefaction waves  in  steady supersonic flow around a sharp corner. Other elementary wave patterns such as contact discontinuities were studied in \cite{Coulombel-Secchi2008ASENR, Coulombel-Secchi2004IUMJ} by Nash-Moser schemes. We also mention the recent paper of Wang and Xin \cite{WangXin} which proves the existence of contact discontinuities for ideal compressible MHD in Sobolev spaces, utilizing the boundary regularizing effect of the transversal magnetic field to avoid loss of derivatives.

\subsection{A rough version of the main results}\label{subsection:rough-version-main-results}
\subsubsection{The setting}

We consider the two dimensional Euler flow \eqref{eq: Euler in Euler coordinates}. The initial data is a small perturbation of the plane rarefaction data. More precisely, $\{x^1=0\} \subset \Sigma_0$ is the flat initial curve {\color{black}(we assume that the data is periodic in $x^2$ and identify $\{x^1=0\}$ with a circle)}. On the half plane $\{x^1 > 0\}$ the initial motion is assumed to be irrotational and isentropic. We assume the data on {\color{black}$\{x^1>0\}$} is a small perturbation of constant states away from vacuum of order $O(\varepsilon)$. We remark that $\varepsilon = 0$ corresponds precisely to {\color{black}the one dimensional} constant case \eqref{eq:1D-rarefaction-wave}. 

The initial data on $\{x^1 > 0\}$ determines a region $\mathcal{D}_0$ (its development)  with a characteristic hypersurface denoted by $C_0$ as its boundary. On the region adjacent to $C_0$ we shall construct a family of multi-dimensional rarefaction waves that converge to the 1D picture \eqref{eq:1D-rarefaction-wave} as the perturbation $\varepsilon \to 0$. It takes two steps to complete this goal. In the current paper, we establish a stable nonlinear energy estimates in Sobolev spaces. We will prove the existence and convergence in a {\color{black}follow-up} paper \cite{LuoYu2}.

The rarefaction region will be studied in the acoustical coordinate $(t,u,\vartheta)$. The level sets of $u$, denoted by $C_u$, correspond to rarefaction fronts emanating from the initial curve and foliate the rarefaction wave region with {\color{black}foliation ``density'' approximately of size $\frac{1}{t}$.} For an arbitrary small constant $\delta>0$, we study the energy propagation on the spacetime domain $\mathcal{D}$ bounded by $C_0, C_u$ and $\Sigma_{\delta}, \Sigma_t$. The picture is depicted as follows:  

\begin{center}
	\includegraphics[width=4in]{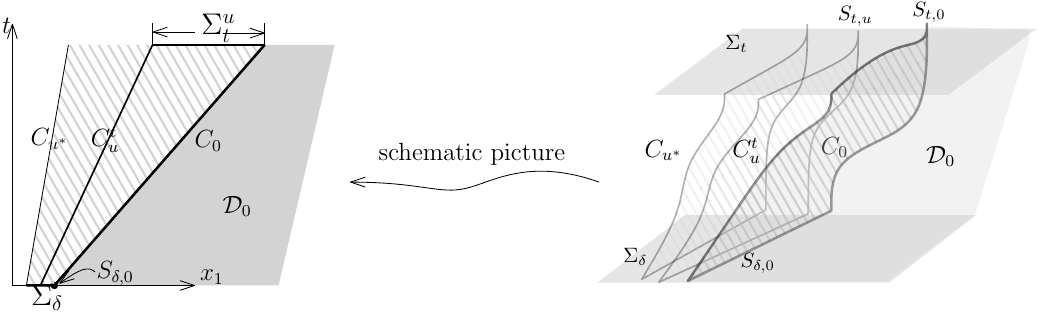}
\end{center}

The data on $C_0$ is determined a priori by the data {\color{black}on the half space $\{x^1 > 0\}$.} However, the data on $\Sigma_{\delta}$ is not known in advance. In fact, for rarefaction waves the domain $\Sigma_{\delta}$ shrinks to the initial curve as $\delta \to 0$. The data on $\Sigma_{\delta}$ has to be carefully chosen and it is indeed determined asymptotically by those on $C_0$.

\subsubsection{A rough version of the main a priori energy estimates}
\begin{MainTheorem}
	{\color{black}There exist} a small positive constant $\varepsilon_0$ and a positive integer $n$ so that, for all $\varepsilon < \varepsilon_0$, for data of size $O(\varepsilon)$ satisfying the initial ansatz \eqref{initial Iinfty} and \eqref{initial irrotational} specified in Section \ref{subsection:energy-ansatz-initial-data} (the data will be constructed in the second paper \cite{LuoYu2}), for $(t,u)\in[\delta,t^*]\times [0,u^*]$,  we have the following energy estimates:
	\begin{align*}
		\mathscr{E}_{\leqslant n}|_{\Sigma_{t}} + \mathscr{F}_{\leqslant n}|_{C_u} \leqslant \mathscr{E}_{\leqslant n}|_{\Sigma_{\delta}} + \mathscr{F}_{\leqslant n}|_{C_0} + \mathbf{error}{\color{black}.}
	\end{align*}
The error term $\mathbf{error}$ is bounded by $C \varepsilon$ where the universal constant $C$ is independent of $\varepsilon$. The notations $\mathscr{E}_{\leqslant n}|_{\Sigma_{t}}$ and $\mathscr{F}_{\leqslant n}|_{C_u}$ denote the higher order energy (up to $n$-th order) and flux through $\Sigma_t$ and $C_u$ respectively. 
\end{MainTheorem}

\subsubsection{Remarks on the main theorem}

\begin{remark}
	Our work provides a rather complete answer to Majda's open question on multi-dimensional rarefaction waves \cite{MajdaBook} (see also Section \ref{subsection:prior-results-rarefaction-waves}) in the case of two dimensional ideal gas dynamics. The three dimensional ideal gas dynamics can be handled exactly in the same way.
\end{remark}

\begin{remark}[Linear estimates]

We provide energy bounds for linearized acoustical waves in rarefaction wave regions without loss of derivatives. We use energy and flux norms in standard Sobolev spaces so that the estimates do not degenerate even at the boundaries of the rarefaction wave regions. In contrast, Alinhac's works on multi-dimensional rarefaction \cite{AlinhacWaveRare1, AlinhacWaveRare2} and the subsequent {\color{black}follow-up} papers \cite{Li1991,ChenLi,WangYin} rely on linear estimates in spacetime co-normal spaces that lose derivatives and degenerate near boundaries. 
\end{remark}

\begin{remark}[Nonlinear estimates]
	We provide  nonlinear energy bounds {\color{black}which are} uniform with respect to $\varepsilon$ and $\delta$. There are no loss of derivatives in our nonlinear energy estimates.  The previous work  \cite{AlinhacWaveRare1,AlinhacWaveRare2,Li1991,ChenLi,WangYin} employ Nash-Moser iteration scheme with loss of derivatives {\color{black}at the nonlinear level.} 
\end{remark}

\begin{remark}[The geometry of hypersurfaces and the stability]

We give a complete description of the geometry of the rarefaction wave fronts $C_u$. Roughly speaking, it is completely captured by the second fundamental form $\chi$. If $\chi$ vanishes, the problem reduces to one-dimensional rarefaction waves.  

We also provide a detailed description of the following stability picture which is quantified by the parameter $\varepsilon$: as $\varepsilon\rightarrow 0$, the multi-dimensional rarefaction waves constructed in the paper converge to the classical centered rarefaction waves in one spatial dimension.
\end{remark}

\begin{remark}
	We focus on compressible Euler equations for an ideal gas, in contrast to Alinhac's work \cite{AlinhacWaveRare1,AlinhacWaveRare2} for a general hyperbolic system. The picture of acoustic waves, especially the acoustical geometry, is indispensable for the linear and nonlinear estimates in the current paper. This indicates that multi-dimensional rarefaction waves in gas dynamics exhibit stronger stability than those for a general hyperbolic system.  
\end{remark}

\subsubsection{Remarks on the new ingredients of the proof}

The proof is done in the geometric framework, pioneered by Christodoulou and Klainerman \cite{CK} on the nonlinear stability of Minkowski spacetime and developed by Christodoulou \cite{ChristodoulouShockFormation} on shock formation for Euler equations. 

Let $\mu$ be the inverse density of characteristic hypersurfaces. The monotonicity of $L(\mu) < 0$ is essential to the stability mechanisms in shock formation, while in rarefaction wave regions we have $L(\mu) > 0${\color{black}.}  This reflects the following fundamental physical picture: characteristic hypersurfaces converge in shock formation and diverge from the singularity in rarefaction waves. This new picture poses new obstacles. We find several new mechanisms for rarefaction waves: 
\begin{remark}
	We obtain energy estimates for linearized wave equations in rarefaction regions, which is completely different from the coercive control of angular derivative first discovered in Christodoulou's work on shock formation \cite{ChristodoulouShockFormation} (based on $L(\mu) < 0$) and the subsequent works \cite{ChristodoulouMiao,LukSpeck2D,LukSpeck3D,AbbresciaSpeck2,HKSW,HLSW,Miao,MiaoYu,Speck1,Speck2,Speck3}.
\end{remark}

\begin{remark}
	We develop a new {\color{black}approach to nonlinear estimates based on a new null frame as commutators for rarefaction waves,} in contrast to the descent schemes in shock formation invented by Christodoulou \cite{ChristodoulouShockFormation} and employed in \cite{ChristodoulouMiao,LukSpeck2D,LukSpeck3D,AbbresciaSpeck2,HKSW,HLSW,Miao,MiaoYu,Speck1,Speck2,Speck3}.
\end{remark}

We will further discuss the above remarks in Section \ref{section: difficulty}.

{\color{black}

\subsection{Applications to the nonlinear stability of the Riemann problem: existence and uniqueness}\label{Sec:Riemann-Problem}

We recall the  solutions of two families of rarefaction waves to the classical Riemann problem. Let $U_l=\begin{pmatrix}
  v_l \\
  \rho_l
   \end{pmatrix}$ and $U_r=\begin{pmatrix}
  v_r \\
  \rho_r
   \end{pmatrix}$ be two constant states for the velocity $v$ and the density $\rho$. 
If we take the following initial data for the Euler equations \eqref{eq: Euler in Euler coordinates}
\[
\begin{pmatrix}
  v \\
  \rho
   \end{pmatrix}\Big|_{t=0}=\begin{cases}
&U_l, \ \ x^1<0, \\
&U_r, \ \ x^1>0,
\end{cases}
\]
with specifically chosen $U_l$ and $U_r$,  the solution for $t>0$ consists of a back rarefaction wave and a front rarefaction wave. The rarefaction waves are illustrated as follows in the second picture:
\begin{center}
	\includegraphics[width=4.5in]{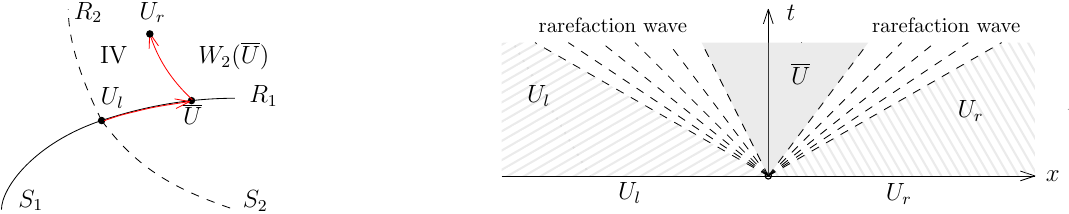}
\end{center}
The shape of the two families of rarefaction wave fronts is like a \emph{fan}. The first picture illustrates the way of choosing the $U_l$ and $U_r$. We refer readers to Chapter 17 of Smoller's textbook \cite{Smoller} for details. 

By virtue of the energy estimates in {\bf Main Theorem}, we will show in \cite{LuoYu2} that, for sufficiently small smooth perturbation of $U_l$ and $U_r$ at $t=0$ of size $O(\varepsilon)$, there still exists a solution to \eqref{eq: Euler in Euler coordinates} defined for $t\in (0,1]$ which asymptotically converges to the above 1D solution as $\varepsilon\rightarrow 0$. In fact,  the shape of the rarefaction fronts becomes an \emph{opened book} and the structure is the same as in one dimension, see the following picture and \cite[Theorem 3]{LuoYu2} for a detailed description of the rarefaction front geometry:
\begin{center}
\includegraphics[width=2in]{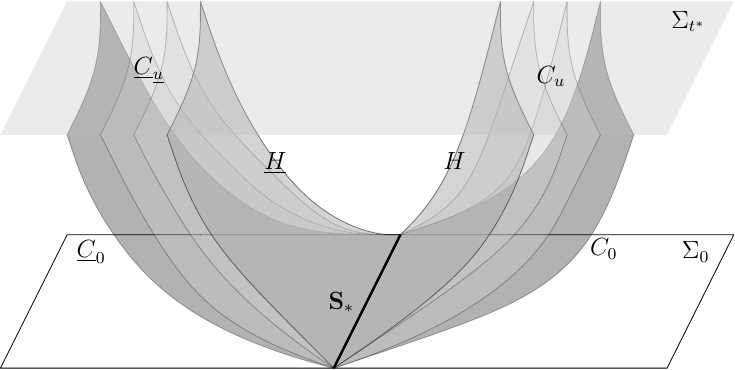}
\end{center}

Furthermore, we will show that the solution constructed in the above picture is indeed unique among all the measurable bounded functions satisfying the entropy inequality, see \cite[Proposition 2.11]{LuoYu2}.  This is among the largest possible classes of functions in the 1D conservation laws that one expects uniqueness.

\begin{remark}
	We will also show that the family of front rarefaction waves that can connected to the initial characteristic hypersurface $C_0$ is unique, see \cite[Proposition 2.14]{LuoYu2}. Note that the solution generated by the initial data on $\Sigma_{\delta}$ on the left-hand-side of $C_0$ is not unique, due to the non-uniqueness of the extension of data from $C_0$ to $\Sigma_{\delta}$. Nevertheless, uniqueness is retrieved in the limit as $\delta \to 0$. 
\end{remark}

\begin{remark}
	We have made the following assumption for the sake of simplicity: the initial discontinuity is across a \emph{straight} line (a circle) on $\Sigma_0$. To go beyond this limitation, i.e., extending the theorems to the general case when the initial discontinuity is an arbitrary smooth curve, we believe that one should make the following modifications: the Riemann invariants  should be chosen adapted to the curve of singularity:
	\[w=\frac{1}{2}\big(\frac{2}{\gamma-1}c+(\widehat{T'})^i\psi_i\big), \ \ \wb=\frac{1}{2}\big(\frac{2}{\gamma-1}c-(\widehat{T'})^i\psi_i\big), \ \ \psi_2= (\widehat{X'})^i\psi_i,\]
	where $X'$ and $\widehat{T'}$ are the unit tangential vector field and the unit normal vector field of the separating curves; {\color{black}see Section \ref{section_acoustical geometry} for the notations $\psi_i$ and compare with the Riemann invariants defined in \eqref{def: Riemann invariants}}. We should also  choose $X'$ and $T'=\kappar \widehat{T'}$ as commutator vector fields. The construction of the initial data can be derived in the same manner. However, the proof of the \emph{a priori} energy estimates would be much longer since the equations for the new Riemann invariants and the commutators of $X'$ and $T'$ will be more complicated. We plan to construct centered rarefaction waves for data across a curved surfaces with vorticity and entropy in three dimensions in future work.
\end{remark}

}

\subsection{Recent progress on shock formation and shock development problem}

\subsubsection{Multi-dimensional shock and singularity formation}\label{Sec:history shock}

 As we mentioned before, in multi-dimensional cases, without the framework of BV spaces, it requires new insights to understand the characteristic hypersurfaces of the Euler equations. {\color{black}One of the major breakthroughs} in this direction is the work \cite{ChristodoulouShockFormation} of Christodoulou on the formation of shocks for an irrotational and isentropic fluids on three dimensions. {\color{black}Some of} his ideas to understand the geometry of the acoustical waves can be traced back to the monumental work \cite{CK} of  Christodoulou and Klainerman on the proof of the nonlinear stability of Minkowski spacetime. We will discuss this insight in details later on.

Nevertheless, Sideris contributed the first blow up result for the compressible Euler equations in three dimensions. His work \cite{Sideris} exhibits stable blow-up for the classical solutions associated to an open set of initial data. However, since the approach is based on the proof by contradiction, it provides no description on the nature of the singularity. In \cite{AlinhacWaveShock1}, \cite{AlinhacWaveShock2} and \cite{AlinhacWaveShock3}, Alinhac has contributed a series of work  on the formation of  singularities for two dimensional compressible Euler equations. He treated the radially symmetric solutions and obtained precise estimates on the time parameter for the first blow-up point. Later on, Alinhac in \cite{AlinhacWaveBlowUp1} and \cite{AlinhacWaveBlowUp2} has exhibited stable blow-up for a class of quasilinear wave equations without any symmetry assumptions on the data. The blow-up mechanism is due to the collapse of the characteristic hypersurface foliations. Though Alinhac did not prove shock formation for Euler equations, his results can be in principle extended to the fluid case since compressible Euler equations in the irrotational case can be reduced to a quasi-linear wave equations similar to the type of equations in \cite{AlinhacWaveBlowUp1} and \cite{AlinhacWaveBlowUp2}. We also remark that Alinhac's estimates suffer derivative losses on the top order quantities of the characteristic hypersurfaces. Hence, his framework is based on a Nash–Moser iteration scheme.

In the monograph \cite{ChristodoulouShockFormation} published in 2007, Christodoulou made a breakthrough and he proved stable shock formation for irrotational relativistic Euler equations in $3+1$ dimensions. Moreover, his work also described the geometry of the boundary of the maximal development of the data. As his work inspired most of the recent developments on shock formation, it is worthy of giving a more detailed account on several of the original ideas appeared in \cite{ChristodoulouShockFormation}.
\begin{itemize}[noitemsep,wide=0pt, leftmargin=\dimexpr\labelwidth + 2\labelsep\relax]
\item Geometrization via the acoustical metric.

The acoustical metric defined on the maximal development of the initial data offers a new Lorentzian spacetime viewpoint to study the Euler equations. Under this set-up, the entire picture becomes an analogue to the theory of general relativity where one studies the Einstein equations. Therefore, the techniques developed in the proof of the stability of the Minkowski space \cite{CK} by Christodoulou and Klainerman can be borrowed to study Euler equations. Indeed,  \cite{CK} offers an insightful paradigm to study quasilinear partial differential equations: assuming the underlying geometry, the quasilinear systems behave very much like a linear system. This new idea leads to a detailed description of the system from multiple perspectives:
\begin{itemize}

\item The characteristic hypersurfaces become the null hypersurfaces with respect to the acoustical metric.  We can mimic the study of null hypersurfaces in general relativity to study the characteristic hypersurfaces for compressible Euler equations.

\item The formation of shocks can be captured by the inverse density $\mu$ of the characteristic hypersurfaces. This quantity can be represented in a geometric way and it also enjoys a geometric transport equation.

\item The formation of shocks is characterized by the non-equivalence of acoustical coordinates and standard Cartesian coordinates. In particular, the solution behaves in a smooth way up to shocks in acoustical coordinates . 
\end{itemize}

\item A coercive mechanism tied to the shock formation.

Compared to the usual case on non-singular spacetime, even the energy estimates for linear wave equations can degenerate near shocks. This degeneration is the most challenging obstacle to the energy method. Christodoulou found an elegant mechanism to overcome the degeneration. He showed that near shocks the inverse density $\mu$ satisfies a monotonicity condition. Therefore, the uncontrolled terms due to the degeneration is coercive in the sense it has a favorable sign. This is a unexpected discovery and it is the key to the entire proof. 

\item A descent scheme to close the top order energy estimates.

\cite{CK} also uses a descent scheme to study the top order estimates. Together with the previous coercive mechanism, the descent scheme can close the energy estimates in finite order Sobolev norms without using the Nash-Moser schemes.

\end{itemize}

The work of Christodoulou has great impacts in the field and it has stimulated several important progress on shock formation for Euler equations and in other settings. In \cite{ChristodoulouMiao}, Christodoulou and Miao proved the shock formation for the non-relativistic compressible Euler equations. Luk and Speck \cite{LukSpeck2D} proved the shock formation for two dimensional  barotropic compressible Euler equations and later on in \cite{LukSpeckNull} and {\color{black}\cite{LukSpeck3D}}  they  have extended their work to the three dimensional compressible Euler equations  with vorticity and entropy. The most recent work \cite{AbbresciaSpeck1} \cite{AbbresciaSpeck2} of Abbrescia and Speck further studies the structure of the singular boundary of the maximal developments of the data. For the new developments on the shock formation in other hyperbolic equations under the geometric frame work of Christodoulou, we refer the readers to \cite{HKSW}, \cite{HLSW}, \cite{Miao}, \cite{MiaoYu}, \cite{Speck1}, \cite{Speck2} and \cite{Speck3}. The work of Christodoulou also inspired research on the low regularity theory on Euler equations, see the series of work \cite{DCLMS}, \cite{DS} and \cite{Speck4}  and also a sharper result \cite{Wang} of Q. Wang.

We also mention the new progress on the blow-up of compressible Euler equations in multi-dimensions  that are not built upon Christodoulou’s framework. In \cite{BuckmasterShkollerVicol2DShock}, \cite{BuckmasterShkollerVicol3DShock} and \cite{BuckmasterShkollerVicol3DVorticityCreation}, Buckmaster, Shkoller and Vicol used different approaches to construct shock formation with vorticity and entropy. The approach is  based on the perturbation of a Burgers shock and it works all the way to the time of first blowup and provided isolated  singularities. See also \cite{BuckmasterIyer} for a result on the unstable behavior of the singularity.

In the recent breakthrough \cite{MRRS1} and \cite{MRRS2}, Merle, Rapha\"{e}l, Rodnianski and Szeftel constructed the implosion type singularity for the compressible three-dimensional Navier-Stokes and Euler equations in a suitable regime of barotropic laws. This is a new family of blow-up solutions for compressible fluids and the density becomes infinity at the blow-up point. See also \cite{Biasi2021} for numerical investigation.

\subsubsection{Multi-dimensional shock development problem}\label{Sec:history development}

The shock development problem is aiming at a more complete picture: to understand how the smooth solution to the Euler equations forms shocks and then develops a shock surface. The work \cite{ChristodoulouShockFormation} is the first step towards the shock development problem. Christodoulou has made another breakthrough \cite{ChristodoulouShockDevelopment} towards the resolution of the shock development problem. Starting with the shock from the work \cite{ChristodoulouShockFormation}, he constructed the shock surface in the restricted regime (there is no jump in entropy and vorticity across shocks) without any symmetry assumptions. The theorems were proved for relativistic Euler equations and they can be translated to the non-relativistic compressible Euler equations {\color{black}by letting the speed of light go to infinity.}

{\color{black}Under symmetry assumptions,} the problem has many features similar to the one dimensional case. {\color{black}There are a few works} that solved the shock development problem in this set-up. In \cite{Yin}, Yin first studied the problem for the three dimensional Euler equations in spherical symmetry. It has been revisited by Christodoulou and Lisibach using different methods in \cite{ChristodoulouLisibach}. In \cite{BDSV}, Buckmaster, Drivas, Shkoller and Vicol solved the shock development problem for solutions to two dimensional Euler equations with vorticity and entropy in azimuthal symmetry. Very recently, using the same method as in \cite{ChristodoulouLisibach}, Lisibach in \cite{ Lisibach 1} and \cite{ Lisibach 2} also studied the shock reflection problem and  interactions of two shocks in plane-symmetry.

\subsection{Technical remarks on  \cite{ChristodoulouShockFormation}, \cite{CK}  and  \cite{AlinhacWaveRare1} }

\subsubsection{Remarks on Christodoulou \cite{ChristodoulouShockFormation} and Christodoulou-Klainerman \cite{CK}}\label{section: Shock and Minkowski}
We briefly describe two fundamental ideas from  Christodoulou \cite{ChristodoulouShockFormation} and Christodoulou-Klainerman \cite{CK} respectively. They will play a central role in the current work.
\begin{itemize}[noitemsep,wide=0pt, leftmargin=\dimexpr\labelwidth + 2\labelsep\relax]
\item The coercivity of energy norms of angular directions near shocks, see \cite{ChristodoulouShockFormation}.

In the near-shock region, i.e., the inverse density of the characteristic hypersurfaces $\mu$ close to $0$, the energy estimate encounters a fundamental difficulty: the energy integrals for rotational directions look like $\displaystyle\int_{\mathcal{D}} \mu |\slashed{\nabla}\psi|^2$ where $\psi$ denotes a component for the acoustical wave, while the error integrals have $\slashed{\nabla}\psi$ components \textit{without} any $\mu$ factor. Thus, when $\mu\rightarrow 0$ near shocks, the disparity in $\mu$ shows that the error integrals can not be bounded by the energy integrals. This even {\color{black}happens at the linear level.}

Christodoulou had made the following remarkable discovery:  although the aforementioned degeneration in the rotational directions is due to the formation of shocks, it is also resolved by the mechanism of shock formation. Since the initial value of $\mu$ is almost $1$ and near shocks $\mu$ is close to $0$, the value of $\mu$ should decrease along the direction $L$ which is towards the shock. Using a transport equation of $\mu$ as well as the acoustical wave equations, he showed that $L(\mu)<0$. He also showed that main contribution of the error integrals without factor $\mu$ for $\slashed{\nabla}{\psi}$ must be in the form $\displaystyle \int_{\mathcal{D}} L(\mu)\cdot |\slashed{\nabla} \psi|^2$. The negative sign of $L(\mu)$ manifests a miraculous coercivity of the energy estimates.  With the help of the sign of $L(\mu)$, this enables one to control all the error terms involving the rotational directions of $\psi$.

We remark that the sign of $L(\mu)$ in the rarefaction wave region is positive so that it is not favorable to the energy estimates near singularities. Therefore, we need completely new mechanism in the current work. Please see the next section for some technical remarks on this point. 

\item The last slice argument from Christodoulou-Klainerman \cite{CK}.

We have mentioned the basic ideas of the stability of the Minkowski space \cite{CK}, such as constructions of null hypersurfaces and energy identities in the spacetime etc, are indispensable to study the acoustical geometry defined by solutions to the compressible Euler equations. The work \cite{CK} also contributes another important idea: the so-called last slice argument. 

We give a schematic review on the last slice argument. In \cite{CK}, the authors ran a bootstrap argument to solve vacuum Einstein equations on a spacetime region $\mathcal{D}_T$ which can be regarded as $[0,T]\times \mathbb{R}^3$. We use $\Sigma_t$ to denote the spacelike hypersurface $\{t\}\times \mathbb{R}^3$ for $t\in [0,T]$. The initial data were given on $\Sigma_0$. In order to construct the null cone foliations of $\mathcal{D}_T$, the usual procedure is as follows: we first choose a sphere foliation on $\Sigma_0$, say the geodesic spheres with respect to a fixed point on $\Sigma_0$. Next, for each sphere in the foliation, it emanates an out-going null cone. The collection of these null cones give the desired foliation of $\mathcal{D}_T$. If one uses this foliation in the proof of stability of Minkowski spacetime, it is very likely that one can not close the top order estimates on the underlying geometry.

{\color{black}Instead of choosing} sphere foliation from the initial slice $\Sigma_0$, Christodoulou and Klainerman's last slice argument has chosen the initial sphere foliation from the last slice $\Sigma_T$. The incoming null cones emanating from these spheres at the last slice give the foliation of the spacetime. Rather than a technical trick, the last slice argument is indeed deeply related to the nature of the problem.  Since the problem is about the asymptotic stability, the larger the time parameter $t$ is, the better the Minkowski spacetime approximates $\Sigma_t$. Therefore, the construction of the geodesic spheres should be more precise on $\Sigma_T$ than $\Sigma_0$.

In the current work, we will construct approximate data close to the singularity. The na\"ive way of construct initial foliation of the null hypersurfaces also suffers a similar loss as above. We will use ideas reminiscent of the last slice argument to get the correct initial foliation by tracing back the data from singularity. This is done in the second paper \cite{LuoYu2} of the series. Please see the next section for some technical remarks on this point, e.g. the fourth remark in Section \ref{Section:schematic description} and d) of Section \ref{section:nonlinear estimates}.
\end{itemize}

\subsubsection{Remarks on the work \cite{AlinhacWaveRare1} of Alinhac}
We summarize the main results of \cite{AlinhacWaveRare1}.  The author studied a general quasilinear symmetrizable hyperbolic system
 \begin{equation}\label{eq:hyper system}
 \partial_t v+ A_1(v)\partial_x v+ A_2(v)\partial_y v =0
 \end{equation}
 where $v(t,x,y) \in \mathbb{R}^N$, $(x,y)\in \mathbb{R} \times \mathbb{R}^{n-2}$, $N\geqslant 1$, $n\geqslant 2$, $t\in \mathbb{R}$ and the coefficient matrices $A_1$ and $A_2$ are smooth in $v$. It is assumed that for all $\eta\in \mathbb{R}^{n-2}$, $A_1(v)+\eta \cdot A_2(v)$ has a simple real eigenvalue $\lambda(v,\eta)$ which is genuinely non-linear.  Let $x=\varphi_0(y)$ be {\color{black}a smooth hypersurface} on $\mathbb{R}^{n-1}$ so that $\varphi_0(0)=\nabla \varphi_0(0)=0$. We pose $v_+(x,y)$ on $x> \varphi_0(y)$ and $v_-(x,y)$ on $x<\varphi_0(y)$ as the initial data.

The data $(v_+,v_-)$ is assumed to satisfy \emph{the compatibility condition}: 

\medskip

(Compatibility). \ For all $y\in \mathbb{R}^{n-2}$, there exists a one-dimensional centered rarefaction wave in the direction $\eta = -\nabla\varphi_0(y)$ joining $v_-(\varphi_0(y),y)$ and $v_+(\varphi_0(y),y)$, corresponding to the simple real eigenvalue $\lambda(\cdot,-\nabla\varphi_0(y))$.

\medskip

To define the rarefaction waves, we consider a domain $\mathcal{R}=\{(t, u,\vartheta) \in \mathbb{R}^n|t>0,u\in(0,1)\}$. Let $\Psi:\overline{\mathcal{R}}\rightarrow \mathbb{R}^n$ be a continuous map where we use the standard Cartesian coordinates $(t,x,y)$ on the target. We assume that $\Psi \in C^\infty(\mathcal{R})$. It is given by
\[\Psi: (t,u,\vartheta)\mapsto (t, \psi(t,u,\vartheta),\vartheta).\]
We also assume a key linear expansion condition $\psi_u(t,u,\vartheta) = t\overline{\psi}(t,u,\vartheta)$ where $\overline{\psi}$ is positive on $\overline{R}$. The image of {\color{black}$\Psi$} is the dihedral angle region $\mathcal{S}$ defined by
\[\mathcal{S}=\{(t, x,y) \in \mathbb{R}^n|t>0, \psi(t,0,y)<x<\psi(t,1,y)\}.\]
\begin{center}
\includegraphics[width=4in]{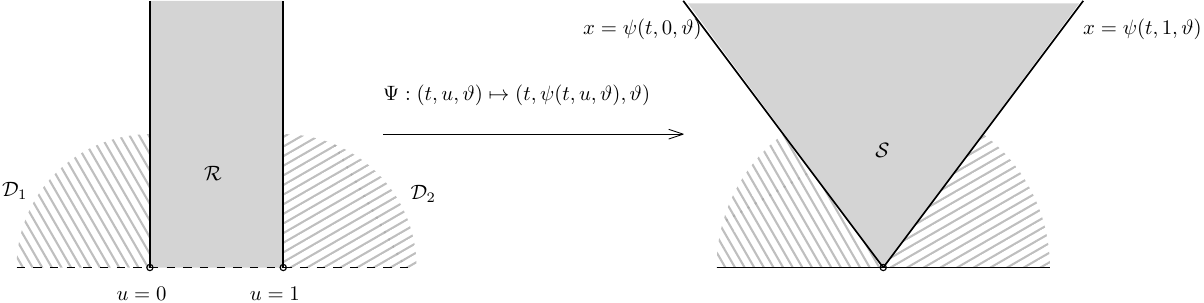}
\end{center}
Assume that $v(t,x,y)$ solves \eqref{eq:hyper system} on $\mathcal{S}$. Then, on $w(t,u,\vartheta)=v\circ \Psi$ solves the following equation
 \begin{equation}\label{eq:hyper system 2}
 L(w,\psi)w=\partial_t w+ \frac{1}{\psi_u}\left(A_1(w)-\psi_t \cdot \mathbf{I}-\psi_y\cdot A_2(w)\right)\partial_u w+ A_2(w)\partial_\vartheta w =0,
 \end{equation}
 where $\mathbf{I}$ is the $N\times N$ identity matrix.
 
 A rarefaction wave is defined as a juxtaposition of three smooth solutions to \eqref{eq:hyper system} defined on three regions $x<\varphi(t,0,\vartheta)$, $\varphi(t,0,\vartheta)<x<\varphi(t,1,\vartheta)$ and $x>\varphi(t,1,\vartheta)$ with $t>0$ so that on $t=0$ they agree with $v_-$ on $x< \varphi_0(y)$ and with $v_+$ on $x> \varphi_0(y)$. 

The main theorem proved in \cite{AlinhacWaveRare1} can be stated as follows: 
there exists a smooth rarefaction wave verifying the above conditions, for $t$ sufficiently small.

\medskip

We now list several key aspects of the proof in \cite{AlinhacWaveRare1} and we also compare them with the current work. 
\begin{itemize}[noitemsep,wide=0pt, leftmargin=\dimexpr\labelwidth + 2\labelsep\relax]
	\item Alinhac's seminal work \cite{AlinhacWaveRare1} used the Nash-Moser iteration scheme to construct multi-dimensional rarefaction {\color{black}waves for a general hyperbolic system.} The Nash-Moser technique was necessary due to the loss of regularity (even in the linear estimates). 
	
	In this work, we establish energy estimates for rarefaction waves in compressible isentropic Euler equations with the ideal gas equation of state. We do not lose derivatives and we can close the energy {\color{black}estimates in standard Sobolev spaces} $H^s$ with $s\geqslant 6$.
	
	\item \cite{AlinhacWaveRare1} used an approximately characteristic coordinate system $(t,u,\vartheta)$ on the region  $\mathcal{R}$ to blow up the rarefaction wave region $\mathcal{S}$ so that the estimates in $\mathcal{R}$ become regular, see the above picture. One of {\color{black}the main technical constraints} in the proof is to require the hypersurfaces defined by $u=0$ and $u=1$ to be characteristic in the process of iteration.  The hypersurface $u=a$ with $a\in (0,1)$ may not be characteristic. The boundary conditions at $u=0$ should be very carefully chosen in each step and this is one of the main difficulties solved in \cite{AlinhacWaveRare1}. Furthermore, \cite{AlinhacWaveRare1} requires the compatibility condition and the three solutions on $\mathcal{D}_1, \mathcal{D}_2, \mathcal{R}$ must be iterated simultaneously to correct the boundaries. 
	
	We construct the acoustical coordinate system $(t,u,\vartheta)$ by using the acoustical (Lorentzian) metric $g$ defined by the solution. The hypersurfaces $C_u$ are inherently characteristic (null) and correspond to rarefaction wave fronts emanating from the initial discontinuity curve.  In particular, we do not pose any boundary condition on the left boundary $u=u^*$ (counterpart of $u=0$ in \cite{AlinhacWaveRare1}) and do not require compatibility conditions. Instead, we describe all rarefaction waves which can be connected to the initial characteristic hypersurface $C_0$, similar to the one dimensional picture.
	
	\item \cite{AlinhacWaveRare1} introduced the celebrated {\color{black}``}good unknown{\color{black}''} for the linearized  equations in the blow-up variables. However, the linearized equations are still singular and lose derivatives in higher order estimates due to the characteristic nature of rarefaction wave. A crucial step in \cite{AlinhacWaveRare1} was the construction of higher order approximate solutions near the singularity via Taylor expansions in time. The corrections to approximate solutions of sufficiently high order satisfy linear estimates in weighted spacetime norms {\color{black}which degenerate} near the boundary $u=0$ and $u=1$.
	
	Our work relies on the physical mechanism of acoustic wave propagation. The wave equations avoid the loss of derivatives in linearized first order system. Based on rarefaction wave energy ansatz, we derive a new linear energy estimates in Sobolev spaces. In particular, our estimates do not degenerate on boundaries of rarefaction wave regions.

	\item To implement the Nash-Moser iteration schemes,  \cite{AlinhacWaveRare1} introduced a chain of weighted Sobolev type spaces (based on anisotropic Littlewood-Paley decomposition) to handle the normal derivatives. The scheme and estimates in \cite{AlinhacWaveRare1} indeed suffer from loss of normal derivatives due the the degeneration of weight functions. This loss persists even for one dimensional rarefaction waves. In particular, since the smallness in \cite{AlinhacWaveRare1} is posed on the time interval, it does not provide error estimates which measures the closeness of the solution to the one dimensional rarefaction waves.
	
	{\color{black}We obtain top order estimates which quantify the perturbations relative to one dimensional case in terms of the small parameter $\varepsilon$. In particular, we can characterize the geometry of the rarefaction front $C_u$ by the second fundamental form $\chi$. The vanishing of $\chi$ indicates that solution reduces to 1-D rarefaction waves. See Section \ref{Sec:Riemann-Problem} for the picture of the rarefaction front geometry. }
\end{itemize}

\subsection{Comments on the proof: difficulties, ideas, and novelties}\label{section: difficulty}

We address the major difficulties in the construction of rarefaction waves, and briefly describe the ideas to overcome them.

\subsubsection{A schematic description}\label{Section:schematic description}
\begin{itemize}[noitemsep,wide=0pt, leftmargin=\dimexpr\labelwidth + 2\labelsep\relax]
	\item[1)] Characteristic propagation speed and loss of derivatives. 
	
	In contrast to shock fronts which are non-characteristic hypersurfaces, rarefaction waves are inherently hyperbolic characteristic problems. Because of the characteristic nature, the linearized rarefaction wave equations could not satisfy the uniform stability condition according to Majda \cite{MajdaBook}, and would suffer loss of normal derivatives. According to Alinhac \cite[Section 3.3]{AlinhacWaveRare0}, the linearized rarefaction wave equations for a general hyperbolic system lose $\frac{s}{2}$ derivatives in $H^s$-norm estimates (see Majda and Osher \cite{MajdaOsher} for detailed analysis). This motivated Alinhac's Nash-Moser schemes in a chain of weighted co-normal spaces.

	To overcome the loss of derivatives in linearized equations, we rely crucially on the following facts for sound waves in gas dynamics: they satisfy wave equations. This is not true for general hyperbolic systems. In particular, the characteristic component $\wb$ (this is one of the Riemann invariants defined in \eqref{def: Riemann invariants}) satisfies a wave equation, and could be used to recover the normal derivatives. This is the basis for linear estimates in Sobolev spaces.
	
	\item[2)] A difficulty in linear energy estimates absent in shock formation.
	
	Owing to the initial discontinuities, the linearized wave equations are singular in rarefaction wave regions. This leads to the degeneracy of angular derivatives estimates, in analogue of the shock formation mentioned in Section \ref{section: Shock and Minkowski}. Unfortunately, on account of the reverse sign of $L(\mu)$, the crucial coercive mechanism in shock formation fails to work for rarefaction waves. A new perspective is needed to understand linear estimates in rarefaction wave regions.
	
	We will provide a detailed asymptotic analysis of rarefaction waves near singularities. The formulation in terms of Riemann invariant variables $\{\wb,w,\psi_2\}$ plays a key role. We derive precise hierarchical energy ansatz not only on the initial Cauchy hypersurface $\Sigma_\delta$ but also on the characteristic hypersurface $C_0$. The hierarchical ansatz, primarily in the form of vanishing of normal derivatives $T(\psi), \psi \in \{w,\psi_2\}$, forms the basis of a new mechanism. This provides linear estimates for acoustic rarefaction waves in Sobolev spaces. The key {\color{black}technical tool for the linear estimates} is a refined Gronwall type inequality. It relies crucially on the positive energy flux through the characteristic hypersurfaces $C_u$ (rarefaction fronts). Furthermore, the energy flux estimates also provide a means to directly control the geometry of rarefaction fronts, which is missing in previous works \cite{AlinhacWaveRare1,AlinhacWaveRare2}.

	\item[3)] A difficulty in nonlinear estimates.
	
	The nonlinear energy estimates are also coupled with the bounds on acoustical geometry. The key geometric quantity is ${\rm tr}(\chi)$, i.e., the mean curvature of the rarefaction fronts.  The standard method to estimate top derivatives of ${\rm tr}(\chi)$, due to Christodoulou \cite{ChristodoulouShockFormation}, is to renormalize the propagation equation $L\big({\rm tr}(\chi)\big)$ which retrieves the loss of one derivative. To handle the singular renormalized equation near singularity, for shock formation the key idea is to make use of the minus sign of $L(\mu)$ which eliminates the leading singular term. Unfortunately, the idea breaks down due to the positive sign of $L(\mu)$ in rarefaction waves. The blow-up of top order derivatives of ${\rm tr}(\chi)$ seems to be inevitable near singularities, rather than a technical issue. This is by far \textit{the most challenging part} of this work.
	
	The top order derivatives of geometric quantities such as ${\rm tr}(\chi)$ are indeed coming from deformation tensors of commutator vector fields.  The strategy is to avoid derivatives on geometric quantities by commuting with a new null frame. We introduce a new \textit{non-integrable} null frame $\{\Lr,\Lbr, \Xr\}$ adapted to the Riemann invariants $\{\wb,w,\psi_2\}$. The covariant nature of the Euler equations allows us to express the associated deformation tensors $\ ^{(\Zr)} \pi$ in terms of Riemann invariants.  The Riemann invariants and the new frame allow us to use the null structure of the solutions to control most of the error terms. Meanwhile, there is still a price to pay due to commutation with the new frame. The worst possible error terms are related to $^{(\Zr)}\pi_{\Lr\LR}$. It can not be controlled by the energy and becomes the primary threats to the energy estimates.  Its resolution relies on the following observation: due to the expansion nature of rarefaction waves, the density of the gas decreases across the rarefaction fronts.  This shows that the worst top order error term has a favorable sign so that it is coercive.
	
	\item[4)] The control of geometry and a hidden vanishing.
	
	Even though the energy estimates can be closed in the second null frame, the non-integrability of the frame, i.e., it is not tangential to the rarefaction fronts, creates new {\color{black}obstacles.} There are error terms similar to $^{(\Zr)}\pi_{\Lr\LR}$. Only this time we no longer have a favorable sign to control them. 
	
	The last ingredient to control the acoustical geometry is the following discovery: there is an extra \textit{vanishing of angular derivatives} for the maximal characteristic speed $v^1+c$. It is a hidden structure of the multi-dimensional rarefaction waves without an analogue in {\color{black}the one dimensional theory,} and it can not be directly predicted from the energy estimates. To capture this extra vanishing, we must trace back the data from singularity, reminiscent of the last slice argument mentioned in Section \ref{section: Shock and Minkowski}. Furthermore, we show that {\color{black}the extra vanishing indeed propagates} by a key commutation formula.
\end{itemize} 

\medskip

In the following, we outline the proof and explain the ideas in more details.

\subsubsection{Linear estimates}

We use the acoustical coordinates $(t,u,\vartheta)$ and we foliate the spacetime by the level sets $C_u$ of $u$ with {\color{black}density $\frac{1}{\mu}$ of order $O(t^{-1})$ at time $t$.} We also use null frame $\{L,\Lb,\Xh\}$ where $L, \Xh$ are tangent to $C_u$. See the figure in Section \ref{subsection:rough-version-main-results} and the precise definitions in Section \ref{section_acoustical geometry}.

We study the following linear wave equation defined on $\mathcal{D}(\delta) = \{t \in [\delta,t], u \in [0,u^*]\}$:
\begin{equation*}
	\Box_{g}\psi = \rho,
	\end{equation*}
	where {\color{black}$\Box_g \sim \Xh^2(f) - \mu^{-1}L\big(\underline{L}(f)\big) + \cdots$}  in the null frame, see \eqref{eq:wave operator in null frame}. The goal is to obtain energy estimates of $\psi$ independent of $\delta \to 0$ (so that $\mu \to 0$ approaching $\Sigma_{\delta}$).

As mentioned in Section \ref{section: Shock and Minkowski}, as $\mu$ is close to $0$, the degeneracy of angular derivative estimates is the main difficulty. In previous works on shock formation, the favorable negative sign of $L(\mu)$ provides a (negative) coercive term in the form  $\displaystyle \int_{\mathcal{D}} L(\mu)\cdot |\slashed{\nabla} \psi|^2$ in the error integral. For rarefaction waves, the degeneration still presents while  $L(\mu)$ become positive. Hence, the coercivity is lost in the energy estimates. Therefore, we have to handle a non-integrable factor of size $\frac{1}{t}$ coming from the degeneration of $\mu$. 

We make the following comparison to illustrate the difficulty. Schematically, let $E(t)$ be the energy at time $t>0$ and $t=\delta$ is the initial time. In the worst scenario, $E(t)$ satisfies the following estimate:
\[E(t)\leqslant E(\delta)+\int_{\delta}^t\frac{C_0}{\tau}E(\tau)d\tau.\]
We may compare this with the energy inequalities often appeared in small-data-global-existence {\color{black}problems} for nonlinear wave equations:
\[E(t)\leqslant E(0)+\int_{0}^t\frac{C}{\tau}E(\tau)d\tau.\]
In the second case, under the ansatz that $E(t)$ is bounded, we can use Gronwall's inequality to show that $E(t)=O\left(\log(t)\right)$. There is a $\log(t)$-loss but the estimates is at least useful to construct long time solutions with lifespan at least of size $O(e^\frac{1}{\varepsilon})$. In the first case, the Gronwall's inequality gives
\[E(t) \leqslant \left(\frac{t}{\delta}\right)^{C_0}E(\delta).\]
When $\delta\rightarrow 0$, unless the initial energy $E(\delta)$ decays in the correct way, the above estimate blows up for arbitrary small time $t$. The loss comes directly from the data and it is the main technical obstacle even for constructing local solutions (regardless the regularity issue). In fact, the analysis indicates that the linearized wave equations in rarefaction wave region {\color{black}are ill-posed for generic data} in Sobolev spaces.

We solve this problem by introducing the correct energy ansatz and the Riemann invariants. On the technical level, we also need a refined Gronwall's inequality.
\begin{itemize}[noitemsep,wide=0pt, leftmargin=\dimexpr\labelwidth + 2\labelsep\relax]
\item[1)]  The energy ansatz and the Riemann invariant variables.

Suggested by the asymptotic analysis of rarefaction waves near the initial singularity, we introduce the Riemann invariant variables $\{\wb,w,\psi_2\}$. It not only allows us to approximately diagonalize the Euler equations in the null direction, but also reveals a hierarchy of energy ansatz that plays a dominant role throughout the proof. 

We define energy norms on a constant $t$-slice $\Sigma_t$ associated with outgoing and incoming null directions $L$ and $\Lb$. For different derivatives and different Riemann invariants, we have a hierarchy on the associated energies. The essence of the energy bounds for rarefaction waves can be reflected in the following manner: 
\begin{itemize}
	\item If $\psi \neq \wb$ or $k\geqslant 1$ ({\color{black}$k$ is the number} of derivatives applied on $\psi$), for all possible commutation vector fields $Z$, the $L^2$-norms of  the outgoing derivatives $L(Z^k \psi)$ and rotational derivatives $\Xh(Z^k \psi)$ are of size $\varepsilon^2$; The $L^2$-norms of  the incoming derivatives $\Lb(Z^k \psi)$ are of size $t^2\varepsilon^2$.
	
	\item The $\Lb \wb$ is of size $1$ {\color{black}and it} will generate most of the linear terms in the energy estimates. These linear terms will be the main enemies in the proof.
\end{itemize}  
We believe that it is the unique energy ansatz which can be proved for {\color{black}the linear wave equation} $\Box_{g}\psi=0$ in rarefaction wave region. See Section \ref{subsection:heuristics-energy-ansatz} for a heuristic derivation of the energy ansatz.  We will construct initial data on $\Sigma_{\delta}$ satisfying such ansatz in the {\color{black}forthcoming paper} \cite{LuoYu2}.

We note that this part is similar in spirit to Alinhac's construction of approximation solution in \cite{AlinhacWaveRare1, AlinhacWaveRare2}. The difference is that we have to derive much more precise hierarchical ansatz not only on the initial Cauchy hypersurface $\Sigma_\delta$ but also on the characteristic hypersurface $C_0$. Furthermore, instead of a diagonalization method depending on the characteristic hypersurfaces, we use the decomposition of Riemann invariant variables and it avoids the loss of derivatives.

\item[2)] The refined Gronwall type inequality and the positive energy flux.

As we mentioned above, we have to use the following bounds:
\[E(t)\leqslant E(\delta)+\int_{\delta}^t\frac{C_0}{\tau}E(\tau)d\tau \ \ {\implies} \ \ E(t) \leqslant \left(\frac{t}{\delta}\right)^{C_0}E(\delta).\]
The correct ansatz gives the decay of the form $E(\delta) \lesssim \delta^2 \varepsilon^2$. In order to get a bound independent of $\delta$, it requires $C_0 < 2$. This restriction does not seem to be realistic, since for higher order estimates we encounter many error terms generated from commutations and sources.

The existence for a positive energy flux through the characteristic hypersurfaces $C_u$ provides a way to implement the above idea. It turns out that most of the errors can always be bounded by $\displaystyle a_0^{-1}\int_0^u F (t,u') du' + a_0 \int_{\delta}^{t}\frac{E(t',u)}{t'}dt'$. We remark that $a_0$ is a small constant at disposal and this retrieves the smallness. We also note that there is a big constant $a_0^{-1}$ for the flux term, but it is not harmful; see Section \ref{section: bilinear error integrals}. 

In fact, we have the following Gronwall type  inequality for the energy $E(t,u)$ and flux $F(t,u)$:
\begin{align*}
	{E}(t,u)+ {F}(t,u)\leqslant At^2+ B \int_0^u F (t,u') du'   + C \int_{\delta}^{t}\frac{E(t',u)}{t'}dt'.
\end{align*}
See Lemma \ref{refined Gronwall} for the proof. We remark that the $At^2$ in the above inequality is consistent with the energy ansatz. We have
\begin{equation*}
	{E}(t,u)+F(t,u)\leqslant  3Ae^{Bu} t^{2}
\end{equation*}
provided $e^{B u^*}C\leqslant 1$. This Gronwall type inequality enables us to obtain linear energy estimates for rarefaction waves. We emphasize that the estimates are in Sobolev spaces, and in particular do not degenerate at the boundaries of the rarefaction wave region, in contrast to previous work \cite{AlinhacWaveRare1,AlinhacWaveRare2}. Furthermore, the energy flux also controls the geometry of rarefaction fronts. 
\end{itemize}

\subsubsection{Nonlinear estimates}\label{section:nonlinear estimates}
The nonlinear energy estimates are always coupled to the control of the underlying geometry. The acoustical geometry is indeed controlled by two functions:  the mean curvature ${\rm tr}(\chi)$ of $C_u$ and the inverse density $\mu$ of the foliation by $C_u$. This is also the case for shock formation, see \cite{ChristodoulouShockFormation,ChristodoulouMiao}.

As we mentioned, the reverse sign of $L(\mu)$ compared to the case of shock formation is not only an obstacle for linear estimate but also is tied to the loss of derivatives on the top order derivatives of ${\rm tr}(\chi)$ and  $\mu$. This loss might prevent us from closing the nonlinear energy estimates in finitely many derivatives.

{\color{black}This scenario is illustrated} as follows. Schematically, the highest order term $Z^N({\rm tr}(\chi))$ satisfies the following equation:
\[	L\left(Z^N({\rm tr}(\chi))\right)=Z^{N+2}(\psi)+\cdots.
\]
The terms in the $\cdots$ are of lower orders and $Z^{N+2}(\psi)$ is one order higher in derivatives than $Z^N({\rm tr}(\chi))$. Thus, a direct integration along $L$ would cause a loss of one derivative. In \cite{ChristodoulouShockFormation}, using the wave equation satisfied by $\psi$, Christodoulou finds a neat algebraic expression of $Z^{N+2}(\psi)$ as $Z^{N+2}(\psi)=L\big(Z^{N+1}(\psi')\big)$ up to lower order terms. Therefore, we can move the top order term to the lefthand side  to derive
\begin{equation}\label{schematic equation for chi 1}
	{\color{black}L\left(Z^N({\rm tr} \chi)-Z^{N+1}(\psi')\right)}=\frac{L(\mu)}{\mu}Z^N({\rm tr} \chi)+\cdots.
\end{equation}
The terms on the righthand side of \eqref{schematic equation for chi 1} are of lower order. Thus, the above trick avoids the loss of derivatives. On the other hand, if we convert \eqref{schematic equation for chi 1} in $L^2$ norms, we arrive at
\begin{equation}\label{schematic equation for chi 2}
	{\color{black}L\big\|Z^N({\rm tr} \chi)-Z^{N+1}(\psi')\big\|_{L^2}}=\frac{L(\mu)}{\mu}\big\|Z^N({\rm tr} \chi)-\left(Z^{N+1}(\psi')\right)\big\|_{L^2}+\cdots.
\end{equation}
This is another place where the sign of $L(\mu)$ plays a crucial role. For shock formation $L\mu<0$, the first term on the righthand can be dropped. This crucial step avoids the unacceptable loss in $\mu$. For rarefaction waves, $L(\mu)$ becomes positive. Integrating \eqref{schematic equation for chi 2} leads to a loss in $\mu$. The loss is even worse since we integrate from the singularity so that we can not bound this term even for very short time. This is the second  difficulty tied to the sign of $L(\mu)$ and it prevents us from closing the top order derivative estimates for $\mu$ or ${\rm tr}\chi$.

The difficulty is resolved by the combination of the following observations:
\begin{itemize}[noitemsep,wide=0pt, leftmargin=\dimexpr\labelwidth + 2\labelsep\relax]
\item[a)]  A non-integrable null frame adapted to the Riemann invariants.

The motivation for introducing the new null frame is to avoid the higher order derivatives of $\chi$ and $\mu$.  For $\psi \in \{\wb,w,\psi_2\}$, by commuting derivatives $Z^N$ with $\Box_g \psi$, we have 
\[ \Box_g (Z^N \psi)=\rho_N. \] 
If we use $Z\in \{L,\Lb,\Xh\}$ as commutators, the source term $\rho_N$ contains $Z^N(\chi)$ coming from the deformation tensors of $Z$. As we explained, this term can not be controlled.

Since the wave equations for the Riemann invariants are covariant, {\color{black}we are free to choose any frame.} The new null frame $\{\Lr,\Lbr, \Xr\}$ is determined by the initial discontinuity surface (a flat curve in our setting) and the acoustical metric $g$ (given directly by the Riemann invariants). In particular, the new frame can be explicitly expressed in terms of the Riemann invariants $\{\wb,w,\psi_2\}$. In contrast, the first null frame is implicitly defined, i.e., we have to solve $\mu$ by integrating along $L$. Since $g$ and $\Zr\in \{\Lr,\Lbr, \Xr\}$ can all be explicitly written in  $\psi\in \{\wb,w,\psi_2\}$, commuting with $\Zr^N$ can only contribute terms of the form $\Zr^{k}(\psi)$ in $\rho_N$. These terms have a better chance to be directly controlled by the energy norms via Gronwall type inequalities.

The new null frame also brings in additional difficulties. They generate new error terms, see 4) of Section \ref{Section:schematic description} and c) below. Furthermore, since the standard null frame $\{L,\Lb,\Xh\}$ adapts naturally to the hypersurfaces $C_u$ and the energy estimates, we have to handle the transformation between two frames. We give the following example to show the challenges related to the change of frames. We have a transport equation $L (\chi) = - \frac{\gamma+1}{\gamma-1}\Xh^2(c^2) + \cdots$ to bound $\chi$. To use the energy ansatz,  we have to change to the new frame $\{\Lr,\Lbr, \Xr\}$. The difference $\Xr - \Xh$ leads to
\begin{equation}\label{remark: a L chi} L (\chi) = -\frac{\Xh(\Xh^1)}{c^{-1}\mu}\Tr(c)   - \frac{\gamma+1}{\gamma-1}\Xr^2(c^2) + \cdots,
\end{equation}
where $\TR = \frac{1}{2}(\Lbr - c^{-2}t \Lr)$.
Unless $\chi\equiv 0$ at the initial singularity which is the one dimensional case, the first term on righthand side still suffers a loss of $\mu$. {\color{black}However, in general we have $\chi = O(\varepsilon)$; see \eqref{initial Iinfty}. This is one of the main difficulty in controlling the acoustical geometry; see the following Point d) for its resolution with the `extra vanishing' of $\Xh\Xr(v^1+c)$.}

\item[b)] The null structures with respect to the Riemann invariants. 

The source terms of the wave equations for the Riemann invariants are all in the covariant form $g^{\alpha\beta}\partial_\alpha \psi\partial_\beta \psi'$. Since $\Lbr$ is null, the contraction with the acoustical metric $g$ guarantees at most one $\Lbr \wb$ term appearing in each of the source terms, i.e., no terms of the type $\Lbr \psi \cdot \Lbr \psi'$.  We notice that there is no smallness in $\Lbr \wb$. Therefore, the worst contribution in the energy estimates from the source terms are at least linear hence borderline terms. See also Remark \ref{remark:null structure}. The deformation tensors associated with the commutators also exhibit similar null structures. In view of the fact that the flux term on the characteristic hypersurfaces $C_u$ contains no $\Lbr$-derivative components, these null structures allow us to deal with most of the error terms, by reducing them to one of the bilinear error integrals in Section \ref{section: bilinear error integrals}. These bilinear error integrals can be bounded by the energy flux through rarefaction fronts. 

\item[c)] The favorable sign from the ‘rarefaction’ effect.

One of top order error terms can be computed as 
\begin{align*}
&\int_{\mathcal{D}}\frac{\Lbr(\psi)}{4t}\Zr^{\beta}\Lbr\big(c^{-1}\,^{(\Zr_0)}\pi_{\Lr \Lr}\big)\cdot \Lbr(\Zr^\alpha(\psi)) =-\int_{\mathcal{D}}\frac{1}{2t}\Zr^{\beta}\Lbr\Zr_0\big(\frac{\gamma+1}{2}\TR(\wb) + \frac{\gamma-3}{2}\TR(w)\big)\cdot\Lbr(\Zr^\alpha(\psi)){\color{black}.}
\end{align*}
The worst case happens for $\psi = \wb$ where we have $\Lbr(\psi) = \Lbr(\wb) \approx -1$. Furthermore, if all the commutators in $\ZR^{\beta}$ are the transversal direction $\Tr$,  it violates the null structures so that it is even not in the scope of the refined Gronwall type inequality. Fortunately, we can use the fact that $\Lbr(\wb) < 0$  so that this term can be ignored. This is due to the {\color{black}expansive nature} of rarefaction waves and it reflects the fact that along the transversal direction the density of the gas is decreasing. This is essential for top order energy estimates in the new null frame. {\color{black}See the estimates of the major error term $\mathbf{I}_{1,1}$ in Section \ref{section sigma34}.}

\item[d)]  A hidden extra vanishing and the new null frame.

Observe that the component $^{(Z)}\pi_{LL}$ for the commutators $Z$ from the first null frame vanishes, while $^{(\Zr)}\pi_{\Lr\LR}\neq 0$ for the new null frame. {\color{black}It originates from the commutation} of the null generators of $C_u$ with the new null frame. In energy estimates, we will encounter the following terms:
 \begin{equation}\label{eq: extra vanishing}
 t^{-1} \,^{(\Xr)}\pi_{\Lr \Lr} = \frac{\Xr(v^1+c)}{t},\ \ t^{-1}\,^{(\Tr)}\pi_{\Lr \Lr} 
 = t^{-1}(1 + \frac{\gamma+1}{2}\TR(\wb) + \frac{\gamma-3}{2}\TR(w)).
 \end{equation}
 We refer to Section \ref{section:Energy identities for higher order terms} for details. The energy ansatz suggests these terms are of size $O(t^{-1}\varepsilon)$ and $O(t^{-1})$. The $t^{-1}$ factor is out of reach for the energy estimates.

In fact, these two terms are of size $O(\varepsilon)$ and $O(1)$. It comes from the delicate choice of the initial data near singularity.  It turns out that the geometry of initial rarefaction wave fronts must be matched in an exact way on $\Sigma_{\delta}$. Even a slight deviation would result in uncontrollable errors. For example, we must have the exact constant $-\frac{\gamma+1}{2}$ in front of $\wb$ {\color{black}in \eqref{eq: extra vanishing}. The} wave fronts are defined by tracing back the data from singularity, reminiscent of Christodoulou and Klainerman's last slice argument in \cite{CK}.

This extra vanishing is also key to retrieve the loss in \eqref{remark: a L chi} of a). We can derive the following equation for $\Xh(\Xh^1)$:
\[ L\big(\Xh(\Xh^1)\big) =  \frac{\Xh(\Xh^1)}{t} + \Xh\Xr(v^1+c) + \cdots.\]
The extra vanishing of $\Xh\Xr(v^1+c)$ provides enough $t$-factors so that we can bound $\Xh(\Xh^1)$ by Gronwall's inequality. We can then come back to \eqref{remark: a L chi} to control $\chi$.

Given the aforementioned importance, we define $\yr = \frac{\Xr(v^1+c)}{t}$ and we expect $\yr$ to have size $O(\varepsilon)$. However, the size of $\yr$ can not be obtained directly from the energy estimates. It turns out that the behavior of $\yr$ can be captured  by a commutation formula which is again related to the nature of rarefaction waves. We observe that $\yr$ appears {\color{black}through} the commutator $[\Lr,\Xr] = \yr \cdot \Tr + \cdots$. We apply this formula to $\wb$ to derive
\[ \yr \cdot \Tr \wb = \Lr \Xr (\wb) - \Xr \Lr(\wb) + \cdots = \Lr \Xr (\wb) + \Xr^2(\psi_2) + \cdots{\color{black},} \]
where we used the Euler equation to substitute $\Lr(\wb)$. The terms on the righthand side are bounded by the energy estimates. This formula  encodes the key information of the rarefaction waves which is $\Tr\wb \approx -1$; {\color{black}see Section \ref{section: yr zr}}. {\color{black}The bounds on $\yr$} play a dominant role in treating the error terms violating the null structures and also in the comparisons of two different null frames; {\color{black}see Remark \ref{remark:estimates-Lambda}, Section \ref{section sigma23} and Section \ref{section: closing B2 for alpha2 psi=wb}}.

\smallskip

This extra vanishing seems hard to be detected using the standard null frame $(L,\Lb,\Xh)$.  The asymptotic analysis shows that $\Xh(v^1+c)$ is of size $O(\varepsilon)$. To our best knowledge, this unexpected vanishing has not appeared in physical or mathematical literature.

\end{itemize}

\subsection{Future work}

In the one dimensional case, given any data on $x_1>0$, we can connect its development by a rarefaction wave in a unique way on the left. This is shown in the first one of the following pictures. For the Riemann problem with an open set of data given in \eqref{eq:Riemann-problem-1D}, as shown in the second one of the following pictures, $U_l$ {\color{black}is first connected to} $\overline{U}$ by a back rarefaction wave and then connected to $U_r$ by a front rarefaction wave. Therefore, the initial discontinuity is resolved by two families of rarefaction waves. \begin{center}
\includegraphics[width=5.5in]{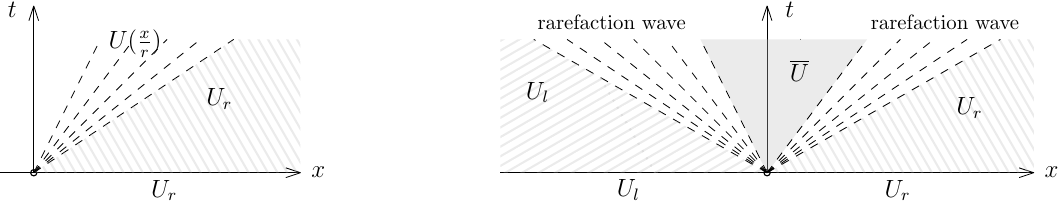}
\end{center}

In the second paper \cite{LuoYu2} of the series,  we will construct initial data on $\Sigma_\delta$ so that the assumptions in Section \ref{subsection:energy-ansatz-initial-data} are satisfied. We also show that, when $\delta\rightarrow 0$, the solutions corresponding to the given data on $\Sigma_a$ converge to a multi-dimensional centered rarefaction wave connecting to the given data given on $x_1>0$. This proves the existence of centered rarefaction wave and exhibits the first picture in multi-dimensional case. As applications, we also prove that small perturbations of data in \eqref{eq:Riemann-problem-1D} leads to the second picture. This proves the non-linear stability of the Riemann problem for two families of rarefaction waves for higher dimensional compressible Euler equations. 

The current work and the second paper \cite{LuoYu2} focus on the irrotational flow because sound waves are the core problems in rarefaction waves and they already reveal the nature of the subject. We will study general Euler flows with vorticity and entropy in three dimensions in the third paper of the series.

\subsection{Organization of the paper}
In Section \ref{section_acoustical geometry}, we recall the acoustical geometry and introduce two sets of null frames. We also introduce Riemann invariants and diagonalize the Euler equations. In Section \ref{section:energy-method-main-theorem}, we introduce the energy identities and the bootstrap ansatz. We also state the main theorem. In Section \ref{section:preparations-for-energy-estimates}, we control the the acoustical geometry and we obtain pointwise bounds for the Riemann invariants. In Section \ref{section:linear-energy-estimates}, we establish the energy estimates for linear equations which are applied to the lowest order energy estimates in Section \ref{section:lowest-order-energy-estimates}. In Section \ref{section:lower-order-estimates}, we derive lower commutator estimates including the bounds on $\yr$ and $\zr$. In Section \ref{section:higher-order-energy-estimates}, we close the energy estimates. The last section is devoted to close the pointwise bootstrap assumptions.

\section{Rarefaction waves and acoustical geometry}\label{section_acoustical geometry}
In terms of the enthalpy $h = e + pV$ ($V$ and $e$ are the specific volume and specific energy, respectively), the Euler system \eqref{eq: Euler in Euler coordinates} is equivalent to
\[\begin{cases}
&(\partial_t + v \cdot \nabla)v = -\nabla h,\\
&c^{-2}(\partial_t + v \cdot \nabla) h + \nabla \cdot v = 0.
\end{cases}
\]
For {\color{black}an isentropic ideal gas,} $h$ can be represented in terms of the sound speed, i.e., $h = \frac{1}{\gamma-1} c^2$. 
We consider the case where there exists a velocity potential function $\phi$ so that $v = -\nabla \phi$. Therefore, the fluid is irrotational. The enthalpy $h$ can be expressed as $h = \partial_t \phi - \frac{1}{2}|\nabla \phi|^2$. The Euler system is then equivalent to the following quasi-linear wave equation in Galilean coordinates $(t,x_1,x_2)$
\begin{equation}\label{eq:Euler:isentropic:irrotational}
g^{\mu \nu} \frac{\partial^2 \phi}{\partial x^\mu \partial x^\nu}  = 0.
\end{equation}
where we have used the Einstein summation convention and the acoustical metric $g$ is defined by
\[{\color{black}g = - c^2 dt^2 + }\sum_{i=1}^2 (dx^i - v^idt)^2.\]
The equation \eqref{eq:Euler:isentropic:irrotational} is the Euler-Lagrange equation corresponding to {\color{black}the Lagrangian density} $L = p(h)$.

Let $\{\phi_{\lambda}: \lambda \in (-1,1)\}$ be a family of solutions of \eqref{eq:Euler:isentropic:irrotational} such that $\phi_0 = \phi$. We call $\psi =\frac{d \phi_{\lambda}}{d \lambda}\big|_{\lambda = 0}$ a variation of $\phi$ through solutions.  Such families of solutions often arises from the symmetry of the spacetime and of the equations, e.g., we may take $\phi(t+\lambda, x_1,x_2)$, $\phi(t, x_1+\lambda,x_2)$ or $\phi(t, x_1,x_2+\lambda)$. We use the following notation to denote the corresponding variation through solution in the rest of the paper:
\[\psi_0= \frac{\partial \phi}{\partial t}, \ \psi_1= \frac{\partial \phi}{\partial x_1}=-v^1, \psi_2= \frac{\partial \phi}{\partial x_2}=-v^2.\]
By differentiating \eqref{eq:Euler:isentropic:irrotational} in $\lambda$, we derive that the variation $\psi$ satisfies a linear  wave equation corresponding to a metric $\widetilde{g}$: 
\[\Box_{\widetilde{g}} \psi = 0,\]
where $\widetilde{g}$ is a conformal change of the acoustical metric $\widetilde{g} = \Omega g$ and $\Omega = \frac{\rho}{c}$. In terms of the original acoustical metric $g$, it is equivalent to
\begin{equation}\label{eq: conformal Euler}
\Box_{g} \psi = -\frac{1}{2}g(D\log(\Omega), D\psi),
\end{equation}
where $D$ is the gradient define with respect to the acoustical metric $g$.

We assume that the fluid flows on the 2-dimensional tube $\Sigma_0=\left\{(t,x_1,x_2)\big| t=0, x_1\in \mathbb{R}, 0\leqslant x_2\leqslant 2\pi \right\}$. We identify $(t,x_1,0)$ and $(t,x_1,2\pi)$  so that we only consider the problem with {\color{black}periodic conditions in $x_2$,}  i.e., $\Sigma_0 =\mathbb{R}\times \mathbb{R}\slash 2\pi\mathbb{Z}$. The initial data of the system are posed on {\color{black}$x_1\geqslant 0$} (the grey region) by
	\[(v,c)\big|_{t=0}=\big(v^1_0(x_1,x_2),v_0^2(x_1,x_2), c_0(x_1,x_2)\big),  \ \ x_1\geqslant 0.\]
\begin{center}
\includegraphics[width=3in]{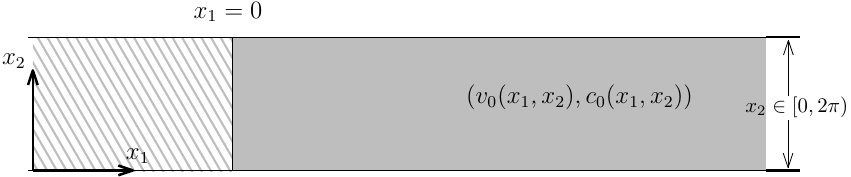}
\end{center}
If  $v\big|_{t=0}=(\overline{v_0},0)$ and $c\big|_{t=0}=\overline{c_0}$, where $\overline{v_0}$ and $\overline{c_0}>0$ are constants, the problem reduces to the classical one-dimensional centered rarefaction wave; see Section \ref{subsection:1D-Riemann-problem}. In this paper, we consider {\bf the perturbed data} where $v\big|_{t=0}-(\overline{v_{0}},0)$ and $c\big|_{t=0}-\overline{c_0}$ are small in Sobolev norms near $x_1=0$. Let $\mathcal{D}_0$ be the future domain of dependence of the solutions to \eqref{eq:Euler:isentropic:irrotational} with respect to the perturbed data. We use $C_0$ to denote its characteristic boundary.
\begin{center}
\includegraphics[width=3in]{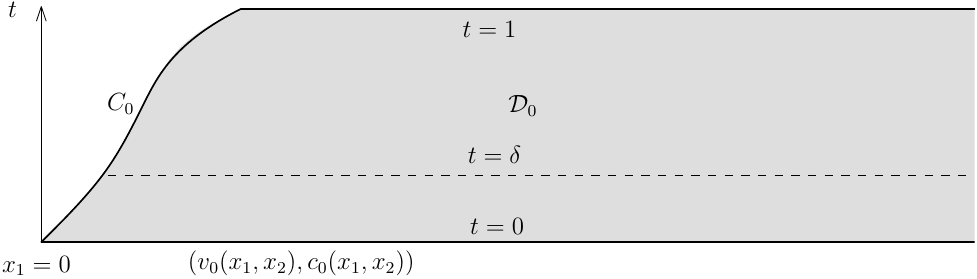}
\end{center}
For small perturbation, we may assume that $\mathcal{D}_0$ at least covers up to $t=1$.

\medskip

Throughout the paper, we use $(x_0,x_1,x_2)=(t,x_1,x_2)$ to denote the Cartesian coordinates on the Galilean spacetime. We use $\Sigma_{t_0}$ to denote the spatial hypersurface $\{(t,x_1,x_2) |t=t_0\}$. We will use a limiting process to construct centered rarefaction waves. We fix a positive parameter $\delta$ (which will be sent to $0$ in the limiting process).  We draw $\mathcal{D}\cap \Sigma_\delta$ as follows:
\begin{center}
\includegraphics[width=3in]{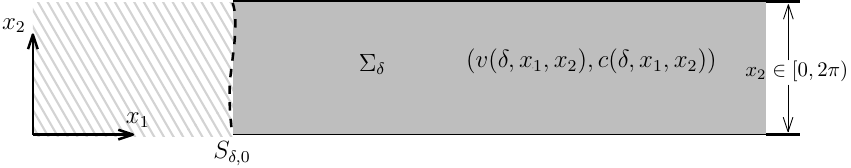}
\end{center}
We define $S_{\delta,0}=\Sigma_\delta\cap C_0=\partial \Sigma_\delta$. It is no longer a straight curve defined by {\color{black}$x_1=\text{constant}$.} The solution $(v,c)$ restricted to $t=\delta$ and on the righthand side of $S_{\delta,0}$ is given by $\big(v^1(\delta, x_1,x_2),v^2(\delta,x_1,x_2), c(\delta,x_1,x_2)\big)$. The data in the rarefaction wave region will be given on $\Sigma_\delta$ on the lefthand of $S_{\delta,0}$. To start with, we choose a smooth function $u$ on $\Sigma_\delta$ so that $S_{\delta,0}$ is given by $u=0$. The lefthand side of $S_{\delta,0}$ on $\Sigma_\delta$ are given by $u>0$.   We will specify data for the Euler equations  for $u \in [0,u^*]$ on $\Sigma_\delta$. The parameter $u^*$, which represents the width of the rarefaction wave, will be determined later on in the proof. It depends on {\color{black}the sound speed} on $C_0$. Once the data is prescribed for $u \in [0,u^*]$ on $\Sigma_\delta$, together with the data on $C_0$, it evolves to the development $\mathcal{D}(\delta)$ according to the Euler equations. In the rest of the paper, since we mainly work in $\mathcal{D}(\delta)$, we use $\mathcal{D}$ to denote $\mathcal{D}(\delta)$. See the shaded region depicted in the following picture:  
\begin{center}
	\includegraphics[width=4in]{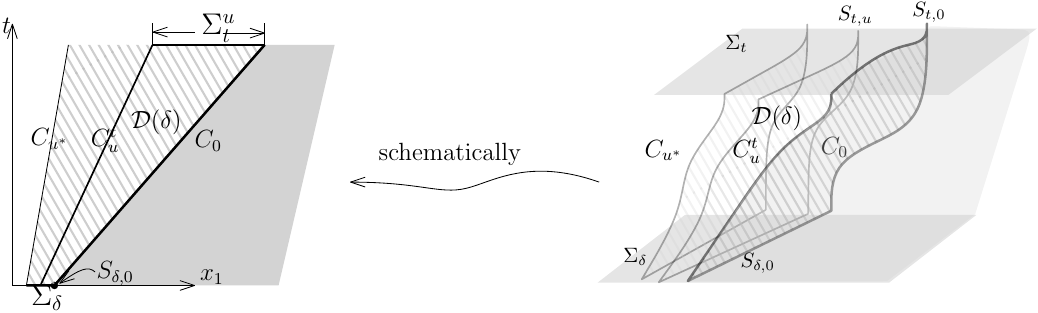}
\end{center}

\subsection{The acoustical coordinate system}

We refer to \cite{ChristodoulouShockFormation} and \cite{ChristodoulouMiao} for details of the construction of the acoustical coordinates. The acoustical coordinate system on $\mathcal{D}$ consists of three smooth functions $t$, $u$ and $\vartheta$. The function $t$ is defined as $x_0$ restricted to $\mathcal{D}$. 

The acoustical function $u$ is {\color{black}already given on} $\Sigma_{\delta}$. In fact, $u$ will be defined in a specific way and it will be given in the course of the construction of the data on $\Sigma_\delta$, see the sequel \cite{LuoYu2}. We define $C_u$ to be the null hypersurfaces consisting of null (future right-going) geodesics emanating from each level set of $u$ on $\Sigma_\delta$. We require $C_u$ to be the level sets of $u$ and this defines $u$ on $\mathcal{D}$. We define $\mathcal{D}(t^*,u^*) =\bigcup_{(t,u) \in [\delta,t^*]\times [0,u^*]}S_{t,u}$. In the rest of the paper, since we will deal with a priori estimates, we assume that $\mathcal{D}=\mathcal{D}(t^*,u^*)$ where $t^*= 1$ and $u^*>0$ are given. We will also use the notation $\mathcal{D}(t,u) =\bigcup_{(t',u') \in [\delta,t]\times [0,u]}S_{t',u'}$, $\Sigma_t^u=\bigcup_{u' \in [0,u]}S_{t,u'}$ and $C_u^{t}=\bigcup_{t' \in [\delta,t]}S_{t',u}$. We also use $\Sigma_t$ to denote $\Sigma_t^{u^*}$.

 We choose {\color{black}the future-pointed vector field $L$} to be the generators of the null geodesics on $C_u$ in such a way that $L(t) = 1$. The inverse density function $\mu$ measures the temporal density of the foliations $\{C_u\}_{u\geqslant 0}$ and it is defined as
\[{\color{black}\mu^{-1} = -g(D t,D u).}\]
Let $S_{t,u} = \Sigma_t \cap C_u$. Therefore, we have $\Sigma_t =\displaystyle \bigcup_{u}S_{t,u}$. The normal vector field $T$ is uniquely defined by the following three conditions:
\[(1) \ T \  \text{is tangent to} \ \Sigma_t; \ \ (2) \ {\color{black}T \ \text{is $g$-perpendicular to}} \ S_{t,u}; \ \ (3) \ Tu = 1.\]


To define the angular function $\vartheta$, we first solve the following system on $C_0$ with data given on $S_{0,0}$:
\[
L(\slashed{\vartheta})=0, \ \ \ \slashed{\vartheta}\big|_{S_{0,0}}=x_2\big|_{S_{0,0}}.
\]
Hence, $\slashed{\vartheta}(\delta)$  is a smooth parametrization of the circle $S_{\delta,0}$. The next step is to define $\vartheta$ on $\Sigma_\delta$ by extending $\slashed{\vartheta}(\delta)$ through the following equation on $\Sigma_\delta$:
\[
T( \vartheta)=0,\ \ \ \vartheta \big|_{S_{\delta,0}}= \slashed{\vartheta}(\delta).
\]
Finally, we use $L(\vartheta)=0$ to extend it to the entire spacetime $\mathcal{D}$ with $\vartheta$ prescribed on $\Sigma_\delta$. This gives the construction of $\vartheta$. Therefore, we obtain the acoustical coordinate system $(t,u,\vartheta)$.

In the acoustical coordinates $(t,u,\vartheta)$, we have
\begin{equation}\label{eq: L T in terms of coordinates}
L = \frac{\partial}{\partial t}, \quad T = \frac{\partial}{\partial u} - \Xi \frac{\partial}{\partial \vartheta},
\end{equation}
where $\Xi$ is a smooth function. In view of the construction, we observe that $T\big|_{\Sigma_\delta} = \frac{\partial}{\partial u}$.

We also define $X=\frac{\partial}{\partial \vartheta}$, $\slashed{g}=g(X,X)$ and {\color{black}the unit vector field} $\widehat{X}=\slashed{g}^{-\frac{1}{2}}X$. 
Therefore, we have
\[g(L,T) = -{\mu}, \quad g(L,L) = g(L,\Xh) = g(T,\Xh)=0, \quad g(\Xh,\Xh) = 1.
\]
We also introduce the vector field $B$ which is uniquely defined by requiring $B(t)=1$ and $B$ is $g$-perpendicular to $\Sigma_t$. It is straightforward to show that  $B$ is the material vector field $B=\frac{\partial}{\partial t}+v$. In particular, we have $g(B,B) = - c^2$. Let $\kappa^2=g(T,T)$, we can also compute that $\mu = c \kappa$. We also define the unit vector $\widehat{T} = \kappa^{-1}T$. The null vector field $L$ can be represented as $L = \frac{\partial}{\partial t} +v-c\widehat{T}$.
\subsection{The geometry of the first null frame}
We refer to \cite{ChristodoulouShockFormation} and \cite{ChristodoulouMiao} for details of computations in this subsection.

We have three kinds of {\color{black}embeddings} $\Sigma_t\hookrightarrow \mathcal{D}$,  $S_{t,u} \hookrightarrow C_u $ and $S_{t,u} \hookrightarrow \Sigma_t$. We use $k$, $\chi$ and $\theta$ to denote the second fundamental forms of these {\color{black}embeddings} respectively:
\[2 c k = \overline{\mathcal{L}}_B g, \ \ 2 \chi = \slashed{\mathcal{L}}_L g, \ \ 2 \kappa \theta = \slashed{\mathcal{L}}_T g.\]
We define the torsion 1-forms $\zeta$ and $\eta$ on $S_{t,u}$ as
\[\zeta(Y) =  g(D_{Y} L, T), \ \ \eta(Y) =  -g(D_{Y} T, L),\]
where $Y$ is any vector field tangent to $S_{t,u}$. We also define the $1$-form $\slashed{\varepsilon}$ as $\kappa \slashed{\varepsilon}(Y)=k(Y,T)$.

Since the $S_{t,u}$'s are 1-dimensional circles, we can represent the tensors by functions. For the sake of simplicity, we use the same symbol to denote the following scalar functions:
\[\chi = \chi(\Xh,\Xh), \  \theta =  \theta(\Xh,\Xh), \ \slashed{k} = k(\Xh,\Xh), \ \zeta = \zeta(\Xh),\ \eta = \eta(\Xh), \ \slashed{\varepsilon}=\slashed{\varepsilon}(\Xh).\]
We also write $\slashed{g}=g(\frac{\partial}{\partial \vartheta},\frac{\partial}{\partial \vartheta})$ and we have $\chi=\frac{1}{2}\slashed{g}^{-1}L(\slashed{g})$ or equivalently $
L(\slashed{g})=2 \slashed{g} \cdot \chi$. These quantities are related by
\[\chi =  c(\slashed{k} - \theta), \ \ \eta = \zeta + \Xh(\mu), \ \ \zeta=\kappa\big(c\slashed{\varepsilon}-\Xh(c)\big).\]

We have the following propagation equation for $\kappa$:
\begin{equation}\label{structure eq 1: L kappa}
L \kappa = m' + e' \kappa 
\end{equation}
where 
\begin{equation}\label{eq: m' e'}
m' = -\frac{\gamma+1}{\gamma-1}Tc, \ \ e'= c^{-1}\widehat{T}^i\cdot L (\psi_i).
\end{equation} 
The repeated indices indicate the summation over $i=1,2$ and $\Th^i$ is the $i$-th component of $\Th$ in the Cartesian coordinates, i.e., $\Th=\sum_{i=1}^2\Th^i \frac{\partial}{\partial x_i}$. There is another way to write $L\kappa$ as
\begin{equation}\label{structure eq 1: L kappa 2nd form}
L \kappa = -Tc -\Th^jT(\psi_j) = -T(v^1 + c) - (\Th^1 + 1)T(\psi_1) - \Th^2T(\psi_2){\color{black}.}
\end{equation}

Since $\Th(\slashed{g}) = 2\slashed{g}\theta$, we have
\begin{equation}\label{defining eq of theta and chi}
	\theta = \Xh^2\Xh(\Xh^1) - \Xh^1\Xh(\Xh^2), \ \ \chi  = -\Xh^i\Xh(\psi_i) - c\Xh^2\Xh(\Xh^1) + c\Xh^1\Xh(\Xh^2).
\end{equation}

We then introduce the left-going null vector field $\underline{L} = c^{-1} \kappa L + 2T$. Hence, we obtain {\bf the first null frame} $(L,\Lb,\Xh)$. This also leads to the second fundamental form $\underline{\chi}$ which is defined by $2 \underline{\chi} = \slashed{\mathcal{L}}_{\underline{L}} g$. We will also work with its scalar version $\underline{\chi} =\underline{\chi}(\Xh,\Xh)$. It can also be computed by $\underline{\chi} =  \kappa(\slashed{k} + \theta)$.

The above geometric quantities can be computed in terms of $\mu$, $\chi$ and $\psi_i$'s as follows:
\begin{equation}\label{structure quantities: connection coefficients}
\begin{cases}
&c k_{ij} = \frac{1}{2}\big(\partial_j v^i + \partial_i v^j\big) = - \partial_i \psi_j = - \partial_j \psi_i, \ \ \slashed{\varepsilon}=-\mu^{-1}\Xh^i T^j \partial_i \psi_j,\\
&\zeta =-\kappa\big(\widehat{T}^j\cdot \Xh(\psi_j) + \Xh(c)\big), \ \ \eta=-\kappa \widehat{T}^j\cdot \Xh(\psi_j)+c\Xh(\kappa),\\
&\chib=2\kappa \slashed{k}-\kappa\alpha^{-1}\chi=c^{-1}\kappa\big(-2\Xh^j\cdot \Xh(\psi_j)-\chi\big).
\end{cases}
\end{equation}

In the first null frame, the Levi-Civita connection $D$ of $g$ can be expressed as:
\begin{equation}\label{structure eq: connection D}
\begin{cases}
D_L L &= \mu^{-1}(L\mu)\cdot L, \ \ D_{\underline{L}} L = -L(c^{-1}\kappa) \cdot L + 2 \eta \cdot \Xh, \ \ D_{L} \underline{L} = -2  \zeta \cdot \Xh,\\
D_{\underline{L}} \underline{L} &= \big(\mu^{-1}\underline{L}\mu + L(c^{-1}\kappa)\big)\underline{L} - 2\mu  \Xh(c^{-1}\kappa)\cdot \Xh,\\
D_\Xh L &= -\mu^{-1} \zeta \cdot  L + \chi \cdot \Xh, \ \ D_\Xh \underline{L} = \mu^{-1} \eta \cdot \underline{L} +  \underline{\chi} \cdot \Xh, \ \ D_L \Xh = -\mu^{-1} \zeta \cdot  L , \\
D_\Xh \Xh &= \frac{1}{2}\mu^{-1} \underline{\chi} \cdot L +  \frac{1}{2}\mu^{-1}\chi \cdot \underline{L} .
\end{cases}
\end{equation}
We also collect the following formulas of the Lie brackets for future uses:
\begin{equation}\label{eq:commutator formulas}
\begin{cases}
&[L,\Xh]=-\chi\cdot \Xh,  \ \ [L,\Lb]=- 2(\zeta + \eta)\Xh+L(c^{-1}\kappa)L, \\
&[L,T]=- (\zeta + \eta)\Xh=-\left(\kappa\left(2c\Xh^i\cdot T(\psi_i)+2\Xh(c)\right)-\Xh(\mu)\right)\Xh,\\
&[T,\Xh]=-\kappa\theta\cdot \Xh, \ \ [\Lb,\Xh]=-\chib\cdot\Xh-\Xh(c^{-2}\mu)L.
\end{cases}
\end{equation}
The wave operator $\Box_g$ can also be decomposed with respect to the first null frame:
\begin{equation}\label{eq:wave operator in null frame}
 \Box_{g} (f) = {\color{black}\Xh^2(f)} - \mu^{-1}L\big(\underline{L}(f)\big) - \mu^{-1}\big(\frac{1}{2}\chi\cdot \underline{L}(f) +\frac{1}{2}\chib\cdot L(f)\big)- 2 \mu^{-1}\zeta \cdot \Xh(f){\color{black}.}
\end{equation}
The null second fundamental form $\chi$ satisfies the following propagation equation
\begin{equation*}
L (\chi) = \mu^{-1} (L\mu) \chi -\chi^2 + R(\Xh,L,\Xh,L),
\end{equation*}
where $R$ is the curvature tensor of $g$. We define the two tensor $w_{\mu\nu}=\partial_{\mu}\psi_{\nu}$ in Cartesian coordinates. The above equation can be expressed explicitly as
\begin{equation}\label{structure eq 2: L chi}
\begin{split}
L (\chi)&=-\frac{\gamma+1}{2}\Xh^2(h)+e\chi -\chi^2 +c^{-2}\left(\frac{\gamma+1}{2}\right)^2 \Xh(h)^2 \\
&\ \ -c^{-2}\left(w(\Xh,\Xh)w(L,L)-w(\Xh,L)^2\right)-(\gamma+1)c^{-2}\left(\Xh(h)w(L,\Xh)-\frac{1}{2}L(h)w(\Xh,\Xh)\right),
\end{split}
\end{equation}
{\color{black} where the function $e$ is defined as $e = \frac{\gamma-1}{2}c^{-2}L(h)+c^{-1}\Th^i \cdot L(\psi_i)$.}

In Cartesian coordinates, we have $\Xh=\Xh^i \partial_i$, $\Th=\Th^i \partial_i$ and $L=\partial_0+L^i \partial_i$. Since $\Xh$ is perpendicular to $\Th$, we know that $\Th^1=-\Xh^2$ and $\Th^2=\Xh^1$. For  $k=1,2$, we have
\begin{equation}\label{structure eq 3: L T on Ti Xi Li}
\begin{cases}
&L(L^{k}) =-\left(L(c) +\widehat{T}^i\cdot L(\psi_i)\right) \widehat{T}^k -\frac{\gamma+1}{2}\Xh(h) \Xh^k,\\
&L(\widehat{T}^{k})= -\kappa^{-1}\zeta \cdot \Xh^k=\left(\widehat{T}^j\cdot \Xh(\psi_j) + \Xh(c)\right)\Xh^k,\\
&T(L^{i}) = L(\kappa) \widehat{T}^i + \eta\cdot \Xh^i=L(\kappa) \widehat{T}^i + \left(-\kappa\left(\widehat{T}^j\cdot \Xh(\psi_j) + \Xh(c)\right)+\Xh(\mu)\right) \Xh^i,\\
&T(\widehat{T}^{i})= -\Xh(\kappa) \Xh^i.
\end{cases}
\end{equation}

\subsection{The geometry of the second null frame}
Using the Cartesian coordinates, we define
\[\Xr=\partial_2, \ \Trh=-\partial_1,  \ \Lr=\partial_t+v-c\Trh=\partial_t+(v^1+c)\partial_1+v^2\partial_2.\]
We also introduce 
\[\kappar=t, \ \ \Tr=\kappar \Trh, \ \ \mur = c\kappar.\]
It is straightforward to check that
\[g(\Lr,\Tr)=-\mur, \ g(\Lr,\Lr)=g(\Lr,\Xr)=0,\  g(\Xr,\Xr)=1, \ g(\Tr,\Tr)=\kappar^2,  {\color{black}\ g(\Tr,\Xr)=0.}\]
We define $\Lbr= c^{-1} \kappar \Lr + 2\Tr$. Hence, we obtain {\bf the second null frame}
$(\Lr, \Lbr, \Xr)$. One can check that
\[g(\Lr,\Lbr)=-2\mur, \ g(\Lr,\Lr)=g(\Lbr,\Lbr)=g(\Lbr,\Xr)=g(\Lr,\Xr)=0,\  g(\Xr,\Xr)=1.\]
We introduce functions $y$, $\yr$, $z$ and $\zr$ as follows:
\[y=\Xr(v^1+c), \ \ \yr = \frac{y}{\kappar},  \ \ z=1+\Tr(v^1+c), \ \ \zr = \frac{z}{\kappar}.\]
These functions play a central role {\color{black}in the characterization of} the rarefaction waves at the initial singularity. The connection coefficients with respect to the new frame can be computed in terms of these functions. We list the definitions and formulas as follows:
{\color{black}
\begin{equation*}
\begin{cases}
&\chir:=g(D_{\Xr} \Lr,\Xr)=-\Xr(\psi_2), \ \ \chibr :=  g(D_{\Xr} \Lbr, \Xr)=c^{-1}\kappar \chir=-c^{-1}\kappar\Xr(\psi_2), \\ 
&\zetar:=  g(D_{\Xr} \Lr, \Tr)=-\kappar y, \ \ \etar:=  -g(D_{\Xr} \Tr, \Lr)=\zetar+\Xr(\mur)=ck(\Tr,\Xr)=-\Tr(\psi_2), \\
&\deltasr:=g(D_{\Lr} \Lr,\Xr)=cy,\ \ \deltar:=g(D_{\Lr} \Lr,\Tr)=-\Lr(\mur)+cz.
\end{cases}
\end{equation*}
}
We can express the Levi-Civita connection in the second null frame as follows:
\begin{equation}
\begin{cases}
D_{\Lr} {\Lr} &= -\mur^{-1} \deltar\cdot \Lr+\deltasr \cdot\Xr
, \ \ D_{\Lbr} \Lr = c^{-2}\left(\deltar+2\Lr(c) \kappar \right)\cdot \Lr+\left(c^{-1}\kappar\deltasr +2\etar \right)\cdot \Xr,
\\
D_{\Lr} \Lbr &= -\zetar \cdot \Xr+\zr\cdot \Lbr,\ \ D_{\Lbr } \Lbr  =  c^{-1}\kappar(2\etar-\zetar)  \cdot \Xr+\left(\Lr(c^{-1}\kappar)-c^{-1}z+\mur^{-1}\Lbr(\mur)\right)\Lbr,\\
D_{\Xr} \Lr & = -\mur^{-1} \zetar \cdot  \Lr +  \chir \cdot \Xr, \ \ 
D_{\Lr} \Xr  =-\frac{1}{2}\mur^{-1} \zetar \cdot  \Lr+\frac{1}{2}\yr \cdot \Lbr,\ \
D_{\Xr} \Lbr  =  \chibr \cdot \Xr+\mur^{-1} \etar \cdot \Lbr , \\ 
D_{\Lbr} \Xr &= -\left[\frac{1}{2}c^{-2}\kappar y+\Xr(c^{-1}\kappar)\right]\Lr+\left(\frac{1}{2}c^{-1}y+\mur^{-1} \etar\right)\Lbr, \ \
D_{\Xr} \Xr = \frac{1}{2}\mur^{-1} \chibr \cdot \Lr +  \frac{1}{2}\mur^{-1}\chir \cdot \Lbr .
\end{cases}
\end{equation}
We also compute the commutators as follows:
\begin{equation*}
\begin{cases}
&[\Tr, \Xr]=0, \ \ [\Lr, \Xr]=\yr\cdot \Tr-\chir\cdot\Xr, \ \ [\Lr, \Tr]=\zr\cdot \Tr-\etar\Xr,\\
&[\Lbr, \Xr]=-\left(\frac{1}{2}c^{-2}\kappar y+\Xr(c^{-1}\kappar)\right)\Lr-\chibr\cdot\Xr+\frac{1}{2}c^{-1}y\cdot\Lbr, \\
& [\Lr, \Lbr]=\left(\Xr(c^{-1}\kappar)-c^{-1}z\right)\Lr-2\etar\cdot \Xr+\zr\cdot \Lbr.
\end{cases}
\end{equation*}
Finally, we define the set $\Lambda=\{\yr, \zr, \chir,\etar\}$. The bounds on the objects of $\Lambda$ will be the key ingredients in the energy estimates.



\subsection{Riemann invariants and Euler equations in the diagonal form}
The acoustical geometry allows one to diagonalize the Euler equations \eqref{eq: Euler in Euler coordinates} in a very concise way. Indeed, it is straightforward to show that the Euler equations are equivalent to
\begin{equation}\label{Euler equations:form 1}
\begin{cases}
L (\frac{2}{\gamma-1}c) &= -c \widehat{T}(\frac{2}{\gamma-1}c)+c \widehat{T}(\psi_k)\widehat{T}^k +c\Xh(\psi_k)\Xh^k,\\
L (\psi_1) &= -c \widehat{T}(\psi_1)+\frac{2}{\gamma-1}c \widehat{T}(c)\widehat{T}^1+\frac{2}{\gamma-1}c\Xh(c)\Xh^1,\\
L (\psi_2) &= -c \widehat{T}(\psi_2)+\frac{2}{\gamma-1}c \widehat{T}(c)\widehat{T}^2+\frac{2}{\gamma-1}c\Xh(c)\Xh^2.
\end{cases}
\end{equation}
Following Riemann \cite{Riemann}, we define the Riemann invariants with respect to the flat initial curve:
\begin{equation}\label{def: Riemann invariants}
w=\frac{1}{2}\left(\frac{2}{\gamma-1}c+\psi_1\right),  \ \ \wb=\frac{1}{2}\left(\frac{2}{\gamma-1}c-\psi_1\right).
\end{equation}
Therefore, we have
\begin{equation}\label{Euler equations:form 2}
\begin{cases}
L (\wb) &= -c \widehat{T}(\wb)(\widehat{T}^1+1)+\frac{1}{2}c \widehat{T}(\psi_2)\widehat{T}^2 +\frac{1}{2}c \Xh(\psi_2)\Xh^2-c\Xh(\wb)\Xh^1,\\
L (w) &= 	c \widehat{T}(w)(\widehat{T}^1-1)	+\frac{1}{2}c \widehat{T}(\psi_2)\widehat{T}^2 +c\Xh(w)\Xh^1+\frac{1}{2}c\Xh(\psi_2)\Xh^2,\\
L (\psi_2) &= -c \widehat{T}(\psi_2)+c\widehat{T}( w+\wb)\widehat{T}^2+c\Xh( w+\wb)
\Xh^2.
\end{cases}
\end{equation}
 Let $A=\left( {\begin{array}{ccc}
-( \widehat{T}^1+1)& 0&   \frac{1}{2}\widehat{T}^2 \\
   0 &   \widehat{T}^1-1& \frac{1}{2}\widehat{T}^2\\
     \widehat{T}^2 &  \widehat{T}^2&-1
  \end{array} } \right)$, $B=\left( {\begin{array}{ccc}
-\Xh^1& 0&   \frac{1}{2}\Xh^2 \\
   0 &   \Xh^1 & \frac{1}{2}\Xh^2\\
    \Xh^2 & \Xh^2&0
  \end{array} } \right)$ and $V=\left( {\begin{array}{c}
  \wb \\
     w\\
     \psi_2
  \end{array} } \right)$, {\color{black}\eqref{Euler equations:form 2}} is equivalent to
  \[L(V)=c A \cdot \widehat{T}(V)+c B\cdot \Xh(V).\]
There is a remarkable feature of the  matrix $A$: since $(\widehat{T}^1)^2+(\widehat{T}^2)^2=1$, $A$ has three eigenvalues $0$, $-1$ and $-2$ regardless the values of $\widehat{T}^1$ and $\widehat{T}^2$. This can be proved by a straightforward computation. We choose  three eigenvectors $\frac{1}{2}\left( {\begin{array}{c}
  1-\widehat{T}^1\\
   1+\widehat{T}^1 \\
    2 \widehat{T}^2
  \end{array} } \right)$, $\frac{1}{2}\left( {\begin{array}{c}
  \widehat{T}^2\\
   -\widehat{T}^2 \\
     2\widehat{T}^1
  \end{array} } \right)$ and $\frac{1}{2} \left( {\begin{array}{c}
 1+ \widehat{T}^1\\
  1 -\widehat{T}^1 \\
     -2\widehat{T}^2
  \end{array} } \right)$ corresponding to the eigenvalues $0$, $-1$ and $-2$ respectively. 
 Using these eigenvectors as columns, we can construct  $P=\left( {\begin{array}{ccc}
\frac{1- \widehat{T}^1}{2}& \frac{\widehat{T}^2}{2}&  \frac{1+ \widehat{T}^1}{2} \\
    \frac{1+ \widehat{T}^1}{2}&   - \frac{\widehat{T}^2}{2}& \frac{1- \widehat{T}^1}{2}\\
  \widehat{T}^2 &\widehat{T}^1&-\widehat{T}^2
  \end{array} } \right)$. 
To diagonalize \eqref{Euler equations:form 2} in the $L$-direction, we define $U=P^{-1} \cdot V$ and we have
 \[LU=c\Lambda\cdot \widehat{T}(U)+cP^{-1} B P \cdot \Xh(U)+\left(c\Lambda P^{-1}\widehat{T}(P)-P^{-1}L(P)+cP^{-1}B \Xh(P)\right)\cdot U,\]
where $\Lambda$ is the diagonal matrix with $0,-1,-2$ on the diagonals. Since $\widehat{T}=\kappa T$, we finally obtain:
 \begin{equation}\label{eq:Euler in diagonalized form}
 LU=\frac{c}{\kappa}\Lambda\cdot {T}(U)+cP^{-1} B P \cdot \Xh(U)+\left( \frac{c}{\kappa}\Lambda P^{-1} {T}(P)-P^{-1}L(P)+cP^{-1}B \Xh(P)\right)\cdot U.
\end{equation}
In an explicit manner, we can represent $U$ as
\begin{equation}\label{eq:explicit formula for U}
\left( {\begin{array}{c}
  U^{(0)} \\
     U^{(-1)}\\
     U^{(-2)}
  \end{array} } \right)=\left( {\begin{array}{c}
\frac{1-\widehat{T}^1}{2}\wb+\frac{1+\widehat{T}^1}{2}w+  \frac{\widehat{T}^2}{2}\psi_2 \\
   \widehat{T}^2 \wb - \widehat{T}^2 w+\widehat{T}^1\psi_2\\
  \frac{1+\widehat{T}^1}{2}\wb+ \frac{1-\widehat{T}^1}{2}w-\frac{\widehat{T}^2}{2}\psi_2
  \end{array} } \right) \ \Leftrightarrow  \  \begin{cases}
   \wb&=\frac{1-\Th^1}{2}U^{(0)}+  \frac{\Th^2}{2} U^{(-1)}+ \frac{1+\Th^1}{2}U^{(-2)},\\
   w&=  \frac{1-\Th^1}{2}U^{(-2)}+\frac{1+\Th^1}{2}U^{(0)}-  \frac{\Th^2}{2} U^{(-1)},\\
   \psi_2&= \Th^1 U^{(-1)}+\Th^2U^{(0)}-\Th^2 U^{(-2)}{\color{black}.}
   \end{cases}
\end{equation}
{\color{black}where $U^{(\lambda)}$ is the corresponding component for the eigenvalue $\lambda$.}

 We can also diagonalize the Euler equations using the second null frame. In fact, similar to \eqref{Euler equations:form 1}, we have
\begin{equation}\label{Euler equations:form 1 ringed}
\begin{cases}
\Lr (\frac{2}{\gamma-1}c) &= -c \Trh(\frac{2}{\gamma-1}c)-c \Trh(\psi_1) +c\Xr(\psi_2),\\
\Lr (\psi_1) &= -c \Trh(\psi_1)-c \Trh\left(\frac{2}{\gamma-1}c\right),\\
\Lr (\psi_2) &= -c \Trh(\psi_2)+c\Xr\left(\frac{2}{\gamma-1}c\right).
\end{cases}
\end{equation}
In terms of Riemann invariants, \eqref{Euler equations:form 1 ringed} reduces to a simple form
\begin{equation}\label{Euler equations:form 3}
\begin{cases}
\Lr (\wb) &= \frac{1}{2}c \Xr(\psi_2),\\
\Lr (w) &= 	-2c \Trh(w)+\frac{1}{2}c\Xr(\psi_2),\\
\Lr (\psi_2) &= -c \Trh(\psi_2)+c\Xr( w+\wb).
\end{cases}
\end{equation}
Therefore, for $A=\left( {\begin{array}{ccc}
0& 0&   0\\
   0 &   -2& 0\\
    0 &  0&-1
  \end{array} } \right)$, $B=\left( {\begin{array}{ccc}
0& 0&   \frac{1}{2}  \\
   0 &   0 & \frac{1}{2} \\
    1 & 1&0
  \end{array} } \right)$ and $V=\left( {\begin{array}{c}
  \wb \\
     w\\
     \psi_2
  \end{array} } \right)$, \eqref{Euler equations:form 3} is equivalent to
    \[\Lr(V)=c A \cdot \Trh(V)+c B\cdot \Xr(V).\]
We then take $P=\left( {\begin{array}{ccc}
1 & 0 &  0 \\
   0&   0& 1\\
  0 &-1&0
  \end{array} } \right)$
 and   $U=P^{-1} \cdot V$. Hence, we diagonalize the Euler equations with respect to the $\Lr$-direction as follows:
  \[\Lr U=c\Lambda\cdot \Trh(U)+cP^{-1} B P \cdot \Xr(U).\]
In terms of the Riemann invariants, we have
\begin{equation*}
U=\left( {\begin{array}{c}
  U^{(0)} \\
     U^{(-1)}\\
     U^{(-2)}
  \end{array} } \right)=\left( {\begin{array}{c}
 \wb \\
  -\psi_2\\
   w 
  \end{array} } \right) \ \ \Leftrightarrow \ \ \begin{cases}
   \wb&= U^{(0)},\\
   w&=   U^{(-2)}{\color{black},} \\
   \psi_2&= - U^{(-1)}{\color{black}.}
   \end{cases}
  \end{equation*}

\subsection{The classical 1-D rarefaction waves in geometric formulation}\label{section: 1D case}

We apply the previous geometric considerations to the 1-D rarefaction waves reviewed in \ref{subsection:1D-Riemann-problem}. The problem considered in this paper will be a multi-dimensional perturbation of this classical 1D picture. 

On the positive axis $x_1=x\geqslant 0$, we pose constant data $(v,c)\big|_{t=0}=(v_0,c_0)$. There exists a unique family of forward-facing centered rarefaction waves connected to the given data, with the explicit solution in \eqref{eq:1D-rarefaction-wave}.
Thus, the acoustical coordinate function $u$ and the null vector field $L$ are given by
{\color{black}
\[u-u_0=-(v+c)=-\frac{x}{t},   \ \ L=\partial_t+(v+c)\partial_x.\]
where $u_0 := -(v_0+c_0)$, ensuring $u=0$ on $C_0$.}
We also have
\begin{align*}
\kappa=t, \ \ \mu=c t,   \ \ T=-t\partial_x,\ \  L\mu=c, \ \ L c =0, \ \ L v=0, \ \ T u =1.
\end{align*}
In particular, on the time slice $\Sigma_\delta$, we have	
\begin{align*}
{\color{black}u-u_0}=-(v+c)=-\frac{x}{\delta}, \ \ \kappa=\delta, \ \ \mu\big|_{t=\delta}&=c \delta,   \ \ T\big|_{t=\delta}=-\delta\partial_x.
\end{align*}

The solution $(v,c)$ is {\color{black}piece-wise smooth for $t > 0$.} It is merely continuous across the line defined by $u=-(v_0+c_0)$ and $t>0$. We emphasize that the solution is not continuous at the {\color{black}singularity} $(t,x)=(0,0)$.  We also notice that on the time slice $\Sigma_\delta$, although the solution is not smooth at $x=\delta(v_0+c_0)$, all possible $L$-derivatives of $(u,c)$ are the same (in fact vanish) for $x<\delta(v_0+c_0)$ and $x>\delta(v_0+c_0)$ at this point.

In terms of  $U^{(0)}$, $U^{(-1)}$ and $U^{(-2)}$, we have
 \begin{align*}
 &U^{(0)}=\frac{1}{2}\left[\frac{4}{\gamma+1}\frac{x}{t}+\frac{\gamma-3}{\gamma-1}\left(\frac{\gamma-1}{\gamma+1}v_0-\frac{2}{\gamma+1}c_0\right)\right],\  U^{(-1)}=0, \ U^{(-2)}=\frac{\gamma+1}{2(\gamma-1)}\left(\frac{\gamma-1}{\gamma+1}v_0-\frac{2}{\gamma+1}c_0\right).
 \end{align*}
 In particular, we have $T\left(U^{(0)}\right)=-\frac{2}{\gamma+1}$. These computations are illuminating for the construction of initial data in higher dimensional situations.

\section{Energy methods and the main theorem}\label{section:energy-method-main-theorem}

\subsection{Multipliers, commutators and their  deformation tensors}\label{section:commutators and their  deformation tensors}

Given a vector field $Z$ on $\mathcal{D}$, its deformation tensor with respect to $g$ is defined as $\,{}^{(Z)} {\pi}_{\mu\nu}= {D}_\mu Z_\nu+ {D}_\nu Z_\mu$.  We will use two types of vector fields. The first set $\mathscr{J}$ is call the set of \emph{multiplier vector fields}; The second type of sets $\mathscr{Z}$  and $\mathring{\mathscr{Z}}$ are called sets of \emph{commutation vector fields}. They are defined as follows:
\[\mathscr{J}=\{\Lh, \Lb\}, \ \ \mathscr{Z}=\{T, \Xh\}, \ \ \mathring{\mathscr{Z}}=\{\mathring{T}, \mathring{X}\},\]
where $\Lh = c^{-1}\kappa L$.
The null components of the deformation tenors of the vectors from $\mathscr{J}$ and $\mathscr{Z}$ are listed in the following tables:
\begin{table}[ht]\label{table: deformation tensors 1}
\centering
\begin{tabular}{c c c c c}
  & $\Lh$ & $\Lb$ & $\Xh$ & $T$\\ [0.5ex] 
\hline
${\pi}_{LL}$     &     $0$ &$0$   &$0$ & $0$\\
$ {\pi}_{\Lb\Lb}$ &     $-8\mu T\left(c^{-1}\kappa\right)$ & $0$&$4\mu \Xh(c^{-1}\kappa)$ & $4\mu T(c^{-1}\kappa)$\\
$ {\pi}_{L\Lb}$ & $-4\kappa L(\kappa )$ & $-4\left(\kappa L(\kappa) + T(\mu)\right)$ & $2(\zeta-\eta)$ & $-2T\left(\mu\right)$\\
$ {\pi}_{L\Xh}$ & $0$ & $-2(\zeta+\eta)$ & $-\chi$ & $-(\zeta+\eta)$\\
$ {\pi}_{\Lb\Xh}$ & $2 \big(c^{-1}\kappa(\zeta+\eta)- \mu \Xh\left(c^{-1}\kappa\right)\big)$ & $-2\mu  \Xh(c^{-1}\kappa)$ & $-\chib$ & $-c^{-1}\kappa(\zeta+\eta)$ \\ [1ex]
$ {\pi}_{\Xh\Xh}$ & $2c^{-1}\kappa \chi$ &$2\chib$ & $0$ & $2\kappa\theta$\\ [1ex]
\hline
\end{tabular}
\end{table}

\begin{table}[ht]\label{table: deformation tensors for Zr}
\centering
\begin{tabular}{c c c c c}
  & $\Xr$ & $\Tr$ \\ [0.5ex] 
\hline
$ {\pi}_{\Lr\Lr}$     &    $-2 cy$ &$-2cz  $    \\
$ {\pi}_{\Lbr\Lbr}$ &     $2  c^{-1}\kappar^2 \big(y-2\Xr(c)\big)$ & $2  c^{-1}\kappar^2\big(z-2\Tr(c)\big)$ \\
$ {\pi}_{\Lr\Lbr}$ & $-2 \kappar \Xr(c) $ & $-2 \kappar \Tr (c) $ \\
${\pi}_{\Lr\Xr}$ & $ -\chir$ & $ -\etar $  \\
$ {\pi}_{\Lbr\Xr}$ & $  -c^{-1}\kappar\chir$ & $ -c^{-1}\kappar\etar$   \\ [1ex]
$ {\pi}_{\Xr\Xr}$ & $0$ &$0$  \\ [1ex]
\hline
\end{tabular}
\end{table}

A \emph{multi-index} $\alpha$ is a string of numbers $\alpha=(i_1,i_2,\cdots,i_n)$ with {\color{black}$i_j=0$ or $1$} for $1\leqslant j\leqslant n$. The \emph{length} of the multi-index $\alpha$ is defined as $|\alpha|=n$. {\color{black}Given a multi-index $\alpha$ and a smooth function $\psi$,} the shorthand notation $Z^\alpha(\psi)$ and  $\Zr^\alpha(\psi)$ denote the following functions:
\[Z^\alpha(\psi)=Z_{(i_N)}\left(Z_{(i_{N-1})}\left(\cdots \left(Z_{(i_1)}(\psi)\right)\cdots\right)\right), \ \Zr^\alpha(\psi)=Z_{(i_N)}\big(\cdots \big(\Zr_{(i_1)}(\psi)\big)\cdots \big),\]
where $Z_{(0)}=\Xh$, $Z_{(1)}=T$, $\Zr_{(0)}=\Xr$ and $\Zr_{(1)}=\Tr$. If $\psi\in \{w,\wb,\psi_2\}$ and $|\alpha|=n$, we also use  $\Psi_n$ to denote $Z^\alpha(\psi)$ and use  $\Psir_n$ to denote $\Zr^\alpha(\psi)$. We also use the notation $Y(\psi)$ where $Y\in \mathscr{Y}$ and $\mathscr{Y}=\{L,\Lb, \Xh\}$.

We introduce the notion of {\bf order} which counts the number of derivatives.  For $U$ from the set $\{w,\wb,\psi_2, c,c^{-1}, \mu, \kappa\}$, we require that the order of $U$ is zero, denoted by ${\rm ord}(U)=0$. For $V$ from the set $\{\eta,\zeta,\chi, \chib, k,\theta,\etar,\zetar,\chir, \chibr, \deltasr,\deltar, y,z, \yr,\zr\}$, we require that ${\rm ord}(V)=1$. For all {\color{black}$Z\in \mathscr{Y}\cup \mathscr{Z}\cup\mathring{\mathscr{Z}}$,} for all $U$ with a well-defined order, we require that ${\rm ord}\left(Z(U)\right)={\rm ord}\left(U\right)+1$. We also define that ${\rm ord}\left(U\cdot V\right)={\rm ord}\left(U\pm V\right)=\max\left({\rm ord}\left(U\right),{\rm ord}\left(U\right)\right)$.

\subsection{Energy identities}
We also refer to \cite{ChristodoulouShockFormation} and \cite{ChristodoulouMiao} for details of computations in this subsection.

\subsubsection{Energy identities for linear waves}
Let $\varrho$ be a source function. We derive energy identities for the linear wave equation:
\begin{equation}\label{model linear wave equation}
 \Box_{g}\psi = \varrho.
\end{equation}
The energy momentum tensor associated to $\psi$ is defined as $\mathbb{T}=d\psi \otimes d\psi-\frac{1}{2}g(D\psi, D\psi)g$.
In the first {\color{black}null frame $(L, \Lb, \Xh)$,} the components of ${\mathbb{T}}_{\mu\nu}$ are listed as follows:
\begin{equation}\label{energy momentum tensor in null frame}
 \begin{split}
  \mathbb{T}_{LL} &= (L\psi)^2,\ \mathbb{T}_{\Lb\Lb} = (\Lb\psi)^2, \ \mathbb{T}_{\Lb L} = \mu (\Xh\psi)^2, \ \mathbb{T}_{L\Xh} = L\psi \cdot \Xh(\psi),\\
\mathbb{T}_{\Lb \Xh} &= \Lb \psi \cdot \Xh (\psi),\ \mathbb{T}_{\Xh\Xh} = \frac{1}{2}(\Xh \psi)^2+\frac{1}{2\mu}L\psi \Lb \psi .
 \end{split}
\end{equation}

The divergence of the energy momentum tensor ${\mathbb{T}}_{\mu\nu}$ is $D^\mu \mathbb{T}_{\mu\nu} = \varrho\cdot \partial_\nu \psi$. For a vector multiplier vector field $J\in \mathscr{J}$, its energy current field is defined as $
P{}^\mu = - {\mathbb{T}}^{\mu}{}_{\nu}J{}^\nu$.
Therefore,
\begin{equation}\label{eq:Divergence of current}
 {D}_\mu  {P}{}^\mu= {Q}= -\varrho\cdot J( \psi)-\frac{1}{2} {\mathbb{T}}^{\mu\nu} \,^{(J)}{\pi}_{\mu\nu}.
\end{equation}
For $(t,u)\in [\delta,t^*]\times [0,u^*]$ and a smooth function $f$ defined on $\mathcal{D}(t,u)$, we use the following notations to {\color{black}denote the integrals:}
\begin{align*}
&\int_{\Sigma_t^u} f=\int_{0}^{u}\!\!\!\int_{0}^{2\pi}\!\!\! f(t,u',\vartheta')  \sqrt{\slashed{g}}du'd\vartheta', \ \ \int_{C_u^t} f= \int_{0}^{t}\!\!\! \int_{0}^{2\pi} \!\!\!f(t',u,\vartheta') \sqrt{\slashed{g}}dt'd\vartheta', \\
&\int_{\mathcal{D}(t,u)} f=\int_{0}^{u}\!\!\!\int_{0}^{t}\!\!\!\int_{0}^{2\pi}\!\!\! f(t',u',\vartheta')  \sqrt{\slashed{g}}dt'du'd\vartheta'.
\end{align*}
The $L^2$ norms are defined using these integrals, i.e., $\displaystyle\|f\|_{L^2(\Sigma_t^u)}=\sqrt{\int_{\Sigma_t^u} |f|^2}$ and $\displaystyle\|f\|_{L^2(C_u^t)}= \sqrt{\int_{C_u^t} |f|^2}$.

We have two choices for $J\in \mathscr{J}$. This leads to the following two energy identities:

\begin{itemize}

\item[Case 1)] $J= \Lh$. We define
\begin{align*}\mathcal{E}(\psi)(t,u)&=\frac{1}{2}\int_{\Sigma_t^u}c^{-1}\kappa \left(c^{-1}\kappa (L\psi)^2+\mu (\Xh\psi)^2\right),\ \ \mathcal{F}(\psi)(t,u)=\int_{C_u^t}  c^{-1}\kappa(L\psi)^2.
\end{align*}
We integrate  \eqref{eq:Divergence of current} over $\mathcal{D}(t,u)$ to derive
\begin{equation}\label{identity: energy with Lh}
\mathcal{E}(\psi)(t,u)+\mathcal{F}(\psi)(t,u)={\color{black}\mathcal{E}(\psi)(0,u) + \mathcal{F}(\psi)(t,0)}+\int_{\mathcal{D}(t,u)} Q,
\end{equation}
where
\begin{align*}
\int_{\mathcal{D}(t,u)} Q&=\underbrace{-\int_{\mathcal{D}(t,u)} \mu\varrho\cdot \Lh\psi}_{Q_0}   + \underbrace{\int_{\mathcal{D}(t,u)}T (c^{-1}\kappa)  (L\psi)^2}_{Q_1}  +\underbrace{\int_{\mathcal{D}(t,u)} \frac{1}{2}L(\kappa^2)(\Xh\psi)^2}_{Q_2} \\
&\ \ +\underbrace{\int_{\mathcal{D}(t,u)} \left(c^{-1}\kappa(\zeta+\eta)-\mu \Xh(c^{-1}\kappa)\right)L\psi\cdot \Xh\psi}_{Q_3} \underbrace{ -\int_{\mathcal{D}(t,u)}  \frac{\kappa^2\chi}{2}(\Xh \psi)^2 +\frac{c^{-1}\kappa \chi}{2}L\psi\cdot \Lb\psi}_{Q_4}.
\end{align*}

\item[Case 2)] $J=\Lb$. We define
\[\Eb(\psi)(t,u)=\frac{1}{2}\int_{\Sigma_t^u} (\Lb\psi)^2+\kappa^2 (\Xh \psi)^2, \ \ \Fb(\psi)(t,u)=\int_{C_u^t}c\kappa  (\Xh \psi)^2.\]
We integrate  \eqref{eq:Divergence of current} over $\mathcal{D}(t,u)$ to derive
\begin{equation}\label{identity: energy with Lb}
\Eb(\psi)(t,u)+\Fb(\psi)(t,u)={\color{black}\Eb(\psi)(0,u)+\Fb(\psi)(t,0)}+\int_{\mathcal{D}(t,u)} \underline{Q},
\end{equation}
where
\begin{align*}
\int_{\mathcal{D}(t,u)} \underline{Q}&=\underbrace{-\int_{\mathcal{D}(t,u)} \mu \varrho\cdot\Lb\psi}_{\underline{Q}_0}  +\underbrace{\int_{\mathcal{D}(t,u)} \frac{1}{2} \left( \mu  L(c^{-1}\kappa)+ \Lb(c^{-1}\kappa ) \right)(\Xh\psi)^2}_{\underline{Q}_1}  \underbrace{-\int_{\mathcal{D}(t,u)} (\zeta+\eta) \Lb\psi\cdot \Xh\psi}_{\underline{Q}_2}  \\
&
\underbrace{-\int_{\mathcal{D}(t,u)} \mu \Xh(c^{-1}\kappa) L\psi \cdot \Xh\psi}_{\underline{Q}_3} \underbrace{-\int_{\mathcal{D}(t,u)}\frac{1}{2}{\mu}\chib \left( (\Xh \psi)^2 + \frac{1}{\mu}L\psi\cdot \Lb\psi \right)}_{\underline{Q}_4}.
\end{align*}

\end{itemize}

\subsubsection{Energy identities for higher order terms}\label{section:Energy identities for higher order terms}
We shall commute derivatives with $\Box_{g}$ to derive higher order energy estimates.  Let $\psi$ be a smooth solution of $\Box \psi=\varrho$ and $Z$ be a vector field on $\mathcal{D}$. We have
\begin{equation}\label{commute a vector field with wave equation}
\Box \left( Z \psi \right) = Z (\varrho) + \frac{1}{2}\tr {}^{(Z)} {\pi} \cdot \varrho +   {\rm div}_{g} \left( {}^{(Z)} J\right)
\end{equation}
where the vector field ${}^{(Z)}J$ is defined by ${}^{(Z)}J^{\mu} = \left( {}^{(Z)} {\pi}^{\mu\nu} - \frac{1}{2}g^{\mu\nu} \tr_{g}{}^{(Z)} {\pi} \right)\partial_\nu \psi$ and the trace $\tr$ is taken with respect to $g$.

In view of \eqref{eq: conformal Euler}, we have the following equations for the Riemann invarints:
\begin{equation}\label{Main Wave equation: order 0}
\begin{cases}
\Box_{g}\wb&=-c^{-1}\left(g(D \wb,D \wb)+\frac{\gamma-3}{4}g(D \wb,D w)+\frac{\gamma+1}{4}g(D w,D w)+\frac{1}{2}g( D\psi_2, D\psi_2)\right){\color{black},}\\
\Box_{g} w&=-c^{-1}\left(\frac{\gamma+1}{4}g(D \wb,D \wb)+\frac{\gamma-3}{4}g(D \wb,D w)+g(D w,D w)+\frac{1}{2}g( D\psi_2, D\psi_2)\right){\color{black},}\\
\Box_{g}\psi_2&=-c^{-1}\left(\frac{3-\gamma}{4}g(D\wb, D \psi_2)+\frac{3-\gamma}{4}g(Dw, D \psi_2)\right).
\end{cases}
\end{equation}
where we use $D\log\Omega=\frac{3-\gamma}{2}c^{-1}(Dw + D\wb)$.
Let $\Psi_0=\Psir_0 \in \{\wb,w,\psi_2\}$ and $Z=\Zr\in\mathring{\mathscr{Z}}$, we then have the following recursion relations:
\begin{equation}\label{commute vector fields with equations}
\begin{split}
\Box_{ {g}} \Psir_n &= \varrho_n, \ \ \Psir_n = \Zr (\Psi_{n-1}),\ \ \varrho_n = \Zr (\varrho_{n-1}) + \frac{1}{2}\tr {}^{(\Zr)} {\pi} \cdot \varrho_{n-1} +  {\rm div}_{g} \left(	 {}^{(\Zr)} J_{n-1}\right){\color{black},}\\
{}^{(\Zr)}J_{n-1}^{\mu}&= \left( {}^{(\Zr)} {\pi}^{\mu\nu} - \frac{1}{2}g^{\mu\nu} \tr_{g}{}^{(\Zr)} {\pi} \right)\partial_\nu \Psir_{n-1}.
\end{split}
\end{equation}
We use $N_{_{\rm top}}$ to denote the total number of $\Zr$'s {\color{black}commuted with the equation.} Therefore, the sub-index of $\Psir_n$ satisfies $0\leqslant n\leqslant N_{_{\rm top}}$. {\color{black}We} also define $\Ninf=\Ntop-1$.

\begin{remark}\label{remark:null structure}
By using the above notations, we rewrite \eqref{Main Wave equation: order 0} as $\Box_g \Psir_0 =\varrho_0$ where $\Psir_0 \in \{\wb,w,\psi_2\}$. The source term $\varrho_0$ is a linear combination of the following terms
\[\big\{c^{-1}g(D f_1,Df_2)\big| f_1,f_2\in \{\wb,w,\psi_2\}\big\},\]
where
\begin{equation}\label{eq: decompose rho0 using Lr Lbr Xr}
g(D f_1,Df_2)=-\frac{1}{2\mur}\Lr (f_1)\Lbr (f_2)-\frac{1}{2\mur}\Lbr (f_1) \Lr (f_2)+\Xr(f_1)\Xr(f_2).
\end{equation}
We notice that the term $\Lb(\wb)\cdot \Lb(\wb)$ is absent in all possible $g(D f_1,Df_2)$'s in \eqref{eq: decompose rho0 using Lr Lbr Xr}. This is the null structure {\color{black}mentioned in b) of Section \ref{section:nonlinear estimates}}.
\end{remark}

We can apply the energy identities for $\Psir_n$. Thus, the integrands of the source terms, i.e., $Q_0$ and $\underline{Q}_0$, are given by
\[Q_0=-\int_{D(t,u)}  \frac{\mu}{\mur} \cdot \varrhor_n \cdot \Lh\Psi_n,  \ \ \underline{Q}_0=-\int_{D(t,u)} \frac{\mu}{\mur} \cdot \varrhor_n \cdot \Lb\Psi_n.\]
where $\varrhor_n=  \mur \varrho_n$. In view of \eqref{commute vector fields with equations}, we have the following recursion relations:
\begin{equation*}
\begin{split}
\varrhor_n&= \Zr ( \varrhor_{n-1} ) +{}^{(\Zr)} \delta \cdot \varrhor_{n-1} + {}^{(\Zr)} \sigma_{n-1}, \
 {}^{(\Zr)} \sigma_{n-1} = \mu\cdot {\rm div}_{g} \left(\, {}^{(\Zr)} J_{n-1}\right), \ {}^{(\Zr)} \delta =\frac{1}{2}\tr {}^{(\Zr)} {\pi}-\mu^{-1}\Zr(\mu).
\end{split}
\end{equation*}
We notice that, for $\Zr=\Xr$ or $\Tr$,  we have ${}^{(\Zr)} \delta =0$. Thus, $\varrhor_n = \Zr ( \varrhor_{n-1} ) + {}^{(\Zr)} \sigma_{n-1}$. According to Section 7.2 of  \cite{ChristodoulouMiao}, we decompose ${}^{(\Zr)} \sigma_{n-1}$ as follows:
\[
{}^{(\Zr)} \sigma_{n-1}= {}^{(\Zr)} \sigma'_{n-1,1}+{}^{(\Zr)} \sigma'_{n-1,2}+{}^{(\Zr)} \sigma_{n-1,3},
\]
where
\begin{equation*}
\begin{cases}
{}^{(\Zr)} \sigma'_{n-1,1}&=-\frac{1}{2}\left(\Lr(c^{-1}\kappar)+\chibr-c^{-1}z\right)\left(\pi_{\Lr\Xr}\Xr(\Psi_{n-1})-\frac{1}{2\mur}\pi_{\Lr\Lr}\Lbr(\Psi_{n-1})\right)\\
&\ \ -\frac{1}{2}(\chir-\zr)\left(\pi_{\Lbr\Xr}\Xr(\Psi_{n-1})-\frac{1}{2\mur}\pi_{\Lbr\Lbr}\Lr(\Psi_{n-1})\right),\\
{}^{(\Zr)} \sigma'_{n-1,2}&=-\frac{1}{2} \pi_{\Lbr \Xr} \cdot \Lr \Xr(\Psi_{n-1})-\frac{1}{2} \pi_{\Lr \Xr} \cdot \Lbr \Xr(\Psi_{n-1})+\frac{1}{4\mur}\left( \pi_{\Lbr \Lbr} \Lr \Lr(\Psi_{n-1})+\pi_{\Lr \Lr} \Lbr \Lbr(\Psi_{n-1})\right)\\
&\ \ +\frac{1}{2} \pi_{\Lr \Lbr} \cdot \Xr \Xr(\Psi_{n-1})- \frac{1}{2} \pi_{\Lbr \Xr} \cdot \Xr \Lr(\Psi_{n-1})- \frac{1}{2} \pi_{\Lr \Xr} \cdot \Xr \Lbr(\Psi_{n-1}),\\
{}^{(\Zr)} \sigma'_{n-1,3}&=-\frac{1}{2} \Lr \left(\pi_{\Lbr \Xr} \right) \Xr(\Psi_{n-1})+\Lr\left(\frac{1}{4\mur} \pi_{\Lbr \Lbr}\right)  \Lr(\Psi_{n-1}) -\frac{1}{2} \Lbr \left(\pi_{\Lr \Xr} \right) \Xr(\Psi_{n-1})+\Lbr\left(\frac{1}{4\mur} \pi_{\Lr \Lr}\right)  \Lbr(\Psi_{n-1})\\
&\ \ +\frac{1}{2}  \Xr(\pi_{\Lr \Lbr})  \Xr(\Psi_{n-1})- \frac{1}{2}  \Xr (\pi_{\Lbr \Xr} )\cdot\Lr(\Psi_{n-1})- \frac{1}{2} \Xr ( \pi_{\Lr \Xr} )\cdot \Lbr(\Psi_{n-1}).
\end{cases}
\end{equation*}
In the above formulas, we use $\pi$ to denote {\color{black}${}^{(\Zr)}\pi$.} In {\color{black}${}^{(\Zr)} \sigma'_{n-1,3}$,} we expand the term $\Lbr\left(\frac{1}{4\mur} \pi_{\Lr \Lr}\right)  \Lbr(\Psi_{n-1})$ as $\Lbr\left(\frac{1}{4\kappar}\right) c^{-1}\pi_{\Lr \Lr}  \Lbr(\Psi_{n-1})+\frac{1}{4\kappar} \Lbr\left(c^{-1}\pi_{\Lr \Lr}\right)  \Lbr(\Psi_{n-1})$.
We move the first term from $\sigma'_{n-1,3}$ to $\sigma'_{n-1,1}$. (This operation leads to a cancellation in the energy estimates and it will provide a gain in $t$). Therefore, we have
\begin{equation}\label{eq: decompose sigma}
{}^{(\Zr)} \sigma_{n-1}= {}^{(\Zr)} \sigma_{n-1,1}+{}^{(\Zr)} \sigma_{n-1,2}+{}^{(\Zr)} \sigma_{n-1,3},
\end{equation}
with (we use $\pi$ to denote {\color{black}${}^{(\Zr)}\pi$})
\begin{equation}\label{eq: sigma 1 circle}
\begin{split}
{}^{(\Zr)} \sigma_{n-1,1}&=-\frac{1}{2}\left(\Lr(c^{-1}\kappar)+\chibr-c^{-1}z\right)\left(\pi_{\Lr\Xr}\Xr(\Psi_{n-1})-\frac{1}{2\mur}\pi_{\Lr\Lr}\Lbr(\Psi_{n-1})\right)\\
&\ \ -\frac{1}{2}(\chir-\zr)\left(\pi_{\Lbr\Xr}\Xr(\Psi_{n-1})-\frac{1}{2\mur}\pi_{\Lbr\Lbr}\Lr(\Psi_{n-1})\right)+\Lbr\left(\frac{1}{4\kappar}\right) c^{-1}\pi_{\Lr \Lr}  \Lbr(\Psi_{n-1}),
\end{split}
\end{equation}
\begin{equation}\label{eq: sigma 2 circle}
\begin{split}
{}^{(\Zr)} \sigma_{n-1,2}&=-\frac{1}{2} \pi_{\Lbr \Xr} \cdot \Lr \Xr(\Psi_{n-1})-\frac{1}{2} \pi_{\Lr \Xr} \cdot \Lbr \Xr(\Psi_{n-1})+\frac{1}{4\mur}\left( \pi_{\Lbr \Lbr} \Lr \Lr(\Psi_{n-1})+\pi_{\Lr \Lr} \Lbr \Lbr(\Psi_{n-1})\right)\\
&\ \ +\frac{1}{2} \pi_{\Lr \Lbr} \cdot \Xr \Xr(\Psi_{n-1})- \frac{1}{2} \pi_{\Lbr \Xr} \cdot \Xr \Lr(\Psi_{n-1})- \frac{1}{2} \pi_{\Lr \Xr} \cdot \Xr \Lbr(\Psi_{n-1}){\color{black},}
\end{split}
\end{equation}
\begin{equation}\label{eq: sigma 3 circle} 
\begin{split}
{}^{(\Zr)} \sigma_{n-1,3}&=-\frac{1}{2} \Lr \left(\pi_{\Lbr \Xr} \right)\cdot \Xr(\Psi_{n-1})+\Lr\left(\frac{1}{4\mur} \pi_{\Lbr \Lbr}\right)  \Lr(\Psi_{n-1})\\
& \ \ -\frac{1}{2} \Lbr \left(\pi_{\Lr \Xr} \right)\cdot \Xr(\Psi_{n-1})+\boxed{\frac{1}{4\kappar}\Lbr\left( c^{-1}\pi_{\Lr \Lr}\right)  \Lbr(\Psi_{n-1})}\\
&\ \ +\frac{1}{2}  \Xr(\pi_{\Lr \Lbr}) \cdot \Xr(\Psi_{n-1})- \frac{1}{2}  \Xr (\pi_{\Lbr \Xr} )\cdot\Lr(\Psi_{n-1})- \frac{1}{2} \Xr ( \pi_{\Lr \Xr} )\cdot \Lbr(\Psi_{n-1}).
\end{split}
\end{equation}
\begin{remark}
	The boxed term is the most dangerous new error terms associated with the second null frame, violating the null structures mentioned in Section \ref{section: difficulty}. Notice that $^{(Z)}\pi_{LL}$ vanishes identically for $Z \in \{\Lh,\Lb,\Xh,T\}$ from the first null frame; see the tables in \ref{section:commutators and their  deformation tensors}.
\end{remark}
Since $\varrhor_n = \Zr ( \varrhor_{n-1} ) + {}^{(\Zr)} \sigma_{n-1}$, for $\Psir_{n}:=\Zr_{n}\Big(\Zr_{n-1}\big(\cdots \big(\Zr_{1}(\Psir_0)\big)\cdots\big)\Big)$, we have
\begin{equation}\label{eq: varphor_n expression}
\varrhor_{n}=\Zr_{n}\big(\cdots \big(\Zr_{1}(\varrhor_{0})\big)\cdots\big)+\sum_{i=0}^{n-1} \Zr_{n} \big( \cdots \big( \Zr_{i+2}\big(\,^{(\Zr_{i+1})}{\sigma}_{i}\big)\big)\cdots \big){\color{black}.}
\end{equation}
We remark that, if $i=n-1$ in the above sum, the corresponding term is $\,^{(\Zr_{n})}{\sigma}_{n-1}${\color{black}.}

\subsection{The energy ansatz and the main theorem of the paper}
Throughout the paper, we use the notations $F\lesssim_s G$ to denote $F\leqslant C\cdot G$ where $C$ is a constant {\color{black}depending only on $s$.} The notation $F\lesssim G$ {\color{black}means that $C$} is a universal constant.

\subsubsection{The small parameter $\varepsilon$}
We recall that on the righthand side of $S_{0,0}$ on $\Sigma_0$, i.e., the region $t=0$ and $x_1\geqslant 0$, we have already posed data  $(v,c)\big|_{t=0}=\big(v^1_0(x_1,x_2),v_0^2(x_1,x_2), c_0(x_1,x_2)\big)$. Let $\overline{v_0}$ and $\overline{c_0}>0$ be fixed constants. We assume that the data is a small irrotational perturbation of the one dimensional data, i.e.,  there is a constant $\varepsilon>0$, so that for all positive integer $k>0$, we have 
\[\|v^1_0(x_1,x_2)-\overline{v_{0}}\|_{H^k}+\|v^2_0(x_1,x_2)\|_{H^k}+\|c_0(x_1,x_2)-\overline{c_0}\|_{H^k}\lesssim_k \varepsilon,\]
where the $H^k$-norms are taken on $\Sigma_0$ with $x_1\geqslant 0$. In addition, we have $\frac{\partial v^2_0}{\partial x^1} = \frac{\partial v^1_0}{\partial x^2}$.

Since the classical solutions to the Euler equations depend continuously on the initial data, we conclude that for any positive integer $k$, for $\psi \in \{w,\wb,\psi_2\}$, for all $1\leqslant |\alpha|\leqslant k$, for $Z\in \mathscr{Z}$,we have
\[\|w-\overline{w_0}\|_{L^\infty(C_0)}+\|\wb-\overline{\wb_0}\|_{L^\infty(C_0)}+\|\psi_2\|_{L^\infty(C_0)}+\|Z^\alpha(\psi)\|_{L^\infty(C_0)} \lesssim_k \varepsilon,\]
where $\overline{w_0}=\frac{1}{2}\left(\frac{2}{\gamma-1}\overline{c_0}-\overline{v_0}\right)$ and $\overline{\wb_0}=\frac{1}{2}\left(\frac{2}{\gamma-1}\overline{c_0}+\overline{v_0}\right)$.

\begin{remark}
We may remove the smallness of $\varepsilon$ by shrinking the time interval $[0,t^*]$. Since we are mainly interested in the stability problem of 1-dimensional rarefaction waves, we will focus on the case where $\varepsilon$ is sufficiently small.
\end{remark}

\subsubsection{The assumptions on the initial data in the rarefaction wave region}\label{subsection:energy-ansatz-initial-data}
Given a smooth function on $\mathcal{D}(t^*,u^*)$, for a multi-index $\alpha$, for all $(t,u)\in [\delta,t^*]\times [0,u^*]$, we define the total energy and the total flux associated to $\Zr^\alpha(\psi)$ as follows:
\[\begin{cases}
&\mathscr{E}_{\alpha}(\psi)(t,u)= \mathcal{E}\big(\Zr^\alpha(\psi)\big)(t,u)+\Eb\big(\Zr^\alpha(\psi)\big)(t,u),\\
&\mathscr{F}_{\alpha}(\psi)(t,u) = \F\big(\Zr^\alpha(\psi)\big)(t,u)+\Fb\big(\Zr^\alpha(\psi)\big)(t,u).
\end{cases}
\]
For all $n\leqslant N_{_{\rm top}}$, we define
\[\mathscr{E}_{n}(\psi)(t,u)= \sum_{|\alpha|= n} \mathscr{E}_{\alpha}(\psi)(t,u), \ \ \mathscr{F}_{n}(\psi)(t,u)= \sum_{|\alpha|= n} \mathscr{F}_{\alpha}(\psi)(t,u).
\]
For $\psi \in \{w,\psi_2\}$, we also define
	\[\mathscr{E}_{\leqslant n}(\psi)(t,u)= \sum_{|\alpha|\leqslant n} \mathscr{E}_{\alpha}(\psi)(t,u), \ \ \mathscr{F}_{\leqslant n}(\psi)(t,u)= \sum_{|\alpha|\leqslant n} \mathscr{F}_{\alpha}(\psi)(t,u){\color{black},}\]
	{\color{black}while} for $\psi = \wb$, we define
	\[\mathscr{E}_{\leqslant n}(\wb)(t,u)= \mathring{\mathscr{E}}_0(\wb)(t,u) + \sum_{1 \leqslant|\alpha|\leqslant n}\mathscr{E}_{\alpha}(\wb)(t,u), \ \ \mathscr{F}_{\leqslant n}(\psi)(t,u)= \mathring{\mathscr{F}}_0(\wb)(t,u) + \sum_{1 \leqslant|\alpha|\leqslant n}\mathscr{F}_{\alpha}(\wb)(t,u),\]
	where 
	\begin{equation}\label{def:Er_0-Fr_0}
		\mathring{\mathscr{E}}_0(\wb)(t,u) = \frac{1}{2}\int_{\Sigma_t^u}c^{-2}\kappa^2(L\wb)^2 + \kappa^2(\XR\wb)^2, \ \ \mathring{\mathscr{F}}_0(\wb)(t,u) =\int_{C_u^t}c^{-1}\kappa(L\wb)^2 + c\kappa(\XR\wb)^2.
	\end{equation}
In order to state the main theorem of the paper, we need precise estimates on the initial data posed on $\Sigma_\delta^{u^*}$ and $C_0^1$. It consists of three sets of assumptions $\mathbf{(I_0)}$, $\mathbf{(I_2)}$ and $\mathbf{(I_\infty)}$. We remark that, for the one dimensional Riemann problem, $u^*=\frac{\gamma+1}{\gamma-1}\overline{c_0}$ corresponds to the vacuum state, see Section \ref{section: 1D case}. The assumptions are listed as follows:
\begin{equation}\label{initial size u*}
\mathbf{(I_0)} \ \ \ u^*=\frac{1}{2}\cdot\frac{\gamma+1}{\gamma-1}\overline{c_0}. 
\end{equation}
{\color{black}
\begin{equation}\label{initial I2}
	\mathbf{(I_2)}\begin{cases}
		&\mathscr{E}_{n}(\psi)(\delta,u^*)+\mathscr{F}_{n}(\psi)(t,0)\leqslant C_0 \varepsilon^2 t^2, \ \ \psi \in \{w,\wb,\psi_2\},  \  1\leqslant n\leqslant \Ntop, \   t \in [\delta,t^*];\\
		&\mathcal{E}(\psi)(\delta,u^*)+\underline{\mathcal{E}}(\psi)(\delta,u^*)+\mathcal{F}(\psi)(t,0)+\underline{\mathcal{F}}(\psi)(t,0)\leqslant C_0 \varepsilon^2 t^2, \  \psi \in \{w,\psi_2\}, \   t \in [\delta,t^*].
	\end{cases}
\end{equation}
}

\begin{equation}\label{initial Iinfty}
\mathbf{(I_\infty)}
\begin{cases}
	&\|L\psi\|_{L^\infty(\Sigma^{u}_\delta)} + \|\Xh\psi\|_{L^\infty(\Sigma^{u}_\delta)} \lesssim \varepsilon, \ \ \psi \in \{w,\wb,\psi_2\};\\
	&\|T(w)\|_{L^\infty(\Sigma^{u}_\delta)} + \|T(\psi_2)\|_{L^\infty(\Sigma^{u}_\delta)} +  \|T\wb + \frac{2}{\gamma+1}\|_{L^\infty(\Sigma^{u}_\delta)} \lesssim \varepsilon \delta; \\
	&\|LZ^{\alpha}\psi\|_{L^\infty(\Sigma^{u}_\delta)}+\|\Xh Z^{\alpha}\psi\|_{L^\infty(\Sigma^{u}_\delta)}+\delta^{-1}\|TZ^{\alpha}\psi\|_{L^\infty(\Sigma^{u}_\delta)} \lesssim \varepsilon, \ \  Z \in \{\Xh, T\}, \ 1 \leqslant |\alpha| \leqslant 2;\\
	& \|\slashed{g} - 1\|_{L^\infty(\Sigma^{u}_\delta)} +  \|\frac{\kappa}{\delta}-1\|_{L^\infty(\Sigma^{u}_\delta)} + \|\Th^2\|_{L^\infty(\Sigma^{u}_\delta)} \lesssim  \varepsilon \delta, \ \|\Th^1+1\|_{L^\infty(\Sigma^{u}_\delta)} \lesssim \varepsilon^2 \delta^2;\\
	& \|Z(\slashed{g})\|_{L^\infty(\Sigma^{u}_\delta)}  \lesssim \varepsilon \delta, \ \|Z^\alpha(\kappa)\|_{L^\infty(\Sigma^{u}_\delta)} \lesssim \varepsilon \delta^2,  \ \ Z \in \{\Xh, T\}, \ 1 \leqslant |\alpha| \leqslant 2;\\
	& \|Z^\alpha(\Th^1)\|_{L^\infty(\Sigma^{u}_\delta)}\leqslant \varepsilon^2 \delta^2, \ \|Z^\alpha(\Th^2)\|_{L^\infty(\Sigma^{u}_\delta)}\leqslant \varepsilon \delta,  \ \  Z \in \{\Xh, T\}, \ 1 \leqslant |\alpha| \leqslant 2.
\end{cases}
\end{equation}
In addition, we also {\color{black}assume that the initial motion} is irrotational:
\begin{equation}\label{initial irrotational}
	\mathbf{(I_{irrotational})} \ \ \ \frac{\partial v^2}{\partial x^1}\Big|_{\Sigma^{u}_\delta} = \frac{\partial v^1}{\partial x^2}\Big|_{\Sigma^{u}_\delta}.
\end{equation}

\begin{remark}
By the scaling of the Euler equations, we may assume that $\overline{c_0}=1$.
Notice that by $\mathbf{(I_\infty)}$ and \eqref{def: Riemann invariants} we have $\|T(c)-\frac{\gamma-1}{\gamma+1}\|_{L^{\infty}(\Sigma_{\delta}^u)} \lesssim \varepsilon \delta$.  In view of $\mathbf{(I_0)}$, $\|c-\overline{c_0}\|_{L^{\infty}(C_0)} \lesssim \varepsilon$, and  $T\big|_{\Sigma_\delta} = \frac{\partial}{\partial u}$,  we may assume that $\frac{1}{4} \leqslant c \leqslant 2$ on $\Sigma_{\delta}$.
\end{remark}
In the second paper \cite{LuoYu2} of this series, we will construct initial data on $\Sigma_\delta$ so that all the above assumptions are verified.


\subsubsection{The main theorem}

We now state the main theorem of the paper:

\begin{MainTheorem}[A priori Energy Estimates] Assume that the initial data posed on $\Sigma_\delta^{u^*}$ and $C_0^1$ satisfies the conditions $\mathbf{(I_0)}$, $\mathbf{(I_2)}$ and $\mathbf{(I_\infty)}$. Therefore, for $\Ntop\geqslant 9$, there exists a constant $\varepsilon_0>0$, so that for all $\frac{1}{2}>\delta>0$, for all $\varepsilon<\varepsilon_0$, $\mathcal{D}\supset \mathcal{D}(1,u^*)$. Moreover, there exists a constant $C_0>0$, so that for all $t\in [\delta,1]$, we have
\begin{equation}\label{main energy estimates}
\begin{cases}
&\mathcal{E}(\psi)(t,u^*)+\underline{\mathcal{E}}(\psi)(t,u^*)\leqslant C_0 \varepsilon^2 t^2, \ \ \psi \in \{w,\psi_2\};\\
&\mathscr{E}_{n}(\psi)(t,u^*)\leqslant C_0 \varepsilon^2 t^2, \ \ \psi \in \{w,\wb,\psi_2\},  \ \ 1\leqslant n\leqslant \Ntop.
\end{cases}
\end{equation}
\end{MainTheorem}

\begin{remark}
The constants $C_0$ and $\varepsilon_0$ are independent of $\delta$. This will allow us to take $\delta\rightarrow 0$ so that we can construct the rarefaction waves all the way up to the singularity, see {\color{black}the second paper \cite{LuoYu2}} of this series.
\end{remark}


\subsubsection{The bootstrap argument and the ansatz}

We use the method of continuity to prove the main estimates \eqref{main energy estimates}. We propose a set of the energy ansatz and we will run a bootstrap argument to prove it on $\mathcal{D}(t^*,u^*)$. 

The ansatz $\mathbf{(B_2)}$ is as follows: we assume that there exists a constant $M>0$,  so that for all  $(t,u)\in [\delta,t^*]\times [0,u^*]$ the following inequalities hold:
\begin{equation}\label{ansatz B2}
\mathbf{(B_2)}\begin{cases}
&\mathscr{E}_{n}(\psi)(t,u)+\mathscr{F}_{n}(\psi)(t,u)\leqslant M \varepsilon^2 t^2, \ \ \psi \in \{w,\wb,\psi_2\},  \ \ 1\leqslant n\leqslant \Ntop;\\
&\mathcal{E}(\psi)(t,u)+\underline{\mathcal{E}}(\psi)(t,u)+\mathcal{F}(\psi)(t,u)+\underline{\mathcal{F}}(\psi)(t,u)\leqslant M \varepsilon^2 t^2, \ \ \psi \in \{w,\psi_2\}.
\end{cases}
\end{equation}
In the bootstrap argument, we will also need auxiliary estimates to bound the $L^\infty$ norms of lower order terms. Thus, we also assume the following set of  bootstrap assumption on the $L^\infty$ bounds.  

The ansatz $\mathbf{(B_\infty)}$ is as follows: we assume that there exists a constant $M>0$ (this is the same $M$ as in \eqref{ansatz B2}), so that for all {\color{black}$(t,u) \in [\delta, t^*]\times [0,u^*]$} and $\psi \in \{w,\wb,\psi_2\}$, the following inequalities hold:
\begin{equation}\label{ansatz Binfty}
\mathbf{(B_\infty)}
\begin{cases}
&\|L\psi\|_{L^\infty(\Sigma^{u}_t)}+\|\Xh\psi\|_{L^\infty(\Sigma^{u}_t)}\leqslant M\varepsilon;\\
&\|T(w)\|_{L^\infty(\Sigma^{u}_t)}+\|T(\psi_2)\|_{L^\infty(\Sigma^{u}_t)}+\varepsilon t \|T\wb\|_{L^\infty(\Sigma^{u}_t)}\leqslant M\varepsilon t; \\
&\|LZ^{\beta}\psi\|_{L^\infty(\Sigma^{u}_t)}+\|\Xh Z^{\beta}\psi\|_{L^\infty(\Sigma^{u}_t)}+t^{-1}\|TZ^{\beta}\psi\|_{L^\infty(\Sigma^{u}_t)}\leqslant M\varepsilon, \ \  Z \in \{\Xh, T\}, \ 1 \leqslant |\beta| \leqslant 2;\\
&\|\psi\|_{L^\infty(\Sigma^{u}_t)} \leqslant M, \ \ \|\kappa\|_{L^\infty(\Sigma^{u}_t)}+\|\Th^1+1\|_{L^\infty(\Sigma^{u}_t)}+\|\Th^2\|_{L^\infty(\Sigma^{u}_t)} \leqslant Mt.
\end{cases}
\end{equation}
In the rest of the paper, we assume the bootstrap assumptions  $\mathbf{(B_2)}$ and  $\mathbf{(B_\infty)}$ hold on $\mathcal{D}(t^*,u^*)$. We will prove that, for sufficiently small $\varepsilon$, we can improve the constant $M$ to be a universal constant $C_0$. The constant $C_0$ will be independent of $\delta, t^*$ and $u^*$. This will close the bootstrap argument {\color{black}hence proving} the main theorem of the paper.

\subsection{Heuristics for the energy ansatz}\label{subsection:heuristics-energy-ansatz}


We make the assumption that solution in the frame $\{L,T,\Xh\}$ is smooth and {\color{black}$\Th^1 \approx -1$,} $\kappa \approx t$ as $t \to 0$. By \eqref{Euler equations:form 2}, the Euler equations can be written as 
\[ c^{-1}\kappa L(V) =  A \cdot T(V) + \kappa B\cdot \Xh(V). \]
By examining the components of $w$ and $\psi_2$, it is straightforward to see
\[ T(w) = O(t\varepsilon), \ T(\psi_2) = O(t\varepsilon), \quad \text{as} \ t \to 0. \]



\section{Preparations for the energy estimates}\label{section:preparations-for-energy-estimates}
{\color{black}In the following,} we will use $\mathring{M}$ to denote a power $M^k$ of $M$. Indeed, $k\leqslant 5$. For example, we can use $\mathring{M}$ to denote $M$, $M^2$ or $M^5$.

\subsection{The control of the acoustical geometry}
\subsubsection{Preliminary estimates on connection coefficients}
We first show that $c\approx 1$. In view of $c=\frac{\gamma-1}{2}(w+\wb)$ and $(\mathbf{B_\infty})$, we have $\|L(c)\|_{L^\infty}\lesssim M \varepsilon$. Since $L=\frac{\partial}{\partial t}$, we integrate from $\Sigma_\delta$ and we obtain
\begin{align*}
|c(t,u,\vartheta)-c(\delta,u,\vartheta)|&\leqslant \int_{\delta}^t |(Lc)(t',u,\vartheta)|dt' \lesssim M\varepsilon.
\end{align*}
Since $c(\delta,u,\vartheta)\in [\frac{1}{4},2]$, we obtain that 
\[\frac{1}{8}\leqslant c \leqslant 3\] 
on $\mathcal{D}(t^*,u^*)$, provided that $M\varepsilon$ is sufficiently small.

Next, we show that $|\kappa| \leqslant \mathring{M}t$. In view of \eqref{structure eq 1: L kappa}, we have
{\color{black}
\[\kappa(t,u,\vartheta)=e^{\int_{\delta}^t e'(\tau)d\tau}\kappa(\delta,u,\vartheta)+\int_{\delta}^t e^{\int_{\delta}^\tau e'(\tau')d\tau'}m'(\tau,u,\vartheta)d\tau.\]
}
In view of \eqref{eq: m' e'} and the fact that $|\Th^1|^2+|\Th^2|^2=1$, we can use $c=\frac{\gamma-1}{2}(w+\wb)$ and $(\mathbf{B_\infty})$ to show that
\[\|m'\|_{L^\infty(\Sigma_t)}\lesssim M,  \ \ \|e'\|_{L^\infty(\Sigma_t)}\lesssim M \varepsilon.\]
Since $t\leqslant t^*\leqslant 1$, this implies the following bound:
\[\big|\kappa(t,u,\vartheta)\big|\lesssim e^{M\varepsilon}\delta+te^{M\varepsilon}M \lesssim Mt,\]
provided that $M\varepsilon$ is sufficiently small.

In view of \eqref{structure eq 2: L chi} and the fact that $h = \frac{1}{\gamma-1} c^2$, we can use $(\mathbf{B_\infty})$ to derive that
\begin{equation}\label{eq:L chi in preliminary geometric bounds}\left\|L \chi-e\chi +\chi^2\right\|_{L^\infty(\Sigma_t)} \lesssim \mathring{M}\varepsilon,
\end{equation}
for all $t\in [\delta,t^*]$. According to $(\mathbf{I_\infty})$, on the initial slice $\Sigma_\delta$, we have
\[|\chi(\delta,u,\vartheta)| =  |c(\slashed{k} - \theta)|=|\Xh^i\Xh(\psi_i)-c\theta|\lesssim \varepsilon.\]
Therefore, we can integrate \eqref{eq:L chi in preliminary geometric bounds} from to $\delta$ to $t$ to derive
\[\|\chi\|_{L^\infty(\Sigma_t)}\lesssim  \varepsilon +\mathring{M}\varepsilon t \lesssim \mathring{M}\varepsilon,\]
provided $\mathring{M}\varepsilon \leqslant 1$. 

According to the equation $L(\slashed{g})=2\slashed{g}\chi$, we can use the bound on $\chi$ to derive
\[|\slashed{g} -1|\lesssim \varepsilon t,\]
if $\mathring{M}\varepsilon$ is sufficiently small. In particular, we have $\slashed{g} \approx 1$.

We also need a bound on $\Xh(\Th^k)$ where $k=1,2$. Since $[L,\Xh]=-\chi \Xh$,  we can use \eqref{structure eq 3: L T on Ti Xi Li} and $(\mathbf{B_\infty})$ to derive that
\[\big|L(\Xh(\widehat{T}^{k}))\big|
=\big|\Xh \big[\big(\widehat{T}^j\cdot \Xh(\psi_j) + \Xh(c)\big)\Xh^k\big]-\chi \Xh\big(\Th^k\big)\big|\lesssim \mathring{M}\varepsilon \Xh(\widehat{T}^{j})+\mathring{M}\varepsilon.\]
According to $(\mathbf{I_\infty})$, we have $\|\Xh(\Th^i)\|_{L^\infty(\Sigma_\delta)}\lesssim  \delta\varepsilon$. By the standard Gronwall's inequality, if $\mathring{M}\varepsilon$ is sufficiently small, we have
\[
\|\Xh(\Th^i)\|_{L^\infty(\Sigma_t)}\leqslant \mathring{M}\varepsilon t.
\]

The same idea can be used to bound $\Xh(\kappa)$. By $[L,\Xh]=-\chi \Xh$, we have
\begin{align*}
L(\Xh\kappa)&=-\chi\Xh(\kappa)+\Xh(m')+\Xh(\kappa e')= (e'-\chi) \Xh(\kappa)+ \Xh(m')+\mu \Xh(e').
\end{align*}
We have already showed $\|e'-\chi\|_{L^\infty}\lesssim \mathring{M} \varepsilon$. By $(\mathbf{B_\infty})$, we have $\|\Xh(m')+\mu \Xh(e')\|_{L^\infty}\lesssim \mathring{M} \varepsilon$. By $(\mathbf{I_\infty})$, we also have $\|\Xh(\kappa)\|_{L^\infty(\Sigma_\delta)}\lesssim  \delta\varepsilon$. Therefore, by a direct use of Gronwall's inequality, if $\mathring{M}\varepsilon$ is sufficiently small, we have
\[
\|\Xh(\kappa)\|_{L^\infty(\Sigma_t)}\lesssim \mathring{M}\varepsilon t.
\]

In view of the commutator formula $[L,T]=-\big(\kappa\big(2c\Xh^i\cdot T(\psi_i)+2\Xh(c)\big)-\Xh(c^{-1}\kappa)\big)\Xh$, by the estimates that we have derived so far, we have
\[\big|L(T(\widehat{T}^{k}))\big|\lesssim \mathring{M}\varepsilon T(\widehat{T}^{j})+\mathring{M}\varepsilon.\]
According to $(\mathbf{I_\infty})$, we have $\|T(\Th^i)\|_{L^\infty(\Sigma_\delta)}\lesssim  \delta\varepsilon$. Thus, by Gronwall's inequality, if $\mathring{M}\varepsilon$ is sufficiently small, we have
\[\|T(\Th^i)\|_{L^\infty(\Sigma_t)}\leqslant \mathring{M}\varepsilon t.\]

In view of the above commutator formula for $[L,T]$, we can proceed exactly in the same manner to {\color{black}bound $T(\kappa)$.} Indeed, by the estimates that we have derived so far, it is straightforward to see that $\|[L,T]\kappa\|_{L^\infty}\lesssim \mathring{M}\varepsilon t$. Therefore, we have
\[|L(T\kappa)|=\left|e'T\kappa + \left(Tm' +\kappa T e'+[L,T]\kappa\right)\right| \lesssim \mathring{M}\varepsilon |T\kappa|+\mathring{M}\varepsilon t.\]
Once more, since $\|T(\kappa)\|_{L^\infty(\Sigma_\delta)}\lesssim \varepsilon \delta$,  by Gronwall's inequality, if $\mathring{M}\varepsilon$ is sufficiently small, we have
\[\|T(\kappa)\|_{L^\infty(\Sigma_t)}\lesssim \mathring{M}\varepsilon t.
\]
Finally,  in view of \eqref{structure quantities: connection coefficients}, we have $|\zeta|\leqslant  \big| -T^i \Xh(\psi_j) -  \kappa \Xh(c)\big| \lesssim \mathring{M}\varepsilon t$. Since $\eta=\zeta+\Xh(c^{-1}\kappa)$, we have
\[\|\zeta\|_{L^\infty(\Sigma_t)}+\|\eta\|_{L^\infty(\Sigma_t)}\lesssim \mathring{M} \varepsilon t.
\]

We summarize the estimates derived so far:
\begin{proposition}
{\color{black}Under the bootstrap assumptions} $(\mathbf{B}_2)$ and $(\mathbf{B}_\infty)$, if $\mathring{M}\varepsilon$ is sufficiently small, we have the following pointwise bounds on $\Sigma_t^{u^*}$ for all $t\in [\delta, t^*]$:
\begin{equation}\label{preliminary geometric bounds}
\begin{cases}
&c\approx 1, \ \slashed{g}\approx 1, \ \|\zeta\|_{L^\infty(\Sigma_t)}\lesssim \mathring{M} \varepsilon t, \ \ \|\eta\|_{L^\infty(\Sigma_t)}\lesssim \mathring{M} \varepsilon t, \  \ \|\chi\|_{L^\infty(\Sigma_t)}\lesssim \mathring{M}\varepsilon,\\
&\|\kappa\|_{L^\infty(\Sigma_t)}\lesssim Mt, \ \ \|\Xh(\kappa)\|_{L^\infty(\Sigma_t)}\lesssim \mathring{M}\varepsilon t, \ \ \|T(\kappa)\|_{L^\infty(\Sigma_t)}\lesssim \mathring{M}\varepsilon t,\\
&\|\Xh(\Th^i)\|_{L^\infty(\Sigma_t)}\leqslant \mathring{M}\varepsilon t, \|T(\Th^i)\|_{L^\infty(\Sigma_t)}\leqslant \mathring{M}\varepsilon t.
\end{cases}
\end{equation}
\end{proposition}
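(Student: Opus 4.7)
The plan is to bound each geometric quantity by deriving a transport equation in the $L$-direction and integrating from the initial slice $\Sigma_\delta$, exploiting the bootstrap assumptions $(\mathbf{B}_\infty)$ and the initial ansatz $(\mathbf{I}_\infty)$. I would first handle $c$ directly: since $c=\frac{\gamma-1}{2}(w+\wb)$ and $L=\partial_t$, the bound $\|L\psi\|_{L^\infty}\lesssim M\varepsilon$ from $(\mathbf{B}_\infty)$ yields $|c(t,u,\vartheta)-c(\delta,u,\vartheta)|\lesssim M\varepsilon$, which combined with $c|_{\Sigma_\delta}\in[\tfrac{1}{4},2]$ gives $\tfrac{1}{8}\leqslant c\leqslant 3$ for $M\varepsilon$ small. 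Next, $\kappa$ satisfies the transport equation \eqref{structure eq 1: L kappa}, $L\kappa=m'+e'\kappa$, where by $(\mathbf{B}_\infty)$ and the just-obtained bound on $c$, $\|m'\|_{L^\infty}\lesssim M$ and $\|e'\|_{L^\infty}\lesssim M\varepsilon$. Duhamel's formula with $\kappa|_{\Sigma_\delta}\approx \delta$ then gives $\|\kappa\|_{L^\infty(\Sigma_t)}\lesssim Mt$.

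The main obstacle is controlling $\chi$, since its transport equation \eqref{structure eq 2: L chi} contains the second-order quantity $\Xh^2(h)$ as well as products of $w$-type tensors. The key observation is that $(\mathbf{B}_\infty)$ already provides $\|\Xh^2\psi\|_{L^\infty}\lesssim M\varepsilon$ (through the $|\beta|\leqslant 2$ bounds) and $|w(\cdot,\cdot)|\lesssim M\varepsilon$, so the entire right-hand side apart from $e\chi-\chi^2$ is $O(\mathring{M}\varepsilon)$. Combined with the initial bound $|\chi|_{\Sigma_\delta}|\lesssim\varepsilon$ supplied by $(\mathbf{I}_\infty)$ via $\chi=c(\slashed{k}-\theta)$, Gronwall then yields $\|\chi\|_{L^\infty(\Sigma_t)}\lesssim\mathring{M}\varepsilon$. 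The closeness $\slashed{g}\approx 1$ follows at once from $L(\slashed{g})=2\slashed{g}\chi$ and $\slashed{g}|_{\Sigma_\delta}\approx 1$.

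With $\chi$ in hand, the commutator bounds follow by the same template. For $\Xh(\Th^k)$ I would commute $\Xh$ through the $L$-equation for $\Th^k$ in \eqref{structure eq 3: L T on Ti Xi Li} using $[L,\Xh]=-\chi\Xh$, which produces a source $\Xh(\Th^j\Xh(\psi_j)+\Xh(c))$ controlled by $(\mathbf{B}_\infty)$ at size $\mathring{M}\varepsilon$, plus the good linear term $-\chi\,\Xh(\Th^k)$; Gronwall with $(\mathbf{I}_\infty)$ gives $\|\Xh(\Th^k)\|_{L^\infty(\Sigma_t)}\lesssim\mathring{M}\varepsilon t$. The argument for $\Xh(\kappa)$ is analogous, commuting $\Xh$ with \eqref{structure eq 1: L kappa}. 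For the $T$-derivatives one uses the commutator $[L,T]=-\big(\kappa(2c\Xh^i T(\psi_i)+2\Xh(c))-\Xh(c^{-1}\kappa)\big)\Xh$, which by the bounds already obtained has $L^\infty$-norm $\lesssim \mathring{M}\varepsilon t$; iterating the same Gronwall scheme on the equations for $T(\Th^k)$ and $T(\kappa)$ with the initial data $(\mathbf{I}_\infty)$ produces the claimed bounds.

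Finally, $\zeta$ and $\eta$ are controlled algebraically from \eqref{structure quantities: connection coefficients}: the formula $\zeta=-\kappa(\Th^j\Xh(\psi_j)+\Xh(c))$ combined with $\|\kappa\|\lesssim Mt$ and $\|\Xh\psi\|+\|\Xh c\|\lesssim M\varepsilon$ gives $\|\zeta\|_{L^\infty(\Sigma_t)}\lesssim\mathring{M}\varepsilon t$, and then $\eta=\zeta+\Xh(c^{-1}\kappa)$ yields $\|\eta\|_{L^\infty(\Sigma_t)}\lesssim\mathring{M}\varepsilon t$ using the previously established bound on $\Xh(\kappa)$. The principal difficulty throughout is ensuring that the order in which quantities are controlled respects the dependencies: $c$ must precede $\kappa$, $\kappa$ and $\chi$ must precede $\slashed{g}$ and the commutators, and all of these precede $\zeta,\eta$. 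Once this ordering is respected the whole package closes via repeated invocations of the scalar Gronwall inequality, provided only that $\mathring{M}\varepsilon$ is sufficiently small.
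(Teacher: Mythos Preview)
Your proposal is correct and follows essentially the same approach as the paper: the same ordering of quantities ($c$, then $\kappa$, then $\chi$ and $\slashed{g}$, then the commuted quantities $\Xh(\Th^k)$, $\Xh(\kappa)$, $T(\Th^k)$, $T(\kappa)$, and finally $\zeta,\eta$ algebraically), the same transport-and-integrate scheme along $L$, and the same use of $(\mathbf{B}_\infty)$ to control the second-order source in the $\chi$-equation.
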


\subsubsection{Improved estimates on $\kappa$}

We consider the wave equation \eqref{Main Wave equation: order 0} for $\psi \in \{w,\wb,\psi\}$. Since $\Lb = 2T +c^{-1}\kappa L$, the bootstrap assumption $(\mathbf{B}_\infty)$ implies that $\|Y(\psi)\|\lesssim M\varepsilon$ for all $Y\in \mathscr{Y}=\{L,\Lb, \Xh\}$  unless $Y=\Lb$ and $\psi=\wb$. In view of Remark \ref{remark:null structure}, the righthand side of  \eqref{Main Wave equation: order 0} are bounded by $\frac{1}{\mu}\mathring{M}\varepsilon$ in $L^\infty$-norm.  Thus, by virtue of \eqref{eq:wave operator in null frame},   for $\psi \in \{w,\wb,\psi_2\}$, we have
\[ \big|\Xh^2 (\psi) - \mu^{-1}L\big(\underline{L}(\psi)\big) - \mu^{-1}\big(\frac{1}{2}\chi\cdot \underline{L}(\psi) +\frac{1}{2}\chib\cdot L(\psi)\big)- 2 \mu^{-1}\zeta \cdot \Xh(\psi)\big| \lesssim \frac{1}{\mu}\mathring{M}\varepsilon.
\]
By \eqref{preliminary geometric bounds}, we have $|\mu|\lesssim \kappa \lesssim Mt$. We multiply both sides of the above inequality by $\mu$ and we use $(\mathbf{B}_\infty)$ to derive that
\begin{equation}\label{eq: auxiliary 1}
L(\Lb\psi) =-\frac{1}{2}\chi\cdot \underline{L}(\psi)+a(t,u,\vartheta),
\end{equation}
with  $\|a(t,u,\vartheta)\|_{L^\infty(\Sigma_t)}\lesssim \mathring{M}\varepsilon$. Hence,
\begin{align*}
\Lb\psi(t,u,\vartheta)=e^{-\frac{1}{2}\int_{\delta}^t \chi(\tau,u,\vartheta) d\tau} \cdot \Lb(\psi)(\delta,u,\vartheta)+\int_{\delta}^t e^{-\frac{1}{2}\int_{0}^\tau \chi(\tau,u,\vartheta) d\tau} a(\tau,u,\vartheta)d\tau.
\end{align*}
By \eqref{preliminary geometric bounds}, if $\mathring{M} \varepsilon$ is sufficiently small, we have$\big\|e^{-\frac{1}{2}\int_{0}^t \chi(\tau,u,\vartheta) d\tau}-1\big\|_{L^\infty(\Sigma_t)}\lesssim \mathring{M}\varepsilon t$. The above formula for $\Lb\psi(t,u,\vartheta)$ thus gives a bound on $|\Lb\psi(t,u,\vartheta)-\Lb\psi(\delta,u,\vartheta)|$. Since $\Lb = 2T +c^{-1}\kappa L$, this implies
\[|T\psi(t,u,\vartheta)-T\psi(\delta,u,\vartheta)|\lesssim  \mathring{M}\varepsilon t.\]
Hence,
\begin{equation}\label{eq: auxiliary 2}
\left|Tc(t,u,\vartheta)-Tc(\delta,u,\vartheta)\right|\lesssim \mathring{M} \varepsilon t.
\end{equation}
In view of \eqref{eq: m' e'}, we conclude that
\[\left|m'(t,u,\vartheta)-m'(\delta,u,\vartheta)\right|\lesssim \mathring{M} \varepsilon t.\]
By integrating $L\kappa = m'+e'\kappa$, we have
\[\kappa(t,u,\vartheta)=e^{\int_{\delta}^t e' d\tau}\kappa(\delta,u,\vartheta)+\int_{\delta}^t e^{\int_{\delta}^\tau e' d\tau'}m'(\tau,u,\vartheta)d\tau.\]
This implies the following estimates on $\kappa$:
\begin{equation}\label{bound:precise bound on kappa}
\left|\kappa(t,u,\vartheta)-\kappa(\delta,u,\vartheta)-m'(\delta,u,\vartheta)(t-\delta)\right|\lesssim \mathring{M}\varepsilon t^2.
\end{equation}
We then use the fact that {\color{black}$\|Tc(\delta,u,\vartheta)+\frac{\gamma-1}{\gamma+1}\|_{L^\infty(\Sigma_\delta)}\lesssim \varepsilon \delta$}  in $\mathbf{I}_\infty$ to derive {\color{black}$\|m'(\delta,u,\vartheta)-1\|_{L^\infty(\Sigma_\delta)}\lesssim \varepsilon \delta$.} Therefore, we conclude that $\kappa \approx t$, i.e.,  for sufficiently small $\mathring{M}\varepsilon$, we have
\begin{equation}\label{bound on kappa}
\kappa \approx t.
\end{equation}
In fact, the above computation yields
\begin{equation}\label{bound on kappa more precise}
\big|\frac{\kappa}{t}-1\big|\lesssim \mathring{M} t \varepsilon.
\end{equation}
This also closes the bound on $\kappa$ in $\mathbf{(B_\infty)}$. In the course of the proof, we have also showed that
\begin{equation}\label{bound on L kappa}
L\kappa \approx 1.
\end{equation}
{\color{black}From \eqref{eq: auxiliary 2}} and the fact that $\|Tc(\delta,u,\vartheta)+1\|_{L^\infty(\Sigma_\delta)}\lesssim \varepsilon \delta$ in {\color{black}($\mathbf{I}_\infty$),} we obtain that 
\begin{equation}\label{bound on Tc}
{\color{black}Tc \approx -\frac{\gamma-1}{\gamma+1}.}
\end{equation}
By \eqref{def: Riemann invariants}, we also have
\begin{equation}\label{bound on Twb}
{\color{black}T\wb \approx -\frac{2}{\gamma+1}.}
\end{equation}

\subsubsection{Improved estimates on $\widehat{T}^{1}$ and $\widehat{T}^{2}$}
According to \eqref{structure eq 3: L T on Ti Xi Li}, we have
\[L(\widehat{T}^{1}+1)=\left(\widehat{T}^j\cdot \Xh(\psi_j) + \Xh(c)\right)\Th^2.
\]
By $\mathbf{(B_\infty)}$, each of the righthand terms is bounded by $\mathring{M}\varepsilon^2 t$ in $L^\infty$. Thus, {\color{black}$|L(\widehat{T}^{1}+1)|\lesssim \mathring{M}\varepsilon^2 t$.} On the other hand, by $\mathbf{(I_\infty)}$ we have $|\Th^1+1|\lesssim \varepsilon^2 \delta^2$ on $\Sigma_\delta$. Therefore, by integrating $L(\widehat{T}^{1}+1)$, we obtain that
\begin{equation}\label{bound on T1+1}
|\widehat{T}^{1}+1|\lesssim \mathring{M}\varepsilon^2 t^2.
\end{equation}
We see that $\widehat{T}^{1}+1$ has an extra $t$ power. This also closes the bound on $\widehat{T}^{1}+1$ in $\mathbf{(B_\infty)}$.

Similarly, we have
\[L(\widehat{T}^{2})= \left(\widehat{T}^j\cdot \Xh(\psi_j) + \Xh(c)\right)\Xh^2.\]
By $\mathbf{(B_\infty)}$, each of the righthand terms is bounded by $\mathring{M}\varepsilon $ in $L^\infty$. By $\mathbf{(I_\infty)}$ we have $|\Th^2|\lesssim \varepsilon  \delta$ on $\Sigma_\delta$. We then integrate the above equation to derive
\begin{equation}\label{bound on T2}
|\widehat{T}^{2}|\lesssim \mathring{M}\varepsilon t.
\end{equation}
This also closes the bound on $\widehat{T}^{2}$ in $\mathbf{(B_\infty)}$.

\subsubsection{Improved higher order pointwise estimates}

The following pointwise bounds for $\psi$ could be useful:
\begin{lemma}
	Let $\psi$ be a linear combinations of $\wb, w$ and $\psi_2$ and $c_0$ is a constant. We have
	\[\|T(\psi) + c_0\|_{L^{\infty}(\Sigma_t)} \leqslant \|T(\psi) + c_0\|_{L^{\infty}(\Sigma_{\delta})} + C\mathring{M} t \varepsilon,\]
where $C$ is a universal constant. In particular, we have
	\begin{align}\label{bound on T(v^1+c)}
		\|T(v^1+c) + 1\|_{L^{\infty}(\Sigma_t)} \lesssim \mathring{M} t \varepsilon{\color{black}.}
	\end{align}
\end{lemma}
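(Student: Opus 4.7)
The plan is to reduce the bound to an integration of equation \eqref{eq: auxiliary 1} along $L$-integral curves. By the linearity of $\Box_g$ and the null-frame decomposition \eqref{eq:wave operator in null frame}, equation \eqref{eq: auxiliary 1} continues to hold for any linear combination $\psi$ of $\wb,w,\psi_2$, giving
\[ L(\Lb\psi) = -\tfrac{1}{2}\chi\, \Lb\psi + a(t,u,\vartheta),\qquad \|a\|_{L^\infty(\Sigma_\tau)}\lesssim \mathring{M}\varepsilon.\]

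First I would derive a uniform $L^\infty$ bound on $\Lb\psi$. Writing $\Lb = 2T + c^{-1}\kappa L$, the bootstrap $\mathbf{(B_\infty)}$ together with \eqref{bound on kappa} and $c\approx 1$ yields $\|\Lb\psi\|_{L^\infty(\Sigma_\tau)}\lesssim \mathring{M}$ for any such linear combination: the only component that is not already of size $O(M\varepsilon)$ is $T\wb$, which is bounded thanks to $T\wb\approx -1$ from \eqref{bound on Twb}. Combined with $\|\chi\|_{L^\infty(\Sigma_\tau)}\lesssim \mathring{M}\varepsilon$ from \eqref{preliminary geometric bounds}, this gives $\|L(\Lb\psi)\|_{L^\infty(\Sigma_\tau)}\lesssim \mathring{M}\varepsilon$. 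Since the $L$-integral curves are exactly the curves of constant $(u,\vartheta)$ in the acoustical coordinates, integrating from $\delta$ to $t$ gives
\[ |\Lb\psi(t,u,\vartheta)- \Lb\psi(\delta,u,\vartheta)| \leqslant \int_\delta^t \|L(\Lb\psi)\|_{L^\infty(\Sigma_\tau)}\, d\tau \lesssim \mathring{M}\varepsilon\, t.\]

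Next I would pass from $\Lb\psi$ to $T\psi$ via $T=\tfrac{1}{2}(\Lb - c^{-1}\kappa L)$. From $\kappa\lesssim Mt$, $c\approx 1$ and $\|L\psi\|_{L^\infty}\lesssim M\varepsilon$, both endpoints $(c^{-1}\kappa L\psi)(t,u,\vartheta)$ and $(c^{-1}\kappa L\psi)(\delta,u,\vartheta)$ are of size $O(\mathring{M}\varepsilon t)$, so
\[|T\psi(t,u,\vartheta)-T\psi(\delta,u,\vartheta)|\lesssim \mathring{M}\varepsilon\, t.\]
Adding $c_0$ to both sides, using the triangle inequality, and taking the supremum in $(u,\vartheta)$ produces the claimed main inequality.

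Finally, for the specialization \eqref{bound on T(v^1+c)}, the Riemann invariant identities \eqref{def: Riemann invariants} give $v^1+c = \tfrac{\gamma+1}{2}\wb+\tfrac{\gamma-3}{2}w$, whence $T(v^1+c)+1 = \tfrac{\gamma+1}{2}\bigl(T\wb+\tfrac{2}{\gamma+1}\bigr) + \tfrac{\gamma-3}{2}Tw$. By $\mathbf{(I_\infty)}$, the right-hand side has $L^\infty(\Sigma_\delta)$-norm bounded by $O(\varepsilon\delta)\lesssim \mathring{M}\varepsilon t$, and applying the main inequality of the lemma finishes the argument. The only delicate point is verifying $\|\Lb\wb\|_{L^\infty}\lesssim 1$, which relies on the previously established pointwise bound $T\wb\approx -1$; without it, the integrand $\chi\cdot\Lb\psi$ in the Grönwall step would not carry the decisive factor of $\varepsilon$.
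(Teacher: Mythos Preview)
Your argument is correct, but it takes a longer detour than necessary. The paper's proof is one line: the bootstrap assumption $\mathbf{(B_\infty)}$ already contains the bound $\|LT(\psi)\|_{L^\infty(\Sigma_\tau)}\leqslant M\varepsilon$ (this is the third line of $\mathbf{(B_\infty)}$ with $Z^\beta=T$), so integrating along $L=\partial_t$ from $\delta$ to $t$ immediately gives $|T\psi(t,u,\vartheta)-T\psi(\delta,u,\vartheta)|\leqslant M\varepsilon(t-\delta)$, and the specialization follows from $\mathbf{(I_\infty)}$ exactly as you say.

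Your route via the wave equation and \eqref{eq: auxiliary 1} is valid, but it essentially reproduces the calculation already performed in the ``Improved estimates on $\kappa$'' subsection (which is where the paper derived $|T\psi(t)-T\psi(\delta)|\lesssim\mathring{M}\varepsilon t$ on the way to \eqref{bound on Twb}). Since that work is already done and the bootstrap $\mathbf{(B_\infty)}$ records the needed bound on $LT\psi$ directly, there is no need to pass through $\Lb\psi$, the bound on $\chi\cdot\Lb\psi$, or the conversion $T=\tfrac12(\Lb-c^{-1}\kappa L)$. Your proof would be appropriate if $\mathbf{(B_\infty)}$ did not include second-order information; as it stands, the paper's shortcut is preferable.
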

\begin{proof}
We integrate the bound $|LT(\psi)| \lesssim M \varepsilon$ of $\mathbf{(B_\infty)}$ and we use $\mathbf{(I_\infty)}$ to bound $\|T(v^1+c) + 1\|_{L^{\infty}(\Sigma_\delta)}$. This proves the lemma.
\end{proof}
We write \eqref{structure eq 1: L kappa 2nd form} and \eqref{structure eq 3: L T on Ti Xi Li} as follows:
\begin{equation}\label{precise form of kappa and Ti}
\begin{split}
&L \kappa = -T(v^1 + c) -\underbrace{\big[ (\Th^1 + 1)T(\psi_1) + \Th^2T(\psi_2)\big] }_{\mathbf{err}_{\kappa}},\\
&L(\Th^i)=\big[\Xh(v^1+c)+ \underbrace{(\Th^1+1)\Xh(\psi_1) + \Th^2\Xh(\psi_2)}_{\mathbf{err}_{\Th}}\big]\Xh^i.
\end{split}
\end{equation}
According to the bounds \eqref{bound on T1+1}, \eqref{bound on T2} and $\mathbf{(B_\infty)}$, the error terms $\mathbf{err}_{\kappa}$ and $\mathbf{err}_{\Th}$ are bounded as follows
\begin{equation}\label{bound on err 0th order}
		\|\mathbf{err}_{\kappa}\|_{L^{\infty}(\Sigma_t)} \lesssim\mathring{M} t^2\varepsilon^2, \ \
		\|\mathbf{err}_{\Th}\|_{L^{\infty}(\Sigma_t)} \lesssim \mathring{M} t \varepsilon^2.
\end{equation}
In view of \eqref{bound on T(v^1+c)} and \eqref{precise form of kappa and Ti}, we also have the following byproduct:
\begin{equation}\label{bound on Lkappa}
	|L\kappa-1| \leqslant \mathring{M} t \varepsilon{\color{black}.}
\end{equation}
We commute $Z \in\mathscr{Z}=\{T,\Xh\}$ {\color{black}with the equation of} $L(\Th^i)$ in \eqref{precise form of kappa and Ti}. In view of \eqref{eq:commutator formulas}, we have
\begin{equation}\label{eq: LZThi}
L(Z(\widehat{T}^{i}))= (Z\Xh(v^1+c) + Z(\mathbf{err}_{\Th})) \Xh^i +  \big(\Xh(v^1+c) + \mathbf{err}_{\Th}\big)Z(\Xh^i)-\,^{(Z)}f\cdot \Xh(\Th^i),
\end{equation}
where $\,^{(\Xh)}f=\chi$, $\,^{(T)}f=\zeta+\eta$ and
\begin{align*}
		Z(\mathbf{err}_{\Th}) &= \Xh(\psi_i) \cdot Z(\Th^i)  +(\Th^1+1)Z\Xh(\psi_1) + \Th^2 Z\Xh(\psi_2). 
\end{align*}
In view of \eqref{preliminary geometric bounds}, \eqref{bound on T1+1}, \eqref{bound on T2} and $\mathbf{(B_\infty)}$,  we can bound $\Xh(v^1+c) + \mathbf{err}_{\Th}$ and $\Xh(\psi_i)$ by $\mathring{M}\varepsilon$ and bound $\,^{(Z)}f\cdot \Xh(\Th^k)$, $ (\Th^1+1)Z\Xh(\psi_1)$, $\Th^2 Z\Xh(\psi_2)$ by $\mathring{M}t\varepsilon^2$. Therefore,
\begin{equation}\label{eq: LZThi}
\big|L(Z(\widehat{T}^{i}))\big|\lesssim  \mathring{M}\varepsilon+\mathring{M}\varepsilon\big(|Z(\widehat{T}^{1})|+|Z(\widehat{T}^{2})|\big).
\end{equation}
By Gronwall's inequality and $\mathbf{(I_\infty)}$, we have $\|Z(\Th^i)\| \lesssim \mathring{M} t\varepsilon$. The bound on $Z(\Th^1)$ can be improved. In fact,
\begin{align*}
L(Z(\widehat{T}^{1}))= (Z\Xh(v^1+c) + Z(\mathbf{err}_{\Th})) \Xh^1 +  \big(\Xh(v^1+c) + \mathbf{err}_{\Th}\big)Z(\Xh^1)-\,^{(Z)}f\cdot \Xh(\Th^1){\color{black}.}
\end{align*}
We can bound $\Xh^1$ by $\mathring{M}\varepsilon t$. Therefore,
\begin{equation}\label{eq: LZTh1}
\big|L(Z(\widehat{T}^{1}))\big|\lesssim  \mathring{M}\varepsilon^2 t+\mathring{M}\varepsilon |Z(\widehat{T}^{1})|.\end{equation}
In view of the bound of $Z(\Th^1)$ on $\Sigma_\delta$ and $\|Z(\Th^i)\| \lesssim \mathring{M} t\varepsilon$, we then conclude that
\begin{equation}\label{bound on Z Ti}
		\big|Z(\Th^1)\big| \lesssim \mathring{M} t^2 \varepsilon^2, \ \ \big|Z(\Th^2)\big| \lesssim \mathring{M} t \varepsilon.
\end{equation}
In view of \eqref{eq: LZThi} and \eqref{eq: LZTh1}, we also have the following byproduct:
\begin{equation}\label{LZTi byproduct}
\big|L(Z(\widehat{T}^{1}))\big|\lesssim  \mathring{M}\varepsilon^2 t,  \ \big|L(Z(\widehat{T}^{2}))\big|\lesssim  \mathring{M}\varepsilon.
\end{equation}

We commute $Z \in\mathscr{Z}=\{T,\Xh\}$ {\color{black}with the equation of} $L(\kappa)$ in \eqref{precise form of kappa and Ti}. In view of \eqref{eq:commutator formulas}, we have
\begin{equation}\label{eq: LZkappa}
L(Z(\kappa))= -ZT(v^1+c) - Z(\mathbf{err}_{\kappa})-\,^{(Z)}f\cdot \Xh(\kappa),
\end{equation}
where $\,^{(\Xh)}f=\chi$, $\,^{(T)}f=\zeta+\eta$ and
\begin{align*}
Z(\mathbf{err}_{\kappa}) &= T(\psi_i) \cdot Z(\Th^i)  +(\Th^1+1) ZT(\psi_1) + \Th^2 ZT(\psi_2). 
\end{align*}
By  \eqref{preliminary geometric bounds}, \eqref{bound on T1+1}, \eqref{bound on T2}, $\mathbf{(B_\infty)}$ and \eqref{bound on Z Ti}, we have $|Z(\mathbf{err}_{\kappa})|\lesssim \mathring{M}\varepsilon^2t^2$ and $|ZT(v^1+c)|\lesssim \mathring{M}\varepsilon t$. Therefore,
\begin{align*}
{\color{black}|L(Z(\kappa))| \lesssim}  \mathring{M}\varepsilon \big(|\Xh(\kappa)|+|T(\kappa)|\big)+ \mathring{M}\varepsilon t.
\end{align*}
By Gronwall's inequality, we then conclude that
\begin{equation}\label{bound on Z kappa}
		\big|Z(\kappa)\big| \lesssim \mathring{M} t^2 \varepsilon.
\end{equation}
As a {\color{black}byproduct,} we have
\begin{equation}\label{bound on LZ kappa}
\big|L(Z(\kappa))\big|\lesssim \mathring{M}\varepsilon t.
\end{equation}

We now turn to the estimates on $Z(\zeta), Z(\eta)$ and $Z(\chi)$. By the explicit formula of $\eta$ and $\zeta$ in \eqref{structure quantities: connection coefficients}, we can use \eqref{preliminary geometric bounds}, \eqref{bound on T1+1}, \eqref{bound on T2}, $\mathbf{(B_\infty)}$, \eqref{bound on Z Ti} and \eqref{bound on Z kappa} to derive
\begin{equation}\label{bound on Z eta zeta}
		\big|Z(\eta)\big| +\big|Z(\zeta)\big| \lesssim \mathring{M}  \varepsilon t.
\end{equation}

To derive the bound on $Z(\chi)$, we commute $Z$ with \eqref{structure eq 2: L chi} to derive
\begin{align*}
L (Z\chi)&=Z(L \chi)-\,^{(Z)}f\cdot \Xh(\chi)
\end{align*} 
where $\,^{(\Xh)}f=\chi$, $\,^{(T)}f=\zeta+\eta$. We can apply $Z$ directly to the righthand side of \eqref{structure eq 2: L chi} to compute $Z(L \chi)$. Therefore, it requires the following explicit expressions:
\[w(\Xh,\Xh)=\Xh^i \Xh(\psi_i), \ \ w(\Xh,L)=w(L,\Xh)=L^i\Xh^i(\psi_i)+\Xh(\psi_0), \ \ w(L,L)=L^i\Xh^i(\psi_i)+L(\psi_0),\]
where $L^i = -\psi_i-c\widehat{T}^i$. Since $h = \frac{1}{\gamma-1} c^2=\psi_0 - \frac{1}{2}|\psi_1|^2-\frac{1}{2}|\psi_2|^2$, we can use \eqref{preliminary geometric bounds}, \eqref{bound on T1+1}, \eqref{bound on T2}, $\mathbf{(B_\infty)}$ and \eqref{bound on Z Ti} to {\color{black}show that,} except the terms $e\chi$ and $\chi^2$ on the righthand of \eqref{structure eq 2: L chi}, we have $|Z(L \chi)|\lesssim \mathring{M}\varepsilon$. Therefore, {\color{black}we can use} the bound on $\chi$ from \eqref{preliminary geometric bounds} to derive
\begin{equation}\label{eq:L chi in preliminary geometric bounds 2}
\left\|L (Z(\chi))-e\cdot Z(\chi) +\chi\cdot Z(\chi)\right\|_{L^\infty(\Sigma_t)} \lesssim \mathring{M}\varepsilon.
\end{equation}
According to $(\mathbf{I_\infty})$, on the initial slice $\Sigma_\delta$, we have
\[|Z\chi(\delta,u,\vartheta)| =  |Z\big(c(\slashed{k} - \theta)\big)|=|Z\big(\Xh^i\Xh(\psi_i)-c\theta\big)|\lesssim \varepsilon.\]
Therefore, we can integrate \eqref{eq:L chi in preliminary geometric bounds 2} from to $\delta$ to $t$ to derive
\begin{equation}\label{bound on Z chi}
\|Z(\chi)\|_{L^\infty(\Sigma_t)} \lesssim \mathring{M}\varepsilon,
\end{equation}
provided $\mathring{M}\varepsilon \leqslant 1$. 

To derive estimates on $Z^2(\Th^i)$, we commute $Z \in\mathscr{Z}=\{T,\Xh\}$ with \eqref{eq: LZThi}. By  \eqref{eq:commutator formulas}, for a multi-index $\alpha$ with $|\alpha|=2$, we have
\begin{align*}
L(Z^\alpha(\widehat{T}^{i}))=&\sum_{\beta+\gamma=\alpha} \big(Z^\beta(\Xh(v^1+c)) + Z^\beta(\mathbf{err}_{\Th})\big) Z^\gamma(\Xh^i) \\
&-\,^{(Z')}f\cdot \Xh(Z(\Th^i))-Z'(\,^{(Z)}f)\cdot \Xh(\Th^i)-\,^{(Z)}f\cdot Z'(\Xh(\Th^i)),
\end{align*}
where $\,^{(\Xh)}f=\chi$, $\,^{(T)}f=\zeta+\eta$ and
\begin{align*}
		Z^\beta(\mathbf{err}_{\Th}) &= \sum_{\beta'+\beta''=\beta}Z^{\beta'}(\Xh(\psi_1)) \cdot Z^{\beta''}(\Th^1+1)+  Z^{\beta'}(\Xh(\psi_2)) \cdot Z^{\beta''}(\Th^2) . 
\end{align*}
In view of \eqref{preliminary geometric bounds}, \eqref{bound on T1+1}, \eqref{bound on T2}, \eqref{bound on Z Ti} and $\mathbf{(B_\infty)}$,  we can bound the sum in the expression of $L(Z^\alpha(\widehat{T}^{i}))$ by $\mathring{M}\varepsilon+\mathring{M}\varepsilon\big(|Z^2(\widehat{T}^{1})|+|Z^2(\widehat{T}^{2})|\big)$; by \eqref{bound on Z eta zeta} and \eqref{bound on Z chi}, we can bound the terms with $\,^{(Z)}f$'s also by $\mathring{M}\varepsilon+\mathring{M}\varepsilon\big(|Z^2(\widehat{T}^{1})|+|Z^2(\widehat{T}^{2})|\big)$. Therefore,
\begin{align*}
\big|L(Z^2(\widehat{T}^{i}))\big|\lesssim  \mathring{M}\varepsilon+\mathring{M}\varepsilon\big(|Z^2(\widehat{T}^{1})|+|Z^2(\widehat{T}^{2})|\big).
\end{align*}
We then use Gronwall's inequality and $\mathbf{(I_\infty)}$ to derive $\|Z^2(\Th^i)\| \lesssim \mathring{M} t\varepsilon$. We can also improve the estimates on $Z(\Th^1)$. In fact,
\begin{align*}
L(Z^\alpha(\widehat{T}^{1}))=&\sum_{\beta+\gamma=\alpha} \big(Z^\beta(\Xh(v^1+c)) + Z^\beta(\mathbf{err}_{\Th})\big) Z^\gamma(\Xh^1) \\
&-\,^{(Z')}f\cdot \Xh(Z(\Th^1))-Z'(\,^{(Z)}f)\cdot \Xh(\Th^1)-\,^{(Z)}f\cdot Z'(\Xh(\Th^1)){\color{black}.}
\end{align*}
In the previous estimates, for $\gamma=0$ and $i=2$, we can only bound $\Xh^2$ by a constant. In the current {\color{black}scenario, the bound can be improved to} $|\Xh^2|\lesssim \mathring{M}\varepsilon t$. Therefore, using $|Z^2(\Th^i)| \lesssim \mathring{M} t\varepsilon$, we obtain that
\begin{align*}
\big|L(Z^2(\widehat{T}^{1}))\big|\lesssim  \mathring{M}\varepsilon^2 t+\mathring{M}\varepsilon |Z(\widehat{T}^{1})|.\end{align*}
Since $\big|Z^2(\Th^1)|_{\Sigma_\delta}\big|\lesssim \delta^2 \varepsilon^2$ , we then integrate the above inequality and we conclude that
\begin{equation}\label{bound on Z2 Ti}
		\big|Z^2(\Th^1)\big| \lesssim \mathring{M} t^2 \varepsilon^2, \ \ \big|Z^2(\Th^2)\big| \lesssim \mathring{M} t \varepsilon.
\end{equation}
Similar to \eqref{LZTi byproduct}, we also have the following byproduct:
\begin{equation}\label{LZTi byproduct 2}
\big|L(Z^2(\widehat{T}^{1}))\big|\lesssim  \mathring{M}\varepsilon^2 t,  \ \big|L(Z^2(\widehat{T}^{2}))\big|\lesssim  \mathring{M}\varepsilon.
\end{equation}

Finally, we derive the pointwise bound on $Z^2(\kappa)$. We commute $Z \in\mathscr{Z}=\{T,\Xh\}$ with \eqref{eq: LZkappa} to derive
\begin{equation}\label{Z2 kappa aux}
L(Z^2(\kappa))= -Z^2T(v^1+c) - Z^2(\mathbf{err}_{\kappa})-Z(\,^{(Z)}f)\cdot \Xh(\kappa)-\,^{(Z)}f\cdot Z(\Xh(\kappa))-\,^{(Z)}f\cdot \Xh(Z(\kappa)),
\end{equation}
where $\,^{(\Xh)}f=\chi$, $\,^{(T)}f=\zeta+\eta$ and
\begin{align*}
Z^2(\mathbf{err}_{\kappa}) &= \sum_{|\alpha|+|\beta|=2}Z^\alpha(\Th^1+1) Z^\beta T(\psi_1) + Z^\alpha(\Th^2) Z^\beta(T(\psi_2)). 
\end{align*}
By  \eqref{preliminary geometric bounds}, \eqref{bound on T1+1}, \eqref{bound on T2}, \eqref{bound on Z Ti}, \eqref{bound on Z kappa} and \eqref{bound on Z2 Ti}, we have $|Z^2(\mathbf{err}_{\kappa})|\lesssim \mathring{M}\varepsilon^2t^2$. The rest of the terms in \eqref{Z2 kappa aux} can be bounded in the same way. In particular, we use the ansatz $\mathbf{(B_\infty)}$ that $|Z^2T(v^1+c)| \lesssim \mathring{M}\varepsilon t$. Therefore, 
\begin{align*}
L(Z^2(\kappa))\lesssim \mathring{M}\varepsilon |Z^2(\kappa)|+ \mathring{M}\varepsilon t.
\end{align*}
By Gronwall's inequality, we then conclude that
\begin{equation}\label{bound on Z^2 kappa}
		\big|Z^2(\kappa)\big| \lesssim \mathring{M} t^2 \varepsilon.
\end{equation}
As a {\color{black}byproduct,} we also have
\begin{equation}\label{bound on LZ2 kappa}
\big|L(Z^2(\kappa))\big|\lesssim \mathring{M}\varepsilon t.
\end{equation}
We summarize the estimates derived in this subsection as follows:
\begin{proposition}
{\color{black}Under the bootstrap assumptions} $(\mathbf{B}_2)$ and $(\mathbf{B}_\infty)$, if $\mathring{M}\varepsilon$ is sufficiently small, for all multi-index $\alpha$ with $1\leqslant |\alpha|\leqslant 2$, for all $Z\in \mathscr{Z}$, we have the following pointwise bounds on $\Sigma_t^{u^*}$ for all $t\in [\delta, t^*]$:
\begin{equation}\label{preliminary geometric bounds with derivatives}
\begin{cases}
&\|Z(\zeta)\|_{L^\infty(\Sigma_t)}\lesssim \mathring{M} \varepsilon t, \ \ \|Z(\eta)\|_{L^\infty(\Sigma_t)}\lesssim \mathring{M} \varepsilon t, \  \ \|Z(\chi)\|_{L^\infty(\Sigma_t)}\lesssim \mathring{M}\varepsilon,\\
&\|Z^\alpha(\kappa)\|_{L^\infty(\Sigma_t)}\lesssim \mathring{M}\varepsilon t^2, \ \|Z^\alpha(\Th^1)\|_{L^\infty(\Sigma_t)}\leqslant \mathring{M}\varepsilon^2 t^2, \ \|Z^\alpha(\Th^2)\|_{L^\infty(\Sigma_t)}\leqslant \mathring{M}\varepsilon t.
\end{cases}
\end{equation}
\end{proposition}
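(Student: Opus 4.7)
The plan is to prove all six bounds simultaneously by commuting $Z \in \mathscr{Z} = \{T, \Xh\}$ (once or twice) with the fundamental transport equations \eqref{structure eq 1: L kappa 2nd form}, \eqref{structure eq 3: L T on Ti Xi Li}, and \eqref{structure eq 2: L chi} for $\kappa$, $\Th^i$, and $\chi$, and then closing via Gronwall in $t$. The basic toolkit consists of the commutator formulas \eqref{eq:commutator formulas}, the zeroth-order bounds in \eqref{preliminary geometric bounds}, and the sharper pointwise estimates \eqref{bound on T1+1}, \eqref{bound on T2}, \eqref{bound on T(v^1+c)}, \eqref{bound on Lkappa} already derived above; the initial data on $\Sigma_\delta$ coming from $\mathbf{(I_\infty)}$ provide the starting values for each Gronwall integration.

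First I would handle the first-order derivatives. Commuting $Z$ with the $L(\Th^i)$ equation in \eqref{precise form of kappa and Ti} produces a transport equation whose right-hand side splits into a source of size $\mathring{M}\varepsilon$ (controlled by the bootstrap $\mathbf{(B_\infty)}$ applied to $Z\Xh\psi$) plus a linear term in $Z(\Th^j)$ coming from the commutator ${}^{(Z)}f \cdot \Xh(\Th^i)$ with $f = \chi$ or $\zeta+\eta$; Gronwall then gives $|Z(\Th^i)| \lesssim \mathring{M} \varepsilon t$, and the improvement $|Z(\Th^1)| \lesssim \mathring{M} \varepsilon^2 t^2$ is obtained by a second pass exploiting $|\Xh^1| \lesssim \mathring{M} \varepsilon t$ (forced by $\Th^1 + 1 = O(\mathring M \varepsilon^2 t^2)$ via $|\Th|^2 = 1$). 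The same scheme applied to $L(\kappa)$ gives $|Z(\kappa)| \lesssim \mathring{M}\varepsilon t^2$, exploiting the sharp $|ZT(v^1+c)| \lesssim \mathring{M}\varepsilon t$ implied by $\mathbf{(B_\infty)}$. The bounds on $Z(\zeta)$, $Z(\eta)$ then follow \emph{algebraically} by differentiating the explicit formulas \eqref{structure quantities: connection coefficients} and substituting the estimates just obtained, while $Z(\chi)$ requires commuting $Z$ into \eqref{structure eq 2: L chi}, which produces a Riccati-type transport equation $L(Z\chi) = eZ(\chi) - \chi Z(\chi) + O(\mathring M \varepsilon)$ that closes by Gronwall given the $\mathbf{(I_\infty)}$ data.

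The second-order bounds are then obtained by iterating the procedure, commuting a second $Z$ through the first-order transport equations \eqref{eq: LZThi} and \eqref{eq: LZkappa}. The new source terms contain $Z({}^{(Z')}f) \cdot \Xh(\cdot)$, which are now controlled by the first-order bounds on $Z(\zeta), Z(\eta), Z(\chi)$ just established, so the hierarchy closes without any loss. Again the asymmetry $|Z^2(\Th^1)| \lesssim \mathring{M} \varepsilon^2 t^2$ versus $|Z^2(\Th^2)| \lesssim \mathring{M}\varepsilon t$ arises from the smallness of $\Xh^1$, and $|Z^2(\kappa)| \lesssim \mathring{M} \varepsilon t^2$ follows from the sharp ansatz bound on $Z^2 T(v^1+c)$.

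The main technical obstacle is \emph{not} the Gronwall mechanics but the careful bookkeeping of $t$ and $\varepsilon$ powers, especially the hidden extra smallness of $\Th^1 + 1$ and all its derivatives — this is precisely the ``extra vanishing'' highlighted in item d) of Section \ref{section:nonlinear estimates} and it must be tracked through every commutation, since otherwise $|Z^\alpha(\Th^1)|$ would only be $O(\mathring{M}\varepsilon t)$ and later estimates on the change-of-frame term $\Xh(\Xh^1)$ (which feeds back into the equation for $\chi$) would fail to close. Everything else reduces to applying Gronwall and absorbing small constants by choosing $\mathring{M}\varepsilon$ sufficiently small.
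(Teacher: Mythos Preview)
Your proposal is correct and follows essentially the same approach as the paper: commute $Z$ once (then twice) with the transport equations \eqref{precise form of kappa and Ti} and \eqref{structure eq 2: L chi}, close by Gronwall using the commutator formulas \eqref{eq:commutator formulas} and the zeroth-order bounds \eqref{preliminary geometric bounds}, obtain $Z(\zeta),Z(\eta)$ algebraically from \eqref{structure quantities: connection coefficients}, and iterate with the first-order bounds on $Z(\chi),Z(\zeta),Z(\eta)$ feeding into the second pass. One small clarification: the sharper bound on $Z^\alpha(\Th^1)$ here is not quite the ``extra vanishing'' of item d) in Section \ref{section:nonlinear estimates} (which concerns $\yr=\kappar^{-1}\Xr(v^1+c)$), but the more elementary observation that $L(\Th^1)=(\cdots)\Xh^1$ with $\Xh^1=-\Th^2=O(\mathring M\varepsilon t)$, which directly yields the additional $\varepsilon t$ factor upon integration.
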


\subsection{Change of coordinates and Sobolev inequalities}

\subsubsection{Control of the change of coordinates}
If one passes from the acoustical coordinates to the Cartesian coordinates on $\Sigma_t$, the transformation is controlled by the Jacobi matrix $\begin{pmatrix} \frac{\partial x_1}{\partial u} & \frac{\partial x_1}{\partial \vartheta}\\
	\frac{\partial x_2}{\partial u} & \frac{\partial x_2}{\partial \vartheta}
\end{pmatrix}$. We recall that in the acoustical coordinates $(t,u,\vartheta)$ the vector field $T$ can be written as $T = \frac{\partial}{\partial u} - \Xi \frac{\partial}{\partial \vartheta}$, see \eqref{eq: L T in terms of coordinates}.  On the other hand, $L=\frac{\partial}{\partial t}$ in the acoustical coordinates. Therefore, $L$ commutes with $ \frac{\partial}{\partial u}$ and $\frac{\partial}{\partial \vartheta}$. Hence,
\begin{align*}
	\frac{\partial \Xi}{\partial t}\frac{\partial}{\partial \vartheta} = [L,\Xi X] = -[L,T] = (\zeta + \eta)\Xh.
\end{align*}
Therefore,
\begin{equation}\label{eq:structure eq for Xi}
	L(\Xi) = \frac{1}{\sqrt{\slashed{g}}}(\zeta + \eta).
\end{equation}
Since $\Xi\big|_{\Sigma_\delta}\equiv 0$, by integrating the above equation, for all $t\in [\delta, t^*]$, we have  the following pointwise bound on $\Xi$ :
\begin{equation}\label{bound on Xi}
	\|\Xi\|_{L^\infty(\Sigma_t^{u^*})}\lesssim \mathring{M}\varepsilon t.
\end{equation}
We recall that $X=\frac{\partial}{\partial \vartheta}$ and $\slashed{g}=g(X,X)$. We can apply $L,T$ and $X$ on $x_0, x_1$ and $x_2$ to derive
\begin{align*}
	\frac{\partial x_{\nu}}{\partial t} = L^{\nu}, \quad \frac{\partial x_i}{\partial u} = T^i + \Xi X^i, \quad \frac{\partial x_i}{\partial \vartheta} = X^i = \sqrt{\slashed{g}}\Xh^i, \  i=1,2, \ \nu=0,1,2.
\end{align*}
Hence, the Jacobi matrix of the coordinates transformation $(t,u,\vartheta) \mapsto (x_0,x_1,x_2)$ is given by
\begin{align*}
	\begin{pmatrix}
		\frac{\partial x^0}{\partial t} & \frac{\partial x^0}{\partial u} & \frac{\partial x^0}{\partial \vartheta}\\
		\frac{\partial x^1}{\partial t} & \frac{\partial x^1}{\partial u} & \frac{\partial x^1}{\partial \vartheta}\\
		\frac{\partial x^2}{\partial t} & \frac{\partial x^2}{\partial u} & \frac{\partial x^2}{\partial \vartheta}
	\end{pmatrix} = \begin{pmatrix}
		1 & 0 & 0\\
		L^1 & \kappa \Th^1 + \Xi \sqrt{\slashed{g}}\Xh^1 & \sqrt{\slashed{g}}\Xh^1\\
		L^2 & \kappa \Th^2 + \Xi \sqrt{\slashed{g}}\Xh^2 & \sqrt{\slashed{g}}\Xh^2
	\end{pmatrix}.
\end{align*}
In particular, the Jacobian is given by $\Delta = -\kappa \sqrt{\slashed{g}}$ and for $k=1,2$, we have
\begin{equation}\label{eq: coordinate change}
	\begin{cases}
		&\frac{\partial x^k}{\partial u}=\kappa \Th^k + \Xi \sqrt{\slashed{g}}\Xh^k, \\ 
		&\frac{\partial x^k}{\partial \vartheta}=\sqrt{\slashed{g}}\Xh^k.
	\end{cases}
\end{equation}
We use  \eqref{structure eq 1: L kappa}, \eqref{structure eq 3: L T on Ti Xi Li}, \eqref{eq:structure eq for Xi} and $L(\slashed{g})=2 \slashed{g} \cdot \chi$ to compute the $L$-derivative of the above equations. First of all, we have
\begin{equation}\label{eq: coord aux 1}
	L\big(\frac{\partial x^k}{\partial \vartheta}\big)=\sqrt{\slashed{g}}\chi\Xh^k+\sqrt{\slashed{g}}L(\Xh^k).
\end{equation}
We can then use  \eqref{structure eq 3: L T on Ti Xi Li},\eqref{preliminary geometric bounds} and $\mathbf{(B_\infty)}$ to bound the righthand side by $\mathring{M}\varepsilon$. Similarly, we can bound $L\big(\frac{\partial x^2}{\partial u}\big)$ in the same manner. This yields
\begin{equation}\label{eq: coordinate bound 1}
	\big|L\big(\frac{\partial x^1}{\partial \vartheta}\big)\big| + \big|L\big(\frac{\partial x^2}{\partial \vartheta}\big)\big| + \big|L\big(\frac{\partial x^2}{\partial u}\big)\big| \lesssim \mathring{M} \varepsilon.
\end{equation}
The bound on $\frac{\partial x^1}{\partial u}$ is different from the previous ones. In fact, we compute that
\begin{align*}
	L\big(\frac{\partial x^1}{\partial u}\big) + 1 &= (\Th^1 + 1) + (L(\kappa)-1)\Th^1 + \kappa L(\Th^1) + L(\Xi \sqrt{\slashed{g}}\Xh^1).
\end{align*}
Thus, we use \eqref{structure eq 3: L T on Ti Xi Li},\eqref{preliminary geometric bounds},\eqref{bound on Lkappa} and $\mathbf{(B_\infty)}$ to bound the righthand side by $\mathring{M}\varepsilon$. This yields
\begin{equation}\label{eq: coordinate bound 2}
	\big|L\big(\frac{\partial x^1}{\partial u}\big)+1\big| \lesssim \mathring{M} \varepsilon.
\end{equation}
We now integrate \eqref{eq: coordinate bound 1} and\eqref{eq: coordinate bound 2}. By $\mathbf{(I_\infty)}$, we conclude that
\begin{equation}\label{eq: fundamental control on coordinate change 1}
	\big|\frac{\partial x_1}{\partial \vartheta}\big|+ \big|\frac{\partial x_1}{\partial u}+t\big|+\big|\frac{\partial x_2}{\partial \vartheta}-1\big|+\big|\frac{\partial x_2}{\partial u}\big|\lesssim \mathring{M} \varepsilon t.
\end{equation}

We can also commute $Z\in \mathscr{Z}$ with \eqref{eq: coord aux 1} and we have eight possible quantities $L(Z(f))$ where $Z\in \{\Xh,\Th\}$ and $f\in \{\frac{\partial x^k}{\partial \vartheta},\frac{\partial x^k}{\partial u}|k=1,2\}$. We treat $L\big(\Xh(\frac{\partial x^k}{\partial \vartheta})\big)$ in details and the rest can be bounded exactly in the same manner. 
In view of \eqref{eq:commutator formulas}, we have $[L,\Xh]=-\chi\cdot \Xh$. Thus, by applying $\Xh$ {\color{black}to \eqref{eq: coord aux 1},} we have
\begin{align*}
	L\big(\Xh\big(\frac{\partial x^k}{\partial \vartheta}\big)\big)=-\chi\cdot\Xh\big(\frac{\partial x^k}{\partial \vartheta}\big)+\Xh\big(\sqrt{\slashed{g}}\chi\Xh^k\big)+\Xh\big(\sqrt{\slashed{g}}L(\Xh^k)\big).
\end{align*}
We can use \eqref{structure eq 3: L T on Ti Xi Li} to replace $L(\Xh^k)$. Thus, by \eqref{preliminary geometric bounds}, \eqref{preliminary geometric bounds with derivatives} and $\mathbf{(B_\infty)}$, we can bound the second term on the righthand side of the above equation by $\mathring{M}\varepsilon$. Hence,
\begin{align*}
	\big|L\big(\Xh\big(\frac{\partial x^k}{\partial \vartheta}\big)\big)+\chi\cdot\Xh\big(\frac{\partial x^k}{\partial \vartheta}\big)\big|\lesssim \mathring{M}\varepsilon.
\end{align*}
Therefore, since $\|\chi\|_{L^\infty(\Sigma_t)}\lesssim \mathring{M}\varepsilon$, we can use Gronwall's inequality and $\mathbf{(I_\infty)}$ to conclude that
\[
\big|\Xh\big(\frac{\partial x^1}{\partial \vartheta}\big)\big|  \lesssim \mathring{M} \varepsilon t.
\] provided $\mathring{M}\varepsilon$ is sufficiently small. We proceed in a similar manner for other terms and we finally have
\begin{equation}\label{eq: fundamental control on coordinate change aux}
	\big|Z\big(\frac{\partial x_k}{\partial \vartheta}\big)\big|+ \big|Z\big(\frac{\partial x_k}{\partial u}\big)\big|\lesssim \mathring{M} \varepsilon t,  \ \ Z\in \mathscr{Z},\ k=1,2.
\end{equation}
Since $T = \frac{\partial}{\partial u} - \Xi \frac{\partial}{\partial \vartheta}$ and $\Xi=\frac{1}{\sqrt{\slashed{g}}}\frac{\partial}{\partial \vartheta}$, by \eqref{bound on Xi}, for a given $C^1$ function $f$ defined on $\Sigma_t^{u^*}$, we have
\begin{align*}
	\big|\frac{\partial f}{\partial u}\big|^2+\big|\frac{\partial f}{\partial \vartheta}\big|^2&=\big|T(f)+\Xi \sqrt{\slashed{g}}\Xh(f)\big|^2+\big|\sqrt{\slashed{g}}\Xh(f)\big|^2\\
	&\lesssim  |Tf|^2 + |\Xh f|^2.
\end{align*}
We can take $f=\frac{\partial x_k}{\partial \vartheta}$ and $\frac{\partial x_k}{\partial u}$. By \eqref{eq: fundamental control on coordinate change aux}, we derive
\begin{equation}\label{eq: fundamental control on coordinate change 2}
	\big|\frac{\partial^2 x_k}{\partial \vartheta^2}\big|+ \big|\frac{\partial^2 x_k}{\partial u\partial \vartheta}\big|+\big|\frac{\partial^2 x_k}{\partial u^2}\big|\lesssim \mathring{M} \varepsilon t, \ \ k=1,2.
\end{equation}
We summarize the estimates on the coordinates transformation $(t,u,\vartheta) \mapsto (x_0,x_1,x_2)$ as follows:
\begin{proposition}
	{\color{black}Under the bootstrap assumptions} $(\mathbf{B}_2)$ and $(\mathbf{B}_\infty)$, if $\mathring{M}\varepsilon$ is sufficiently small, we have the following pointwise bounds on $\Sigma_t^{u^*}$ for all $t\in [\delta, t^*]$:
	\begin{equation}\label{eq: fundamental control on coordinate change}
		\begin{cases}
			&\big|\frac{\partial x_1}{\partial \vartheta}\big|+ \big|\frac{\partial x_1}{\partial u}+t\big|+\big|\frac{\partial x_2}{\partial \vartheta}-1\big|+\big|\frac{\partial x_2}{\partial u}\big|\lesssim \mathring{M} \varepsilon t, \\
			&\big|\frac{\partial^2 x_k}{\partial \vartheta^2}\big|+ \big|\frac{\partial^2 x_k}{\partial u\partial \vartheta}\big|+\big|\frac{\partial^2 x_k}{\partial u^2}\big|\lesssim \mathring{M} \varepsilon t, \ \ k=1,2.
		\end{cases}
	\end{equation}
\end{proposition}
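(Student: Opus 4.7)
The plan is to convert the coordinate-derivative bounds into transport equations along $L = \partial_t$ (in acoustical coordinates), exploit the fact that $L$ commutes with $\partial_u$ and $\partial_\vartheta$, and then integrate from $\Sigma_\delta$ using the initial ansatz $\mathbf{(I_\infty)}$ together with the geometric pointwise bounds already established in \eqref{preliminary geometric bounds}, \eqref{bound on Lkappa}, \eqref{bound on T1+1}, \eqref{bound on T2} and \eqref{preliminary geometric bounds with derivatives}. The starting point is the identification
\[
\frac{\partial x^k}{\partial \vartheta} = X^k = \sqrt{\slashed{g}}\,\Xh^k, \qquad \frac{\partial x^k}{\partial u} = T^k + \Xi X^k = \kappa\, \Th^k + \Xi \sqrt{\slashed{g}}\,\Xh^k,
\]
which reduces everything to controlling $\slashed{g}$, $\Xh^k$, $\kappa$, $\Th^k$ and the function $\Xi$.

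First I would handle $\Xi$. Since $[L,T] = -(\zeta+\eta)\Xh$ and $\Xh = \slashed{g}^{-1/2}\partial_\vartheta$, applying $L$ to $T = \partial_u - \Xi\partial_\vartheta$ gives $L(\Xi)\,\partial_\vartheta = \slashed{g}^{-1/2}(\zeta+\eta)\partial_\vartheta$, so $L(\Xi) = \slashed{g}^{-1/2}(\zeta+\eta)$. Using $\slashed{g}\approx 1$ and $|\zeta|+|\eta|\lesssim \mathring{M}\varepsilon t$, together with $\Xi|_{\Sigma_\delta}=0$ (which follows from the construction of $\vartheta$ on $\Sigma_\delta$ via $T(\vartheta)=0$), a single time integration gives $\|\Xi\|_{L^\infty(\Sigma_t^{u^*})}\lesssim \mathring{M}\varepsilon t$.

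Next, for the first-order bounds, I would apply $L$ directly to the formulas above. The angular derivatives $\partial x^k/\partial\vartheta$ and the transversal derivative $\partial x^2/\partial u$ are easy: all terms produced by $L$ are already $O(\mathring{M}\varepsilon)$ using \eqref{structure eq 3: L T on Ti Xi Li}, the bounds on $\chi$, $\Th^2$, $\Xh^2$, $L(\Xh^k)$, and $\Xi$, and integration from $\Sigma_\delta$ closes them. The delicate term is $\partial x^1/\partial u$, which is not small since $\Th^1\approx -1$ and $\kappa\approx t$. I would rearrange
\[
L\Bigl(\frac{\partial x^1}{\partial u}\Bigr) + 1 = (\Th^1+1) + (L\kappa-1)\Th^1 + \kappa L(\Th^1) + L(\Xi\sqrt{\slashed{g}}\,\Xh^1),
\]
and exploit the \emph{extra vanishings} $|\Th^1+1|\lesssim \mathring{M}\varepsilon^2 t^2$, $|L\kappa - 1|\lesssim \mathring{M}\varepsilon t$ (together with $\kappa L(\Th^1) = \kappa(\Xh(v^1+c)+\mathbf{err}_\Th)\Xh^1$ which is $O(\mathring{M}\varepsilon t \cdot \mathring{M}\varepsilon t)$ because of $|\Xh^1|\lesssim \mathring{M}\varepsilon t$) to obtain a righthand side of size $\mathring{M}\varepsilon$. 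Integration from $\Sigma_\delta$ with $\partial x^1/\partial u|_{\Sigma_\delta}=-\delta + O(\mathring{M}\varepsilon\delta^2)$ then gives the stated bound.

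For the second-order bounds, the plan is to commute $Z\in \mathscr{Z}=\{T,\Xh\}$ through the $L$-equations derived above, absorb the resulting $[L,Z]$ term $-\,{}^{(Z)}f\cdot\Xh(\cdot)$ using $\|\chi\|_{L^\infty},\|\zeta+\eta\|_{L^\infty}\lesssim \mathring{M}\varepsilon$, and bound the remaining $Z$-derivatives of the inhomogeneity using \eqref{preliminary geometric bounds with derivatives}. A Gronwall argument along $L$, combined with the initial bounds from $\mathbf{(I_\infty)}$ on $\Sigma_\delta$, yields $|Z(\partial x^k/\partial\vartheta)|+|Z(\partial x^k/\partial u)|\lesssim \mathring{M}\varepsilon t$. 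Finally, since $\partial_u = T+\Xi\partial_\vartheta = T+\Xi\sqrt{\slashed{g}}\,\Xh$ and $\partial_\vartheta = \sqrt{\slashed{g}}\,\Xh$, with $|\Xi|\lesssim \mathring{M}\varepsilon t$, the pointwise bounds on $\partial_u^2 x^k$, $\partial_u\partial_\vartheta x^k$, $\partial_\vartheta^2 x^k$ follow from the corresponding $Z$-derivative estimates. The main obstacle throughout is the $\partial x^1/\partial u$ term, whose bound crucially relies on the hidden vanishing of $\Th^1+1$ and $L\kappa-1$; everything else is a direct Gronwall propagation.
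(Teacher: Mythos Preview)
Your proposal is correct and follows essentially the same approach as the paper: derive $L(\Xi)=\slashed{g}^{-1/2}(\zeta+\eta)$ and integrate, express $\partial x^k/\partial u$ and $\partial x^k/\partial\vartheta$ in terms of $\kappa,\Th^k,\Xi,\slashed{g},\Xh^k$, compute their $L$-derivatives (with the same rearrangement for $\partial x^1/\partial u + t$ exploiting $|\Th^1+1|$ and $|L\kappa-1|$), then commute $Z\in\{T,\Xh\}$ and use Gronwall plus the conversion $\partial_u=T+\Xi\sqrt{\slashed{g}}\,\Xh$, $\partial_\vartheta=\sqrt{\slashed{g}}\,\Xh$ for the second-order bounds. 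Your explanation of why $\kappa L(\Th^1)$ is small (via $\Xh^1=\Th^2$) is in fact slightly more explicit than the paper's.
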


\subsubsection{Sobolev inequalities}
We recall that $\|f\|_{L^2(\Sigma_t^u)}=\sqrt{\int_{\Sigma_t^u} \left|f\right|^2}$. We have the following Sobolev inequality:
\begin{lemma}
	{\color{black}Under the bootstrap assumptions} $(\mathbf{B}_2)$ and $(\mathbf{B}_\infty)$, if $\mathring{M}\varepsilon$ is sufficiently small, for all $t\in [\delta, t^*]$, for any smooth function $f$ defined on $\Sigma_t^u$, we have 
	\begin{equation}\label{ineq:Sobolev inequalities}
		\|f\|_{L^\infty(\Sigma_t)}\lesssim \sum_{k+l\leqslant 2}\|\Xr^k \Tr^l (f)\|_{L^2(\Sigma_t)}.
	\end{equation}
\end{lemma}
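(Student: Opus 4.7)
The plan is to deduce the inequality from the classical two-dimensional Sobolev embedding applied in the acoustical coordinates $(u,\vartheta)$, and then to convert the coordinate derivatives into $\Xr$ and $\Tr$ using the pointwise bounds established in the preceding subsections. In the first step, since $\sqrt{\slashed g}\approx 1$ by \eqref{preliminary geometric bounds}, the measure $\sqrt{\slashed g}\,du\,d\vartheta$ on $\Sigma_t^u$ is uniformly comparable to the flat measure on the cylinder $\mathcal C := [0,u^*]\times \mathbb S^1$, whose shape is independent of $t$ (and in particular stays regular as $t\to 0$). The standard $H^2\hookrightarrow L^\infty$ Sobolev embedding on $\mathcal C$ therefore yields
\[
\|f\|_{L^\infty(\Sigma_t^u)}\;\lesssim\;\sum_{k+l\leqslant 2}\bigl\|\partial_u^{\,k}\partial_\vartheta^{\,l}f\bigr\|_{L^2(\Sigma_t^u)}.
\]

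The second step is to convert the coordinate derivatives into the first null frame. Recall from \eqref{eq: L T in terms of coordinates} that $\partial_\vartheta=X=\sqrt{\slashed g}\,\Xh$ and $\partial_u=T+\Xi X$. The bounds \eqref{preliminary geometric bounds}, \eqref{bound on Xi} and \eqref{preliminary geometric bounds with derivatives} show that the transition coefficients $\sqrt{\slashed g}$ and $\Xi\sqrt{\slashed g}$, together with their derivatives through order two (in $T$ and $\Xh$), are uniformly bounded provided $\mathring M\varepsilon$ is sufficiently small. Expanding $\partial_u^{\,k}\partial_\vartheta^{\,l}$ in $\{T,\Xh\}$ via the Leibniz rule and the commutators in \eqref{eq:commutator formulas}, we obtain a comparable control via the first null frame. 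Then we relate this frame to $\{\Tr,\Xr\}$ by writing in Cartesian coordinates
\[
T=-\frac{\kappa\Th^1}{t}\Tr+\kappa\Th^2\Xr,\qquad \Xh=-\frac{\Xh^1}{t}\Tr+\Xh^2\Xr.
\]
Using the identities $\Xh^1=\Th^2$, $\Xh^2=-\Th^1$ together with \eqref{bound on kappa more precise}, \eqref{bound on T1+1} and \eqref{bound on T2}, one checks that
\[
\Bigl|-\tfrac{\kappa\Th^1}{t}-1\Bigr|+|\kappa\Th^2|+\Bigl|\tfrac{\Xh^1}{t}\Bigr|+|\Xh^2-1|\;\lesssim\;\mathring M\varepsilon,
\]
while \eqref{bound on Z Ti}--\eqref{bound on Z^2 kappa} control the first and second $\{T,\Xh\}$-derivatives of all these coefficients. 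By expanding the change of frame with the Leibniz rule (and using $[\Tr,\Xr]=0$), one concludes that $\{T,\Xh\}$ and $\{\Tr,\Xr\}$ generate uniformly comparable $H^2$-type norms, which yields the claim.

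The delicate point is the uniformity as $t\to 0$ in the last step: because $\Tr=-t\,\partial_1$ contains an explicit factor of $t$, the coefficient $-\kappa\Th^1/t$ expressing $T$ in terms of $\Tr$ would blow up unless $\kappa\approx t$. This is exactly the content of \eqref{bound on kappa more precise}, whose derivation in turn relied on the near-singularity expansion of the transport equation for $\kappa$. In other words, the Sobolev inequality in the form stated works precisely because the acoustical coordinates are adapted to the rarefaction-wave scaling at the initial singularity; a naive use of $\partial_u,\partial_\vartheta$ or of Cartesian $\partial_1,\partial_2$ would not provide a $t$-uniform constant. Verifying this scaling compatibility for all terms produced by the second-order Leibniz expansion, and checking that the first- and second-order derivatives of the transition coefficients $\Xh^i,\Th^i,\kappa,\Xi,\slashed g$ do not produce any residual $t$-degeneration, is the main technical obstacle, and it is resolved directly by the pointwise bounds already assembled.
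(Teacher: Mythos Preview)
Your approach is correct and identifies the key analytic point—the standard $H^2\hookrightarrow L^\infty$ embedding on the fixed cylinder $[0,u^*]\times\mathbb S^1$, followed by a $t$-uniform conversion of derivatives, with $\kappa\approx t$ from \eqref{bound on kappa more precise} supplying the crucial uniformity. However, your route differs from the paper's and carries some extra baggage.

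You perform a two-step conversion $(\partial_u,\partial_\vartheta)\to(T,\Xh)\to(\Tr,\Xr)$, passing through the first null frame. The paper instead converts directly $(\partial_u,\partial_\vartheta)\to(\Tr,\Xr)$ via the chain rule
\[
\partial_{u}f=\tfrac{\partial x_1}{\partial u}\,\partial_{x_1}f+\tfrac{\partial x_2}{\partial u}\,\partial_{x_2}f,\qquad \partial_{x_1}=-t^{-1}\Tr,\quad \partial_{x_2}=\Xr,
\]
and likewise for $\partial_\vartheta$, invoking the Jacobian bounds \eqref{eq: fundamental control on coordinate change} assembled in the immediately preceding subsection. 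For second derivatives one simply iterates this chain rule and uses the second-order bounds on $\partial^2 x_k/\partial(u,\vartheta)^2$ already recorded there; no passage through $(T,\Xh)$ is needed. Your detour works, but it requires first-order $\{T,\Xh\}$-derivatives of $\Xi$ and $\slashed g$, which are \emph{not} actually contained in the references you cite: \eqref{preliminary geometric bounds}, \eqref{bound on Xi}, and \eqref{preliminary geometric bounds with derivatives} give only $\Xi$ and $\slashed g$ themselves, together with derivatives of $\kappa,\Th^i,\zeta,\eta,\chi$. The missing bounds can certainly be derived—for instance by commuting $Z$ with $L(\Xi)=\slashed g^{-1/2}(\zeta+\eta)$ and integrating from $\Sigma_\delta$ where $\Xi\equiv 0$, and from $T(\slashed g)=2\kappa\slashed g\,\theta$—but the paper's one-step route avoids this extra bookkeeping entirely, which is precisely why the coordinate-change estimates were packaged as they were.
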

\begin{proof}
	First of all, we have the usual Sobolev inequality:
	\begin{equation}\label{eq:Sobolev u vartheta to Tr Xh standard in u vartheta}
		{\color{black}\|f\|_{L^\infty(\Sigma_t)}^2}\lesssim \sum_{k+l\leqslant 2, (k,l)\neq (1,1)}\int_{0}^{u^*}\int_{0}^{2\pi}\left|\partial^k_\theta \partial^l_u  f(u',\vartheta')\right|^2du'd\vartheta'.
	\end{equation}
	
	As a consequence of this inequality, for a given $C^1$ function $f$ defined on $\Sigma_t^{u^*}$, we have
	\begin{align*}
		\big|\frac{\partial f}{\partial u}\big|^2+\big|\frac{\partial f}{\partial \vartheta}\big|^2&=\big|\frac{\partial x_1}{\partial u}\frac{\partial f}{\partial x_1}+\frac{\partial x_2}{\partial u}\frac{\partial f}{\partial x_2}\big|^2+\big|\frac{\partial x_1}{\partial \vartheta}\frac{\partial f}{\partial x_1}+\frac{\partial x_2}{\partial \vartheta} \frac{\partial f}{\partial x_2}\big|^2\\
		&\lesssim  \big(\big|\frac{\partial x_1}{\partial \vartheta}\big|^2 + \big|\frac{\partial x_1}{\partial u}\big|^2\big)|\Trh f|^2 + \big(\big|\frac{\partial x_2}{\partial \vartheta}\big|^2 + \big|\frac{\partial x_2}{\partial u}\big|^2\big)|\Xr f|^2.
	\end{align*}
	For sufficiently small $\mathring{M}\varepsilon$, \eqref{eq: fundamental control on coordinate change 1} yields
	\begin{equation}\label{eq:Sobolev u vartheta to Tr Xh 1 derivatives}
		\big|\frac{\partial f}{\partial u}\big|^2+\big|\frac{\partial f}{\partial \vartheta}\big|^2\lesssim |\Tr f|^2 + |\Xr f|^2{\color{black}.}
	\end{equation}
	We also have
	\begin{align*}
		\frac{\partial^2 f}{\partial \vartheta^2}&=-\frac{1}{t}\frac{\partial^2 x_1}{\partial \vartheta^2}\Tr(f)+\frac{\partial^2 x_2}{\partial \vartheta^2}\Xr(f)-\frac{1}{t}\frac{\partial x_1}{\partial \vartheta}\frac{\partial }{\partial \vartheta}\big(\Tr(f)\big)+\frac{\partial x_2}{\partial \vartheta}\frac{\partial }{\partial \vartheta}\big(\Xr(f)\big),\\
		\frac{\partial^2 f}{\partial u^2}&=-\frac{1}{t}\frac{\partial^2 x_1}{\partial u^2}\Tr(f)+\frac{\partial^2 x_2}{\partial u^2}\Xr(f)-\frac{1}{t}\frac{\partial x_1}{\partial u}\frac{\partial}{\partial u}\big(\Tr(f)\big)+\frac{\partial x_2}{\partial u}\frac{\partial}{\partial u}\big(\Xr(f)\big).
	\end{align*}
	We use \eqref{eq:Sobolev u vartheta to Tr Xh 1 derivatives} to bound $\frac{\partial }{\partial \vartheta}\big(\Tr(f)\big)$, $\frac{\partial }{\partial u}\big(\Tr(f)\big)$, $\frac{\partial }{\partial \vartheta}\big(\Xr(f)\big)$ and $\frac{\partial }{\partial u}\big(\Xr(f)\big)$.  This leads to
	\begin{align*}
		\big|\frac{\partial^2 f}{\partial \vartheta^2}\big|^2+\big|\frac{\partial^2 f}{\partial u^2}\big|^2 &\lesssim \frac{1}{t^2}\big(\big|\frac{\partial^2 x_1}{\partial \vartheta^2}\big|^2+\big|\frac{\partial^2 x_1}{\partial u^2}\big|^2\big)|\Tr f|^2+
		\big(\big|\frac{\partial^2 x_2}{\partial \vartheta^2}\big|^2+\big|\frac{\partial^2 x_2}{\partial u^2}\big|^2\big)|\Xr f|^2\\
		& \ \ +\big[\frac{1}{t^2}\big(\big|\frac{\partial x_1}{\partial \vartheta}\big|^2+\big|\frac{\partial x_1}{\partial u}\big|^2\big)+
		\big(\big|\frac{\partial x_2}{\partial \vartheta}\big|^2+\big|\frac{\partial x_2}{\partial u}\big|^2\big)\big]\big(|\Tr^2 f|^2+|\Tr \Xr f|^2+|\Xr^2f|^2\big)\\
		&\lesssim |\Tr f|^2+
		|\Xr f|^2+|\Tr^2 f|^2+|\Tr \Xr|^2+|\Xr^2f|^2.
	\end{align*}
	In the last step, we have used \eqref{eq: fundamental control on coordinate change}. Combined with \eqref{eq:Sobolev u vartheta to Tr Xh 1 derivatives}, the standard Sobolev inequality \eqref{eq:Sobolev u vartheta to Tr Xh standard in u vartheta} yields the derived estimate.
\end{proof}

\subsection{Comparison lemma and pointwise bounds on acoustical waves}
\subsubsection{Comparison between two null frames}
{\color{black}According to} \eqref{preliminary geometric bounds}, \eqref{bound on kappa} and $\mathbf{(B_2)}$, for all $\psi \in \{w,\wb,\psi_2\}$, for all multi-index $\alpha$ with $|\alpha|\leqslant \Ntop$, we have
\[\int_{\Sigma_t^u} t^2L(\Zr^\alpha\psi)^2+t^2\Xh(\Zr^\alpha\psi)^2+\Lb(\Zr^\alpha\psi)^2\lesssim \mathring{M}\varepsilon^2t^2\]
except for $\alpha=0$ and $\psi=\wb$. Since $\Lb=c^{-1}\kappa L+2T$, the above bounds imply that 
\begin{equation}\label{eq: B2 equi}
	\int_{\Sigma_t^u} t^2\Xh(\Zr^\alpha\psi)^2+T(\Zr^\alpha\psi)^2\lesssim \mathring{M}\varepsilon^2t^2.
\end{equation}
On the other hand, the frame $(\Tr,\Xr)$ are related to $(T,\Xh)$ by the following formulas:
\[\Tr= -\frac{\kappar}{ \kappa  (\Th^1)^2+\ (\Th^2)^2}\big(\Th^1 T+  \Th^2 X\big),\ \ \Xr= \frac{1}{ \kappa  (\Th^1)^2+\ (\Th^2)^2}\big(\Th^2 T- \kappa \Th^1 X\big).\]
In view of the improved bounds \eqref{bound on kappa}, \eqref{bound on T1+1} and \eqref{bound on T2}, \eqref{eq: B2 equi} implies that
\begin{equation}\label{eq: B2 equi bis}
	\int_{\Sigma_t^u} t^2\Xr(\Zr^\alpha\psi)^2+\Tr(\Zr^\alpha\psi)^2\lesssim \mathring{M}\varepsilon^2t^2.
\end{equation}
This bound is sufficient to bound the $L^\infty$ norms of the acoustical waves. In the rest of this subsection, we will derive a lemma to compare the new null frame $(\Lr,\Lbr,\Xr)$ with the old null frame$(L,\Lb,\Xh)$. First of all, for a smooth function $f$ defined on $\mathcal{D}(t^*,u^*)$, we have
\begin{equation}\label{change of frame 1}
	\begin{cases}
		&Lf-\Lr f=c\big(\frac{\Th^1+1}{\kappar}\Tr(f) -\Th^2\Xr(f)\big),\\
		&Tf-\Tr f=-\big[\left(\frac{\kappa}{\kappar}-1\right)\Th^1+(\Th^1+1)\big]\Tr(f) +\kappa \Th^2\Xr(f),\\
		&Xf-\Xr f=- \frac{\Th^2}{\kappar} \Tr(f) -(\Th^1+1)\Xr(f).
	\end{cases}
\end{equation}
By \eqref{change of frame 1}, we have
\[
\Tr f-T f
= \frac{-(\Th^1+1)+(1-\frac{\kappa}{\kappar})- (1-\kappa) \cdot \Th^2\cdot \frac{\Th^2}{\kappar}}{ \frac{\kappa}{\kappar} + (1-\kappa) \cdot \Th^2 \cdot \frac{\Th^2}{\kappar} } Tf+\frac{\Th^2}{ \frac{\kappa}{\kappar} + (1-\kappa) \cdot \Th^2 \cdot \frac{\Th^2}{\kappar} } \Xh f.
\]
Therefore,  \eqref{bound on kappa}, \eqref{bound on T1+1} and \eqref{bound on T2} imply that
\begin{equation}\label{eq: compa aux 1}
	\begin{split}
		|\Tr f|&\leqslant |Tf|+\mathring{M}\varepsilon |Tf|+\mathring{M}\varepsilon t|\Xh f|\lesssim |Tf|+\mathring{M}\varepsilon t|\Xh f|\\
		&\lesssim t|Lf|+|\Lb f|+\mathring{M}\varepsilon t|\Xh f|.
	\end{split}
\end{equation}
By \eqref{change of frame 1}, we also have
\[\Xr f-\Xh f=\frac{\frac{\Th^2}{\kappa} }{  (\Th^1)^2+   \Th^2 \frac{\Th^2}{\kappa}} T  (f)-\frac{\Th^1(\Th^1+1)+\Th^2 \frac{\Th^2}{\kappa}}{  (\Th^1)^2+   \Th^2 \frac{\Th^2}{\kappa}} \Xh (f).\]
Hence,
\begin{equation}\label{eq: compa aux 2}
	\begin{split}
		|\Xr f|&\leqslant |\Xh f|+\mathring{M}\varepsilon |Tf|+\mathring{M}\varepsilon t|\Xh f|\lesssim  |\Xh f|+\mathring{M}\varepsilon |Tf|\\
		&\lesssim |\Xh f|+\mathring{M}\varepsilon t |Lf|+ \mathring{M}\varepsilon |\Lb f|.
	\end{split}
\end{equation}
By virtue of \eqref{eq: compa aux 1} and \eqref{eq: compa aux 2}, the first equation of \eqref{change of frame 1} implies that
\begin{align*}
	|\Lr f|&\lesssim |Lf|+\mathring{M}\varepsilon^2 t |\Tr f|+\mathring{M}\varepsilon t |\Xr f|\lesssim |Lf|+\mathring{M}\varepsilon t|\Xh f|+\mathring{M}\varepsilon^2 t |\Lb f|.
\end{align*}
Finally,  if $\mathring{M}\varepsilon$ is sufficiently small, {\color{black}for $\Lbr= c^{-1} \kappar \Lr + 2\Tr$} we have
\begin{align*}
	|\Lbr f|&\lesssim \kappar |\Lr f|+|\Tr f|\lesssim t |\Xh f|+t|Lf|+|\Lb f|.
\end{align*}
To summarize, we have the following comparison lemma:
\begin{proposition}
	{\color{black}Under the bootstrap assumptions} $(\mathbf{B}_2)$ and $(\mathbf{B}_\infty)$, if $\mathring{M}\varepsilon$ is sufficiently small,  for all smooth functions $f$ defined on $\mathcal{D}(t^*,u^*)$, we have the following pointwise bounds:
	\begin{equation}\label{comparison bound 1: L infty}
		\begin{cases}
			|\Lr f|&\lesssim |Lf|+ \varepsilon t |\Xh f|+ \varepsilon^2 t |\Lb f|,\\
			|\Xr f|&\lesssim |\Xh f|+ \varepsilon t |Lf|+  \varepsilon |\Lb f|,\\
			|\Lbr f|&\lesssim t |\Xh f|+t|Lf|+|\Lb f|,\\
			|\Tr f|&\lesssim t|Lf|+|\Lb f|+ \varepsilon t|\Xh f|.
		\end{cases}
	\end{equation}
\end{proposition}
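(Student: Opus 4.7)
The plan is to derive the four bounds directly from the explicit frame-change formulas \eqref{change of frame 1} by inverting them, using the pointwise control of $\Th^1+1$, $\Th^2$, and $\kappa/\kappar-1$ already established in \eqref{bound on kappa}, \eqref{bound on T1+1}, and \eqref{bound on T2}. The key observation is that the matrix relating $(\Tr f,\Xr f)$ to $(Tf,\Xh f)$, after dividing out the expected scaling $\kappar=t$, is a perturbation of the identity with off-diagonal entries of order $\mathring{M}\varepsilon t$ (coming from $\Th^2$) and diagonal corrections of order $\mathring{M}\varepsilon^2 t^2$ (coming from $\Th^1+1$ and $1-\kappa/\kappar$).

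First I will handle $\Tr f$ and $\Xr f$. The second and third equations of \eqref{change of frame 1} express $Tf-\Tr f$ and $Xf-\Xr f$ as linear combinations of $\Tr f$ and $\Xr f$ with the small coefficients above, so Cramer's rule (or equivalently a Neumann-series inversion of the $2\times 2$ system) yields
\[
 |\Tr f|\lesssim |Tf|+\mathring{M}\varepsilon t|\Xh f|,\qquad |\Xr f|\lesssim |\Xh f|+\mathring{M}\varepsilon |Tf|,
\]
which is exactly the content of \eqref{eq: compa aux 1}--\eqref{eq: compa aux 2}. Substituting $Tf=\tfrac{1}{2}(\Lb f-c^{-1}\kappa Lf)$ to replace $|Tf|$ by $t|Lf|+|\Lb f|$ then produces the second and fourth inequalities of the proposition.

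Next, for $\Lr f$, I will rewrite the first equation of \eqref{change of frame 1} as $\Lr f=Lf-c\bigl(\tfrac{\Th^1+1}{\kappar}\Tr f-\Th^2\Xr f\bigr)$ and insert the estimates for $\Tr f$ and $\Xr f$ just obtained. Crucially, the quadratic decay $|\Th^1+1|\lesssim\mathring{M}\varepsilon^2 t^2$ together with $|\Th^2|\lesssim\mathring{M}\varepsilon t$ delivers the sharp factors $\varepsilon^2 t$ in front of $|\Lb f|$ and $\varepsilon t$ in front of $|\Xh f|$. Finally, the defining relation $\Lbr=c^{-1}\kappar\Lr+2\Tr$ combined with the two previous estimates gives the bound for $|\Lbr f|$ immediately, without any further smallness being needed.

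The main obstacle, really a bookkeeping point, is the careful accounting of the powers of $\varepsilon$ and $t$: one must use the quadratic smallness $|\Th^1+1|\lesssim\mathring{M}\varepsilon^2 t^2$ from \eqref{bound on T1+1} (rather than the naive $O(1)$ or merely linear bound) in order to produce the $\varepsilon^2 t$ coefficient in front of $|\Lb f|$ in the $\Lr f$ estimate; otherwise that coefficient would only be $O(1)$ and would destroy the hierarchy the later higher-order energy estimates rely on. Once this is observed, all the remaining steps are purely algebraic manipulations, valid as soon as $\mathring{M}\varepsilon$ is small enough that the $2\times 2$ system at the core of the inversion has a uniformly bounded inverse.
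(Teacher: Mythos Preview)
Your proposal is correct and follows essentially the same approach as the paper: invert the $2\times 2$ system in \eqref{change of frame 1} to obtain \eqref{eq: compa aux 1}--\eqref{eq: compa aux 2}, then feed these into the first line of \eqref{change of frame 1} for $\Lr f$ and into $\Lbr=c^{-1}\kappar\Lr+2\Tr$ for $\Lbr f$. Your emphasis on the quadratic bound $|\Th^1+1|\lesssim\mathring{M}\varepsilon^2 t^2$ as the source of the $\varepsilon^2 t$ coefficient in front of $|\Lb f|$ is exactly the point.
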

\begin{corollary}
	For all $\psi \in \{\wb,w,\psi_2\}$, for all multi-index $\alpha$ with $|\alpha|\leqslant \Ntop$, we have
	\begin{equation}\label{comparison bound 1: L 2}
		\begin{cases}
			&t^2 \displaystyle\int_{\Sigma_t^u}|\Lr \Zr ^\alpha \psi|^2+|\Xr \Zr ^\alpha \psi|^2\lesssim \mathscr{E}_{|\alpha|}(\psi)(t,u)+t^2\varepsilon^2\EB_{|\alpha|}(\psi)(t,u),\\
			&\displaystyle\int_{\Sigma_t^u}|\Lbr \Zr ^\alpha\psi|^2+|\Tr \Zr ^\alpha\psi|^2\lesssim\mathscr{E}_{|\alpha|}(\psi)(t,u)+\EB_{|\alpha|}(\psi)(t,u).
		\end{cases}
	\end{equation}
\end{corollary}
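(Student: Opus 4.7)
The plan is to derive this corollary by a direct $L^2$ lift of the pointwise comparison lemma \eqref{comparison bound 1: L infty}. Since the proposition gives pointwise control of the new frame $(\Lr,\Lbr,\Xr,\Tr)$ by the old frame $(L,\Lb,\Xh)$, I would apply it with $f = \Zr^{\alpha}\psi$, square each inequality, and integrate over $\Sigma_t^u$ using the measure $\sqrt{\slashed g}\, du'\, d\vartheta'$. The key identifications on the right-hand side are that the quadratic expressions in $L,\Xh,\Lb$ that appear after integration are exactly (up to $t^2 \approx \kappa^2$ factors) the energies $\mathcal{E}_{|\alpha|}(\psi)$ and $\EB_{|\alpha|}(\psi)$ whose sum is $\mathscr{E}_{|\alpha|}(\psi)$.

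Concretely, recall that by the definitions and the identity $\mu = c\kappa$ we have
\begin{equation*}
\mathcal{E}(\psi)(t,u) = \tfrac{1}{2}\int_{\Sigma_t^u} c^{-2}\kappa^2 (L\psi)^2 + \kappa^2(\Xh\psi)^2, \qquad \Eb(\psi)(t,u) = \tfrac{1}{2}\int_{\Sigma_t^u}(\Lb\psi)^2 + \kappa^2(\Xh\psi)^2.
\end{equation*}
Combining these with $c \approx 1$ and $\kappa \approx t$ from \eqref{bound on kappa}, I get the three baseline bounds
\begin{equation*}
t^2\int_{\Sigma_t^u}(L\Zr^\alpha\psi)^2 \lesssim \mathscr{E}_{|\alpha|}(\psi), \qquad t^2\int_{\Sigma_t^u}(\Xh \Zr^\alpha\psi)^2 \lesssim \mathscr{E}_{|\alpha|}(\psi), \qquad \int_{\Sigma_t^u}(\Lb \Zr^\alpha\psi)^2 \lesssim \EB_{|\alpha|}(\psi),
\end{equation*}
which are the only ingredients beyond the pointwise comparison itself.

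For the first line of \eqref{comparison bound 1: L 2}, squaring $|\Lr f| \lesssim |Lf| + \varepsilon t |\Xh f| + \varepsilon^2 t|\Lb f|$ and multiplying by $t^2$ yields $t^2|\Lr f|^2 \lesssim t^2 |Lf|^2 + \varepsilon^2 t^4 |\Xh f|^2 + \varepsilon^4 t^4 |\Lb f|^2$; after integration the first two terms are controlled by $\mathscr{E}_{|\alpha|}(\psi)$ (with a gain of $\varepsilon^2 t^2$ for the second) and the last by $\varepsilon^4 t^4 \EB_{|\alpha|}(\psi) \lesssim t^2\varepsilon^2 \EB_{|\alpha|}(\psi)$. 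The $\Xr$-bound is obtained in the same way from $|\Xr f|\lesssim |\Xh f|+\varepsilon t|Lf|+\varepsilon|\Lb f|$, where now multiplying by $t^2$ and integrating gives $\mathscr{E}_{|\alpha|}(\psi) + \varepsilon^2 t^2\EB_{|\alpha|}(\psi)$ directly. For the second line, squaring $|\Lbr f|\lesssim t|\Xh f|+t|Lf|+|\Lb f|$ and $|\Tr f|\lesssim t|Lf|+|\Lb f|+\varepsilon t|\Xh f|$ and integrating without any additional $t^2$ factor immediately produces $\mathscr{E}_{|\alpha|}(\psi) + \EB_{|\alpha|}(\psi)$.

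There is no real obstacle here — this is a bookkeeping corollary; the substantive work is entirely contained in the pointwise comparison \eqref{comparison bound 1: L infty}, which in turn relied on the improved pointwise bounds on $\kappa$, $\widehat T^1+1$, $\widehat T^2$ proved earlier under the bootstrap. The only mild care required is to track constants so that $\varepsilon^4 t^4$ terms are absorbed into $t^2\varepsilon^2\EB_{|\alpha|}$ using $\varepsilon t\leqslant 1$, and to note that the statement is uniform in $|\alpha|\leqslant \Ntop$ simply because the pointwise comparison does not distinguish between the function and its iterated $\Zr$-derivatives.
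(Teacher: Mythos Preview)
Your proposal is correct and is precisely the approach the paper intends: the corollary is stated immediately after the pointwise comparison proposition \eqref{comparison bound 1: L infty} with no separate proof, and your argument---squaring the pointwise bounds applied to $f=\Zr^\alpha\psi$, integrating over $\Sigma_t^u$, and identifying the resulting quadratic expressions with the energies via $c\approx 1$, $\kappa\approx t$---is exactly the intended derivation. Your tracking of the $\varepsilon$ and $t$ powers, including the absorption $\varepsilon^4 t^4\leqslant \varepsilon^2 t^2$, is also correct.
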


\subsubsection{$L^\infty$ estimates on acoustical waves}
For all multi-index $\alpha$ with $|\alpha|\leqslant \Ninf=\Ntop-1$, for all $\psi \in \{w,\wb,\psi_2\}$, except for the case $\Zr^\alpha \psi=T\wb$, we apply the Sobolev inequality \eqref{ineq:Sobolev inequalities}
to derive pointwise bound for $\mathring{Z}^\alpha \psi$:
\begin{align*}\|\mathring{Z}^\alpha\psi\|_{L^\infty(\Sigma_t)}\lesssim \sum_{k+l\leqslant 2}\|\Xr^k \Tr^l \mathring{Z}^\alpha\psi\|_{L^2(\Sigma_t)}.
\end{align*}
The righthand side is bounded by a universal constant times $\mathring{M}\varepsilon$. If at least one $\Tr$ appears in $\Zr^{\alpha}$, thus, we can rewrite the above inequality as
\begin{align*}
	\|\mathring{Z}^\alpha\psi\|_{L^\infty(\Sigma_t)}\lesssim \sum_{k+l\leqslant 2}\|\Tr\left(\Xr^k \Tr^l \mathring{Z}^{\alpha-1}\psi\right)\|_{L^2(\Sigma_t)}\lesssim \mathring{M}\varepsilon t.
\end{align*}
Therefore, we have proved the following $L^\infty$ estimates on acoustical waves:
\begin{proposition}
	For all multi-index $\alpha$ with $|\alpha|\leqslant \Ninf$, for all $\psi \in \{w,\wb,\psi_2\}$, except for the case $\Zr^\alpha \psi=T\wb$, we have
	\begin{equation}\label{ineq: L infty bound}
		\|\mathring{Z}^\alpha\psi\|_{L^\infty(\Sigma_t)}\lesssim  \begin{cases}\mathring{M}\varepsilon, \ \ & \text{if}~\Zr^\alpha=\Xr^\alpha;\\
			\mathring{M}\varepsilon t, \ \ & \text{otherwise}.
		\end{cases}
	\end{equation}
\end{proposition}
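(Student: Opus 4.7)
The plan is a two-step chain: use the Sobolev inequality \eqref{ineq:Sobolev inequalities} to reduce the $L^\infty$ norm to an $L^2$ norm of up to two additional $\Xr,\Tr$ derivatives, then apply the frame-comparison $L^2$ estimates \eqref{comparison bound 1: L 2} combined with the bootstrap ansatz $(\mathbf{B_2})$. A crucial simplification comes from the commutator table: since $[\Tr,\Xr]=0$, the vector fields $\Xr$ and $\Tr$ commute, so any product $\Xr^k\Tr^l\Zr^\alpha$ can be freely rearranged to the normal form $\Tr^a\Xr^b$ with $a+b=k+l+|\alpha|$ and no commutator errors.

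For the generic bound $\|\Zr^\alpha\psi\|_{L^\infty(\Sigma_t)}\lesssim\mathring{M}\varepsilon$: apply \eqref{ineq:Sobolev inequalities} to $\Zr^\alpha\psi$ to reduce to controlling $\|\Tr^a\Xr^b\psi\|_{L^2(\Sigma_t^u)}$ with $a+b\leqslant\Ninf+2$. Peel off one outermost derivative. If $a\geqslant 1$, write $\Tr^a\Xr^b\psi=\Tr(\Zr^\gamma\psi)$ with $|\gamma|=a+b-1\leqslant\Ninf+1=\Ntop$; the second line of \eqref{comparison bound 1: L 2} and $(\mathbf{B_2})$ yield $\|\Tr(\Zr^\gamma\psi)\|_{L^2}\lesssim\sqrt{\mathscr{E}_{|\gamma|}(\psi)+\EB_{|\gamma|}(\psi)}\lesssim\sqrt{M}\varepsilon t$. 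If $a=0$, write $\Xr^b\psi=\Xr(\Zr^\gamma\psi)$ and use the first line of \eqref{comparison bound 1: L 2}, yielding $\|\Xr(\Zr^\gamma\psi)\|_{L^2}\lesssim t^{-1}\sqrt{\mathscr{E}_{|\gamma|}(\psi)}+\varepsilon\sqrt{\EB_{|\gamma|}(\psi)}\lesssim\sqrt{M}\varepsilon$. Summing over $k+l\leqslant 2$ gives the claimed $\mathring{M}\varepsilon$ bound.

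For the improved bound $\mathring{M}\varepsilon t$ when $\Zr^\alpha$ contains a $\Tr$, commutativity lets us rearrange $\Xr^k\Tr^l\Zr^\alpha\psi=\Tr(\Xr^k\Tr^l\Zr^{\alpha-1}\psi)$ where $\Zr^{\alpha-1}$ is the operator of length $|\alpha|-1$ obtained by deleting one $\Tr$. The inner expression has order $k+l+|\alpha|-1\leqslant\Ntop$, so the second line of \eqref{comparison bound 1: L 2} together with $(\mathbf{B_2})$ yields $\|\Tr(\cdots)\|_{L^2}\lesssim\sqrt{M}\varepsilon t$, producing the improved bound. The excluded case $\Zr^\alpha\psi=\Tr\wb$ is precisely the one where $\Tr\wb\approx -1$ is of unit size by \eqref{bound on Twb}, so no $\varepsilon$ smallness is available and the case must be excluded. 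The main subtlety is the derivative-count bookkeeping: Sobolev costs two extra derivatives and the constraint $|\alpha|\leqslant\Ninf=\Ntop-1$ leaves exactly one derivative of slack, which is absorbed by peeling off a single outer $\Tr$ or $\Xr$ via the comparison lemma; the vanishing commutator $[\Tr,\Xr]=0$ is what makes this rearrangement free of error and is the key algebraic fact underlying the whole argument.
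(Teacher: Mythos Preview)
Your proof is correct and follows essentially the same approach as the paper: apply the Sobolev inequality \eqref{ineq:Sobolev inequalities}, exploit $[\Tr,\Xr]=0$ to freely rearrange derivatives, then peel off one outer $\Tr$ or $\Xr$ and invoke the energy bounds. The paper cites the derived consequence \eqref{eq: B2 equi bis} directly rather than going through \eqref{comparison bound 1: L 2} plus $(\mathbf{B_2})$, but this is the same argument; your identification of the excluded case $\Tr\wb$ and the derivative-count bookkeeping are both handled exactly as in the paper.
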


\begin{remark}[How to use the pointwise bounds]\label{remark:techical nonlinear}
	
	Given an integer $m\geqslant 2$ and functions $F_1,\cdots, F_m$ in such a way that ${\rm ord}(F_1)\leqslant {\rm ord}(F_2)\leqslant \cdots \leqslant {\rm ord}(F_m)$. For each $i\leqslant m$,  $\|F_i\|_{L^2(\Sigma_t)}$ is bounded. In addition, if ${\rm ord}(F_i)\leqslant \Ninf$, $\|F_i\|_{L^\infty(\Sigma_t)}$ is bounded. 
	
	If $\sum_{i=1}^m {\rm ord}(F_i)\leqslant N_{\rm top}+1$, we have the following two estimates:
	\begin{equation}\label{trivial multilinear estimates}
		\begin{cases}
			&\big|\int_{\Sigma_t} F_1\cdot F_2\cdots F_m\big|\leqslant  \|F_1\|_{L^\infty(\Sigma_t)}\cdots \|F_{m-2}\|_{L^\infty(\Sigma_t)}\|F_{m-1}\|_{L^2(\Sigma_t)}\|F_m\|_{L^2(\Sigma_t)},\\
			&\|F_1\cdot F_2\cdots F_m\|_{L^2(\Sigma_t)}\leqslant \|F_1\|_{L^\infty(\Sigma_t)}\cdots \|F_{m-1}\|_{L^\infty(\Sigma_t)}\|F_m\|_{L^2(\Sigma_t)}.
		\end{cases}
	\end{equation}
	The proof is trivial. It suffices to observe that for $i\leqslant m-1$, ${\rm ord}(F_i)\leqslant \Ninf$. Therefore, we can use H\"older's inequality with $L^\infty$ bounds on such $F_i$'s.
	
	In the rest of the paper, we will frequently encounter the above scenario. In most of the cases, the $F_i$'s are $\Zr^\alpha \psi$ where $\psi \in \{w,\wb,\psi_2\}$. 
\end{remark}

\section{Linear energy estimates}\label{section:linear-energy-estimates}
\subsection{Energy estimates for linear waves in rarefaction wave region}\label{section: fundamental energy estimates}
In the rest of the paper, we always assume that $\mathring{M} \varepsilon$ is sufficiently small so that the previous preliminary estimates hold. Based on these estimates, we derive the fundamental energy estimates for {\color{black}the linear wave equation} \eqref{model linear wave equation}, i.e., $ \Box_{g}\psi = \varrho$, in the rarefaction wave region in this section. To simplify the notations, we use $\mathcal{E}(t,u)$ to denote $\mathcal{E}(\psi)(t,u)$; similarly, we also use notations $\Eb(t,u)$, $\mathcal{F}(t,u)$ and $\Fb(t,u)$.

\subsubsection{Multiplier $\Lh$}
We start with identity \eqref{identity: energy with Lh} where we {\color{black}take the multiplier vector field} $J= \Lh$. We bound $Q_1,\cdots,Q_4$ one by one.

To bound $Q_1$, we notice that $T(c^{-1}\kappa)=c^{-1}T\kappa-c^{-2}\kappa Tc$. By \eqref{preliminary geometric bounds}, we have $\|T(\kappa)\|_{L^\infty(\Sigma_t)}\lesssim \mathring{M}\varepsilon t$. In view of \eqref{bound on kappa}, we conclude that $|T\kappa|\lesssim t \lesssim c^{-1}\kappa$, provided $\mathring{M}\varepsilon$ is sufficiently small.This implies that
{\color{black}\begin{align*}
	|Q_1|&= \big|\int_{\mathcal{D}(t,u)}T (c^{-1}\kappa)  |L\psi|^2 \big|\lesssim\int_{\mathcal{D}(t,u)} c^{-1}\kappa |L\psi|^2 =\int_{0}^{u} \mathcal{F}(t,u')du'.
\end{align*} }

To bound $Q_2$, in view of \eqref{bound on L kappa} and $c\approx 1$, we have $L(\kappa^2) \approx c^{-1}\kappa$.  Therefore,
{\color{black}\begin{align*}
	|Q_2|&= \int_{\mathcal{D}(t,u)} \frac{1}{2}L (\kappa^2	)  |\Xh\psi|^2 \lesssim \int_{\mathcal{D}(t,u)} c^{-1}\kappa |\Xh\psi|^2 =\int_{0}^{u} \Fb(t,u')du'.
\end{align*} }

To bound $Q_3$, in view of \eqref{preliminary geometric bounds}, we notice that $|\zeta+\eta|\lesssim \mathring{M}\varepsilon t$ and $| \Xh(c^{-1}\kappa)|\lesssim \mathring{M}\varepsilon t$. Therefore, 
\begin{align*}
	|Q_3|&=\left|\int_{\mathcal{D}(t,u)} \left(c^{-1}\kappa(\zeta+\eta)-\mu \Xh(c^{-1}\kappa)\right)L\psi\cdot \Xh\psi \right|\lesssim  \mathring{M}\varepsilon \int_{\mathcal{D}(t,u)} t^2 |L\psi| |\Xh\psi|\\
	& \lesssim  \mathring{M}\varepsilon \int_{\mathcal{D}(t,u)} c^{-1}\kappa \left(c^{-1}\kappa (L\psi)^2+\mu (\Xh\psi)^2\right)\lesssim \mathring{M}\varepsilon \int_{\delta}^{t} \E(t',u)dt'.
\end{align*}

To bound $Q_4$,in view of \eqref{preliminary geometric bounds}, we have $|\chi|\lesssim \mathring{M}\varepsilon$. Therefore, we have
\begin{align*}
	|Q_4|&= \left|\int_{\mathcal{D}(t,u)}  \frac{\kappa^2\chi}{2}(\Xh \psi)^2 +\frac{c^{-1}\kappa \chi}{2}L\psi\cdot \Lb\psi\right|\lesssim \mathring{M} \varepsilon \left|\int_{\mathcal{D}(t,u)}   \kappa^2 (\Xh \psi)^2 +\left(\kappa^2  L(\psi)^2+\Lb(\psi)^2\right)\right|\\
	& \lesssim \mathring{M}\varepsilon \int_{\delta}^{t} \E(t',u)+\Eb(t',u)dt'.
\end{align*}

Putting all the estimates in \eqref{identity: energy with Lh} , we have
\begin{equation}\label{ineq: energy ineq for Lh}
	\begin{cases}
		&\displaystyle\mathcal{E}(t,u)+\mathcal{F}(t,u)=\mathcal{E}(\delta,u) + \mathcal{F}(t,0)-\int_{\mathcal{D}(t,u)} \mu\varrho\cdot \Lh\psi +\sum_{1\leqslant j \leqslant 4}Q_i,\\
		&\displaystyle\sum_{1\leqslant j \leqslant 4}|Q_i| \lesssim \mathring{M}\varepsilon \int_{0}^{u} \mathcal{F}(t,u')+\Fb(t,u')du'+ \int_{\delta}^{t} \E(t',u)+\Eb(t',u)dt'.
	\end{cases}
\end{equation}

\subsubsection{Multiplier $\Lb$}

We turn to the identity \eqref{identity: energy with Lb} where we {\color{black}take the multiplier vector field} $J= \Lh$. We bound $\underline{Q}_1,\cdots,\underline{Q}_4$ one by one.

To bound $\underline{Q}_1$, it is straightforward to check that $L(c^{-1}\kappa)\lesssim 1$ and $\Lb(c^{-1}\kappa)\lesssim t$. Therefore,
\begin{align*}
	|\underline{Q}_1|&=\big|\int_{\mathcal{D}(t,u)} \frac{1}{2} \left( \mu  L(c^{-1}\kappa)+ \Lb(c^{-1}\kappa) \right)(\Xh\psi)^2\big|\lesssim \int_{\mathcal{D}(t,u)}t(\Xh\psi)^2\lesssim \int_{0}^{u} \Fb(t,u')du'.
\end{align*}

To bound $\underline{Q}_2$, we use $|\zeta+\eta|\lesssim \mathring{M}\varepsilon t$ and $\kappa \approx t$ to derive
\begin{align*}
	|\underline{Q}_2|&=\big|\int_{\mathcal{D}(t,u)} (\zeta+\eta) \Lb\psi\cdot \Xh\psi\big|\lesssim \mathring{M}\varepsilon \int_{\mathcal{D}(t,u)} t|\Lb\psi||\Xh\psi|\\
	&\lesssim \mathring{M}\varepsilon \int_{\mathcal{D}(t,u)} |\Lb\psi|^2+t^2|\Xh\psi|\lesssim \int_{\delta}^{t}\Eb(t',u)dt'.
\end{align*}

To bound $\underline{Q}_3$, we use $|\Xh(c^{-1}\kappa)|\lesssim \mathring{M}\varepsilon t$ to derive
\begin{align*}
	|\underline{Q}_3|&=\big|\int_{\mathcal{D}(t,u)} \mu \Xh(c^{-1}\kappa) L\psi \cdot \Xh\psi\big|\lesssim \mathring{M}\varepsilon \int_{\mathcal{D}(t,u)} t^2|L\psi||\Xh\psi|\lesssim \int_{\delta}^{t}\E(t',u)dt'.
\end{align*}

To bound $\underline{Q}_4$, we use $|\chib|\lesssim \mathring{M}\varepsilon t$ because $\chib=c^{-1}\kappa\big(-2\Xh^j\cdot \Xh(\psi_j)-\chi\big)$. Thus,
\begin{align*}
	|\underline{Q}_4|&=\left|\int_{\mathcal{D}(t,u)}\frac{1}{2}{\mu}\chib \big( (\Xh \psi)^2 + \frac{1}{\mu}L\psi\cdot \Lb\psi \big)\right|\leqslant \big|\int_{\mathcal{D}(t,u)}\frac{1}{2}{ \mu}\chib \big( (\Xh \psi)^2 + \frac{1}{2}(L\psi)^2+\frac{1}{2\mu^2}(\Lb\psi)^2\big) \big|\\
	&\lesssim \mathring{M}\varepsilon \int_{\mathcal{D}(t,u)}t^2 \left( (\Xh \psi)^2 +(L\psi)^2\right)+ (\Lb\psi)^2\lesssim \mathring{M}\varepsilon\int_{\delta}^{t} \E(t',u)+\Eb(t',u)dt'.
\end{align*}
Putting all the estimates in \eqref{identity: energy with Lb} , we have
\begin{equation}\label{ineq: energy ineq for Lb}
	\begin{cases}
		&\displaystyle\Eb(t,u)+\Fb(t,u)=\Eb(\delta,u) + \Fb(t,0)-\int_{\mathcal{D}(t,u)} \mu \varrho\cdot\Lb\psi +\sum_{1\leqslant j \leqslant 4}\underline{Q}_i,\\
		&\displaystyle\big|\sum_{1\leqslant j \leqslant 4}Q_i\big| \lesssim \int_0^u \Fb(t,u')du'+ \int_{\delta}^{t} \E(t',u)+\Eb(t',u)dt',
	\end{cases}
\end{equation}
provided $\mathring{M}\varepsilon$ is sufficiently small.

\subsubsection{The fundamental energy inequality}
We define the total energy and the total flux associated to $\psi$ as follows:
\[\mathscr{E}(\psi)(t,u)=\mathcal{E}(\psi)(t,u)+\underline{\mathcal{E}}(\psi)(t,u), \ \mathscr{F}(\psi)(t,u)=\mathcal{F}(\psi)(t,u)+\underline{\mathcal{F}}(\psi)(t,u).\]
Therefore, in view of \eqref{ineq: energy ineq for Lh} and \eqref{ineq: energy ineq for Lb}, for sufficiently small $\mathring{M}\varepsilon$, we have the following fundamental energy identity:
\begin{equation}\label{ineq: energy ineq for total}
	\begin{split}
		\mathscr{E}(\psi)(t,u)+\mathscr{F}(\psi)(t,u)&=\mathscr{E}(\psi)(\delta,u)+ \mathscr{F}(\psi)(t,0)+\mathscr{N}(\psi)(t,u)+\underline{\mathscr{N}}(\psi)(t,u)+{\mathbf{Err}}
	\end{split}
\end{equation}
where the nonlinear terms $\mathscr{N}(t,u)$ and $\underline{\mathscr{N}}(t,u)$ are defined as
\[\mathscr{N}(\psi)(t,u)=-\int_{\mathcal{D}(t,u)}\mu\varrho\cdot \Lh\psi,     \ \ \underline{\mathscr{N}}(\psi)(t,u)=-\int_{\mathcal{D}(t,u)}\mu \varrho\cdot\Lb\psi,\]
and the error term $\mathbf{Err}$ satisfies
\[\left|\mathbf{Err}\right|\lesssim \int_{0}^{u} \mathscr{F}(\psi)(t,u')du'+ \int_{\delta}^{t} \mathscr{E}(\psi)(t',u)dt'.\]

\subsection{Bilinear error integrals}\label{section: bilinear error integrals} 
We introduce three types of bilinear error integrals associated to a pair of functions $(\psi,\psi')$. 

The first one is
\begin{equation}\label{def:L1}
	\mathscr{L}_1(\psi,\psi')(t,u)=\int_{\mathcal{D}(t,u)} \kappa |\Xh \psi||L\psi'|.
\end{equation}
It is clear that
\begin{equation}\label{ineq: estimates for L1}
	\mathscr{L}_1(\psi,\psi')(t,u)\lesssim \int_{0}^{u} \mathscr{F}(\psi)(t,u')+ \mathscr{F}(\psi')(t,u')du'.
\end{equation}

The second and third bilinear error integrals are
\begin{equation}\label{def:L2 L3}
	\mathscr{L}_2(\psi,\psi')(t,u)=\int_{\mathcal{D}(t,u)} |L\psi||\Lb\psi'|, \ \ \mathscr{L}_3(\psi,\psi')(t,u)=\int_{\mathcal{D}(t,u)}  |\Xh \psi||\Lb\psi'|. 
\end{equation}
For any small positive constant $a_0$ (it will be determined later on in the energy estimates for $w, \wb$ and $\psi_2$), we have
\begin{align*}
	\mathscr{L}_2(\psi,\psi')(t,u)&\leqslant \int_{\mathcal{D}(t,u)} \frac{t}{2a_0} |L\psi|^2+\frac{a_0}{2}\frac{|\Lb\psi'|^2}{t}\\
	&\lesssim \frac{1}{a_0}\int_{0}^{u} \mathscr{F}(\psi)(t,u')du'+a_0\int_\delta^t  \frac{\mathscr{E}(\psi')(t',u)}{t'}dt'.
\end{align*}
Similar estimates also hold for {\color{black}$\mathscr{L}_3(\psi,\psi')(t,u)$.} Therefore,
\begin{equation}\label{ineq: estimates for L2 L3}
	\mathscr{L}_2(t,u)+\mathscr{L}_3(t,u)\leqslant  C_0\Big(\frac{1}{a_0}\int_{0}^{u} \mathscr{F}(t,u')du'+a_0\int_\delta^t  \frac{\mathscr{E}(t',u)}{t'}dt'\Big),
\end{equation}
where $C_0$ is a universal constant and the small positive constant $a_0$ will be determined later on.
For $\psi$ of zero order we shall also make use of another error integral (see \eqref{eq: 0 order bound on wb}):
\begin{equation}\label{def:L3r}
	\mathring{\mathscr{L}}_3(\psi,\psi')(t,u)=\int_{\mathcal{D}(t,u)}  |\Xr \psi||\Lb\psi'|{\color{black}.}
\end{equation}

\begin{remark}\label{remark:bilinear-error-integrals}
	We notice that $\mathscr{L}_i$ are of the forms $\int_{\mathcal{D}(t,u)} |Z\psi||Z'\psi'|$  but we exclude the case $\int_{\mathcal{D}(t,u)}|\Lb \psi||\Lb \psi'|$. The reason is that we can bound at least one of the factor $|Z\psi|$ in $\mathscr{L}_i$ by the flux, which provides a crucial smallness factor by integrating in $u$. This is the null structure mentioned in Section \ref{section: difficulty}. 
\end{remark}

\subsection{A refined Gronwall type inequality}
To handle the bilinear error integrals in the energy estimates, we will need a refined Gronwall type inequality:
\begin{lemma}\label{refined Gronwall}
	{\color{black}Let  $E(t,u)$ and $F(t,u)$ be two smooth non-negative functions defined on $D(t^*,u^*)$ such that 
		\[ E(t,u') \leqslant E(t,u) \ \ \text{for}  \ \  0 \leqslant u' \leqslant u \leqslant u^* \ \ \text{and} \ \ F(t',u) \leq F(t,u) \ \text{for} \ \ \delta \leq t' \leqslant t \leqslant t^*. \]
	}
	We assume that there exist positive constants $A$, $B$ and $C$ so that for all $(t,u)\in [\delta,t^*]\times [0,u^*]$, we have the following inequality:
	\begin{align*}
		{E}(t,u)+ {F}(t,u)\leqslant At^2+ B \int_0^u F (t,u') du'   + C \int_{\delta}^{t}\frac{E(t',u)}{t'}dt'.
	\end{align*}
	Then, if $e^{B u^*}C\leqslant 1$, we have the following inequality for all $(t,u)\in [\delta,t^*]\times [0,u^*]$:
	\begin{equation*}
		{E}(t,u)+F(t,u)\leqslant  3Ae^{Bu} t^{2}.
	\end{equation*}
\end{lemma}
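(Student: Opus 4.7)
The plan is to prove this refined Gronwall inequality by a bootstrap/continuity argument in the time variable $t$, using the non-negativity of $E$ and $F$ to reduce to a single scalar inequality, and crucially exploiting the hypothesis $e^{Bu^*}C \leqslant 1$ to close the bootstrap with a strict improvement.

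Set $H(t,u) = E(t,u) + F(t,u)$. Since $F \leqslant H$ and $E \leqslant H$, the hypothesis becomes
\begin{equation*}
H(t,u) \leqslant At^2 + B\int_0^u H(t,u')\,du' + C\int_\delta^t \frac{H(t',u)}{t'}\,dt'.
\end{equation*}
At $t = \delta$ the second integral vanishes, so a standard Gronwall in $u$ gives the initial bound $H(\delta, u) \leqslant A\delta^2 e^{Bu} \leqslant 3Ae^{Bu}\delta^2$.

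Now for the bootstrap. Let $\mathcal{T} = \{ t \in [\delta, t^*] : H(s,u) \leqslant 3Ae^{Bu}s^2 \ \text{for all } (s,u) \in [\delta,t] \times [0,u^*] \}$. Then $\delta \in \mathcal{T}$ by the initial step, and $\mathcal{T}$ is closed by continuity of $H$. For any $t_0 \in \mathcal{T}$ and any $t \leqslant t_0$, substituting the ansatz into the right-hand side of the inequality and evaluating the integrals gives
\begin{equation*}
H(t,u) \leqslant At^2 + 3A(e^{Bu}-1)t^2 + \tfrac{3}{2}ACe^{Bu}(t^2-\delta^2) \leqslant 3Ae^{Bu}t^2 - \bigl(2 - \tfrac{3}{2}Ce^{Bu}\bigr)At^2.
\end{equation*}
Since $Ce^{Bu} \leqslant Ce^{Bu^*} \leqslant 1$, the factor $2 - \tfrac{3}{2}Ce^{Bu}$ is bounded below by $\tfrac{1}{2}$, giving the strict improvement
\begin{equation*}
H(t,u) \leqslant 3Ae^{Bu}t^2 - \tfrac{1}{2}At^2, \qquad \text{uniformly in } u \in [0, u^*].
\end{equation*}
By continuity of $H$ on the compact cylinder $[t_0, t_0+\varepsilon] \times [0, u^*]$, the gap of at least $\tfrac{1}{2}At_0^2$ persists in a right neighborhood of $t_0$, so $\mathcal{T}$ is relatively open. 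By connectedness, $\mathcal{T} = [\delta, t^*]$, completing the proof.

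The only delicate point is the closing of the bootstrap: a naive substitution of the ansatz $3Ae^{Bu}t^2$ would merely recover the same bound, so the proof hinges on extracting a \emph{strict} gap from the hypothesis $Ce^{Bu^*} \leqslant 1$. The numerical constant $3$ in the statement is precisely tuned so that the contribution $3A(e^{Bu}-1) - 3A e^{Bu} = -3A$, together with the initial $At^2$, leaves a surplus $-2At^2$ that dominates the flux error $\tfrac{3}{2}ACe^{Bu}t^2 \leqslant \tfrac{3}{2}At^2$. A smaller constant (e.g.\ $2A$) would fail to yield this surplus, while the chosen value works precisely because $Ce^{Bu^*} \leqslant 1 < \tfrac{4}{3}$.
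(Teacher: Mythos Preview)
Your proof is correct, and it takes a genuinely different route from the paper's argument.

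The paper proceeds constructively: it first applies Gronwall in $u$ (with $t$ frozen) to eliminate the flux integral, obtaining
\[
E(t,u)+F(t,u)\leqslant Ae^{Bu}t^{2}+e^{Bu}C\int_{\delta}^{t}\frac{E(t',u)}{t'}\,dt',
\]
then sets $Y(t)=\int_{\delta}^{t}E(t',u)/t'\,dt'$, derives the scalar inequality $tY'(t)\leqslant Ae^{Bu}t^{2}+DY(t)$ with $D=e^{Bu}C\leqslant 1$, and integrates explicitly via the integrating factor $t^{-D}$. This yields the sharper intermediate bound $E+F\leqslant \frac{2+D}{2-D}\,Ae^{Bu}t^{2}$, which reduces to $3Ae^{Bu}t^{2}$ only at the endpoint $D=1$.

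Your approach instead collapses $E$ and $F$ into $H=E+F$, posits the final bound as a bootstrap ansatz, and verifies a strict improvement using the numerical slack $2-\tfrac{3}{2}Ce^{Bu}\geqslant \tfrac{1}{2}$. This is shorter and computationally lighter, at the cost of requiring the answer in advance and losing the sharper dependence on $D$. One minor phrasing point: the specific gap $\tfrac{1}{2}At_{0}^{2}$ need not persist verbatim for $t>t_{0}$; what continuity gives is that the function $(t,u)\mapsto 3Ae^{Bu}t^{2}-H(t,u)$, being strictly positive on $\{t_{0}\}\times[0,u^{*}]$, remains nonnegative on a right neighborhood. This is what you need, and your argument is sound.
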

\begin{proof}
	We define $H(t,u)=A t^2- {E}(t,u)   + C \int_{\delta}^{t}\frac{E(t',u)}{t'}dt'$. Therefore,
	\[
	F(t,u)\leqslant  {H}(t,u)+ B \int_0^u {\color{black}F}(t,u') du'.
	\]
	We use the standard Gronwall's inequality for the variable $u$ and we obtain that
	\begin{align*}
		{F}(t,u)\leqslant & {H}(t,u) + B \int_{0}^u e^{B (u-u')}{H}(t,u')du'.
	\end{align*}
	According to the definition of $H(t,u)$, this is equivalent to
	\begin{align*}
		{F}(t,u)+{E}(t,u)\leqslant At^2+ C \int_{\delta}^{t}\frac{{E}(t',u)}{t'}dt'+B \int_{0}^u e^{B (u-u')}{H}(t,u')du'.
	\end{align*}
	For $u'\leqslant u$, the definition of $H(t,u)$ also implies that 
	\begin{align*}
		{H}(t,u')&\leqslant At^2 + C \int_{\delta}^{t}\frac{{E}(t',u')}{t'}dt'\leqslant A t^2+ C \int_{\delta}^{t}\frac{{E}(t',u)}{t'}dt'{\color{black}.}
	\end{align*}
	{\color{black}Combining the above} two inequalities, we have
	\begin{align*}
		{F}(t,u)+E(t,u)&\leqslant At^2  + C \int_{\delta}^{t}\frac{E(t',u)}{t'}dt' +B \int_{0}^u e^{B (u-u')}\left[ At^2+C \int_{\delta}^{t}\frac{{E}(t',u)}{t'}dt'\right]du'\\
		&=A t^2+ C \int_{\delta}^{t}\frac{{E}(t',u)}{t'}dt' +(e^{Bu}-1)\left(At^2+ C \int_{\delta}^{t}\frac{{E}(t',u)}{t'}dt'\right).
	\end{align*}
	Therefore, 
	\begin{equation}\label{ineq: in gronwall proof 1}
		{F}(t,u)+E(t,u)\leqslant Ae^{B u}t^2+e^{B u}C \int_{\delta}^{t}\frac{E(t',u)}{t'}dt'.
	\end{equation}
	In particular,
	\begin{equation}\label{ineq: in gronwall proof 2}
		E(t,u)\leqslant Ae^{B u}t^2+e^{B u}C \int_{\delta}^{t}\frac{E(t',u)}{t'}dt'.
	\end{equation}
	For a fixed $u$, if we define $D=e^{B u}C$ and $Y(t)=\int_{\delta}^{t}\frac{E(t',u)}{t'}dt'$, then \eqref{ineq: in gronwall proof 2} is equivalent to
	\begin{align*}
		tY(t)' \leqslant  Ae^{B u} t^2+D Y(t),
	\end{align*}
	which is also equivalent to 
	\[
	\left(\frac{Y(t)}{t^D}\right)' \leqslant  Ae^{B u} t^{1-D} .\]
	We can integrate the above equation on $[\delta,t]$ and we use $D\leqslant 1$ to derive
	\begin{align*}
		\frac{Y(t)}{t^D}&\leqslant  \frac{Ae^{Bu}}{2-D}\left(t^{2-D}- \delta^{2-D}\right)\leqslant  \frac{2Ae^{Bu}}{2-D} t^{2-D}.\end{align*}
	Hence,
	\[Y(t)\leqslant  \frac{2Ae^{Bu}}{2-D} t^{2}.\]
	We put the bound on $Y(t)$ back into \eqref{ineq: in gronwall proof 1}. This implies
	\begin{align*}
		E(t,u)+F(t,u)&\leqslant  \frac{2+e^{B u}C}{2-e^{B u}C}Ae^{Bu} t^{2}.
	\end{align*}
	The result of the lemma follows immediately.
\end{proof}

\section{The lowest order energy estimates}\label{section:lowest-order-energy-estimates}

In this section, we apply the results of Section \ref{section: fundamental energy estimates} to the wave equations \eqref{Main Wave equation: order 0}, i.e., $\Box_g \Psi_0 =\varrho_0$ with $\Psi_0 \in \{\wb,w,\psi_2\}$. In view of \eqref{eq: decompose rho0 using Lr Lbr Xr}, we recall that  $\varrho_0$ is a linear combination of terms from the set $\big\{c^{-1}g(D f_1,Df_2)\big| f_1,f_2\in \{\wb,w,\psi_2\}\big\}$.

In the rest of the section, we first derive energy estimates for $w$ and $\psi_2$. We then use the Euler equations to obtain the energy bound on $\wb$.

\subsection{Energy estimates for $w$ and $\psi_2$}

We take $\Psi_0=w$ or $\psi_2$ in \eqref{Main Wave equation: order 0}. In view of the results of Section \ref{section: fundamental energy estimates}, in particular \eqref{ineq: energy ineq for total},  it suffices to bound the following error terms:
\[\begin{cases}
\mathscr{N}(\Psi_0)(t,u)&=\displaystyle-\int_{\mathcal{D}(t,u)} c^{-1}\mu g(D f_1,Df_2)\cdot \Lh\Psi_0,\\
\underline{\mathscr{N}}(\Psi_0)(t,u)&=\displaystyle-\int_{\mathcal{D}(t,u)} c^{-1}\mu  g(D f_1,Df_2) \cdot\Lb\Psi_0,
\end{cases} \ \ \ \text{with} \  f_1,f_2\in \{\wb,w,\psi_2\}.\]
According to \eqref{eq: decompose rho0 using Lr Lbr Xr}, we rewrite $\mu g(D f_1,Df_2)$ as
\begin{equation}\label{aux:lowest order 1}
\mu g(D f_1,Df_2)=-\frac{1}{2}(Lf_1 \Lb f_2+\frac{1}{2}\Lb f_1 L f_2)+\mu \Xh(f_1)\Xh(f_2).
\end{equation}
The possible error terms can be classified {\color{black}into two groups} according to either $\{f_1,f_2\}\cap \{w,\psi_2\} \neq \emptyset$ or $f_1=f_2=\wb$. We treat these two cases separately.

\medskip

\noindent{\bf Case 1} \ $\{f_1,f_2\}\cap \{w,\psi_2\} \neq \emptyset$.  Without loss of generality, we assume that $f_2\in \{w,\psi_2\}$. 

\medskip

By \eqref{preliminary geometric bounds}, \eqref{bound on Twb} and the bootstrap assumption $\mathbf{(B_\infty)}$, we have $|c|+|\Lb f_1|\lesssim 1$.  Therefore, \eqref{aux:lowest order 1} implies
\begin{align*}
\left|\mathscr{N}(\Psi_0)(t,u)\right|&\lesssim \int_{\mathcal{D}(t,u)}  \left(|Lf_1||\Lb f_2|+|\Lb f_1| |L f_2|+\mu|\Xh(f_1)||\Xh(f_2)|\right)|\Lh\Psi_0|\\
&\lesssim \int_{\mathcal{D}(t,u)}  \big(\mathring{M}\varepsilon |\Lb f_2|+  |L f_2|+\mu \mathring{M} \varepsilon |\Xh(f_2)|\big)\mu|L\Psi_0|\\
&\lesssim \int_{\mathcal{D}(t,u)}  \mathring{M}\varepsilon \mu |\Lb f_2||L\Psi_0|+ \mu |L f_2||L\Psi_0|+ \mathring{M} \varepsilon \mu^2|\Xh(f_2)||L\Psi_0|.
\end{align*}
We notice that, in the last line, both $f_2$ and $\Psi_0$ are from the set $\{w,\psi_2\}$. In view of the definition of $\mathscr{E}(\psi)(t,u)$ and $\mathscr{F}(\psi)(t,u)$, we apply Cauchy-Schwarz inequality to each of the above terms in the integrand and we obtain
\begin{align*}
\left|\mathscr{N}(\Psi_0)(t,u)\right|
&\lesssim \int_{0}^{u} \mathscr{F}_0(t,u')du'+\mathring{M}\varepsilon  \int_{\delta}^{t} \mathscr{E}_0(t',u)dt'\\
&\lesssim \mathring{M}\varepsilon^3t^2+\int_{0}^{u} \mathscr{F}_0(t,u')du'.
\end{align*}
\begin{remark}[Abuse of notations]
We have used the notations $\mathscr{E}_0(t',u)$ for $\mathscr{E}(\psi)(t',u)$ where $\psi\in \{w,\wb, \psi_2\}$. {\color{black}In the rest of the paper,} we will use notations $\mathscr{E}_n(t,u)$ to denote $\mathscr{E}(\psi)(t,u)$ where $\psi\in \{w,\wb, \psi_2\}$ if there is no confusion. Similarly, we use notations $\mathscr{F}_n(t,u)$,  $\Eb_n(t,u)$ and  $\Fb_n(t,u)$.
\end{remark}
In the previous inequality, we used $\mathbf{(B_2)}$ to bound $\mathscr{E}_0(t',u)$. It is also important to observe that the flux term $\mathscr{F}_0(t,u')$ in the above estimates is associated with $w$ and $\psi_2$. It does not include the flux of $\wb$.

Similarly, we can bound $\underline{\mathscr{N}}(\Psi_0)(t,u)$ as follows:\begin{align*}
\left|\underline{\mathscr{N}}(\Psi_0)(t,u)\right|
&\lesssim \int_{\mathcal{D}(t,u)}  \left(\mathring{M}\varepsilon |\Lb f_2|+ |L f_2|+\mu \mathring{M} \varepsilon |\Xh(f_2)|\right)|\Lb\Psi_0|\\
&\lesssim \int_{\mathcal{D}(t,u)}  \mathring{M}\varepsilon |\Lb f_2||\Lb\Psi_0|+  |L f_2||\Lb\Psi_0|+\mathring{M} \varepsilon \mu|\Xh(f_2)||\Lb\Psi_0|\\
&\lesssim \mathscr{L}_3(f_2,\Psi_0)(t,u)+\mathring{M}\varepsilon^3t^2.
\end{align*}
In the last step, we have used the notations of bilinear error integrals defined in Section \ref{section: bilinear error integrals}.

\medskip

\noindent {\bf Case 2}~$f_1=f_2=\wb$. In this case, we will bound $c$, $c^{-1}$ {\color{black}and $\Lb(\wb)$} in $L^\infty$ by a universal constant.

\medskip

For $\mathscr{N}(\Psi_0)(t,u)$,  we bound one of $\Xh(\wb)$'s in $L^\infty$ norm by $\mathring{M}\varepsilon$. This leads to
\begin{align*}
\left|\mathscr{N}(\Psi_0)(t,u)\right|&\lesssim \int_{\mathcal{D}(t,u)}  \left(|L\wb||\Lb \wb|+\mu|\Xh(\wb)|^2\right)|\Lh\Psi_0|\\
&\lesssim \int_{\mathcal{D}(t,u)}  \mu |L \wb| |L\Psi_0|+\mu^2 |\Xh(\wb)|^2|L\Psi_0|\\
&\lesssim\mathring{M}\varepsilon^3t^2+ \int_{\mathcal{D}(t,u)}   \mu |L \wb| |L\Psi_0|{\color{black}.}
\end{align*}

For $\underline{\mathscr{N}}(\Psi_0)(t,u)$, we have
\begin{align*}
\left|\underline{\mathscr{N}}(\Psi_0)(t,u)\right|&\lesssim \int_{\mathcal{D}(t,u)}  \left(|L\wb||\Lb \wb|+\mu|\Xh(\wb)|^2\right)|\Lb\Psi_0|\\
&\lesssim \mathring{M}\varepsilon^3t^2+\int_{\mathcal{D}(t,u)}  |L\wb| |\Lb\Psi_0|{\color{black}.}
\end{align*}
The appearance of $\wb$ in the integral may generate a flux term $\mathscr{F}_0(t,u')$ associated to $\wb$. To avoid it, we will use the Euler equations to replace $L(\wb)$ by derivatives of $w$ and $\psi_2$.  In fact, by \eqref{Euler equations:form 2}, we have
\begin{equation}\label{aux:lowest order 2}
\begin{split}
L (\wb) &= c \Xh(\psi_2)\Xh^2  -c \widehat{T}(\wb)(\widehat{T}^1+1) + c \widehat{T}(\psi_2)\widehat{T}^2  - c\Xh(\wb)\Xh^1 \\
&=c \Xh(\psi_2)\Xh^2+\mathring{M}t\varepsilon^2{\color{black},}
\end{split}
\end{equation}
where we bound the last two terms by $\mathbf{(B_2)}$ and we use improved estimate  \eqref{bound on T1+1} on $\widehat{T}^1+1$  to control the second term. We can bound
\[\int_{\mathcal{D}(t,u)}  \mathring{M}t\varepsilon^2 |\Lb\Psi_0|\lesssim \mathring{M}\varepsilon^3t^2,\]
because $\Psi_0\neq \wb$ so that we can use $\mathbf{(B_\infty)}$ to bound $|\Lb\Psi_0|$ by $ \mathring{M}t\varepsilon$. Therefore, 
\begin{align*}
\left|\underline{\mathscr{N}}(\Psi_0)(t,u)\right|&\lesssim \mathring{M}\varepsilon^3t^2+\int_{\mathcal{D}(t,u)}  |\Xh \psi_2 | |\Lb\Psi_0|.
\end{align*}
Therefore, we have
\begin{align*}
\left|\mathscr{N}(\Psi_0)(t,u)\right|+\left|\underline{\mathscr{N}}(\Psi_0)(t,u)\right|&\lesssim \mathring{M}\varepsilon^3t^2+ \int_{\mathcal{D}(t,u)}   \mu |\Xh \psi_2| |L\Psi_0|+|\Xh\psi_2| |\Lb\Psi_0|\\
&\lesssim \mathring{M}\varepsilon^3t^2+ \int_{0}^{u} \mathscr{F}_0(t,u')du'+\mathscr{L}_3(\psi_2,
\Phi_0)(t,u).\end{align*}

\medskip

{\color{black}Combining the above estimates} in {\bf Case 1} and {\bf Case 2}, in view of \eqref{ineq: estimates for L2 L3}, there exist universal constant $C_0$, $C_1$ and $C_2$, such that if $\mathring{M}\varepsilon$ is sufficiently small, we have
\begin{align*}
\mathscr{E}(\Psi_0)(t,u)+\mathscr{F}(\Psi_0)(t,u)&\leqslant \mathscr{E}(\Psi_0)(\delta,u)+ \mathscr{F}(\Psi_0)(t,0)+C_1\mathring{M}\varepsilon^3t^2\\
&\ \ +C_0\Big(\frac{1}{a_0}\int_{0}^{u} \mathscr{F}(\psi)(t,u')du'+a_0\int_\delta^t  \frac{\mathscr{E}(\psi)(t',u)}{t'}dt'\Big)\\
&\leqslant C_2\varepsilon^2t^2+C_0\Big(\frac{1}{a_0}\int_{0}^{u} \mathscr{F}(\psi)(t,u')du'+a_0\int_\delta^t  \frac{\mathscr{E}(\psi)(t',u)}{t'}dt'\Big).
\end{align*}
It is important to notice that  the above energy norms are associated with $w$ and $\psi_2$, i.e., $\psi\neq \wb$. Since the energy norms on the lefthand side are also associated with with $w$ and $\psi_2$, we apply the refine Gronwall's inequality, i.e., Lemma \ref{refined Gronwall}. We may take $a_0 = \frac{1}{2C_0}$ and $u_0^* = \frac{\log 2}{2 C_0^2}$ so that $e^{B u^*}C\leqslant 1$. Therefore, the refined Gronwall's inequality yields that,  
for all $(t,u)\in [\delta,t^*]\times [0,u_0^*]$,
\begin{equation}\label{eq: 0 order bound on w and psi2}
\mathscr{E}(\psi)(t,u)+\mathscr{F}(\psi)(t,u)\lesssim  \varepsilon^2t^2,
\end{equation}
where $\psi= w$ or $\psi_2$. This closes the second estimate of the bootstrap assumption $\mathbf{(B_2)}$, see \eqref{ansatz B2}. We notice that $u_0^*$ is a universal constant. As we shall see, by iteration we can improve $u_0^*$ to $u^*$ as long as we have a lower bound on $c$; see Section \ref{subsection:conclusion-higher-order-energy-estimates}.

\subsection{Energy bounds for $\wb$}\label{subsection:zero-order-bounds-wb}

This section is devoted to bound $L\wb$ and $\XR\wb$. We point out that these estimates are not included in the  bootstrap assumption $\mathbf{(B_2)}$.

According to \eqref{aux:lowest order 2}, we have the following pointwise bound:
\begin{align*}
|\kappa L (\wb) -\mu \Xh(\psi_2)\Xh^2| \lesssim \mathring{M}t^2 \varepsilon^2.
\end{align*}
Therefore, we can bound $L\wb$ in terms of $\Xh(\psi_2)$. Indeed, by the bound \eqref{eq: 0 order bound on w and psi2} on $\psi_2$, we have
\[\int_{\Sigma_t^u}c^{-2}\kappa^2 |L (\wb)|^2  \lesssim \int_{\Sigma_t^u}\mu^2 |\Xh(\psi_2)|^2  +\mathring{M}t^2\varepsilon^3\lesssim t^2 \varepsilon^2,\]
provided $\varepsilon$ is sufficiently small. The contribution of $L\wb$ in the flux term can be bounded in the same manner. Therefore, for all $(t,u)\in [\delta,t^*]\times [0,u^*]$,  we obtain that
 \begin{equation}\label{eq: 0 order bound on L wb}
\int_{\Sigma_t^u}c^{-2}\kappa^2 |L (\wb)|^2  +\int_{C_u^t}  c^{-1}\kappa |L(\wb)|^2{\color{black}\lesssim t^2 \varepsilon^2.}
\end{equation}
In view of \eqref{change of frame 1}, \eqref{bound on T1+1}, \eqref{bound on T2}, we also have
\begin{align*}
	\int_{\Sigma_t^u}c^{-2}\kappa^2 |\Lr(\wb)|^2 \lesssim t^2 \varepsilon^2{\color{black}.}
\end{align*}

For $\XR(\wb)$ we use the equation \eqref{Euler equations:form 3} to obtain
\begin{align*}
	\int_{\Sigma_t^u}\kappa^2|\XR(\wb)|^2 \lesssim \int_{\Sigma_t^u}c^{-2}\kappa^2|\Lr(\psi_2)|^2 + (\kappar^{-1}\kappa)^2|\Tr(\psi_2)|^2 + \kappa^2|\Xr(w)|^2 \lesssim t^2 \varepsilon^2.
\end{align*}
By \eqref{eq: 0 order bound on w and psi2} and \eqref{comparison bound 1: L infty}, we have
\begin{align*}
	\int_{\Sigma_t^u}\kappa^2|\XR(\wb)|^2 \lesssim t^2 \varepsilon^2.
\end{align*}
The contribution of $\XR\wb$ in the flux term can be bounded in the same manner.
Recalling the definitions  in \eqref{def:Er_0-Fr_0}, we  have the following energy bounds for $\wb$:
\begin{equation}\label{eq: 0 order bound on wb}
	\mathring{\mathscr{E}}_0(\wb)(t,u) + \mathring{\mathscr{F}}_0(\wb)(t,u)  \lesssim t^2 \varepsilon^2{\color{black}.}
\end{equation}
We summarize the zero order energy estimates as 
\begin{equation}\label{eq: 0 order bound}
	\underbrace{\sum_{\psi \in \{w,\psi_2\}}\mathscr{E}_{0}(\psi)(t,u) + \mathring{\mathscr{E}}_0(\wb)(t,u)}_{:= \mathscr{E}_0(t,u)} + \underbrace{\sum_{\psi \in \{w,\psi_2\}}\mathscr{F}_{0}(\psi)(t,u) + \mathring{\mathscr{F}}_0(\wb)(t,u)}_{:= \mathscr{F}_0(t,u)} \lesssim t^2 \varepsilon^2{\color{black}.}
\end{equation}
\begin{remark}
		It seems that the above approach can not provide energy bounds on  $\Xh(\wb)$.	In fact, since we use $\mathring{\mathscr{Z}}$ as commutator and the second null frame to decompose $g(D\psi,D\psi')$, it is $\XR(\wb)$  that will appear in the error terms, instead of $\Xh(\wb)$.
	\end{remark}

\section{Lower order estimates and extra vanishing}\label{section:lower-order-estimates}

\subsection{The $L^2$ and pointwise bounds on objects of $\Lambda$}
We recall that $\Lambda = \{\yr, \zr, \chir,\etar\}$. We use $\lambda$ to denote a generic object from $\Lambda$.

\subsubsection{Bounds on $\lambda = \chir, \etar$}
Since $\chir=-\Xr(\psi_2)$ and $\etar=-\Tr(\psi_2)$, the estimates on $\chir$ and $\etar$ are easy. In fact, according to \eqref{eq: B2 equi bis} and \eqref{ineq: L infty bound},  for all multi-indices $\alpha$ with $|\alpha|\leqslant \Ntop$ and $\beta$ with $|\beta|\leqslant \Ninf-1$ , for all $\lambda \in \{\chir,\etar\}$, we have
\begin{equation*}
\|\mathring{Z}^\alpha(\lambda)\|_{L^2(\Sigma_t)}\lesssim  \begin{cases}\mathring{M}\varepsilon, \ \ & \text{if}~\Zr^\beta=\Xr^\alpha ~\text{and}~\lambda =\chir;\\
\mathring{M}\varepsilon t, \ \ & \text{otherwise}.
\end{cases} \ \ |\alpha|\leqslant \Ntop.
\end{equation*}
and
\begin{equation*}
\|\mathring{Z}^\beta(\lambda)\|_{L^\infty(\Sigma_t)}\lesssim  \begin{cases}\mathring{M}\varepsilon, \ \ & \text{if}~\Zr^\beta=\Xr^\beta ~\text{and}~\lambda =\chir;\\
\mathring{M}\varepsilon t, \ \ & \text{otherwise}.
\end{cases} \ \ |\beta|\leqslant \Ninf-1.
\end{equation*}

\subsubsection{Bounds on $\lambda = \yr,\zr$}\label{section: yr zr}

When $\lambda=\yr$ or $\zr$, the estimates are much more involved. We will frequently compute commutators of the shape $[\Lr, \Zr^\alpha]$. In view of $[\Lr, \Xr]=\yr\cdot \Tr-\chir\cdot\Xr$ and $[\Lr, \Tr]=\zr\cdot \Tr-\etar\Xr$, for any multi-index $\alpha$, we have the following schematic commutation formula:
\begin{equation}\label{eq: commutation formular for Lr Zr}
[\Lr, \Zr^\alpha]=\sum_{\substack{\alpha_1+\alpha_2=\alpha \\ |\alpha_1|\leqslant |\alpha|-1}}\Zr^{\alpha_1}(\lambda) \Zr^{\alpha_2}, \  \ \lambda \in \{\yr, \zr, \chir,\etar\}.
\end{equation}
\begin{remark}[A key structure in the commutator]\label{remark: commutator structure}
We observe that if the $\lambda$ appearing in a single term $\Zr^{\alpha_1}(\lambda) \Zr^{\alpha_2}$ in \eqref{eq: commutation formular for Lr Zr} happens to be $\yr$ or $\zr$, then at least one of the $\Zr$'s in $\Zr^{\alpha_2}$ is $\Tr$.

Similarly, if the $\lambda$ appearing in a single term $\Zr^{\alpha_1}(\lambda) \Zr^{\alpha_2}$ in \eqref{eq: commutation formular for Lr Zr} happens to be $\chir$ or $\etar$, then at least one of the $\Zr$'s in $\Zr^{\alpha_2}$ is $\Xr$.
\end{remark}
Since $\Tr$ commutes with all $\Zr\in \mathring{\mathscr{Z}}$, we also have
\begin{equation}\label{eq: commutation formular for Lbr Zr}
[\Lbr, \Zr^\alpha]=\Zr^\alpha(c^{-1}) \kappar \Lr+c^{-1}\kappar\sum_{\substack{\alpha_1+\alpha_2=\alpha\\ |\alpha_1|\leqslant |\alpha|-1}}\Zr^{\alpha_1}(\lambda) \Zr^{\alpha_2}, \  \ \lambda \in \{\yr, \zr, \chir,\etar\}.
\end{equation}
\begin{remark}\label{remark: how to get bound on yr zr nondegeneration condition}
To derive the estimates on $\yr$ and $\zr$, we will combine the commutator formulas $[\Lr, \Xr]=\yr\cdot \Tr-\chir\cdot\Xr$ and $[\Lr, \Tr]=\zr\cdot \Tr-\etar\Xr$ with the following key fact for the rarefaction waves: $\Tr(\wb)\approx -1$. Indeed, from \eqref{bound on Twb}, we have $T(\wb)\approx -1$. Since $\Tr= -\frac{\kappar}{ \kappa  (\Th^1)^2+\ (\Th^2)^2}\big(\Th^1 T+  \Th^2 X\big)$, we can use \eqref{bound on kappa}, \eqref{bound on T1+1} and \eqref{bound on T2} to get $\Tr(\wb)\approx -1$.
\end{remark}
To obtain the estimates on $\yr$, we apply $[\Lr, \Xr]=\yr\cdot \Tr-\chir\cdot\Xr$ to $\wb$ and use \eqref{Euler equations:form 3} to replace $\Lr (\wb)$. This leads to
\begin{align*} 
\yr\cdot \Tr \wb&=\Lr\Xr(\wb) -\Xr\Lr (\wb)+\chir\cdot\Xr\wb=\Lr\Xr(\wb) -\frac{1}{2}\Xr \big(c \Xr(\psi_2)\big)+\chir\cdot\Xr\wb.
\end{align*}
Since $\chir=-\Xr(\psi_2)$, we obtain the following schematic formula:
\begin{equation}\label{eq: yr 0 order equation}
\yr\cdot \Tr \wb=\Lr\Xr(\wb) -\frac{1}{2}c\Xr^2(\psi_2)+\Xr(\psi)\Xr(\psi),
\end{equation}
where $\psi\in \{w,\wb,\psi_2\}$. We remark that in the expression $\Xr(\psi)\Xr(\psi)$ we ignore the numerical constants. We apply $\Zr^\alpha$ to \eqref{eq: yr 0 order equation} and we {\color{black}keep track of all} the top order terms as follows:
\begin{align*} 
&\Zr^{\alpha}(\yr)\cdot \Tr \wb+\sum_{\substack{\alpha_1+\alpha_2=\alpha \\ |\alpha_1|\leqslant |\alpha|-1}} \Zr^{\alpha_1}(\yr) \Zr^{\alpha_2}(\Tr \wb)\\
=&\Zr^{\alpha}\big(\Lr\Xr(\wb)\big) -\frac{1}{2}c\Zr^\alpha\big(\Xr^2(\psi_2)\big)+\sum_{\substack{\alpha_1+\alpha_2=\alpha \\ |\alpha_2|\leqslant |\alpha|-1}} \Zr^{\alpha_1}(c) \Zr^{\alpha_2}(\Xr^2(\psi_2))+\sum_{\alpha_1+\alpha_2=\alpha } \Zr^{\alpha_1}(\Xr\psi)\Zr^{\alpha_2}(\Xr\psi).
\end{align*}
We use \eqref{eq: commutation formular for Lr Zr} to commute $\Zr^\alpha$ and $\Lr$ for the first term on the righthand side to derive
\begin{align*} 
\Zr^{\alpha}(\yr)\cdot \Tr \wb+\sum_{\substack{\alpha_1+\alpha_2=\alpha \\ |\alpha_1|\leqslant |\alpha|-1}} \Zr^{\alpha_1}(\yr) \Zr^{\alpha_2}(\Tr \wb)
=&\Lr\Zr^{\alpha}\Xr (\wb) +\sum_{\substack{\alpha_1+\alpha_2=\alpha, \\ |\alpha_1|\leqslant |\alpha|-1}}\Zr^{\alpha_1}(\lambda) \Zr^{\alpha_2}\Xr(\wb)-\frac{1}{2}c\Zr^\alpha\big(\Xr^2(\psi_2)\big)\\
&\ +\sum_{\substack{\alpha_1+\alpha_2=\alpha \\ |\alpha_2|\leqslant |\alpha|-1}} \!\!\!\Zr^{\alpha_1}(c) \Zr^{\alpha_2}(\Xr^2(\psi_2))+\sum_{\alpha_1+\alpha_2=\alpha } \!\!\!\Zr^{\alpha_1}(\Xr\psi)\Zr^{\alpha_2}(\Xr\psi).
\end{align*}
Therefore, we obtain the following schematic expression:
\begin{align*} 
\Zr^{\alpha}(\yr)\cdot \Tr \wb=&\Lr\Zr^{\alpha}\Xr (\wb) +\sum_{\substack{\alpha_1+\alpha_2=\alpha, \\ |\alpha_1|\leqslant |\alpha|-1}}\Zr^{\alpha_1}(\lambda) \Zr^{\alpha_2+1}(\wb)-\frac{1}{2}c\Zr^\alpha\big(\Xr^2(\psi_2)\big)\\
&+\sum_{\substack{\alpha_1+\alpha_2=\alpha \\ |\alpha_2|\leqslant |\alpha|-1}} \Zr^{\alpha_1}(c) \Zr^{\alpha_2}(\Xr^2(\psi_2))+\sum_{\alpha_1+\alpha_2=\alpha } \Zr^{\alpha_1}(\Xr\psi)\Zr^{\alpha_2}(\Xr\psi)\\
=&\Lr\Zr^{\alpha}\Xr (\wb) +\sum_{\substack{\alpha_1+\alpha_2=\alpha, \\ |\alpha_1|\leqslant |\alpha|-1}}\Zr^{\alpha_1}(\lambda) \Zr^{\alpha_2+1}(\wb)-\frac{1}{2}c\Zr^\alpha\big(\Xr^2(\psi_2)\big)+\sum_{\substack{\alpha_1+\alpha_2=\alpha \\ |\alpha_1|=1}} \Zr^{\alpha_1}(c) \Zr^{\alpha_2}(\Xr^2(\psi_2))\\
&+\sum_{\substack{\alpha_1+\alpha_2=\alpha \\ |\alpha_1|\geqslant 2}} \Zr^{\alpha_1}(c) \Zr^{\alpha_2}(\Xr^2(\psi_2))+\sum_{\alpha_1+\alpha_2=\alpha } \Zr^{\alpha_1}(\Xr\psi)\Zr^{\alpha_2}(\Xr\psi).
\end{align*}
Thus, 
\begin{equation}\label{eq:precise formula for Z alpha yr}
\begin{split}
\Zr^{\alpha}(\yr)\cdot \Tr \wb=&\Lr\Zr^{\alpha}\Xr (\wb) +\sum_{\substack{\alpha_1+\alpha_2=\alpha \\ |\alpha_1|\leqslant 1}} 
\Zr^{\alpha_1}(c) \Zr^{\alpha_2}(\Xr^2(\psi_2))+\sum_{\substack{\alpha_1+\alpha_2=\alpha, \\ |\alpha_1|\leqslant |\alpha|-1}}\Zr^{\alpha_1}(\lambda) \Zr^{\alpha_2+1}(\wb)\\
&+\sum_{\substack{\alpha_1+\alpha_2=\alpha \\ |\alpha_1|\geqslant 2}} \Zr^{\alpha_1}(c) \Zr^{\alpha_2}(\Xr^2(\psi_2))+\sum_{\alpha_1+\alpha_2=\alpha } \Zr^{\alpha_1}(\Xr\psi)\Zr^{\alpha_2}(\Xr\psi).
\end{split}
\end{equation}
We now compute the $L^2(\Sigma_t)$ norm on each term appeared in \eqref{eq:precise formula for Z alpha yr}. In view of Remark \ref{remark:techical nonlinear}, \eqref{eq: B2 equi bis} and \eqref{ineq: L infty bound}, the last two sums are bounded by $\mathring{M}\varepsilon^2$ in $L^2(\Sigma_t)$. Since $|\Tr(\wb)|\approx 1$, $|c|\approx 1$ and $|\Zr(c)|\lesssim 1$ , we have
\begin{equation}\label{ineq: inductive bound on yr}
\begin{split}
\|\Zr^{\alpha}(\yr)\|_{L^2(\Sigma_t)}\lesssim&\|\Lr\Zr^{\alpha}\Xr (\wb)\|_{L^2(\Sigma_t)} +\|\Zr^\alpha\Xr^2(\psi_2)\|_{L^2(\Sigma_t)}+\sum_{|\alpha_2|=|\alpha|-1} \|\Zr^{\alpha_2}(\Xr^2(\psi_2))\|_{L^2(\Sigma_t)}\\
&+\sum_{\substack{\alpha_1+\alpha_2=\alpha, \\ |\alpha_1|\leqslant |\alpha|-1}}\|\Zr^{\alpha_1}(\lambda) \Zr^{\alpha_2+1}(\wb)\|_{L^2(\Sigma_t)}+\mathring{M}\varepsilon^2.
\end{split}
\end{equation}

To obtain the estimates on $\zr$, we apply $[\Lr, \Tr]=\zr\cdot \Tr-\etar\Xr$ to $\wb$ and use \eqref{Euler equations:form 3} to replace $\Lr (\wb)$:
\begin{equation}\label{eq: zr 0 order equation}
\begin{split}
\zr\cdot \Tr \wb&=\Lr\Tr(\wb) -\Tr\Lr (\wb)+\etar\cdot\Xr\wb=\Lr\Tr(\wb) -\frac{1}{2}\Tr \big(c \Xr(\psi_2)\big)+\etar\cdot\Xr\wb\\
&=\Lr\Tr(\wb) -\frac{1}{2}c \Tr \Xr(\psi_2)-\frac{1}{2}\Tr (c)\Xr(\psi_2)-\Tr(\psi_2)\Xr\wb.
\end{split}
\end{equation}
We apply $\Zr^\alpha$ to the above equation and we {\color{black}keep track of all} the top order terms as follows:
\begin{align*} 
&\Zr^{\alpha}(\zr)\cdot \Tr \wb+\sum_{\substack{\alpha_1+\alpha_2=\alpha \\ |\alpha_1|\leqslant |\alpha|-1}} \Zr^{\alpha_1}(\zr) \Zr^{\alpha_2}(\Tr \wb)\\
=&\Zr^{\alpha}\big(\Lr\Tr(\wb)\big) -\frac{1}{2}c\Zr^\alpha\big(\Tr\Xr(\psi_2)\big)+\sum_{|\alpha_2|=|\alpha|-1 } \Zr(c) \Zr^{\alpha_2}(\Tr\Xr(\psi_2))-\frac{1}{2}\Tr (c)\Zr^{\alpha}\Xr(\psi_2)\\
&+\sum_{\substack{\alpha_1+\alpha_2=\alpha \\ |\alpha_1|\geqslant 2}} \Zr^{\alpha_1}(c) \Zr^{\alpha_2}(\Tr\Xr(\psi_2))+\sum_{\substack{\alpha_1+\alpha_2=\alpha \\ |\alpha_1|\geqslant 1}} \Zr^{\alpha_1}\Tr (c)\Zr^{\alpha_2}\Xr(\psi_2)+\sum_{\alpha_1+\alpha_2=\alpha}\Zr^{\alpha_1}\Tr(\psi_2)\Zr^{\alpha_2}\Xr\wb.
\end{align*}
Similar to the calculations for $\yr$, when we compute the $L^2(\Sigma_t)$ norm for $\zr$, by \eqref{eq: B2 equi bis}, \eqref{ineq: L infty bound} and Remark \ref{remark:techical nonlinear}, we can bound the last three sums by $\mathring{M}\varepsilon^2$. Therefore, by abusing the notations, we rewrite the above formula as
\begin{align*} 
&\Zr^{\alpha}(\zr)\cdot \Tr \wb+\sum_{\substack{\alpha_1+\alpha_2=\alpha \\ |\alpha_1|\leqslant |\alpha|-1}} \Zr^{\alpha_1}(\zr) \Zr^{\alpha_2}(\Tr \wb)\\
=&\Zr^{\alpha}\big(\Lr\Tr(\wb)\big) -\frac{1}{2}c\Zr^\alpha\left(\Tr\Xr(\psi_2)\right)+\sum_{|\alpha_2|=|\alpha|-1 } \Zr(c) \Zr^{\alpha_2}(\Tr\Xr(\psi_2))-\frac{1}{2}\Tr (c)\Zr^{\alpha}\Xr(\psi_2)+\mathring{M}\varepsilon^2.
\end{align*}
We then use \eqref{eq: commutation formular for Lr Zr}  to commute $\Zr^\alpha$ and $\Lr$ for the first term on the righthand side to derive
\begin{align*} 
&\Zr^{\alpha}(\zr)\cdot \Tr \wb+\sum_{\substack{\alpha_1+\alpha_2=\alpha \\ |\alpha_1|\leqslant |\alpha|-1}} \Zr^{\alpha_1}(\zr) \Zr^{\alpha_2}(\Tr \wb)\\
=&\Lr\Zr^{\alpha}\Tr(\wb)+\sum_{\substack{\alpha_1+\alpha_2=\alpha, \\ |\alpha_1|\leqslant |\alpha|-1}}\Zr^{\alpha_1}(\lambda) \Zr^{\alpha_2}\Tr(\wb)\\
&-\frac{1}{2}c\Zr^\alpha\big(\Tr\Xr(\psi_2)\big)+\sum_{|\alpha_2|=|\alpha|-1 } \Zr(c) \Zr^{\alpha_2}(\Tr\Xr(\psi_2))-\frac{1}{2}\Tr (c)\Zr^{\alpha}\Xr(\psi_2)+\mathring{M}\varepsilon^2.
\end{align*}
Hence,
\begin{align*} 
\Zr^{\alpha}(\zr)\cdot \Tr \wb=&\Lr\Zr^{\alpha}\Tr(\wb)-\frac{1}{2}c\Zr^\alpha\big(\Tr\Xr(\psi_2)\big)+\sum_{|\alpha_2|=|\alpha|-1 } \Zr(c) \Zr^{\alpha_2}(\Tr\Xr(\psi_2))-\frac{1}{2}\Tr (c)\Zr^{\alpha}\Xr(\psi_2)\\
&+\sum_{\substack{\alpha_1+\alpha_2=\alpha, \\ |\alpha_1|\leqslant |\alpha|-1}}\Zr^{\alpha_1}(\lambda) \Zr^{\alpha_2+1}(\wb)+\mathring{M}\varepsilon^2.
\end{align*}
We remark that if we trace all the previous calculations, similar to \eqref{eq:precise formula for Z alpha yr}, we have
\begin{equation}\label{eq:precise formula for Z alpha zr}
\begin{split}
\Zr^{\alpha}(\zr)\cdot \Tr \wb
=&\Lr\Zr^{\alpha}\Tr(\wb)+\!\!\!\!\!\!\sum_{\substack{\alpha_1+\alpha_2=\alpha\\ |\alpha_1|\leqslant 1} } \Zr^{\alpha_1}(c) \Xr\Zr^{\alpha_2}(\Tr(\psi_2))+\Tr (c)\Xr\Zr^{\alpha}(\psi_2)+\!\!\!\!\!\!\sum_{\substack{\alpha_1+\alpha_2=\alpha, \\ |\alpha_1|\leqslant |\alpha|-1}}\!\!\!\!\!\!\Zr^{\alpha_1}(\lambda) \Zr^{\alpha_2+1}(\wb)\\
+&\!\!\!\!\!\!\sum_{\substack{\alpha_1+\alpha_2=\alpha \\ |\alpha_1|\geqslant 2}} \Zr^{\alpha_1}(c) \Zr^{\alpha_2}(\Tr\Xr(\psi_2))+\!\!\!\!\!\!\sum_{\substack{\alpha_1+\alpha_2=\alpha \\ |\alpha_1|\geqslant 1}} \Zr^{\alpha_1}\Tr (c)\Zr^{\alpha_2}\Xr(\psi_2)+\!\!\!\!\!\!\sum_{\alpha_1+\alpha_2=\alpha}\!\!\!\Zr^{\alpha_1}\Tr(\psi_2)\Zr^{\alpha_2}\Xr\wb.
\end{split}
\end{equation}
We then compute the $L^2(\Sigma_t)$ bound on each term appeared in the above formula.  By using {\color{black}$\Tr(\wb)\approx -1$,} $|c|\approx 1$ and $|\Zr(c)|\lesssim 1$ , we have
\begin{equation}\label{ineq: inductive bound on zr}
\begin{split}
\|\Zr^{\alpha}(\zr)\|_{L^2(\Sigma_t)}\lesssim&\|\Lr\Zr^{\alpha}\Tr (\wb)\|_{L^2(\Sigma_t)} +\|\Zr^\alpha\Tr\Xr(\psi_2)\|_{L^2(\Sigma_t)}+\sum_{|\alpha_2|=|\alpha|-1} \|\Zr^{\alpha_2}(\Tr\Xr(\psi_2))\|_{L^2(\Sigma_t)}\\
&+\|\Zr^{\alpha}\Xr(\psi_2)\|_{L^2(\Sigma_t)}+\!\!\!\!\!\!\sum_{\substack{\alpha_1+\alpha_2=\alpha, \\ |\alpha_1|\leqslant |\alpha|-1}}\!\!\!\!\!\!\|\Zr^{\alpha_1}(\lambda) \Zr^{\alpha_2+1}(\wb)\|_{L^2(\Sigma_t)}+\mathring{M}\varepsilon^2.
\end{split}
\end{equation}

\medskip

With the help of \eqref{ineq: inductive bound on yr} and \eqref{ineq: inductive bound on zr}, we perform an induction argument on $|\alpha|$ to derive  $L^2$ bounds on $\yr$ and $\zr$. More precisely, for all $|\alpha|\leqslant \Ntop-1$, we will show that
\begin{equation}\label{ineq: L^2 bound on lambda}
\|\Zr^{\alpha}(\yr)\|_{L^2(\Sigma_t)}+\|\Zr^{\alpha}(\zr)\|_{L^2(\Sigma_t)}\lesssim\frac{1}{t}\sqrt{\mathscr{E}_{\leqslant |\alpha|}(t)}+\mathring{M}\varepsilon^2.
\end{equation}
In the above expression, $\mathscr{E}_{\leqslant |\alpha|}(t)$ is {\color{black}the sum of energies} for all $\psi \in \{w,\wb,\psi_2\}$.

First of all, we notice that every linear term on the righthand sides of \eqref{ineq: inductive bound on yr} and \eqref{ineq: inductive bound on zr} contains either an $\Xr$ or an $\Lr$ derivative. By \eqref{comparison bound 1: L 2} and the ansatz $\mathbf{(B_2)}$, we have
\begin{equation}\label{ineq: inductive bound on lambda}
\|\Zr^{\alpha}(\yr)\|_{L^2(\Sigma_t)}+\|\Zr^{\alpha}(\zr)\|_{L^2(\Sigma_t)}\lesssim\frac{1}{t}\sqrt{\mathscr{E}_{\leqslant |\alpha|+1}(t)}+\!\!\!\!\sum_{\substack{\alpha_1+\alpha_2=\alpha, \\ |\alpha_1|\leqslant |\alpha|-1}}\!\!\!\!\|\Zr^{\alpha_1}(\lambda) \Zr^{\alpha_2+1}(\wb)\|_{L^2(\Sigma_t)}+\mathring{M}\varepsilon^2.
\end{equation}

We start to run the induction argument.  For $|\alpha|=0$, according to \eqref{eq: yr 0 order equation} and \eqref{eq: zr 0 order equation}, we have
\begin{equation*}
\| \yr\|_{L^2(\Sigma_t)}+\|\zr\|_{L^2(\Sigma_t)}\lesssim\frac{1}{t}\sqrt{\mathscr{E}_{\leqslant 1}(t)}+\mathring{M}\varepsilon^2.
\end{equation*}
Hence, \eqref{ineq: inductive bound on lambda} holds for $|\alpha|=0$.

We now make another assumption that $|\alpha|\leqslant \Ntop-2$. The induction hypothesis is that \eqref{ineq: L^2 bound on lambda} holds for all indices of length at most $|\alpha|-1$. In this case, the $\Zr^{\alpha_2+1}(\wb)$ term in \eqref{ineq: inductive bound on lambda} can be bounded in $L^\infty$ norm. This is because $|\alpha_2|+1\leqslant \Ninf$, see \eqref{ineq: L infty bound}. Hence, \eqref{ineq: inductive bound on lambda} and the induction hypothesis yield
\begin{align*}
\|\Zr^{\alpha}(\yr)\|_{L^2(\Sigma_t)}+\|\Zr^{\alpha}(\zr)\|_{L^2(\Sigma_t)}&\lesssim\frac{1}{t}\sqrt{\mathscr{E}_{\leqslant |\alpha|+1}(t)}+\!\!\!\!\sum_{\substack{\alpha_1+\alpha_2=\alpha, \\ |\alpha_1|\leqslant |\alpha|-1}}\!\!\!\!\|\Zr^{\alpha_1}(\lambda) \|_{L^2(\Sigma_t)}\|\Zr^{\alpha_2+1}(\wb)\|_{L^\infty(\Sigma_t)}+\mathring{M}\varepsilon^2\\
&\lesssim\frac{1}{t}\sqrt{\mathscr{E}_{\leqslant |\alpha|+1}(t)}+\sum_{|\alpha_1|\leqslant |\alpha|-1}\|\Zr^{\alpha_1}(\lambda) \|_{L^2(\Sigma_t)}+\mathring{M}\varepsilon^2\\
&\lesssim\frac{1}{t}\sqrt{\mathscr{E}_{\leqslant |\alpha|+1}(t)}+\mathring{M}\varepsilon^2.
\end{align*}
This proves \eqref{ineq: inductive bound on lambda} for all $\alpha$ with $|\alpha|\leqslant \Ntop-2$.

To verify the case where $|\alpha|=\Ntop-1$,  it requires the $L^\infty$ bounds on lower order derivatives of $\yr$ and $\zr$. For all multi-index $\alpha$ with $|\alpha|\leqslant \Ntop-4$ and $\lambda \in \{\yr,\zr\}$,  since $|\alpha|+2\leqslant \Ntop-2$, we apply \eqref{ineq:Sobolev inequalities}:
\begin{equation}\label{eq:yr zr aux l infty}
\begin{split}
\|\mathring{Z}^\alpha(\lambda)\|_{L^\infty(\Sigma_t)}&\lesssim \sum_{k+l\leqslant 2}\|\Xr^k \Tr^l \mathring{Z}^\alpha(\lambda)\|_{L^2(\Sigma_t)}\lesssim \sum_{|\beta|\leqslant |\alpha|+2}\|\mathring{Z}^\beta(\lambda)\|
_{L^2(\Sigma_t)}\\
&\lesssim \frac{1}{t}\sqrt{\mathscr{E}_{\leqslant N_{\rm top}}(t)}+\mathring{M}\varepsilon^2\lesssim \mathring{M}\varepsilon.
\end{split}
\end{equation}
In the last step, we have used $\mathbf{(B_2)}$.

Let $N'=\Ntop-4$. To prove \eqref{ineq: inductive bound on lambda} for $|\alpha|=\Ntop-1$, we write \eqref{ineq: inductive bound on lambda} as
\begin{align*}
&\|\Zr^{\alpha}(\yr)\|_{L^2(\Sigma_t)}+\|\Zr^{\alpha}(\zr)\|_{L^2(\Sigma_t)}\\
\lesssim&\frac{1}{t}\sqrt{\mathscr{E}_{\leqslant |\alpha|+1}(t)}+\big(\!\!\!\!\!\!\sum_{\substack{\alpha_1+\alpha_2=\alpha, \\ |\alpha_1|\leqslant N'}}\!\!\!\!\!\!+\!\!\!\!\!\!\sum_{\substack{\alpha_1+\alpha_2=\alpha, \\1\leqslant |\alpha_2|\leqslant N'}}\!\!\!\!\!\!\big) \! \|\Zr^{\alpha_1}(\lambda) \Zr^{\alpha_2+1}(\wb)\|_{L^2(\Sigma_t)}+\mathring{M}\varepsilon^2\\
\lesssim&\frac{1}{t}\sqrt{\mathscr{E}_{\leqslant |\alpha|+1}(t)}+\!\!\!\sum_{\substack{\alpha_1+\alpha_2=\alpha, \\ |\alpha_1|\leqslant N_{\infty}}}\!\!\!\!\!\!\mathring{M}\varepsilon \|\Zr^{\alpha_2+1}(\wb)\|_{L^2(\Sigma_t)}+\!\!\!\sum_{\substack{\alpha_1+\alpha_2=\alpha, \\1\leqslant |\alpha_2|\leqslant N_{\infty}}} \!\!\!\!\!\!\mathring{M}\varepsilon\|\Zr^{\alpha_1}(\lambda)\|_{L^2(\Sigma_t)}+\mathring{M}\varepsilon^2.
\end{align*}
Hence, \eqref{ineq: inductive bound on lambda} follows from the case $|\alpha|\leqslant \Ntop-2$ and \eqref{comparison bound 1: L 2}. Moreover, by repeating the argument in \eqref{eq:yr zr aux l infty}, we also proved that, for all multi-index $\alpha$ with $|\alpha|\leqslant \Ntop-3$ and $\lambda \in \{\yr,\zr\}$,  we have 
\begin{equation}\label{ineq: L infty bound on lambda}
\|\mathring{Z}^\alpha(\lambda)\|_{L^\infty(\Sigma_t)}\lesssim \mathring{M}\varepsilon.
\end{equation}

\subsubsection{Summary}
We summarize the results of the section as follows:
\begin{proposition} For all $|\alpha|\leqslant \Ntop-1$, for all $\lambda \in \{\yr,\zr,\chir,\etar\}$, we have
\begin{equation}\label{ineq: L^2 bound on lambda final}
\|\Zr^{\alpha}(\lambda)\|_{L^2(\Sigma_t)}\lesssim\frac{1}{t}\sqrt{\mathscr{E}_{\leqslant |\alpha|+1}(t)}+\mathring{M}\varepsilon^2.
\end{equation}
Moreover, for all multi-index $\alpha$ with $|\alpha|\leqslant \Ninf-1$, for $\lambda \in \{\yr,\zr,\chir,\etar\}$,  we have 
\begin{equation}\label{ineq: L infty bound on lambda final}
\|\mathring{Z}^\alpha(\lambda)\|_{L^\infty(\Sigma_t)}\lesssim \mathring{M}\varepsilon.
\end{equation}
\end{proposition}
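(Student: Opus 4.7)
The plan is to split the proof according to whether $\lambda \in \{\chir, \etar\}$ or $\lambda \in \{\yr, \zr\}$, since the former two are essentially first derivatives of $\psi_2$ while the latter two require an inversion argument. For $\chir = -\Xr(\psi_2)$ and $\etar = -\Tr(\psi_2)$, both the $L^2$ and $L^\infty$ estimates are direct corollaries of the comparison inequality \eqref{comparison bound 1: L 2}, the bootstrap assumption $\mathbf{(B_2)}$ and the Sobolev inequality \eqref{ineq:Sobolev inequalities} applied to $\psi_2$; these require no induction.

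The heart of the argument lies with $\yr$ and $\zr$. First I would apply $[\Lr,\Xr] = \yr \cdot \Tr - \chir\cdot \Xr$ and $[\Lr,\Tr] = \zr\cdot \Tr - \etar \cdot \Xr$ to the Riemann invariant $\wb$. Using the Euler equation \eqref{Euler equations:form 3} to substitute for $\Lr(\wb)$, I obtain algebraic identities of the form
\begin{align*}
\yr \cdot \Tr\wb &= \Lr\Xr(\wb) - \tfrac{1}{2}c\Xr^2(\psi_2) + (\Xr\psi)(\Xr\psi),\\
\zr\cdot \Tr\wb &= \Lr\Tr(\wb) - \tfrac{1}{2}c\Tr\Xr(\psi_2) - \tfrac{1}{2}\Tr(c)\Xr(\psi_2) - \Tr(\psi_2)\Xr\wb.
\end{align*}
Because of the non-degeneracy $|\Tr\wb|\approx 1$ established in \eqref{bound on Twb} and Remark \ref{remark: how to get bound on yr zr nondegeneration condition}, each identity can be solved for $\yr$ respectively $\zr$. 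To estimate $\Zr^\alpha(\yr)$ and $\Zr^\alpha(\zr)$, I would apply $\Zr^\alpha$, commute $\Zr^\alpha$ past $\Lr$ using \eqref{eq: commutation formular for Lr Zr}, and peel off the principal top-order terms from the lower-order commutator remainders; this leads to inequalities of the schematic form \eqref{ineq: inductive bound on lambda}, in which the linear top-order contributions are controlled directly by $\mathscr{E}_{\leqslant |\alpha|+1}(t)/t^2$ via \eqref{comparison bound 1: L 2}.

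The remaining terms are the nonlinear pieces $\Zr^{\alpha_1}(\lambda)\,\Zr^{\alpha_2+1}(\wb)$ with $|\alpha_1|\leqslant |\alpha|-1$, and this is where the main obstacle lies: one needs a simultaneous induction on $|\alpha|$, with $L^\infty$ bounds on lower-order derivatives of $\lambda$ as an auxiliary induction hypothesis. I would proceed in three stages. \emph{Base case} $|\alpha|=0$: the identities above directly yield $\|\yr\|_{L^2}+\|\zr\|_{L^2}\lesssim t^{-1}\sqrt{\mathscr{E}_{\leqslant 1}}+\mathring{M}\varepsilon^2$. \emph{Inductive step for} $|\alpha|\leqslant \Ntop-2$: here $|\alpha_2|+1\leqslant \Ninf$, so I can move $\Zr^{\alpha_2+1}(\wb)$ to $L^\infty$ using \eqref{ineq: L infty bound} while keeping $\Zr^{\alpha_1}(\lambda)$ in $L^2$ and invoking the induction hypothesis. \emph{Top case} $|\alpha|=\Ntop-1$: since $|\alpha_2|+1$ can now exceed $\Ninf$, I split the sum at $N'=\Ntop-4$; when $|\alpha_1|\leqslant N'$, I use a Sobolev-type $L^\infty$ bound $\|\Zr^{\alpha_1}(\lambda)\|_{L^\infty}\lesssim \mathring{M}\varepsilon$ derived from the already-proven $L^2$ estimates through \eqref{ineq:Sobolev inequalities} (this is the step that requires the induction hypothesis at intermediate order $|\alpha_1|+2\leqslant \Ntop-2$), and symmetrically when $|\alpha_2|\leqslant N'$.

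The principal difficulty is thus bookkeeping the derivative count: one must arrange the Sobolev $L^\infty$ bounds on lower derivatives of $\lambda$ to be available before they are needed in the $L^2$ estimate at order $|\alpha|=\Ntop-1$. This is delicate because the inductive chain involves \emph{both} $\mathscr{E}_{\leqslant |\alpha|+1}$ (one derivative more than $\lambda$) and a Sobolev loss of two derivatives, so the margin $\Ntop-\Ninf\geqslant 2$ together with $\Ninf\approx \Ntop/2$ must be verified to accommodate every term. Once the $L^2$ bound \eqref{ineq: L^2 bound on lambda final} is established for all $|\alpha|\leqslant \Ntop-1$, the $L^\infty$ bound \eqref{ineq: L infty bound on lambda final} for $|\alpha|\leqslant \Ninf-1$ follows immediately from \eqref{ineq:Sobolev inequalities} and $\mathbf{(B_2)}$.
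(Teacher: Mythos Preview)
Your proposal is correct and follows essentially the same approach as the paper's proof: the same split of $\Lambda$ into $\{\chir,\etar\}$ versus $\{\yr,\zr\}$, the same commutator identities applied to $\wb$ combined with $|\Tr\wb|\approx 1$ to solve algebraically for $\yr,\zr$, the same inductive scheme on $|\alpha|$, and the same splitting at $N'=\Ntop-4$ for the top case $|\alpha|=\Ntop-1$ using Sobolev to upgrade the already-established $L^2$ bounds on $\Zr^{\alpha_1}(\lambda)$ (with $|\alpha_1|+2\leqslant\Ntop-2$) to $L^\infty$. The only cosmetic difference is that the paper first records the $L^\infty$ bound on $\lambda$ for $|\alpha|\leqslant\Ntop-4$ as an intermediate step before closing the top case, whereas you fold this into the description of the top case itself.
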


\begin{remark}\label{remark:estimates-Lambda}
The estimates on objects of $\Lambda$ lose one derivative, i.e., the order of the righthand side of \eqref{ineq: L^2 bound on lambda final} is higher compared to the lefthand side.
\end{remark}

\subsection{Other auxiliary formulas and bounds}
\subsubsection{Other auxiliary formulas}
We recall that  $\Xr$ and $\Trh$ commute with vectors in $\mathring{\mathscr{Z}}$. For all multi-index $\alpha$,  we apply $\Zr^\alpha\in  \mathring{\mathscr{Z}}$ to \eqref{Euler equations:form 3} and we ignore {\color{black}the irrelevant constants in coefficients.} This leads to the following formulas:
\begin{equation}\label{Euler equations:form 3 with derivatives}
\begin{cases}
\Zr^\alpha \Lr (\wb) &= \displaystyle\sum_{\alpha_1+\alpha_2=\alpha} \Zr^{\alpha_1}(c) \cdot \Xr(\Zr^{\alpha_2}(\psi_2)),\\
\Zr^\alpha \Lr (w) &= 	\displaystyle\sum_{\alpha_1+\alpha_2=\alpha}\big[\Zr^{\alpha_1}(c)  \Trh(\Zr^{\alpha_2}(w))+ \Zr^{\alpha_1}(c) \Xr(\Zr^{\alpha_2}(\psi_2))\big],\\
\Zr^\alpha\Lr (\psi_2) &= \displaystyle\sum_{\alpha_1+\alpha_2=\alpha} \big[\Zr^{\alpha_1}(c)\Trh(\Zr^{\alpha_2}(\psi_2))+ \Zr^{\alpha_1}(c) \Xr(\Zr^{\alpha_2}(w+\wb))\big].
\end{cases}
\end{equation}
By dividing multiplying both sides by $c^{-1}$, we can also put \eqref{Euler equations:form 3} in the following form:
\begin{equation}\label{Euler equations:form 3 with derivatives prime}
\begin{cases}
\Zr^\alpha \big(c^{-1}\Lr (\wb)\big)&=   \frac{1}{2}\Xr(\Zr^{\alpha}(\psi_2)),\\
\Zr^\alpha  \big(c^{-1}\Lr (w) \big)&= 	 -2\Trh(\Zr^{\alpha}(w))+ \frac{1}{2}\Xr(\Zr^{\alpha}(\psi_2)),\\
\Zr^\alpha \big(c^{-1}\Lr (\psi_2)\big) &= - \Trh(\Zr^{\alpha}(\psi_2))+ \Xr(\Zr^{\alpha}(w+\wb)).
\end{cases}
\end{equation}
We can also use $\Lb$ as the main direction to write \eqref{Euler equations:form 3} as follows:
\begin{equation}\label{Euler equations:form 4}
\begin{cases}
\Lbr (\wb) &=2\Tr(\wb) +\frac{1}{2}\kappar \Xr(\psi_2),\\
\Lbr (w) &= 	\frac{1}{2}\kappar\Xr(\psi_2),\\
\Lbr (\psi_2) &= \Tr(\psi_2)+\kappar\Xr( w+\wb).
\end{cases}
\end{equation}
We apply $\Zr^\alpha\in  \mathring{\mathscr{Z}}$ to the above equations to derive
\begin{equation}\label{Euler equations:form 4 with derivatives}
\begin{cases}
\Zr^\alpha \Lbr (\wb) &= 2\Tr(\Zr^\alpha(\wb)) +\frac{1}{2}\kappar \Xr(\Zr^\alpha(\psi_2)),\\
\Zr^\alpha \Lbr (w) &= 		\frac{1}{2}\kappar\Xr(\Zr^\alpha(\psi_2)),\\
\Zr^\alpha\Lbr (\psi_2) &=  \Tr(\Zr^\alpha(\psi_2))+\kappar\Xr(\Zr^\alpha(w+\wb)).
\end{cases}
\end{equation}

\subsubsection{Other auxiliary bounds}

We collect some estimates on waves of the form $\Zr^\alpha \Lr \Zr^\beta \psi$ where $\psi\in \{w,\wb,\psi_2\}$. They will appear in the higher order energy estimates.

First of all, we notice that the $\Trh$ derivative only acts on $w$ or $\psi_2$. Therefore, by \eqref{ineq: L infty bound}, for all multi-index $\alpha$ with $|\alpha|\leqslant \Ninf-1$, for all $\psi \in \{w,\wb,\psi_2\}$,  we have
\begin{align*}
\|\mathring{Z}^\alpha \Lr \psi\|_{L^\infty(\Sigma_t)}\lesssim  \mathring{M}\varepsilon.
\end{align*}
We now commute $\Lr$ with $\Zr^\alpha$ to derive bounds on $\Lr\mathring{Z}^\alpha  \psi$.  In view of \eqref{eq: commutation formular for Lr Zr}, we can apply extra $\Zr^\beta$ derivatives and we obtain
\[\Zr^\beta\Lr \Zr^\alpha \psi = \Zr^{\alpha+\beta} \Lr  \psi +\sum_{\substack{\alpha_1+\alpha_2=\alpha,|\alpha_1|\leqslant |\alpha|-1\\
  \beta_1+\beta_2=\beta}}\Zr^{\alpha_1+\beta_1}(\lambda) \Zr^{\alpha_2+\beta_2}\psi.\]
Hence, by \eqref{ineq: L infty bound} and \eqref{ineq: L infty bound on lambda final}, for multi-indices $\alpha$ and $\beta$ with $|\alpha|+|\beta|\leqslant \Ninf-1$, we have
\begin{equation*}
\left\|\Zr^\beta\Lr \mathring{Z}^\alpha  \psi \right\|_{L^\infty(\Sigma_t)}\lesssim  \mathring{M}\varepsilon.
\end{equation*}
We can also apply \eqref{eq: commutation formular for Lr Zr} to $\Zr^\beta \psi$. Therefore, we have
\[\Lr \Zr^{\alpha+\beta} \psi = \Zr^\alpha \Lr \Zr^\beta \psi +\sum_{\substack{\alpha_1+\alpha_2=\alpha \\ |\alpha_1|\leqslant |\alpha|-1}}\Zr^{\alpha_1}(\lambda) \Zr^{\alpha_2+\beta}\psi{\color{black}.}\]
If  $|\alpha|+|\beta|\leqslant \Ntop$, the possible top order derivatives of $\lambda$ in this formula is at most $\Zr^{\Ntop-1}(\lambda)$. Hence, the inequality \eqref{ineq: L^2 bound on lambda final} can be applied. 
Therefore,
\begin{align*}
\|\Zr^\alpha \Lr \Zr^\beta \psi \|_{L^2(\Sigma_t)} &\lesssim \|\Lr \Zr^{\alpha+\beta} \psi\|_{L^2(\Sigma_t)} +\sum_{\substack{\alpha_1+\alpha_2=\alpha \\ |\alpha_1|\leqslant |\alpha|-1}}\|\Zr^{\alpha_1}(\lambda) \Zr^{\alpha_2+\beta}\psi\|_{L^2(\Sigma_t) }\lesssim \mathring{M}\varepsilon.
\end{align*}
In the last step, we have used  Remark \ref{remark:techical nonlinear}. We summarize the above estimates as follows:
\begin{proposition}
{\color{black}Under the bootstrap assumptions} $(\mathbf{B}_2)$ and $(\mathbf{B}_\infty)$, if $\mathring{M}\varepsilon$ is sufficiently small,  for all $t\in [\delta, t^*]$, we have the following bounds:
\begin{itemize}
\item For multi-indices $\alpha$ and $\beta$ with $|\alpha|+|\beta|\leqslant \Ninf-1$,  for all $\psi \in \{w,\wb,\psi_2\}$, we have
\begin{equation}\label{ineq: L infty bound for Zbeta L Zalpha}
\left\|\Zr^\beta\Lr \mathring{Z}^\alpha  \psi \right\|_{L^\infty(\Sigma_t)}\lesssim  \mathring{M}\varepsilon.
\end{equation}
\item For multi-indices $\alpha$ and $\beta$ with $|\alpha|+|\beta|\leqslant \Ntop$,  for all $\psi \in \{w,\wb,\psi_2\}$, we have
\begin{equation}\label{ineq: L 2 bound for Zbeta L Zalpha}
\| \Zr^\alpha \Lr \Zr^\beta \psi \|_{L^2(\Sigma_t)}\lesssim\mathring{M}\varepsilon.
\end{equation}
\end{itemize}
\end{proposition}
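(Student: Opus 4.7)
The plan is to reduce both bounds to quantities already controlled by the energy ansatz $\mathbf{(B_2)}$, the lower-order estimates on the objects of $\Lambda$, and the diagonalized Euler equations \eqref{Euler equations:form 3 with derivatives}. The strategy is essentially the one sketched in the paragraphs preceding the statement, organized as follows.

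First I will treat the $L^2$ estimate. Applying the commutator formula \eqref{eq: commutation formular for Lr Zr} inside $\Zr^\alpha \Lr \Zr^\beta \psi$, I can write
\[
\Zr^\alpha \Lr \Zr^\beta \psi \;=\; \Lr \Zr^{\alpha+\beta}\psi \;-\;\!\!\!\!\sum_{\substack{\gamma_1+\gamma_2=\alpha+\beta\\ |\gamma_1|\leqslant |\alpha+\beta|-1}}\!\!\!\!\Zr^{\gamma_1}(\lambda)\,\Zr^{\gamma_2}\psi,
\]
where $\lambda\in\{\yr,\zr,\chir,\etar\}$. For the first term, I substitute from \eqref{Euler equations:form 3 with derivatives} to express $\Lr\Zr^{\alpha+\beta}\psi$ as a sum of products $\Zr^{\sigma_1}(c)\cdot \Xr(\Zr^{\sigma_2}\psi')$ or $\Zr^{\sigma_1}(c)\cdot \Trh(\Zr^{\sigma_2}\psi')$ with $|\sigma_1|+|\sigma_2|\leqslant |\alpha|+|\beta|\leqslant \Ntop$ and $\psi'\in\{w,\wb,\psi_2\}$. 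Since $|c|\approx 1$ and all its derivatives are controlled pointwise by $\mathring{M}\varepsilon$ (via \eqref{ineq: L infty bound} applied to $w+\wb$), the multilinear estimate of Remark \ref{remark:techical nonlinear} reduces the $L^2$ norm of each such product to $\|\Xr\Zr^{\sigma_2}\psi'\|_{L^2(\Sigma_t)}+\|\Trh\Zr^{\sigma_2}\psi'\|_{L^2(\Sigma_t)}$, which is $\lesssim \mathring{M}\varepsilon$ by \eqref{comparison bound 1: L 2} combined with $\mathbf{(B_2)}$. For the commutator sum, I again invoke Remark \ref{remark:techical nonlinear}: whichever factor has order $\leqslant \Ninf$ is placed in $L^\infty$ (via \eqref{ineq: L infty bound on lambda final} or \eqref{ineq: L infty bound}), and the other goes into $L^2$ using \eqref{ineq: L^2 bound on lambda final} or the $L^2$ bounds coming from $\mathbf{(B_2)}$. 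The resulting total is $\lesssim\mathring{M}\varepsilon$.

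For the $L^\infty$ estimate, I will use the Sobolev inequality \eqref{ineq:Sobolev inequalities}:
\[
\|\Zr^\beta\Lr \mathring{Z}^\alpha\psi\|_{L^\infty(\Sigma_t)}\;\lesssim \sum_{k+l\leqslant 2}\|\Xr^k\Tr^l\Zr^\beta\Lr\mathring{Z}^\alpha\psi\|_{L^2(\Sigma_t)}.
\]
For each term on the right the total number of geometric vector fields (counting the extra $\Xr^k\Tr^l$) is at most $|\alpha|+|\beta|+2\leqslant \Ninf+1\leqslant \Ntop$, so each of these $L^2$-norms falls into the scope of the just-proved $L^2$ estimate, giving the bound $\mathring{M}\varepsilon$.

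The main obstacle is purely bookkeeping: making sure that in every term produced by the commutator expansion and by \eqref{Euler equations:form 3 with derivatives}, the factor carrying the highest number of derivatives still admits the $L^2$ bound coming from $\mathbf{(B_2)}$ via \eqref{comparison bound 1: L 2}, while the remaining factors have order at most $\Ninf$ so that the pointwise bounds \eqref{ineq: L infty bound} and \eqref{ineq: L infty bound on lambda final} are available. The counts work out because the derivative loss in \eqref{ineq: L^2 bound on lambda final} costs exactly one order, which is absorbed by the fact that one derivative of $\Lr\psi$ has already been traded for a spatial $\Xr$ or $\Trh$ derivative via the Euler equations; no other subtle cancellation is needed.
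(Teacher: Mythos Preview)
Your argument is correct. There are two points where you take a slightly different route from the paper, and both are worth noting.

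For the $L^2$ bound, the paper proceeds exactly as you do via the commutator identity, but then bounds the leading piece $\|\Lr\Zr^{\alpha+\beta}\psi\|_{L^2(\Sigma_t)}$ directly by the comparison estimate \eqref{comparison bound 1: L 2} together with $\mathbf{(B_2)}$, without re-invoking the Euler equations. Your alternative of substituting \eqref{Euler equations:form 3 with derivatives} works too, but note that this formula concerns $\Zr^{\alpha+\beta}\Lr\psi$, not $\Lr\Zr^{\alpha+\beta}\psi$; you are implicitly commuting once more, which just produces further commutator terms of the same type. Also, your index constraint in the commutator sum should be $\gamma_1+\gamma_2'=\alpha$ with $\gamma_2=\gamma_2'+\beta$ (only $\Zr^\alpha$ is being commuted past $\Lr$), not $\gamma_1+\gamma_2=\alpha+\beta$; your looser constraint is harmless for the estimate but is not what the formula actually gives.

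For the $L^\infty$ bound, the paper takes a more direct pointwise route: it first observes from the diagonalized Euler equations that $\Trh$ only hits $w$ or $\psi_2$, so $\|\Zr^{\alpha+\beta}\Lr\psi\|_{L^\infty}\lesssim \mathring{M}\varepsilon$ follows immediately from \eqref{ineq: L infty bound}, and then the commutator terms are bounded pointwise via \eqref{ineq: L infty bound on lambda final} and \eqref{ineq: L infty bound}. Your Sobolev-to-$L^2$ reduction is a clean alternative and the derivative count works exactly (since $\Ninf-1+2=\Ntop$). The paper's approach is more elementary and avoids appealing to Sobolev, while yours is more systematic in that it reduces everything to the single $L^2$ statement.
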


\section{Higher order energy estimates}\label{section:higher-order-energy-estimates}

We now apply the identities in Section \ref{section:Energy identities for higher order terms} to derive the higher order energy estimates for acoustical waves.

We recall that for $\Psi_0 \in \{\wb,w,\psi_2\}$, the equation \eqref{Main Wave equation: order 0} can be written as $\Box_g \Psi_0 =\varrho_0$. For a multi-index $\alpha$ with $|\alpha|=n$, we use $\Psi_n$ to denote $\Zr^\alpha(\Psi_0)$. When one applies \eqref{identity: energy with Lh} and \eqref{identity: energy with Lb} to $\Box_g \Psi_n =\varrho_n$, the corresponding error integrals ${Q}_0$ and $\underline{Q}_0$ are given by $-\int_{D(t,u)}  \frac{\mu}{\mur}\cdot \varrhor_n \cdot \Lh\Psi_n$ and $-\int_{D(t,u)} \frac{\mu}{\mur}\cdot \varrhor_n\cdot \Lb\Psi_n$ {\color{black}where $ \varrhor_n=  \mur \varrho_n$, respectively.} For $\Psir_{n}:=\Zr_{n}\big(\Zr_{n-1}\big(\cdots \big(\Zr_{1}(\Psir_0)\big)\cdots\big)\big)$, we have
\[\varrhor_{n}=\Zr_{n}\big(\cdots \big(\Zr_{1}(\varrhor_{0})\big)\cdots\big)+\sum_{i=0}^{n-1} \Zr_{n} \big( \cdots \big( \Zr_{i+2}\big(\,^{(\Zr_{i+1})}{\sigma}_{i}\big)\big)\cdots \big).
\]
Therefore, schematically, $\varrhor_n$ is a sum of the following  two types of terms:
\[\bullet \ \text{\bf Type I} :~  \Zr^{\beta}\left( \varrhor_{0}\right), \ \ |\beta| =n ; \ \ \ \ \ \ \bullet \ \text{\bf Type II} :~  \Zr^{\beta}\left(\,^{(\Zr)}{\sigma}\right), \ \ |\beta|\leqslant n-1.
\]
The {\bf Type II} terms in $\varrhor_n$ are of the form $\Zr^{\beta}\left(\,^{(\Zr_{i+1})}{\sigma_i}\right)$ where $|\beta|= n-i-1$. By \eqref{eq: decompose sigma}, we have $\,^{(\Zr_{i+1})}{\sigma_i}=\,^{(\Zr_{i+1})}{\sigma_{i,1}}+\,^{(\Zr_{i+1})}{\sigma_{i,2}}+\,^{(\Zr_{i+1})}{\sigma_{i,3}}$.  Thus, we can further decompose  {\bf Type II} terms as a sum of {\color{black}the three types of terms:} the {\bf Type $\mathbf{II}_k$} terms correspond to the contribution of $\,^{(\Zr_{i+1})}{\sigma_{i,k}}$ terms respectively, where $k=1,2,3$. 

In the rest of the paper, $n\leqslant \Ntop$.

\subsection{Energy estimates on {\bf Type I} terms}
Since $\frac{\mu}{\mur}\lesssim 1$, it suffices to bound $\mathscr{N}(\Psi_n)(t,u)$ and $\underline{\mathscr{N}}(\Psi_n)(t,u)$ in the following form
\[\mathscr{N}_n(t,u)= \int_{\mathcal{D}(t,u)}  \big| \Zr^{\beta}\big( \varrhor_{0}\big)\big| \big|\Lh\Psi_n\big|,  \ \  \underline{\mathscr{N}}_n(t,u)= \int_{\mathcal{D}(t,u)} \big|\Zr^{\beta}\big(\varrhor_{0}\big)\big| \big|\Lb\Psi_n\big|,\ \ \text{with}~|\beta|\leqslant n.\]
We remark that we used the simplified notations $\Psi_n $ for $\Zr^\alpha(\Psi_0)$, $\mathscr{N}_n(t,u)$ for $\mathscr{N}(\Psi_n)(t,u)$ and $\underline{\mathscr{N}}_n(t,u)$ for $\underline{\mathscr{N}}(\Psi_n)(t,u)$.  We will also use $\mathscr{E}_{n}(t,u)$ for $\mathscr{E}_{n}(\psi)(t,u)$ where $\psi\in \{w,\wb,\psi_2\}$. Similarly, we also use notations like $\mathscr{E}_{\leqslant n}(t,u)$, $\mathscr{F}_{n}(t,u)$ etc.

We recall that $\varrhor_0=  \mur \varrho_0$ and $\varrho_0$ is a linear combination of terms from the set $\big\{c^{-1}g(D f_1,Df_2)\big| f_1,f_2\in \{\wb,w,\psi_2\}\big\}$ where 
\[g(D f_1,Df_2)=-\frac{1}{2\mur}\Lr(f_1) \Lbr(f_2)-\frac{1}{2\mur}\Lbr(f_1) \Lr(f_2)+\Xr(f_1)\Xr(f_2).\]
By applying $\Zr^{\beta}$ to $\varrhor_0$,  we can write $\Zr^{\beta}\left( \varrhor_{0}\right)$ as a linear combination of the following terms:
\begin{equation}\label{bbb2}
\Zr^{\beta_1}\big(c^{-1} \Lr (f_1)\big)\Zr^{\beta_2}\big(\Lbr( f_2)\big),   \ \  \kappar \Zr^{\beta_1}\big(\Xr(f_1)\big)\Zr^{\beta_2}\big(\Xr(f_2)\big), 
\end{equation}
where $f_1,f_2\in \{\wb,w,\psi_2\}$ and  $|\beta_1|+|\beta_2|=|\beta|$.

\subsubsection{The first case: $\Lh$ as the multiplier}\label{subsection top order source term Lh}
The contribution of \eqref{bbb2} in $\mathscr{N}_n(t,u)$ split into the sum (over $\beta_1$ and $\beta_2$) of {\color{black}the following terms:}
\[
\begin{cases}
\mathscr{N}_{f_1,f_2;1}(t,u)&= \displaystyle\int_{\mathcal{D}(t,u)}   \big| \Zr^{\beta_1}\big(c^{-1}\Lr (f_1)\big)\big|\big|\Zr^{\beta_2}\big(\Lbr (f_2)\big)\big| \big|\Lh\Psi_n \big|, \\
\mathscr{N}_{f_1,f_2;2}(t,u)&=  \displaystyle \int_{\mathcal{D}(t,u)}    \kappar \big|\Zr^{\beta_1}\big(\Xr(f_1)\big)\big|\big|\Zr^{\beta_2}\big(\Xr(f_2)\big)\big| \big|\Lh\Psi_n 	\big|, 
\end{cases}
\]
where  $|\beta_1|+|\beta_2|=|\beta|=n\leqslant \Ntop$. 

\medskip

We start with the estimate on $\mathscr{N}_{f_1,f_2;2}(t,u)$. Since $\Xr$ commute with all $\Zr\in \mathring{\mathscr{Z}}$, we have
\begin{align*}
\mathscr{N}_{f_1,f_2;2}(t,u)\lesssim & \int_{\mathcal{D}(t,u)}   \kappar^2 \big| \Xr\big(\Zr^{\beta_1} f_1\big)\big|\big| \Xr\big(\Zr^{\beta_2} f_2\big)\big| \big|  L\Psi_n\big|.
\end{align*}
Without loss of generality, we assume $|\beta_1|+1\leqslant \Ninf$. Thus, by \eqref{ineq: L infty bound}, $\|\Xr \big(\Zr^{\beta_1} (f_1)\big) \|_{L^\infty}\lesssim \mathring{M} \varepsilon$. Hence,
\begin{align*}
\mathscr{N}_{f_1,f_2;2}(t,u)
\lesssim & \mathring{M}\varepsilon \int_{\mathcal{D}(t,u)}  \kappar^2  \big| \Xr\big(Z^{\beta_2} f_2\big)\big| \big|  L\Psi_n\big|.
\end{align*}
We can use Cauchy-Schwarz inequality as well as \eqref{comparison bound 1: L 2} and $\mathbf{(B_2)}$ to derive
\begin{align*}
\mathscr{N}_{f_1,f_2;2}(t,u)
\lesssim & \mathring{M} \varepsilon \int_\delta^t \mathscr{E}_{\leqslant n}(\tau)+\varepsilon\EB_{\leqslant n}(\tau)d\tau\lesssim \mathring{M} \varepsilon^3t^3.
\end{align*}

\medskip

We turn to $\mathscr{N}_{f_1,f_2;1}(t,u)$. It consists of the following two cases:
\begin{itemize}[noitemsep,wide=0pt, leftmargin=\dimexpr\labelwidth + 2\labelsep\relax]

\item[(a)] The case where $|\beta_2|\geqslant \Ninf$. 

In this case, since $|\beta_1|+1\leqslant \Ninf$,  by \eqref{Euler equations:form 3 with derivatives prime} and \eqref{ineq: L infty bound}, we have $\big|\Zr^{\beta_1}\big(c^{-1}\Lr (f_1)\big)\big|\lesssim \mathring{M}\varepsilon$. Therefore,  $\mathscr{N}_{f_1,f_2;1}(t,u)$ is bounded as follows:
\begin{align*}
\mathscr{N}_{f_1,f_2;1}(t,u)&\lesssim  \mathring{M}\varepsilon\int_{\mathcal{D}(t,u)}   \kappar \big| \Zr^{\beta_2}\big(\Lbr (f_2)\big)\big| \big|L\Psi_n\big|.
\end{align*}
We then consider three cases where $f_2=\wb, w$ or $\psi_2$ respectively. For $f_2=\wb$, we use \eqref{Euler equations:form 4 with derivatives} to replace $\Zr^{\beta_2}\big(\Lbr (f_2)\big)$ and this leads to
\begin{align*}
\mathscr{N}_{f_1,f_2;1}(t,u)&\lesssim  \varepsilon\int_{\mathcal{D}(t,u)}  \kappar \big(\big|\Tr(\Zr^{\beta_2}(\wb))\big|+\big|\kappar \Xr(\Zr^{\beta_2}(\psi_2))\big| \big)  \big|L\Psi_n\big| \\
&\lesssim  \varepsilon\int_\delta^t \big(\int_{\Sigma_\tau^u} \big|\Tr(\Zr^{\beta_2}(\wb))\big|^2+\kappar^2 \big|\Xr(\Zr^{\beta_2}(\psi_2))\big|^2 \big)^\frac{1}{2}\big(\int_{\Sigma_\tau^u} \kappar \big|L\Psi_n\big|^2 \big)^\frac{1}{2}d\tau.
\end{align*}
We can use \eqref{comparison bound 1: L 2} and $\mathbf{(B_2)}$ and this leads to $\mathscr{N}_{f_1,f_2;1}(t,u)\lesssim \mathring{M}\varepsilon^3t^2$. The estimates for $f_2=w$  or $\psi_2$ can be derived exactly in the same manner. Hence,
\begin{align*}
\mathscr{N}_{f_1,f_2;1}(t,u)& \lesssim  \mathring{M}\varepsilon^3t^2.
\end{align*}

\item[(b)] The case where $|\beta_2|\leqslant \Ninf-1$. 

In this case, we can use \eqref{Euler equations:form 4 with derivatives} to replace $\Zr^{\beta_2}\big(\Lbr (f_2)\big)$. Thus, \eqref{ineq: L infty bound} implies that $\big|\Zr^{\beta_2}\big(\Lbr (f_2)\big)\big|\lesssim 1$, provided $\mathring{M}\varepsilon$ is sufficiently small. Hence,
\begin{align*}
\mathscr{N}_{f_1,f_2;1}(t,u)&\lesssim \int_{\mathcal{D}(t,u)} \kappar \big| \Zr^{\beta_1} \big(c^{-1}\Lr (f_1) \big) \big|  \big| L\Psi_n  \big|.
\end{align*}
We will use formula \eqref{Euler equations:form 3 with derivatives prime} to replace $\Zr^{\beta_1}\big(c^{-1}\Lr (f_1)\big)$ in the integrand. {\color{black}We consider two cases where $f_1=\wb$, and $f_1=w$ or $\psi_2$  separately.}  
\begin{itemize}
\item[(b-1)] For $f_1=\wb$,   by \eqref{Euler equations:form 3 with derivatives prime}, we have
\begin{align*}
\mathscr{N}_{f_1,f_2;1}(t,u)&\lesssim  \int_{\mathcal{D}(t,u)}  \kappar \big|\Xr(\Zr^{\beta_1}(\psi_2))\big| \big|L\Psi_n\big| \\
&\lesssim  \int_{\mathcal{D}(t,u)}  \kappar \big(\big|\Xh(\Zr^{\beta_1}(\psi_2))\big|+\varepsilon t \big|L(\Zr^{\beta_1}(\psi_2))\big|+\varepsilon \big|\Lb(\Zr^{\beta_1}(\psi_2))\big|\big)\big|L\Psi_n\big|.
\end{align*}
In the last step, we used \eqref{comparison bound 1: L infty} to bound $\Xr(\Zr^{\beta_1}(\psi_2))$. We can proceed in the same manner as for $\mathscr{N}_{f_1,f_2;2}(t,u)$ to bound the contribution of the second and third terms by $\mathring{M}\varepsilon^3t^2$. Thus,
\begin{align*}
\mathscr{N}_{f_1,f_2;1}(t,u)&\lesssim \mathring{M}\varepsilon^3t^2+ \int_{\mathcal{D}(t,u)}  \kappar \big|\Xh(\Zr^{\beta_1}(\psi_2))\big| \big|L\Psi_n\big|\\
&\lesssim \mathring{M}\varepsilon^3t^2+\int_0^u \mathscr{F}_{\leqslant n}(t,u')du'.
\end{align*}

\item[(b-2)] For $f_1=w$,  by \eqref{Euler equations:form 3 with derivatives prime} and \eqref{comparison bound 1: L infty}, we have
\begin{align*}
\mathscr{N}_{f_1,f_2;1}(t,u)&\lesssim  \int_{\mathcal{D}(t,u)}  \kappar \big( \big|\Trh(\Zr^{\beta_1}(w)) \big|+ \big|\Xr(\Zr^{\beta_1}(\psi_2)) \big| \big)  \big|L\Psi_n \big|\\
&\lesssim  \int_{\mathcal{D}(t,u)}   \big( \big|\Lb(\Zr^{\beta_1}(\psi)) \big|+\kappar \big|L(\Zr^{\beta_1}(\psi)) \big|+\kappar \big|\Xh(\Zr^{\beta_1}(\psi)) \big| \big)  \big|L\Psi_n \big|.
\end{align*}
We can bound the last two terms by $\displaystyle\int_0^u \mathscr{F}_{\leqslant n}(t,u')du'$. Therefore, according to \eqref{def:L2 L3}, we obtain that
\begin{align*}
\mathscr{N}_{f_1,f_2;1}(t,u) \lesssim \mathscr{L}_2(\Zr^{\beta_1}(\psi),\Psi_n)(t,u) +\int_0^u \mathscr{F}_{\leqslant n}(u')du'.
\end{align*}
The estimates for $f_1=\psi_2$ can be derived exactly in the same manner.
\end{itemize}

\end{itemize}
Combining all the above estimates, we obtain that
\begin{equation}\label{est: N_n for I_1}
\mathscr{N}_{n}(t,u)\lesssim  \mathring{M}\varepsilon^3t^2+\sum_{1\leqslant |\beta|\leqslant n}\mathscr{L}_2(\Zr^{\beta_1}(\psi),\Psi_n)(t,u)+\int_0^u \mathscr{F}_{\leqslant n}(u')du'.
\end{equation}

\subsubsection{The second case: $\Lb$ as the multiplier}\label{subsection top order source term Lb}

We turn to $\underline{\mathscr{N}}_{n}(t,u)$. The contribution of \eqref{bbb2} in $\underline{\mathscr{N}}_{n}(t,u)$ {\color{black}splits} into two types of terms:
\[
\begin{cases}
\underline{\mathscr{N}}_{f_1,f_2;1}(t,u)&= \displaystyle\int_{\mathcal{D}(t,u)}   \big| \Zr^{\beta_1} \big(c^{-1}\Lr (f_1) \big) \big| \big|\Zr^{\beta_2} \big(\Lbr (f_2) \big) \big| \big|\Lb\Psi_n  \big|, \\
\underline{\mathscr{N}}_{f_1,f_2;2}(t,u)&= \displaystyle \int_{\mathcal{D}(t,u)}    \kappar  \big|\Zr^{\beta_1} \big(\Xr(f_1) \big) \big| \big|\Zr^{\beta_2} \big(\Xr(f_2) \big) \big|  \big|\Lb\Psi_n 	 \big|, 
\end{cases}\]
where  $|\beta_1|+|\beta_2|=|\beta|=n\leqslant \Ntop$.

\medskip

We start with $\underline{\mathscr{N}}_{f_1,f_2;2}(t,u)$.  Without loss of generality, we assume that  $|\beta_1|+1\leqslant \Ninf$. Therefore, we can use \eqref{ineq: L infty bound} to derive $\|\Zr^{\beta_1} \big( \Xr (f_1)\big) \|_{L^\infty}\lesssim  \mathring{M}\varepsilon$. By virtue of $\mathbf{(B_2)}$, we have
\begin{align*}
\underline{\mathscr{N}}_{f_1,f_2;2}(t,u)
\lesssim & \mathring{M}\varepsilon \int_{\mathcal{D}(t,u)}  \kappar  \big| \Xr\big(\Zr^{\beta_2} f_2\big)\big| \big|  \Lb\Psi_n\big| \lesssim    \mathring{M} \varepsilon^3t^2.
\end{align*}

\medskip

We turn to  $\underline{\mathscr{N}}_{f_1,f_2;1}(t,u)$ and we consider the following two cases:
\begin{itemize}[noitemsep,wide=0pt, leftmargin=\dimexpr\labelwidth + 2\labelsep\relax]

\item[(a)] The case where $|\beta_2|\geqslant \Ninf$. 

Similar to the case (a)  in Section \ref{subsection top order source term Lh}, we use \eqref{Euler equations:form 3 with derivatives prime} to derive $\big|\Zr^{\beta_2}\big(c^{-1}\Lr (f_1)\big)\big|\lesssim \mathring{M}\varepsilon$. Hence,
\begin{align*}
\underline{\mathscr{N}}_{f_1,f_2;1}(t,u)&\lesssim \mathring{M} \varepsilon\int_{\mathcal{D}(t,u)}     \big|\Zr^{\beta_2}\big(\Lbr (f_2)\big)\big| \big|\Lb\Psi_n\big|.
\end{align*}
If $f_2=\wb$, we use \eqref{Euler equations:form 4 with derivatives} to replace $\Zr^{\beta_3}\big(\Lbr (f_2)\big)$ and we also use \eqref{comparison bound 1: L infty} to derive
\begin{equation}\label{section 9.2 aux 1}
\begin{split}
\underline{\mathscr{N}}_{f_1,f_2;1}(t,u)&\lesssim \mathring{M}  \varepsilon\int_{\mathcal{D}(t,u)} \big(\big|\Tr(\Zr^{\beta_2}(\wb))\big|+\big|\kappar \Xr(\Zr^{\beta_2}(\psi_2))\big|\big)  \big|\Lb\Psi_n\big| \\
&\lesssim \mathring{M}  \varepsilon\int_{\mathcal{D}(t,u)} \big(\big|\Lb(\Zr^{\beta_2}(\wb))\big|+\kappar\big|L(\Zr^{\beta_2}(\wb))\big|+\kappar\big|\Xh(\Zr^{\beta_2}(\psi_2))\big| \big) \big|\Lb\Psi_n\big|.
\end{split}
\end{equation}
By $\mathbf{(B_2)}$, the above is bounded by $ \mathring{M}\varepsilon^3t^2$. The estimates for $f_2=w$ or $\psi_2$ can be derived exactly in the same way. Hence,
\begin{align*}
\underline{\mathscr{N}}_{f_1,f_2;1}(t,u)& \lesssim \mathring{M}\varepsilon^3t^2.
\end{align*}

\item[(b)] The case where $|\beta_2|< \Ninf$. 

Similar to the case (b)  in Section \ref{subsection top order source term Lh}, we use \eqref{Euler equations:form 4 with derivatives} to derive $\big|\Zr^{\beta_2}\big(\Lbr (f_2)\big)\big|\lesssim 1$. Hence,
\begin{align*}
\underline{\mathscr{N}}_{f_1,f_2;1}(t,u)&\lesssim \int_{\mathcal{D}(t,u)} \big| \Zr^{\beta_1}\big(c^{-1}\Lr (f_1)\big)\big| \big| \Lb\Psi_n \big|.
\end{align*}
We consider two cases where $f_1=\wb$, and $f_1=w$ or $\psi_2$, and they will be treated differently. 
\begin{itemize}
\item[(b-1)] For $f_1=\wb$,  we use \eqref{Euler equations:form 3 with derivatives prime} to replace $\Zr^{\beta_1}\big(c^{-1}\Lr (\wb)\big)$. Similar to the case (b-1)  in Section \ref{subsection top order source term Lh}, by combining with \eqref{comparison bound 1: L infty}, this leads to
\begin{equation}\label{section 9.2 aux 2}
\begin{split}
\underline{\mathscr{N}}_{f_1,f_2;1}(t,u)&\lesssim  \int_{\mathcal{D}(t,u)} \big|\Xr(\Zr^{\beta_1}(\psi_2))\big| \big|\Lb\Psi_n\big|  \\
&\lesssim \int_{\mathcal{D}(t,u)}  \big(\big|\Xh(\Zr^{\beta_1}(\psi_2))\big|+\varepsilon t \big|L(\Zr^{\beta_1}(\psi_2))\big| +\varepsilon\big|\Lb(\Zr^{\beta_1}(\psi_2))\big|\big)\big|\Lb\Psi_n\big| \\
&\lesssim \mathring{M}\varepsilon^3 t^3+\mathscr{L}_3(\Zr^{\beta_1}(\psi_2),\Psi_n)(t,u){\color{black}.}
\end{split}
\end{equation}

\item[(b-2)] For $f_1=w$ or $\psi_2$,  the direct use of the second equation of \eqref{Euler equations:form 3 with derivatives prime} will generate a $\Trh$ direction and it causes a loss in $\kappar$. We will commute $\Lr$ with $Z^{\beta_1}$ to avoid the loss. 

We further decompose the integral into two sums. Schematically, we have
\begin{align*}
\underline{\mathscr{N}}_{f_1,f_2;1}(t,u)&\lesssim  \big(\sum_{\substack{\beta_1'+\beta_1''=\beta_1\\ \beta_1''\geqslant \Ninf}} +\sum_{\substack{\beta_1'+\beta_1''=\beta_1\\ \beta_1''< \Ninf}}\big)\int_{\mathcal{D}(t,u)} \big|\Zr^{\beta'_1}\big(c^{-1}\big)\big| \big|\Zr^{\beta''_1}\big(\Lr (f_1)\big)\big| \big| \Lb\Psi_n \big|=\mathbf{S}_1+\mathbf{S}_2.
\end{align*}
We use $\mathbf{S}_1$ and $\mathbf{S}_2$ to denote the first and the second sum.

In $\mathbf{S}_1$,  since $\beta_1'< \Ninf$, we have $\big|\Zr^{\beta'_1}\big(c^{-1}\big)\big| \lesssim 1$. Thus,
\[\mathbf{S}_1\lesssim \sum_{\beta_1''\leqslant \beta_1}\int_{\mathcal{D}(t,u)} \big|\Zr^{\beta''_1}\big(\Lr (f_1)\big)\big| \big| \Lb\Psi_n \big|.\]
We apply \eqref{eq: commutation formular for Lr Zr} to $\Zr^{\beta''_1}\big(\Lr(f_1)\big)$ and we derive 
\begin{align*}
\mathbf{S}_1&\lesssim \sum_{\beta_1''\leqslant \beta_1}\big(\int_{\mathcal{D}(t,u)} \big|\Lr \Zr^{\beta''_1}\big(f_1\big)\big| \big| \Lb\Psi_n \big|+\sum_{\substack{\alpha_1+\alpha_2=\beta''_1\\|\alpha_2|\geqslant 1}} \int_{\mathcal{D}(t,u)} \big|\Zr^{\alpha_1}(\lambda)\big| \big|\Zr^{\alpha_2}(f_1)\big| \big| \Lb\Psi_n \big|\big)\\
&\lesssim \sum_{|\beta_1''|\leqslant n}\mathscr{L}_2(\Zr^{\beta''_1}(\psi),\Psi_n)(t,u) +\sum_{\substack{|\alpha_1|+|\alpha_2|\leqslant n\\ |\alpha_2|\geqslant 1}}\underbrace{\int_{\mathcal{D}(t,u)} \big|\Zr^{\alpha_1}(\lambda)\big| \big|\Zr^{\alpha_2}(f_1)\big| \big| \Lb\Psi_n \big|}_{\mathbf{I}_{\alpha_1,\alpha_2}}.
\end{align*}
We recall that the geometric quantities $\lambda \in \Lambda=\{\yr, \zr, \chir,\etar\}$. According to Remark \ref{remark: commutator structure}, if $\lambda=\yr$ or $\zr$, we have $\Zr^{\alpha_2}(f_1)=\Tr(\Zr^{\alpha'_2}(f_1))$. 

It remains to bound the integrals $\mathbf{I}_{\alpha_1,\alpha_2}$ where $|\alpha_1|+|\alpha_2|\leqslant n$ and $|\alpha_2|\geqslant 1$.  According to the size of $\alpha_1$, we have two different cases :
\begin{itemize}
\item[1)]$|\alpha_1|\leqslant \Ninf-1$. By \eqref{ineq: L infty bound on lambda final}, we have
\begin{equation}\label{ineq: case 1}
\mathbf{I}_{\alpha_1,\alpha_2}\lesssim \mathring{M}\varepsilon\int_{\mathcal{D}(t,u)} \!\!\!\!\!\!\big|\Zr^{\alpha_2}(f_1)\big| \big| \Lb\Psi_n \big|\lesssim \mathring{M}\varepsilon\int_{\mathcal{D}(t,u)} \!\!\!\!\!\!\big(\big|\Tr\Zr^{\alpha_2-1}(f_1)\big|+\big|\Xr\Zr^{\alpha_2-1}(f_1)\big|\big) \big| \Lb\Psi_n\big|.
\end{equation}
We then apply \eqref{comparison bound 1: L infty} and we derive
\begin{align*}
\mathbf{I}_{\alpha_1,\alpha_2}&\lesssim \mathring{M}\varepsilon \int_{\mathcal{D}(t,u)} \big(\big|\Lb\Zr^{\alpha_2-1}(f_1)\big|+\big|\Xh\Zr^{\alpha_2-1}(f_1)\big|+t\big|L\Zr^{\alpha_2-1}(f_1)\big|\big) \big| \Lb\Psi_n \big|\\
&\lesssim \mathring{M}\varepsilon^3 t^2+\mathring{M}\varepsilon \mathscr{L}_3(\Zr^{\alpha_2-1}(f_1),\Psi_n)(t,u).
\end{align*}
In view of the inequality \eqref{ineq: estimates for L2 L3} and $\mathbf{(B_2)}$, we have
\begin{align*}
\mathbf{I}_{\alpha_1,\alpha_2}&\lesssim \mathring{M}\varepsilon^3 t^2.
\end{align*}

\item[2)] $|\alpha_1|\geqslant \Ninf$. In this case, we use the bound  $\big|\Zr^{\alpha_2}(f_1)\big|\lesssim \varepsilon$. Hence,
\begin{align*}
\mathbf{I}_{\alpha_1,\alpha_2}&\lesssim \mathring{M}\varepsilon\int_{\mathcal{D}(t,u)}  \big|\Zr^{\alpha_1}(\lambda)\big| \big| \Lb\Psi_n \big|\lesssim \mathring{M}\varepsilon\int_{\delta}^t \|\Zr^{\alpha_1}(\lambda)\|_{L^2(\Sigma_\tau)} \left\| \Lb\Psi_n \right\|_{L^2(\Sigma_\tau)}d\tau.
\end{align*}
Therefore, we can apply the bound \eqref{ineq: L^2 bound on lambda final} and $\mathbf{(B_2)}$ to derive
\begin{align*}
\mathbf{I}_{\alpha_1,\alpha_2}&\lesssim \mathring{M}\varepsilon^3t^2+\mathring{M}\varepsilon\int_{\delta}^t \frac{1}{
\tau}\mathscr{E}_{\leqslant n}(\tau,u)d\tau\lesssim \mathring{M}\varepsilon^3t^2.
\end{align*}
\end{itemize} 
Combining the case 1) and 2), we obtain that
\begin{align*}
\mathbf{S}_1&\lesssim \mathring{M}\varepsilon^3 t^2+\sum_{|\gamma|\leqslant n}\mathscr{L}_2(\Zr^{\gamma}(\psi),\Psi_n)(t,u).
\end{align*}

\medskip

In $\mathbf{S}_2$, we have $\beta_1''< \Ninf$.  We have to first deal with $\Zr^{\beta'_1}\big(c^{-1}\big)$. It can be expanded as a linear combination of terms of the shape $c^{-m}\Zr^{\beta'_{1;i_1}}(c)\Zr^{\beta'_{1;i_2}}(c)\cdots \Zr^{\beta'_{1;i_k}}(c)$ with $\sum_{j=1}^k\beta'_{1;i_j}=\beta'_1$. Without loss of generality, we assume that $|\beta'_{1;i_1}|=\displaystyle\max_{j\leqslant k}|\beta'_{1;i_j}|$. Hence, 
\begin{equation}\label{eq: a bound on Zbeta c -1}
\big|\Zr^{\beta'_1}\big(c^{-1}\big)\big|\lesssim \big|\Zr^{\beta'_{1;i_1}}\big(c\big)\big|.
\end{equation}
Therefore, we have
\begin{align*}
\mathbf{S}_2&\lesssim \varepsilon \sum_{\beta'_{1;i_1}\leqslant \beta_1} \int_{\mathcal{D}(t,u)} \big|\Zr^{\beta'_{1;i_1}}\big(c\big)\big| \big|\Zr^{\beta''_1}\big(\Lr (w)\big)\big|\big| \Lb\Psi_n\big|.
\end{align*}
We may assume that $|\beta'_{1;i_1}|\geqslant 2$. Otherwise, we use the bound $\big|\Zr^{\beta'_{1;i_1}}\big(c\big)\big| \lesssim 1$ and this term has already been controlled in $\mathbf{S}_1$. We then write $\Zr^{\beta'_{1;i_1}}\left(c\right)$ as $\Zr\big(\Zr^{\widetilde{\beta}'_{1;i_1}}\big)(c)$  where $\Zr^{\beta'_{1;i_1}}=\mathring{Z} \Zr^{\widetilde{\beta}'_{1;i_1}}$ and $|\widetilde{\beta}'_{1;i_1}|\geqslant 1$. By $\beta_1''<\Ninf$, we use \eqref{ineq: L infty bound for Zbeta L Zalpha} to bound $\big|\Zr^{\beta''_1}\big(\Lr (w)\big)\big| \lesssim \mathring{M} \varepsilon$.  Therefore, by rewriting $c$ in terms of $w$ and $\wb$, we have
\begin{equation}\label{type I S1 aux 1}
\mathbf{S}_2\lesssim \mathring{M}\varepsilon   \int_{\mathcal{D}(t,u)}\big(\big|\Tr \Zr^{\widetilde{\beta}'_{1;i_1}}\psi\big|+\big|\Xr \Zr^{\widetilde{\beta}'_{1;i_1}}\psi\big|\big) \big| \Lb\Psi_n \big|.
\end{equation}
We have already handled a similar bound in \eqref{ineq: case 1}. This leads to
\begin{align*}
\mathbf{S}_2&\lesssim \mathring{M}\varepsilon^3 t^2.
\end{align*}

\end{itemize}

\end{itemize}

Combining all the above estimates, we conclude that
\begin{equation}\label{est: N_nb for I_1}
\underline{\mathscr{N}}_{n}(t,u)\lesssim  \mathring{M}\varepsilon^3 t^2+\sum_{|\gamma|\leqslant n}\mathscr{L}_2(\Zr^{\gamma}(\psi),\Psi_n)(t,u)+\sum_{|\gamma|\leqslant n}\mathscr{L}_3(\Zr^{\gamma}(\psi_2),\Psi_n)(t,u).
\end{equation}

\subsubsection{Summary}

In view of \eqref{est: N_n for I_1} and \eqref{est: N_nb for I_1}, the error terms of {\bf Type I} can be bounded as follows:
\begin{align*}
{\mathscr{N}}_n(t,u)+\underline{\mathscr{N}}_n(t,u)
&\lesssim    \mathring{M}\varepsilon^3 t^2+\sum_{|\gamma|\leqslant n}\mathscr{L}_2(\Zr^{\gamma}(\psi),\Psi_n)(t,u)+\sum_{|\gamma|\leqslant n}\mathscr{L}_3( \Zr^{\gamma}(\psi_2),\Psi_n)(t,u)+\int_0^u \mathscr{F}_{\leqslant n}(u')du'.
\end{align*}

\subsection{Estimates on {\bf Type $\mathbf{II}_1$} terms}

For the sake of simplicity, we use ${}^{(\mathring{Z})} \sigma_{k}$ to denote $\,^{(\Zr_{i+1})}{\sigma_{i,k}}$ where $k=1,2,3$. Since $\frac{\mu}{\mur}\lesssim 1$, it is suffices to bound the contribution of  $\,^{(\Zr_{i+1})}{\sigma_{i,1}}$'s in $\mathscr{N}_n(t,u)$ and $\underline{\mathscr{N}}_n(t,u)$ in the following form
\[\mathscr{N}_n(t,u)= \int_{\mathcal{D}(t,u)}  \big| \Zr^\beta \big({}^{(\mathring{Z})} \sigma_{1}\big)\big| \big|\Lh\Psi_n\big|,  \ \  \underline{\mathscr{N}}_n(t,u)= \int_{\mathcal{D}(t,u)} \big| \Zr^\beta\big({}^{(\mathring{Z})} \sigma_{1}\big)\big| \big|\Lb\Psi_n\big|,\]
where $|\beta|\leqslant n-1$.
For $\Zr=\Xr$ or $\Tr$, we have $\,^{(\mathring{Z})}{\pi}_{\Lbr\Xr}= c^{-1}\kappa\,^{(\mathring{Z})}{\pi}_{\Lr\Xr}$. Therefore, we rewrite \eqref{eq: sigma 1 circle} as
\begin{equation*}
\begin{split}
{}^{(\Zr)} \sigma_{1}&=-\frac{1}{2}\underbrace{\big(\Lr(c^{-1}\kappar)+2\chibr-2c^{-1}z\big)\cdot \pi_{\Lr\Xr} \cdot \Xr(\Psi_{m})}_{\sigma_{1,1}} +\frac{1}{4}\underbrace{(c^{-1}\chibr-c^{-2}z) \cdot \frac{\pi_{\Lbr\Lbr}}{c^{-1}\kappar^2} \cdot\Lr(\Psi_{m})}_{\sigma_{1,2}}\\
& \ \ +\underbrace{\frac{1}{4}\big[\frac{1}{\kappar}\big(\Lr(c^{-1}\kappar)+\chibr-c^{-1}z\big)+\Lbr\big(\frac{1}{\kappar}\big)\big]\cdot c^{-1}\pi_{\Lr\Lr}\cdot \Lbr(\Psi_{m})}_{\sigma_{1,3}}.\end{split}
\end{equation*}
In the above expression, we used ${\pi}$ to denote the deformation tensor $\,^{(\mathring{Z})}{\pi}$. In view of \eqref{eq: varphor_n expression}, it is important to observe that $|\beta|+m+1\leqslant \Ntop$.

We will first derives estimates on $\sigma_{1,1}$ and $\sigma_{1,2}$ and then on $\sigma_{1,3}$.
  
\subsubsection{Estimates on $\sigma_{1,1}$ and $\sigma_{1,2}$}\label{section sigma11 12}
The terms in $\sigma_{1,1}$ and $\sigma_{1,2}$ can be schematically represented as $G \times D\times W$ where
\begin{equation}\label{sigma 1 1 G D W}
G\in \{ \Lr(c^{-1}\kappar), \chibr,  c^{-1}\chibr,  c^{-1}z, c^{-2}z\}, \ D\in \{\,^{(\mathring{Z})}{\pi}_{\Lr\Xr}, c\kappar^{-2}\,^{(\mathring{Z})}{\pi}_{\Lbr\Lbr} \},\ W\in \{\Lr(\Psi_{m}),\Xr(\Psi_{m})\}.
\end{equation}
We will bound these terms one by one. In the following, we bound the derivative of $G\times D\times W$ by
\[\left|\Zr^{\beta}\left( G\times D\times W\right)\right|\lesssim \sum_{\beta_1+\beta_2+\beta_3=\beta}|\Zr^{\beta_1}(G)| |\Zr^{\beta_2}(D)||\Zr^{\beta_3}(W)|.\]
According to size of the multi-indices $\beta_i$'s, it suffices to consider three cases:

\begin{itemize} [noitemsep,wide=0pt, leftmargin=\dimexpr\labelwidth + 2\labelsep\relax]

\item[(a)] $|\beta_1|\leqslant \Ninf-1$ and $|\beta_2|\leqslant \Ninf-1$. 

We first use \eqref{ineq: L infty bound} and \eqref{ineq: L infty bound on lambda final} to show that  $|\Zr^{\beta_1}(G)| |\Zr^{\beta_2}(D)|\lesssim \varepsilon$.  Indeed, for $G=\Lr(c^{-1}\kappar)$, we only have $|\Zr^{\beta_1}(G)|\lesssim 1$. But {\color{black}in that case} we must have $D=\,^{(\mathring{Z})}{\pi}_{\Lr\Xr}$.Therefore, by the tables of deformation tensors in Section \ref{section:commutators and their  deformation tensors}, it is straightforward to check that $\big|\Zr^{\beta_2}\big(\,^{(\mathring{Z})}{\pi}_{\Lr\Xr}\big)\big|\lesssim \varepsilon$. Hence,  $|\Zr^{\beta_1}(G)| |\Zr^{\beta_2}(D)|\lesssim \varepsilon$; For $D=c\kappar^{-2}\,^{(\mathring{Z})}{\pi}_{\Lbr\Lbr}$, we only have $|\Zr^{\beta_1}(D)|\lesssim 1$. But {\color{black}in that case} we must have $D= c^{-1}\chibr$ or $c^{-2}z$. Therefore, $\big|\Zr^{\beta_2}(D)\big|\lesssim \varepsilon$. Hence,  $|\Zr^{\beta_1}(G)| |\Zr^{\beta_2}(D)|\lesssim \varepsilon$. The other cases are much easier and they can derived in the same manner. As a conclusion, we have
\begin{align*}
\left|\Zr^{\beta}\left(G \times D\times W\right)\right|&\lesssim \mathring{M}\varepsilon\sum_{\beta_3 \leqslant \beta}\left(|\Zr^{\beta_3}(\Lr(\Psi_{m}))|+|\Zr^{\beta_3}(\Xr(\Psi_{m}))|\right).
\end{align*}
Since $\Xr$ commutes with $\Zr^{\beta_3}$, the contribution of $\Zr^{\beta_3}(\Xr(\Psi_{m}))$ to $\mathscr{N}_n(t,u)$ and $\underline{\mathscr{N}}_n(t,u)$ can be bounded similarly as in \eqref{ineq: case 1}:
\begin{equation}\label{bound auxxx 1}
\mathscr{N}_n(t,u)+\underline{\mathscr{N}}_n(t,u)\lesssim \mathring{M}\varepsilon\sum_{\beta_3 \leqslant \beta}\int_{\mathcal{D}(t,u)}  |\Xr \Zr^{\beta_3} (\Psi_{m})|\big(\big|\Lh\Psi_n\big|+\big|\Lb\Psi_n\big|\big)\lesssim \mathring{M}\varepsilon^3 t^2.
\end{equation}
It remains to bound the contribution from $\Zr^{\beta_3}(\Lr(\Psi_{m}))$, i.e.,
\begin{equation}\label{eq:sigma 1 1 aux 1}
\mathscr{N}_n(t,u)+\underline{\mathscr{N}}_n(t,u)\lesssim\mathring{M} \varepsilon\sum_{\beta_3 \leqslant \beta}\int_{\mathcal{D}(t,u)}  |\Zr^{\beta_3}(\Lr(\Psi_{m}))| \big(\big|\Lh\Psi_n\big|+\big|\Lb\Psi_n\big|\big).
\end{equation}
We notice that $|\beta_3|+m\leqslant \Ntop-1$.  We apply \eqref{eq: commutation formular for Lr Zr} to bound the righthand side of \eqref{eq:sigma 1 1 aux 1} by
\begin{align*}
&  \mathring{M} \varepsilon\int_{\mathcal{D}(t,u)} \big(\big|\Lr \Zr^{\beta_3}\big( \Psi_m\big)\big| +\sum_{\substack{|\alpha_1|+|\alpha_2|\leqslant |\beta_3|\\ |\alpha_2|\geqslant 1}}\big|\Zr^{\alpha_1}(\lambda)\big| \big|\Zr^{\alpha_2}(\Psi_m)\big|\big) \big(\big|\Lh\Psi_n\big|+\big|\Lb\Psi_n\big|\big)\\
\lesssim& \mathring{M}\varepsilon^3t^2+\mathring{M} \varepsilon\sum_{\substack{|\alpha_1|+|\alpha_2|\leqslant |\beta_3|\\ |\alpha_2|\geqslant 1}}\int_{\mathcal{D}(t,u)}\underbrace{ \big|\Zr^{\alpha_1}(\lambda)\big| \big|\Zr^{\alpha_2}(\Psi_m)\big| \big(\big|\Lh\Psi_n\big|+\big|\Lb\Psi_n\big|\big)}_{\mathbf{I}_{\alpha_1,\alpha_2}},
\end{align*}
where $\lambda \in \{\yr, \zr, \chir,\etar\}$. We have used \eqref{ineq: estimates for L2 L3} and $\mathbf{(B_2)}$ in the last step. To deal with $\mathbf{I}_{\alpha_1,\alpha_2}$, we can proceed exactly as for case (b-2) of Section \ref{subsection top order source term Lb}. Together with \eqref{bound auxxx 1} and  this finally leads to
\[\mathscr{N}_n(t,u)+\underline{\mathscr{N}}_n(t,u)\lesssim \mathring{M}\varepsilon^3t^2.
\]

\item[(b)] $|\beta_2|\geqslant \Ninf$. 

Similar to case (a), by \eqref{ineq: L infty bound}, \eqref{ineq: L infty bound on lambda final} and \eqref{ineq: L infty bound for Zbeta L Zalpha}, we have $|\Zr^{\beta_1}(G)| |\Zr^{\beta_3}(W)|\lesssim \mathring{M}\varepsilon$. Since $D\in \{\,^{(\mathring{Z})}{\pi}_{\Lr\Xr}, c\kappar^{-2}\,^{(\mathring{Z})}{\pi}_{\Lbr\Lbr} \}$ where $\Zr =\Xr$ or $\Tr$, it is straightforward to check that 
\[D\in \{  y-2\Xr(c),  z-2\Tr(c), \Xr(\psi_2), \Tr(\psi_2)\}.\] 
Therefore, schematically, we have
$\Zr^{\beta_2}(D)=\mathring{Z}(\Zr^{\beta_2'}(\Psi_0))=\mathring{Z}(\Psi_{|\beta_2|})$. Hence, the contribution of those terms in ${\mathscr{N}}(t,u)$ and $\underline{\mathscr{N}}(t,u)$ can bounded as follows
\begin{align*}
{\mathscr{N}}_n(t,u)+\underline{\mathscr{N}}_n(t,u)&\lesssim   \mathring{M}\varepsilon \int_{\mathcal{D}(t,u)}\big(|\Xr(\Psi_{|\beta_2|})+|\Tr(\Psi_{|\beta_2|}) \big)\left(|\Lh\Psi_n|+|\Lb\Psi_n|\right)\lesssim  \mathring{M}\varepsilon^3t^2.
\end{align*}
where we proceeded exactly as in \eqref{section 9.2 aux 1} or \eqref{section 9.2 aux 2} and we also used \eqref{ineq: estimates for L2 L3} and \eqref{comparison bound 1: L 2}.

\item[(c)] $|\beta_1|\geqslant \Ninf$. 

 Since $\Lr(c^{-1}\kappar)=c^{-1}-c^{-2}\kappar \Lr(c)$, in view of \eqref{sigma 1 1 G D W}, we may assume that $G= c^{-1}$, $c^{-2}\kappar \Lr(c)$, $\chibr$,  $c^{-1}\chibr$,  $c^{-1}\kappar \zr$ or $c^{-2}\kappar \zr$.

If $G=c^{-1}$, the corresponding terms in $\mathscr{N}_n(t,u)$ and $\underline{\mathscr{N}}_n(t,u)$ are bounded by
\[\int_{\mathcal{D}(t,u)}\big|\Zr^{\beta_1}(c^{-1})\big|\big|\Zr^{\beta_2}\big(\pi_{\Lr\Xr}\big)\big|\big|\Zr^{\beta_3}\big(\Xr(\Psi_{m})\big)\big| \big(|\Lh\Psi_n|+|\Lb\Psi_n|\big){\color{black}.}\]
We can bound this term by $\mathring{M}\varepsilon^3t^2$ exactly in the same manner as for the $\mathbf{S}_2$ terms in  case (b-2) of Section \ref{subsection top order source term Lb}.

If $G\neq c^{-1}$, it can be written as $c^{-k}\kappar \mathring{G}$ where $k=1,2$ and $\mathring{G}=\Xr(\psi_2), \zr$ or $\Lr(c)$. Thus, by \eqref{comparison bound 1: L 2},
\begin{equation}\label{auxxx x}
\|\Zr^{\beta_1}G\|_{L^2(\Sigma_\tau)}\leqslant \sum_{\beta_1'+\beta_2''=\beta_2}\|\Zr^{\beta_1}(c^{-k})\cdot \Zr^{\beta_1}\mathring{G}\|_{L^2(\Sigma_\tau)}\lesssim \mathring{M}\varepsilon t.
\end{equation}
On the other hand, similar to case (a), by \eqref{ineq: L infty bound} and  \eqref{ineq: L infty bound for Zbeta L Zalpha} we have $|\Zr^{\beta_2}(D)||\Zr^{\beta_3}(W)|\lesssim \mathring{M}\varepsilon$. Therefore, we can apply $\mathbf{(B_2)}$ to derive
\begin{align*}
{\mathscr{N}}_n(t,u)+\underline{\mathscr{N}}_n(t,u)&\lesssim   \mathring{M}\varepsilon \int_{\delta}^t\|\Zr^{\beta_1}(G)\|_{L^2(\Sigma_\tau)} \big(\|\Lh\Psi_n\|_{L^2(\Sigma_\tau)}+\|\Lb\Psi_n\|_{L^2(\Sigma_\tau)}\big)d\tau\lesssim \mathring{M}\varepsilon^3t^2.
\end{align*}
\end{itemize}
By combining all the above estimates, the total contribution of $\sigma_{1,1}$ and $\sigma_{1,2}$ in $\mathscr{N}_n(t,u)$ and $\underline{\mathscr{N}}_n(t,u)$ are bounded as
\[\mathscr{N}_n(t,u)+\underline{\mathscr{N}}_n(t,u)\lesssim \mathring{M}\varepsilon^3t^2.\]

\subsubsection{Estimates  on $\sigma_{1,3}$}

A direct computation shows
\[\sigma_{1,3}=\frac{1}{4}\left(-c^{-2}\Lr(c)+c^{-1}\chir-c^{-1}\zr\right)\cdot c^{-1}\pi_{\Lr\Lr}\cdot \Lbr(\Psi_{m}).\]
For $Z=\Xr$ or $\Tr$, by the tables in Section \ref{section:commutators and their  deformation tensors}, $c^{-1}\,^{(Z)}\pi_{\Lr\Lr}=-2y$ or $-2z$. Hence, the terms in $\sigma_{1,3}$ can be schematically written as $G \times D\times \Lbr(\Psi_{m})$ with
\[G\in \{-c^{-2}\Lr(c), c^{-1}\chir,c^{-1}\zr\}, \ \ D\in \{y, z \}.\]
Thus,
\[\big|\Zr^{\beta}\big( G\times D\times  \Lbr({\color{black}\Psi_{m}})\big)\big|\lesssim \sum_{\beta_1+\beta_2+\beta_3=\beta}\big|\Zr^{\beta_1}(G)\big| \big|\Zr^{\beta_2}(D)\big|\big|\Zr^{\beta_3}\big( \Lbr(\Psi_{m})\big)\big|.\]
It suffices to consider the following three cases:
\begin{itemize}[noitemsep,wide=0pt, leftmargin=\dimexpr\labelwidth + 2\labelsep\relax]

\item[(i)] $|\beta_1|\leqslant \Ninf-1$ and $|\beta_2|\leqslant \Ninf-1$. 

By \eqref{ineq: L infty bound} and \eqref{ineq: L infty bound on lambda final}, {\color{black}we have} $|\Zr^{\beta_1}(G)| |\Zr^{\beta_2}(D)|\lesssim\mathring{M}\varepsilon$. Hence,
\begin{align*}
\mathscr{N}_n(t,u)+\underline{\mathscr{N}}_n(t,u)&\lesssim \mathring{M}\varepsilon\sum_{\beta_3 \leqslant \beta}\int_{\mathcal{D}(t,u)} \big|\Zr^{\beta_3}(\Lbr(\Psi_{m}))\big| \big(\big|\Lh\Psi_n\big|+\big|\Lb\Psi_n\big|\big)\\
&\lesssim \mathring{M}\varepsilon\sum_{\beta_3 \leqslant \beta}\int_{\mathcal{D}(t,u)} \big(\kappa|\Xr\Zr^{\beta_3}(\Psi_{m})|+|\Tr\Zr^{\beta_3}(\Psi_{m})| \big)\big(\big|\Lh\Psi_n\big|+\big|\Lb\Psi_n\big|\big),
\end{align*}
where we used \eqref{Euler equations:form 4 with derivatives} to replace $\Zr^{\beta_3}(\Lbr(\Psi_{m}))$ in the last step. Similar to  \eqref{section 9.2 aux 2}, we then use \eqref{comparison bound 1: L infty} to replace $\Tr$ and $\Xr$ derivatives by $L,\Lb$ and $\Xh$ derivatives. This shows that
\begin{align*}
\mathscr{N}_n(t,u)+\underline{\mathscr{N}}_n(t,u)&\lesssim \mathring{M}\varepsilon^3t^2.
\end{align*}

\item[(ii)] $|\beta_2|\geqslant \Ninf$. 

By \eqref{ineq: L infty bound}, \eqref{ineq: L infty bound on lambda final} and  \eqref{Euler equations:form 4 with derivatives}, we have  $|\Zr^{\beta_1}(G)|\lesssim \mathring{M} \varepsilon$ and $|\Zr^{\beta_3}(\Lbr(\Psi_{m}))|\lesssim 1$.  Since $D\in \{y, z \}$, we write $Z^{\beta_2}(D)=\mathring{Z}(\Zr^{\beta_2'}(\Psi_0))=\mathring{Z}(\Psi_{|\beta_2|})$ with $\mathring{Z}=\Tr$ or $\Xr$. Hence, similar to \eqref{type I S1 aux 1} or \eqref{section 9.2 aux 1}, we have
\begin{align*}
{\mathscr{N}}_n(t,u)+\underline{\mathscr{N}}_n(t,u)&\lesssim   \mathring{M}\varepsilon \int_{\mathcal{D}(t,u)}|\mathring{Z}(\Psi_{|\beta_2|}) \big(|\Lh\Psi_n|+|\Lb\Psi_n|\big)\lesssim \varepsilon^3t^2.
\end{align*}

\item[(iii)] $|\beta_1|\geqslant \Ninf$. 

According to \eqref{ineq: L infty bound}, \eqref{ineq: L infty bound on lambda final} and  \eqref{Euler equations:form 4 with derivatives}, we have $|\Zr^{\beta_3}(\Lbr(\Psi_{m}))|\lesssim 1$ and $|\Zr^{\beta_2}(D)|\lesssim \varepsilon \kappar$. Therefore,
\begin{align*}
{\mathscr{N}}_n(t,u)+\underline{\mathscr{N}}_n(t,u)&\lesssim   \mathring{M}\varepsilon \int_{\mathcal{D}(t,u)}|{\Zr}^{\beta_1}\big(\kappar \cdot G\big) |\big(|\Lh\Psi_n|+|\Lb\Psi_n|\big).
\end{align*}
Since $G\in \{-c^{-2}\Lr(c), c^{-1}\chir,c^{-1}\zr\}$, similar to \eqref{auxxx x}, we have $\|{\Zr}^{\beta_1}\left(\kappar \cdot G\right) \|_{L^2}\lesssim \mathring{M}\varepsilon$. Hence,
\begin{align*}
{\mathscr{N}}_n(t,u)+\underline{\mathscr{N}}_n(t,u)&\lesssim \mathring{M}\varepsilon \int_{\delta}^t\|{\Zr}^{\beta_1}\big(\kappar \cdot G\big) \|_{L^2(\Sigma_\tau)}\big(\|\Lh\Psi_n\|_{L^2(\Sigma_\tau)}+\|\Lb\Psi_n\|_{L^2(\Sigma_\tau)}\big)d\tau\lesssim  \mathring{M}\varepsilon^3t^2.
\end{align*}
\end{itemize}
Therefore, the total contribution of $\sigma_{1,3}$ in $\mathscr{N}_n(t,u)$ and $\underline{\mathscr{N}}_n(t,u)$ {\color{black}is bounded} as
\[\mathscr{N}_n(t,u)+\underline{\mathscr{N}}_n(t,u)\lesssim \mathring{M}\varepsilon^3t^2.\]

\subsubsection{Summary}

Combining the estimates for $\sigma_{1,1}, \sigma_{1,2}$ and $\sigma_{1,3}$, the error terms of {\bf Type $\mathbf{II}_1$} can be bounded as follows:
\[\mathscr{N}_n(t,u)+\underline{\mathscr{N}}_n(t,u)\lesssim \mathring{M}\varepsilon^3t^2.\]

\subsection{Estimates on {\bf Type $\mathbf{II}_2$} terms}

For {\bf Type $\mathbf{II}_2$} terms, since $\frac{\mu}{\mur}\lesssim 1$, it suffices to bound  the following integrals:
\[\mathscr{N}_n(t,u)= \int_{\mathcal{D}(t,u)}  \big| \Zr^\beta \big({}^{(\mathring{Z})} \sigma_{2} \big) \big|  \big|\Lh\Psi_n \big|,  \ \  \underline{\mathscr{N}}_n(t,u)= \int_{\mathcal{D}(t,u)}   \big| \Zr^\beta \big({}^{(\mathring{Z})} \sigma_{2} \big) \big|  \big|\Lb\Psi_n \big|,\]
where $|\beta|\leqslant n-1$. We can rewrite \eqref{eq: sigma 2 circle} as
\begin{equation*}
\begin{split}
{}^{(\Zr)} \sigma_{2}&=-\frac{1}{2} \underbrace{\big(\pi_{\Lbr \Xr} \cdot \Lr \Xr(\Psi_{m})+\pi_{\Lr \Xr} \cdot \Lbr \Xr(\Psi_{m})+ \pi_{\Lbr \Xr} \cdot \Xr \Lr(\Psi_{m})+\pi_{\Lr \Xr} \cdot \Xr \Lbr(\Psi_{m})\big)}_{\sigma_{2,1}},\\
&\ \ +\frac{1}{2} \underbrace{\pi_{\Lr \Lbr} \cdot \Xr \Xr(\Psi_{m})}_{\sigma_{2,2}}+\underbrace{\frac{1}{4\mur}  \pi_{\Lbr \Lbr} \Lr \Lr(\Psi_{m})+\frac{1}{4\mur} \pi_{\Lr \Lr} \Lbr \Lbr(\Psi_{m})}_{\sigma_{2,3}},
\end{split}
\end{equation*}
where  ${\pi}$ stands for $\,^{(\mathring{Z})}{\pi}$. In view of \eqref{eq: varphor_n expression}, we have $|\beta|+m+1\leqslant \Ntop$.

\subsubsection{Estimates on $\sigma_{2,1}$}\label{section sigma21}
By the tables in Section \ref{section:commutators and their  deformation tensors},  for $\Zr=\Xr$ or $\Tr$, we have $\,^{(\mathring{Z})}{\pi}_{\Lbr\Xr}= c^{-1}\kappar\,^{(\mathring{Z})}{\pi}_{\Lr\Xr}$. Thus, we can replace $\Lbr$ by $c^{-1}\kappar\Lr+2\Tr$ to derive
\begin{align*}
\sigma_{2,1}&=-\pi_{\Lbr \Xr}\big({\color{black}\Lr \Xr(\Psi_{m})+ \Xr\Lr(\Psi_{m})}\big)-2\pi_{\Lr \Xr}\Xr\Tr(\Psi_{m})-\frac{1}{2}\pi_{\Lr \Xr}\Xr(c^{-1})\kappar \Lr(\Psi_{m})\\
&\stackrel{\Xr\Lr=[\Xr,\Lr]+\Lr\Xr}{=}-\pi_{\Lbr \Xr}\big(2\Lr \Xr(\Psi_{m})+\chir \Xr(\Psi_{m})-\yr \Tr(\Psi_{m})\big)-2\pi_{\Lr \Xr}\Xr\Tr(\Psi_{m})-\frac{1}{2}\pi_{\Lr \Xr}\kappar\Xr(c^{-1}) \Lr(\Psi_{m})\\
&=-\pi_{\Lbr \Xr}\big(2\Lr \Xr(\Psi_{m})+\chir \Xr(\Psi_{m})\big)+\pi_{\Lr \Xr}\big(c^{-1} y \Tr(\Psi_{m})-2\Xr\Tr(\Psi_{m})\big)-\frac{1}{2}\pi_{\Lr \Xr}\kappar\Xr(c^{-1}) \Lr(\Psi_{m}).
\end{align*}
The terms in $\sigma_{2,1}$  can be schematically represented as $D\times W$ with
\[D\in \{\,^{(\mathring{Z})} \pi_{\Lbr \Xr} ,\,^{(\mathring{Z})} \pi_{\Lr \Xr}\},  \ \ W\in \{\Lr\Xr(\Psi_{m}), \chir \Xr(\Psi_{m}), c^{-1} y\Tr(\Psi_{m}),\Xr\Tr(\Psi_{m}),\kappar\Xr(c^{-1}) \Lr(\Psi_{m})\}.
\]
We show that for all possible $F=D$ or $W$, for all multi-index $\alpha$, we have
\begin{equation}\label{eq: auxxx bounds}
\begin{cases}
&\|\Zr^{\alpha}(F)\|_{L^2(\Sigma_t)}\lesssim \mathring{M}\varepsilon, \ \ \ {\rm ord}\left(\Zr^{\alpha}(F)\right)\leqslant \Ntop+1;\\
&\|\Zr^{\alpha}(F)\|_{L^\infty(\Sigma_t)}\lesssim \mathring{M}\varepsilon, \ \ \  {\rm ord}\left(\Zr^{\alpha}(F)\right)\leqslant \Ninf.
\end{cases}
\end{equation}
{\color{black}We} check case by case to prove \eqref{eq: auxxx bounds}:
\begin{itemize}
\item $F=\chir \Xr(\Psi_{m})$. 

In this case, $F=-\Xr(\psi_2)\Xr(\Psi_{m})$. Hence, 
\[\Zr^{\alpha}F=\sum_{\alpha_1+\alpha_2=\alpha}\Zr^{\alpha_1}(\Xr(\psi_2))\Zr^{\alpha_2}(\Xr(\Psi_m)){\color{black}.}\]
Therefore,  $\eqref{eq: auxxx bounds}$ follows immediately from \eqref{comparison bound 1: L 2},\eqref{ineq: L infty bound} and \eqref{ineq: L 2 bound for Zbeta L Zalpha}.

\item $F=\Lr\Xr(\Psi_{m}), \Xr\Tr(\Psi_{m})$ or $\,^{(\mathring{Z})}\pi_{\Lr \Xr}$. 

We recall that $\,^{(\mathring{X})}{\pi}_{\Lr\Xr}=-\chir=\Xr(\psi_2)$ and  $\,^{(\mathring{T})}{\pi}_{\Lr\Xr}=-\etar=\Tr(\psi_2)$. Therefore, \eqref{eq: auxxx bounds} is a direct consequence of \eqref{comparison bound 1: L 2},\eqref{ineq: L infty bound} and \eqref{ineq: L 2 bound for Zbeta L Zalpha}.

\item $F= \,^{(\mathring{X})}{\pi}_{\Lbr\Xr}$ or  $\kappar\Xr(c^{-1}) \Lr(\Psi_{m})$. 

Because $\,^{(\mathring{X})}{\pi}_{\Lbr\Xr}=c^{-1}\kappar \Xr(\psi_2)$ and  $\,^{(\mathring{T})}{\pi}_{\Lbr\Xr}=c^{-1}\kappar\Tr(\psi_2)$, $F$ can be written schematically as $\kappar^a\Zr^{\alpha}(c^{-1})E$, where $(a,|\alpha|)\in \{0,1\}$ and $F'=\Xr(\Psi_{m}), \Xr(\psi_2), \Tr(\psi_2)$ or $\Lr(\Psi_{n-1})$. According to \eqref{comparison bound 1: L 2}, \eqref{ineq: L infty bound} and \eqref{ineq: L 2 bound for Zbeta L Zalpha}, $F'$ satisfies \eqref{eq: auxxx bounds}. Therefore, we can use Remark \ref{remark:techical nonlinear} to conclude that $F$ also satisfies  \eqref{eq: auxxx bounds}. 

\item $F=c^{-1} y\Tr(\Psi_{m})$.

The worst scenario for $\Psi_m$ is that $\Psi_{m}=\wb$ because the $L^\infty$ or $L^2$ estimates of $\Tr(\Psi_{m})$ is only bounded by a universal constant. On the other hand, $y=\Xr(-\psi_1+c)$, it satisfies \eqref{eq: auxxx bounds}. Therefore, by  Remark \ref{remark:techical nonlinear}, $F$ also satisfies  \eqref{eq: auxxx bounds}. 
\end{itemize}

According to \eqref{eq: auxxx bounds} and Remark \ref{remark:techical nonlinear}, in view of $\left|\Zr^{\beta}\big(  D\times W\big)\right|\lesssim \sum_{\beta_1+\beta_2=\beta} |\Zr^{\beta_1}(D)||\Zr^{\beta_2}(W)|$, we conclude that
\begin{equation}\label{eq: auxxx bounds 111}
\|\Zr^{\beta}(  D\times W)\|_{L^2(\Sigma_t)} \lesssim \mathring{M}\varepsilon^2.
\end{equation}

The extra $\varepsilon$ in \eqref{eq: auxxx bounds 111} shows that the  contribution of $\sigma_{2,1}$ in $\mathscr{N}_n(t,u)$ and $\underline{\mathscr{N}}_n(t,u)$ can be bounded as follows:
\begin{align*}
{\mathscr{N}}_n(t,u)+\underline{\mathscr{N}}_n(t,u)&\lesssim  \int_{\delta}^t \|\Zr^{\beta}(  D\times W)\|_{L^2(\Sigma_\tau)} \left(\|\Lh\Psi_n\|_{L^2(\Sigma_\tau)}+\|\Lb\Psi_n\|_{L^2(\Sigma_\tau)}\right)d\tau\lesssim \mathring{M}\varepsilon^3t^2.
\end{align*}

\subsubsection{Estimates on $\sigma_{2,2}$}\label{section sigma22}
By the tables in Section \ref{section:commutators and their  deformation tensors}, we have $\,^{(\Zr)}\pi_{\Lr \Lbr}=-2\kappar \Zr(c)$. Hence,
\[\big|\Zr^{\beta}\big( \sigma_{2,2}\big)\big|\lesssim \sum_{\beta_1+\beta_2=\beta} \kappar\big|\Zr^{\beta_1} \mathring{Z}(c)\big||\Xr^2\Zr^{\beta_2}(\Psi_{m})|.\]
Unless $|\alpha|=0$ or $\mathring{Z}=\Tr$, for $F=\mathring{Z}(c)$ or $\Xr^2(\Psi_{m})$,  just as for \eqref{eq: auxxx bounds}, it is straightforward to see that
\begin{equation*}
\begin{cases}
&\|\Zr^{\alpha}(F)\|_{L^2(\Sigma_t)}\lesssim \mathring{M}\varepsilon, \ \ \ {\rm ord}\left(\Zr^{\alpha}(F)\right)\leqslant \Ntop+1;\\
&\|\Zr^{\alpha}(F)\|_{L^\infty(\Sigma_t)}\lesssim \mathring{M}\varepsilon, \ \ \  {\rm ord}\left(\Zr^{\alpha}(F)\right)\leqslant \Ninf.
\end{cases}
\end{equation*}
Therefore, similar to\eqref{eq: auxxx bounds 111}, unless $|\beta_1|=0$ and $\mathring{Z}=\Tr$,  we have
\[\|\kappar \cdot \Zr^{\beta_1} \mathring{Z}(c)\cdot \Xr^2\Zr^{\beta_2}({\color{black}\Psi_{m}})\|_{L^2(\Sigma_t)} \lesssim \mathring{M}\varepsilon^2.\]
The corresponding contribution in $\sigma_{2,1}$ in $\mathscr{N}_n(t,u)$ and $\underline{\mathscr{N}}_n(t,u)$ can be bounded by $\mathring{M}\varepsilon^3t^2$. It remains to treat the case where $|\beta_1|=0$ and $\mathring{Z}=\Tr$. In fact, we have
\begin{align*}
{\mathscr{N}}_n(t,u)+\underline{\mathscr{N}}_n(t,u)&\lesssim  \int_{\mathcal{D}(t,u)}  \kappar\big| \mathring{T}(c)\big||\Xr^2\Zr^{\beta}({\color{black}\Psi_{m}})|\big(\big|\Lh\Psi_n\big|+\big|\Lb\Psi_n\big|\big)\\
&\lesssim \int_{\delta}^t \mathscr{E}_{\leqslant n}(\tau,u)d\tau.
\end{align*}

{\color{black}Combining} all the estimates, the contribution of $\sigma_{2,2}$ are bounded as follows:
\begin{align*}
{\mathscr{N}}_n(t,u)+\underline{\mathscr{N}}_n(t,u)&\lesssim \mathring{M}\varepsilon^3t^2+\int_{\delta}^t \mathscr{E}_{\leqslant n}(\tau,u)d\tau.
\end{align*}

\subsubsection{Estimates  on $\sigma_{2,3}$}\label{section sigma23}

The term $\sigma_{2,3}$ is much harder than the previous terms due to the presence of $\Lr^2(\Psi_m)$. We can use $\Lbr= c^{-1} \kappar \Lr + 2\Tr$ to expand $\Lbr \Lbr$ in terms of $\Lr$ and $\Tr$. This gives
\begin{align*}
\pi_{\Lbr \Lbr} \Lr \Lr(\Psi_{m})+\pi_{\Lr \Lr} \Lbr \Lbr(\Psi_{m})=&\big(\pi_{\Lbr \Lbr}+c^{-2}\kappar^2 \pi_{\Lr\Lr}\big) \Lr \Lr(\Psi_{m})+2c^{-1}\kappar \pi_{\Lr \Lr} \big(\Lr \Tr (\Psi_{m})+
\Tr \Lr(\Psi_{m})\big)\\
&+4\pi_{\Lr \Lr}\Tr\Tr(\Psi_{m})+\Lbr(c^{-1}\kappar )\pi_{\Lr \Lr}\Lr(\Psi_{m}).
\end{align*}
For $\mathring{Z}\in \mathring{\mathscr{Z}}$, by the tables in Section \ref{section:commutators and their  deformation tensors}, we have $\,^{(\mathring{Z})}\pi_{\Lbr \Lbr}+c^{-2}\kappar^2 \,^{(\mathring{Z})}\pi_{\Lr\Lr}=-4c^{-2}\kappar \mur \mathring{Z}(c)$. Therefore, we can decompose $\sigma_{2,3}$ as $\sigma'_{2,3}+\sigma''_{2,3}$:
\begin{align*}
-\underbrace{c^{-2}\kappar \mathring{Z}(c)\Lr^2(\Psi_{m})}_{\sigma'_{2,3}}
 \ \ +\underbrace{\frac{1}{2}c^{-2} \pi_{\Lr \Lr}  \big(\Lr \Tr (\Psi_{m})+\Tr \Lr(\Psi_{m})\big)+c^{-1}\pi_{\Lr \Lr} \frac{\Tr^2(\Psi_{m})}{\kappar}+\frac{1}{4}\frac{\Lbr(c^{-1}\kappar )}{\kappar}  c^{-1}\pi_{\Lr \Lr}\Lr(\Psi_{m})}_{\sigma''_{2,3}}.
\end{align*}
The terms in $\sigma''_{2,3}$  can be schematically represented as  $C\times D\times W$ where 
\[C\in \big\{1, \frac{\Lbr(c^{-1}\kappar )}{\kappar}\big\}, \ \ D\in \big\{c^{-a}\,^{(\mathring{Z})} \pi_{\Lr \Lr} | a=1,2\big\},  \ \ W\in \big\{\Lr\Tr(\Psi_{m}),\Tr\Lr(\Psi_{m}),\frac{\Tr^2(\Psi_{m})}{\kappar}, \Lr(\Psi_{m})\big\}.
\]
We prove that for all $F= D$ or $W$, for all multi-index $\alpha$, we have
\begin{equation}\label{eq:111}
\begin{cases}
&\|\Zr^{\alpha}(F)\|_{L^2(\Sigma_t)}\lesssim \mathring{M}\varepsilon, \ \ \ {\rm ord} \big(\Zr^{\alpha}(F)\big)\leqslant \Ntop+1;\\
&\|\Zr^{\alpha}(F)\|_{L^\infty(\Sigma_t)}\lesssim \mathring{M}\varepsilon, \ \ \  {\rm ord}\big(\Zr^{\alpha}(F)\big)\leqslant \Ninf.
\end{cases}
\end{equation}
\begin{remark}
If $F$ satisfies \eqref{eq:111}, then $c^{-a}F$ also satisfies \eqref{eq:111},  where $a=0, \pm 1$. This is direct from \eqref{eq:111} and Remark \ref{remark:techical nonlinear}.
\end{remark}
We check case by case to prove \eqref{eq:111} as follows: 
\begin{itemize}
\item $F=\Lr\Tr(\Psi_{m}),\Lr\Tr(\Psi_{m}),\Lr(\Psi_{m})$ or $c^{-a}\,^{(\mathring{Z})} \pi_{\Lr \Lr}$.

We recall that $\,^{(\mathring{X})} \pi_{\Lr \Lr}=2c\Xr(-\psi_1+c)$ and $\,^{(\mathring{X})} \pi_{\Lr \Lr}=2c\left(1+\Tr(-\psi_1+c)\right)$. Therefore, \eqref{eq:111} follows from \eqref{bound on T(v^1+c)}, \eqref{ineq: L infty bound}, \eqref{ineq: L infty bound for Zbeta L Zalpha} and \eqref{ineq: L 2 bound for Zbeta L Zalpha}.

\item $F=\frac{\Tr^2(\Psi_{m})}{\kappar}$.

We have $\Zr^{\alpha}(F)=\frac{1}{\kappar}\Tr\Zr^\alpha\Tr(\Psi_{m})$. Hence, the $L^\infty$ bounds in \eqref{eq:111} is directly from \eqref{ineq: L infty bound}. On the other hand, by \eqref{comparison bound 1: L infty}, we have
\[|\Zr^{\alpha}(F)|\lesssim |L(\Zr^\alpha\Tr(\Psi_{m}))|+|\Xh(\Zr^\alpha\Tr(\Psi_{m}))|+\frac{1}{\kappar}|\Lb(\Zr^\alpha\Tr(\Psi_{m}))|.\]
Therefore, The  $L^2$ bound  in \eqref{eq:111} is a consequence of $\mathbf{(B_2)}$.
\end{itemize}
For $C=\frac{\Lbr(c^{-1}\kappar )}{\kappar}$, we write it as $C=2\Tr(c^{-1})+c^{-1}\kappar \Lr(c^{-1})+c^{-2}$.
Therefore, by the same argument for \eqref{eq: a bound on Zbeta c -1}, for multi-indices $\alpha$ and $\beta$ with $|\alpha|\leqslant \Ntop$ and $|\beta|\leqslant \Ninf-1$, we have
\begin{equation}\label{eq: auxxx bounds sigma 23 C}
 \|\Zr^{\alpha}(C)\|_{L^2(\Sigma_t)}+\|\Zr^{\beta}(C)\|_{L^\infty(\Sigma_t)}\lesssim 1.
 \end{equation}
By writing $\Zr^{\beta}(C\times  D\times W)$ as $\sum_{\beta_1+\beta_2+\beta_3=\beta} \Zr^{\beta_1}(C)\Zr^{\beta_2}(D) \Zr^{\beta_3}(W)$, we can use Remark \ref{remark:techical nonlinear}, \eqref{eq: auxxx bounds sigma 23 C} and \eqref{eq:111} to conclude that
\begin{equation}\label{eq: auxxx bounds 111  sigma 23}
\| \Zr^{\beta}(C\times  D\times W)\|_{L^2(\Sigma_t)} \lesssim \mathring{M}\varepsilon^2.
\end{equation}
Therefore, the contribution of $\sigma''_{2,3}$ in $\mathscr{N}_n(t,u)$ and $\underline{\mathscr{N}}_n(t,u)$ can be bounded by
\begin{align*}
\int_{\delta}^t \|\Zr^{\beta}(C\times  D\times W)\|_{L^2(\Sigma_\tau)} \big(\|\Lh\Psi_n\|_{L^2(\Sigma_\tau)}+\|\Lb\Psi_n\|_{L^2(\Sigma_\tau)}\big)\lesssim \mathring{M}\varepsilon^3t^2.
\end{align*}

\medskip

It remains to bound the most difficult term $\sigma'_{2,3}$. We split it into two terms:
\[\sigma'_{2,3}=c^{-2}\kappar \mathring{Z}(c)\Lr^2(\Psi_{m})=-\mathring{Z}(c^{-1})\underbrace{\Lr (\kappar\Lr(\Psi_{m}))}_{\sigma'_{2,3;1}}+\mathring{Z}(c^{-1}) \underbrace{\Lr(\Psi_{m}) }_{\sigma'_{2,3;2}}.\] 
In view of \eqref{eq: commutation formular for Lr Zr}, for $\Psi_{m}=\Zr^{\alpha'}( \psi)$ where $\psi \in \{w,\wb, \psi_2\}$ and $|\alpha'|=m$, we have
\[\Lr\Psi_{m} =\Zr^{\alpha'}(\Lr\psi)+\sum_{\alpha_1+\alpha_2=\alpha', \atop \\ |\alpha_1|\leqslant |\alpha|-1}\Zr^{\alpha_1}(\lambda)\Zr^{\alpha_2}(\psi).\]

Let $\Phi_{m+1}=\kappar\Lr\Psi_{m}$. We can use \eqref{Euler equations:form 3 with derivatives} to replace $\Lr\psi$ and we derive
\begin{equation}\label{eq: computation for Phi m+1}
\begin{split}
 \Phi_{m+1}&:=\kappar\Lr\Psi_{m} =\Zr^\alpha(\kappar\Lr\psi)+\sum_{\substack{\alpha_1+\alpha_2=\alpha'\\ |\alpha_1|\leqslant |\alpha'|-1}}\Zr^{\alpha_1}(\kappar\lambda)\Zr^{\alpha_2}(\psi)\\
&=\sum_{\alpha_1+\alpha_2=\alpha'} \big[\Zr^{\alpha_1}(c)\Tr(\Zr^{\alpha_2}(\psi'))+ \Zr^{\alpha_1}(c) \kappar\Xr(\Zr^{\alpha_2}(\psi''))\big]+\sum_{\substack{\alpha_1+\alpha_2=\alpha'\\ |\alpha_1|\leqslant |\alpha'|-1}}\Zr^{\alpha_1}(\kappar\lambda)\Zr^{\alpha_2}(\psi)\\
&=\sum_{\alpha_1+\alpha_2=\alpha'} \Zr^{\alpha_1}(c)\overline{Z}(\Zr^{\alpha_2}(\psi'))+\sum_{\substack{\alpha_1+\alpha_2=\alpha'\\ |\alpha_1|\leqslant |\alpha'|-1}}\Zr^{\alpha_1}(\kappar\lambda)\Zr^{\alpha_2}(\psi),
\end{split}
\end{equation}
where $\overline{Z}=\kappar\Xr$ or $\Tr$. According to \eqref{Euler equations:form 3 with derivatives}, we observe that if $\overline{Z}=\Tr$, then $\psi'\neq \wb$.  Hence,
\begin{equation}\label{eq: Phi m+1}
 \Zr^\gamma\big( \Phi_{m+1}\big)=\sum_{\alpha_1+\alpha_2=\alpha'+\gamma}  \Zr^{\alpha_1}(c)\overline{Z}(\Zr^{\alpha_2}(\psi'))+\sum_{\substack{\alpha_1+\alpha_2=\alpha'+\gamma\\ |\alpha_1|\leqslant |\alpha'|+|\gamma|-1}}\Zr^{\alpha_1}(\kappar\lambda)\Zr^{\alpha_2}(\psi).
\end{equation}

We claim that for $\Zr^\gamma\big( \Phi_{m+1}\big)$, we have
\begin{equation}\label{eq: auxxx bounds sigma 23aaaaa  sss}
\begin{cases}
&\|\Zr^\gamma\big( \Phi_{m+1}\big)\|_{L^2(\Sigma_t)}\lesssim \mathring{M}\varepsilon, \ \ |\alpha'|+|\gamma|\leqslant \Ntop;\\
&\|\Zr^{\gamma'}\big( \Phi_{m+1}\big)\|_{L^\infty(\Sigma_t)}\lesssim \mathring{M}\varepsilon, \ \ \  |\alpha'|+|\gamma'|\leqslant \Ninf-1.
\end{cases}
\end{equation}
We prove \eqref{eq: auxxx bounds sigma 23aaaaa  sss}  by {\color{black}checking each term} of the righthand side of \eqref{eq: Phi m+1}. Because $\psi'\in \{w,\psi_2\}$ for $\overline{Z}=\Tr$,  the terms in the first sum of  \eqref{eq: Phi m+1} are bounded by $\mathring{M}\varepsilon$ by $\mathbf{(B_2)}$ and \eqref{ineq: L infty bound}. For the terms in the second sum, the index restriction $|\alpha'|+|\gamma|\leqslant \Ntop$ implies the total order of $\lambda$ appearing in  \eqref{eq: Phi m+1} is at most $\Ntop$. Thus, we can apply \eqref{ineq: L^2 bound on lambda final}, \eqref{ineq: L infty bound on lambda final}, $\mathbf{(B_2)}$, \eqref{ineq: L infty bound} as well as Remark \ref{remark:techical nonlinear} to bound these terms. This completes the proof of \eqref{eq: auxxx bounds sigma 23aaaaa  sss}.

By \eqref{eq: commutation formular for Lr Zr} and \eqref{eq: Phi m+1}, we can further compute
\begin{align*}
\Zr^\beta\big(\sigma'_{2,3;1}\big)&=\Zr^\beta\big(\Lr\big(\Phi_{m+1}\big)\big)=  \Lr\Zr^\beta\big(\Phi_{m+1}\big)+\sum_{\substack{\beta_1+\beta_2=\beta \\ |\beta_1|\leqslant |\beta|-1}}\Zr^{\beta_1}(\lambda)\Zr^{\beta_2}\big(\Phi_{m+1}\big)\\
&=\sum_{\alpha_1+\alpha_2=\alpha'+\beta} \big[ \Lr\Zr^{\alpha_1}(c)\overline{Z}(\Zr^{\alpha_2}(\psi'))+\Zr^{\alpha_1}(c) \Lr\overline{Z}(\Zr^{\alpha_2}(\psi'))\big]\\
&\ \ +\sum_{\substack{\alpha_1+\alpha_2=\alpha'+\beta\\ |\alpha_1|\leqslant |\alpha'|+|\beta|-1}}\big[\Lr\Zr^{\alpha_1}(\kappar\lambda)\Zr^{\alpha_2}(\psi)+\Zr^{\alpha_1}(\kappar\lambda)\Lr\Zr^{\alpha_2}(\psi)\big]+\sum_{\substack{\beta_1+\beta_2=\beta \\ |\beta_1|\leqslant |\beta|-1}}\Zr^{\beta_1}(\lambda)\Zr^{\beta_2}\big(\Phi_{m+1}\big).
\end{align*}
In the last step, we proceeded exactly as for \eqref{eq: computation for Phi m+1}. By regrouping the above terms, we arrive at the following expression
\begin{align*}
\Zr^\beta\big(\sigma'_{2,3;1}\big)
&=\Lr\Zr^{\alpha'+\beta-1}(\kappar\lambda)\Zr(\psi)+\!\!\!\!\!\!\sum_{\substack{\alpha_1+\alpha_2=\alpha'+\beta, \\ |\alpha_1|\leqslant 1}}\Zr^{\alpha_1}(c) \Lr\overline{Z}(\Zr^{\alpha_2}(\psi')) +\!\!\!\!\!\!\sum_{\alpha_1+\alpha_2=\alpha'+\beta} \underbrace{\Lr\Zr^{\alpha_1}(c)\overline{Z}(\Zr^{\alpha_2}(\psi'))}_{\mathbf{Err}_1}\\
&+\sum_{\substack{\alpha_1+\alpha_2=\alpha'+\beta \\ |\alpha_1|\geqslant 2}} \underbrace{\Zr^{\alpha_1}(c) \Lr\overline{Z}(\Zr^{\alpha_2}(\psi'))}_{\mathbf{Err}_2} +\sum_{\substack{\alpha_1+\alpha_2=\alpha'+\beta\\ |\alpha_2|\geqslant 2}}\underbrace{\Lr\Zr^{\alpha_1}(\kappar\lambda)\Zr^{\alpha_2}(\psi)}_{\mathbf{Err}_3}\\
&+\sum_{\substack{\alpha_1+\alpha_2=\alpha'+\beta\\ |\alpha_1|\leqslant |\alpha'|+|\beta|-1}} \underbrace{\Zr^{\alpha_1}(\kappar\lambda)\Lr\Zr^{\alpha_2}(\psi) }_{\mathbf{Err}_4} +\sum_{\substack{\beta_1+\beta_2=\beta \\ |\beta_1|\leqslant |\beta|-1}}\underbrace{\Zr^{\beta_1}(\lambda)\Zr^{\beta_2}\big(\Phi_{m+1}\big)}_{\mathbf{Err}_5}{\color{black}.}
\end{align*}
For any $k\leqslant 5$, each single term in $\mathbf{Err}_k$ can be written as a product of two functions $F_1\cdot F_2$ in the obvious way. We apply $\mathbf{(B_2)}$, \eqref{ineq: L infty bound}, \eqref{ineq: L^2 bound on lambda final}, \eqref{ineq: L infty bound on lambda final}, \eqref{eq: auxxx bounds sigma 23aaaaa  sss} and Remark \ref{remark:techical nonlinear} to $F_1$ and $F_2$. This shows that, for $j=1$ and $2$, we have
\[\begin{cases}
&\| F_j\|_{L^2(\Sigma_t)}\lesssim \mathring{M}\varepsilon, \ \ |\alpha'|+|\beta|\leqslant \Ntop;\\
&\| F_j\|_{L^\infty(\Sigma_t)}\lesssim \mathring{M}\varepsilon, \ \ |\alpha'|+|\beta|\leqslant \Ninf-1.
\end{cases}
\]
Let $\mathbf{Err}=\sum_{1\leqslant k\leqslant 5} \mathbf{Err}_k$. The above discussion shows that
\begin{equation}\label{sigma' 231}
\Zr^\beta\big(\sigma'_{2,3;1}\big)=\Lr\Zr^{\alpha'+\beta-1}(\kappar\lambda)\Zr(\psi)+\!\!\!\!\!\!\sum_{\substack{\alpha_1+\alpha_2=\alpha'+\beta, \\ |\alpha_1|\leqslant 1}}\Zr^{\alpha_1}(c) \Lr\overline{Z}(\Zr^{\alpha_2}(\psi')) +\mathbf{Err},
\end{equation}
with
\begin{equation}\label{eq: auxxx bounds on Errors}
\begin{cases}
&\| \mathbf{Err}\|_{L^2(\Sigma_t)}\lesssim \mathring{M}\varepsilon^2, \ \ |\alpha'|+|\beta|\leqslant \Ntop;\\
&\| \mathbf{Err}\|_{L^\infty(\Sigma_t)}\lesssim \mathring{M}\varepsilon^2, \ \ |\alpha'|+|\beta|\leqslant \Ninf-1.
\end{cases}
\end{equation}

We come back to $\sigma'_{2,3}$. By definition, we have $\sigma'_{2,3}=-\mathring{Z}(c^{-1})\sigma'_{2,3;1}+ \mathring{Z}(c^{-1}) \sigma'_{2,3;2}$. Therefore, the contribution of $\sigma'_{2,3}$ to $\mathscr{N}_n(t,u)$ and $\underline{\mathscr{N}}_n(t,u)$ are bounded by
\begin{align*}
\int_{\mathcal{D}(t,u)}  \big| \Zr^\beta ({}\sigma'_{2,3} ) \big|  \big( |\Lh\Psi_n|+ |\Lb\Psi_n| \big)\leqslant \sum_{l=1,2}\sum_{\beta'+\beta''=\beta}\int_{\mathcal{D}(t,u)} \big|\Zr^{\beta'+1}(c^{-1}) \big| \big|\Zr^{\beta''}(\sigma'_{2,3;l}) \big| \big( |\Lh\Psi_n|+ |\Lb\Psi_n| \big).
\end{align*}
We first deal with $\sigma'_{2,3;2}$. By definition, $\sigma'_{2,3;2}=\Lr(\Psi_{m})$. Therefore, by \eqref{eq: commutation formular for Lr Zr}, we have 
\begin{align*}
\Zr^{\beta''}(\sigma'_{2,3;2}) &=\Lr\Zr^{\beta''}\Psi_{m}+\sum_{\substack{\beta''_1+\beta''_2=\beta''  \\ |\beta''_1|\leqslant |\beta''|-1}}\Zr^{\beta''_1}(\lambda)\Zr^{\beta''_2}(\Psi_{m})\\
&=\Lr\Zr^{\beta''}\Psi_{m}+\Zr^{\beta''-1}(\lambda)\Zr(\Psi_{m})+\sum_{\substack{\beta''_1+\beta''_2=\beta''  \\ |\beta''_2|\geqslant 2}}\underbrace{\Zr^{\beta''_1}(\lambda)\Zr^{\beta''_2}(\Psi_{m})}_{\mathbf{Err}_6}.
\end{align*}
Similar to the previously defined error terms $\mathbf{Err}_k$ with $1\leqslant k \leqslant 5$, $\mathbf{Err}_6$ enjoys the same estimates as \eqref{eq: auxxx bounds on Errors}. In view of $\mathbf{(B_2)}$, \eqref{ineq: L infty bound}, \eqref{ineq: L^2 bound on lambda final} and \eqref{ineq: L infty bound on lambda final}, unless $\Zr(\Psi_{m})=\Tr(\wb)$ (and this forces $\lambda=\yr$ or $\zr$ and $n=1$), the second term $\Zr^{\beta''-1}(\lambda)\Zr(\Psi_{m})$ also enjoys the same estimates as $\mathbf{Err}_6$ , i.e., \eqref{eq: auxxx bounds on Errors}.  Therefore, it suffices to regard $\Zr^{\beta''}(\sigma'_{2,3;2})$ as
\begin{equation}\label{eq:Z beta'' L Psi}
\Zr^{\beta''}(\sigma'_{2,3;2})  = \Lr\Zr^{\beta''}\Psi_{m}+\Zr^{\beta''-1}(\lambda)\Tr(\wb)+\mathbf{Err},
\end{equation}
where $\mathbf{Err}$ satisfies the estimates \eqref{eq: auxxx bounds on Errors}. We notice that $\lambda$ must be $\yr$ or $\zr$ in \eqref{eq:Z beta'' L Psi}.

Similarly, for the first term in \eqref{sigma' 231}, i.e., $\Lr\Zr^{\alpha+\beta-1}(\kappar\lambda)\Zr(\psi)$, unless $\psi=\wb$ and $\Zr=\Tr$ (this forces $\lambda = \yr$ or $\zr$),  it also enjoys the same estimates as $\mathbf{Err}$. It suffices to regard $\Zr^{\beta''}(\sigma'_{2,3;1})$ as
\[\Zr^{\beta''}(\sigma'_{2,3;1})=\sum_{\substack{\alpha_1+\alpha_2=\alpha'+\beta'', \\ |\alpha_1|\leqslant 1}}\Zr^{\alpha_1}(c) \Lr\overline{Z}(\Zr^{\alpha_2}(\psi'))+\Lr\Zr^{\alpha'+\beta-1}(\kappar\lambda)\Tr(\wb)+\mathbf{Err}.\]
Hence, we can bound $\int_{\mathcal{D}(t,u)} \big| \Zr^\beta({}\sigma'_{2,3})\big|\big(|\Lh\Psi_n|+ |\Lb\Psi_n|\big)$ by the sum of the following five terms:
\begin{align*}
\mathbf{A}_0&=\sum_{\beta'+\beta''=\beta}\int_{\mathcal{D}(t,u)} \big|\Zr^{\beta'+1}(c^{-1})\big|\cdot\mathbf{Err}\cdot\big(\big|\Lh\Psi_n\big|+ \big|\Lb\Psi_n\big|\big),\\
\mathbf{A}_1&=\sum_{\substack{\beta'+\beta''=\beta\\ \alpha_1+\alpha_2=\alpha'+\beta'', |\alpha_1|\leqslant 1}}\int_{\mathcal{D}(t,u)}  \big|\Zr^{\beta'+1}(c^{-1})\big|\big|\Zr^{\alpha_1}(c)\big|\big|\Lr\overline{Z}(\Zr^{\alpha_2}(\psi'))\big| \big(\big|\Lh\Psi_n\big|+ \big|\Lb\Psi_n\big|\big),\\
\mathbf{A}_2&=\sum_{\beta'+\beta''=\beta}\int_{\mathcal{D}(t,u)}  \big|\Zr^{\beta'+1}(c^{-1})\big|\big|\Lr\Zr^{\alpha+\beta-1}(\kappar\lambda)\big| \big|\Tr(\wb)\big|  \big(\big|\Lh\Psi_n\big|+ \big|\Lb\Psi_n\big|\big),\\
\mathbf{A}_3&=\sum_{\beta'+\beta''=\beta}\int_{\mathcal{D}(t,u)} \big|\Zr^{\beta'+1}(c^{-1})\big|\big|\Lr\Zr^{\beta''}(\Psi_{m})\big|\big(\big|\Lh\Psi_n\big|+ \big|\Lb\Psi_n\big|\big),\\
\mathbf{A}_4&=\sum_{\beta'+\beta''=\beta}\int_{\mathcal{D}(t,u)} \big|\Zr^{\beta'+1}(c^{-1})\big|\big|\Zr^{\beta''-1}(\lambda)\big| \big|\Tr(\wb)\big| \big(\big|\Lh\Psi_n\big|+ \big|\Lb\Psi_n\big|\big){\color{black}.}
\end{align*}

First of all, by $\big|\Tr(\wb)\big|\lesssim 1$, we can remove $\big|\Tr(\wb)\big|$ from $\mathbf{A}_2$ and $\mathbf{A}_4$. 

Next, we will remove the factor $\big|\Zr^{\beta'+1}(c^{-1})\big|$ from all $\mathbf{A}_i$'s. We also notice that the $\Zr^{\alpha_1}(c)$ term in $\mathbf{A}_1$ can also be removed in the same way. In fact, similar to \eqref{eq: a bound on Zbeta c -1}, $\Zr^{\beta'+1}(c^{-1})$ can be written as a linear combination of terms of the type $c^{-m'}\Zr^{\gamma_{1}}(c)\Zr^{\gamma_2}(c)\cdots \Zr^{\gamma_k}(c)$. Without loss of generality, let $|\gamma_1|=\max_{j\leqslant k}|\gamma_k|$. According to the size of $|\gamma_1|$, we have the following three cases:
\begin{itemize}
\item $|\gamma_1|\leqslant 1$. 

In this case, we have
\[c^{-m}\Zr^{\gamma_{1}}(c)\Zr^{\gamma_2}(c)\cdots \Zr^{\gamma_k}(c)=c^{-m}\Zr(c)\Zr(c)\cdots \Zr(c),\] 
where is bounded by $1$. Hence, we simply replace this term by $1$ in  $\mathbf{A}_i$'s.

\item $2\leqslant |\gamma_1|\leqslant \Ntop$. 

We can apply \eqref{ineq: L infty bound} to $c^{-1}$, $\Zr^{\gamma_{1}}(c),\cdots \Zr^{\gamma_k}(c)$ to derive
\[\| c^{-m'}\Zr^{\gamma_{1}}(c)\Zr^{\gamma_2}(c)\cdots \Zr^{\gamma_k}(c)\|_{L^\infty(\Sigma_t)}\lesssim \mathring{M}\varepsilon.\]
For sufficiently small $\varepsilon$, we can still replace this term by $1$ in  $\mathbf{A}_i$'s. 

\item $|\gamma_1|>\Ntop$. 

According to $\mathbf{(B_2)}$, \eqref{comparison bound 1: L 2} and \eqref{ineq: L infty bound}, we apply Remark \ref{remark:techical nonlinear} to $c^{-1}$, $\Zr^{\gamma_{1}}(c),\cdots \Zr^{\gamma_k}(c)$ to derive
\[ \|c^{-m'}\Zr^{\gamma_{1}}(c)\Zr^{\gamma_2}(c)\cdots \Zr^{\gamma_k}(c)\|_{L^2(\Sigma_t)}\lesssim \mathring{M}\varepsilon.\]
Since $|\gamma_1|>\Ntop$, the orders of $\Lr\overline{Z}(\Zr^{\alpha_2}(\psi'))$, $\Lr\Zr^{\alpha+\beta-1}(\kappar\lambda)$, $\Lr\Zr^{\beta''}(\Psi_{m})$ and $\Zr^{\beta''-1}(\lambda)$ are all less than $\Ntop$. In view of \eqref{ineq: L infty bound on lambda final} and \eqref{ineq: L infty bound for Zbeta L Zalpha}, the $L^\infty$ norm of these four functions are bounded by $\mathring{M}\varepsilon$. Therefore, in each of the $\mathbf{A}_i$'s, we can use $\mathring{M}\varepsilon$ to bound the terms involving $c$'s in $L^2(\Sigma_t)$, use $\mathring{M}\varepsilon$ to bound the terms $\mathbf{Err}$, $\Lr\overline{Z}(\Zr^{\alpha_2}(\psi'))$, $\Lr\Zr^{\alpha+\beta-1}(\kappar\lambda)$, $\Lr\Zr^{\beta''}(\Psi_{m})$ and $\Zr^{\beta''-1}(\lambda)$ in $L^\infty(\Sigma_1)$  and use the ansatz $\mathbf{(B_2)}$ to bound $\big|\Lh\Psi_n\big|+ \big|\Lb\Psi_n\big|$ in $L^2(\Sigma_t)$. As a conclusion, the corresponding contribution from the $\mathbf{A}_i$'s are bounded by $\mathring{M}\varepsilon^3t^2$. 
\end{itemize}
From the previous discussion, we conclude that
\begin{align*}
\int_{\mathcal{D}(t,u)}  \big| \Zr^\beta ({}\sigma'_{2,3} ) \big|  \big( |\Lh\Psi_n|+ |\Lb\Psi_n| \big)\lesssim \mathring{M}\varepsilon^3t^2+\sum_{j=0}^4 \mathbf{A'}_j,
\end{align*}
where $\mathbf{A'}_j$ are the $\mathbf{A}_j$'s without the terms of $c$:
\begin{align*}
&\mathbf{A}'_0=\sum_{\beta'+\beta''=\beta}\int_{\mathcal{D}(t,u)} \mathbf{Err}\cdot\big(\big|\Lh\Psi_n\big|+ \big|\Lb\Psi_n\big|\big),\ \ \mathbf{A}'_1=\!\!\!\!\!\!\!\!\!\!\!\!\sum_{\substack{\beta'+\beta''=\beta\\ \alpha_1+\alpha_2=\alpha'+\beta'', |\alpha_1|\leqslant 1}}\!\!\!\!\!\!\!\!\!\!\!\!\int_{\mathcal{D}(t,u)}    \big|\Lr\overline{Z}(\Zr^{\alpha_2}(\psi'))\big| \big(\big|\Lh\Psi_n\big|+ \big|\Lb\Psi_n\big|\big),\\
&\mathbf{A}'_2=\sum_{\beta'+\beta''=\beta}\int_{\mathcal{D}(t,u)}  \big|\Lr\Zr^{\alpha+\beta-1}(\kappar\lambda)\big| \big(\big|\Lh\Psi_n\big|+ \big|\Lb\Psi_n\big|\big),\ \ \mathbf{A}'_3=\sum_{\beta'+\beta''=\beta}\int_{\mathcal{D}(t,u)}  \big|\Lr\Zr^{\beta''}(\Psi_{m})\big|\big(\big|\Lh\Psi_n\big|+ \big|\Lb\Psi_n\big|\big),\\
&\mathbf{A}'_4=\sum_{\beta'+\beta''=\beta}\int_{\mathcal{D}(t,u)}  \big|\Zr^{\beta''-1}(\lambda)\big|   \big(\big|\Lh\Psi_n\big|+ \big|\Lb\Psi_n\big|\big).
\end{align*}
Notice that $\lambda=\yr$  or $\zr$  in $\mathbf{A}'_4$ and $\mathbf{A}'_4$,  since we must have $\lambda=\yr$ or $\zr$ in \eqref{eq:Z beta'' L Psi}.    

We bound the $\mathbf{A}'_i$'s. We start with $\mathbf{A}'_2$. First of all, we recall that 
\[ \kappar\lambda \in \Lambda = \{\kappar\yr, \kappar\zr\}=\{\Xr(v^1+c), \Tr(v^1+c)\}.\] 
Therefore,  we have
\begin{align*}
\big|\Lr\Zr^{\alpha+\beta-1}(\kappar\lambda)\big| &\lesssim \big|\Lr\Zr^{\alpha+\beta}(\psi)\big|,
\end{align*}
where $\psi \in \{w,\wb,\psi_2\}$. Therefore,
\begin{equation}\label{eq:A prime 2}
\mathbf{A}'_2\lesssim\sum_{|\alpha|+|\beta|\leqslant n} \int_{\mathcal{D}(t,u)}\big|\Lr\Zr^{\alpha+\beta}(\psi)\big| \big(|\Lh\Psi_n|+ |\Lb\Psi_n|\big).
\end{equation}
We notice that $\mathbf{A}'_3$ can also be bounded by the righthand side of the above inequality. Therefore, by \eqref{def:L1}, \eqref{ineq: estimates for L1}, \eqref{def:L2 L3} and \eqref{comparison bound 1: L infty}, 
\begin{equation}\label{bounds on A prime 2 3}
\mathbf{A}'_2+\mathbf{A}'_3\lesssim \mathring{M}\varepsilon^3 t^2+\sum_{|\beta|\leqslant n}\mathscr{L}_2(\Zr^{\beta}(\psi),\Psi_n)(t,u)+\int_0^u \mathscr{F}_{\leqslant n}(u')du'.
\end{equation}

For $\mathbf{A}'_1$,  we recall from \eqref{eq: computation for Phi m+1} that $\overline{Z}=\kappar\Xr$ or $\Tr$. If $\overline{Z}= \Tr$, {\color{black}the corresponding integrands in $\mathbf{A}'_1$ have} already appeared in \eqref{eq:A prime 2}. If $\overline{Z}= \kappa \Xr$, {\color{black}the corresponding integrands} in $\mathbf{A}'_1$ are computed by
\[\Lr\overline{Z}(\Zr^{\alpha_2}(\psi'))=\kappar \Lr \Xr(\Zr^{\alpha_2}(\psi'))+ \Xr(\Zr^{\alpha_2}(\psi')).\]
Therefore, their contributions in $\mathbf{A}'_1$ are given by
\begin{align}
	\mathbf{A}'_1 &\lesssim \int_{\mathcal{D}(t,u)}\big|\kappar \Lr \Xr(\Zr^{\alpha_2}(\psi'))+ \Xr(\Zr^{\alpha_2}(\psi'))\big| \big(\big|\Lh\Psi_n\big|+ \big|\Lb\Psi_n\big|\big) \label{eq:A prime 3}\\
	&\lesssim \int_{\delta}^t \mathscr{E}_{\leqslant n}(\tau,u)d\tau + {\color{black}\int_{0}^u \mathscr{F}_{\leqslant n}(\tau,u')du'} + \sum_{1\leq|\beta|\leqslant n}\mathscr{L}_3(\Zr^{\beta}(\psi),\Psi_n)(t,u) + \mathring{\mathscr{L}}_3(\psi,\Psi_n)(t,u), \nonumber
\end{align}
where $\mathring{\mathscr{L}}_3$ is defined in \eqref{def:L3r} and we sum over $\psi \in \{\wb,w,\psi_2\}$.

We turn to the most difficult term $\mathbf{A}'_4$. Recall that we must have $\lambda=\yr$  or $\zr$  in  $\mathbf{A}'_4$. 
By \eqref{eq:precise formula for Z alpha yr} and \eqref{eq:precise formula for Z alpha zr}, we have the following schematic expression as 
\begin{equation}\label{eq: Z beta prime prim -1}
\begin{split}
\Zr^{\beta''-1}(\lambda)\cdot \Tr \wb
&=\Lr\Zr^{\beta''}(\psi)+\sum_{\substack{\beta''_1+\beta''_2=\beta''-1\\ |\beta''_1|\leqslant 1} } \Zr^{\beta''_1}(c) \Xr\Zr^{\beta''_2+1}(\psi)+\sum_{\substack{\beta''_1+\beta''_2=\beta''-1, \\ |\beta''_1|\leqslant |\beta''|-2}}\underbrace{\Zr^{\beta''_1}(\lambda) \Zr^{\beta''_2+1}(\wb)}_{\mathbf{Err}_7}\\
&+\sum_{\substack{\beta''_1+\beta''_2=\beta''-1 \\ |\beta''_1|\geqslant 2}}\underbrace{ \Zr^{\beta''_1}(c) \Zr^{\beta''_2+2}( \psi)}_{\mathbf{Err}_8}+\sum_{\beta''_1+\beta''_2=\beta''-1}\underbrace{\left(\Zr^{\beta''_1}\Tr(\psi_2)+\Zr^{\beta''_1}(\Xr\psi)\right)\Zr^{\beta''_2}\Xr\psi}_{\mathbf{Err}_9}.
\end{split}
\end{equation}
Similar to $\mathbf{Err}_k$ with $k=1,\cdots, 6$, we can use \eqref{comparison bound 1: L 2}, \eqref{ineq: L infty bound}, \eqref{ineq: L^2 bound on lambda final}, \eqref{ineq: L infty bound on lambda final} and Remark \ref{remark:techical nonlinear} to show that, for $k=7,8,9$, we have
\begin{equation}\label{eq: auxxx bounds on other Errors}
\begin{cases}
&\| \mathbf{Err}_k\|_{L^2(\Sigma_t)}\lesssim \mathring{M}\varepsilon^2;\\
&\| \mathbf{Err}_k\|_{L^\infty(\Sigma_t)}\lesssim \mathring{M}\varepsilon^2, \ \ \  \text{if}~|\beta'|\leqslant \Ninf.
\end{cases}
\end{equation}
Therefore, we can regroup $\mathbf{Err}_7$, $\mathbf{Err}_8$ and $\mathbf{Err}_9$ into the  $\mathbf{Err}$ term in $\mathbf{A}'_0$. Therefore, in order to bound the contribution of \eqref{eq: Z beta prime prim -1} in  $\mathbf{A}'_4$, we can equivalently rewrite it as
\begin{equation}\label{eq: lambda aux aaa}
\Zr^{\beta''-1}(\lambda)\cdot \Tr \wb
=\Lr\Zr^{\beta''}(\psi)+\sum_{\alpha_1+\alpha_2=\alpha,|\alpha_1|\leqslant 1} \Zr^{\alpha_1}(c) \Xr\Zr^{\alpha_2+1}(\psi).
\end{equation}
Since $\Tr\wb \approx 1$, we can replace $\Zr^{\beta''-1}(\lambda)$ in $\mathbf{A}'_4$ by the righthand side of the above equation. We notice that the first term on the righthand side of \eqref{eq: lambda aux aaa}, i.e.,$\Lr\Zr^{\beta''}(\psi)$, has already appeared in bounds for $\mathbf{A}'_1$. For the sum on the righthand side of \eqref{eq: lambda aux aaa}, we can repeat the argument for $\mathbf{A}_0, \cdots, \mathbf{A}_5$ to remove $\Zr^{\alpha_1}(c)$. On the other hand, $\Xr\Zr^{\alpha_2+1}(\psi)$ has also appeared in \eqref{eq:A prime 3}. Therefore, $\mathbf{A}'_4$ {\color{black}can be estimated exactly in the same way} as $\mathbf{A}'_1$, $\mathbf{A}'_2$ and $\mathbf{A}'_3$ in \eqref{bounds on A prime 2 3} and \eqref{eq:A prime 3}.

Finally, by \eqref{eq: auxxx bounds on Errors}, it is straightforward to show that $\mathbf{A}'_0\lesssim \mathring{M}\varepsilon^3 t^2$. By putting all the estimates together, the  contribution of $\sigma_{2,3}=\sigma'_{2,3}+\sigma''_{2,3}$ in $\mathscr{N}_n(t,u)$ and $\underline{\mathscr{N}}_n(t,u)$ are bounded by
\begin{align*}
{\mathscr{N}}(t,u)+\underline{\mathscr{N}}(t,u)\lesssim&\mathring{M}\varepsilon^3 t^2 + \int_{\delta}^t \mathscr{E}_{\leqslant n}(\tau,u)d\tau + \int_0^u \mathscr{F}_{\leqslant n}(u')du'\\
& + \sum_{|\beta|\leqslant n}\mathscr{L}_2(\Zr^{\beta}(\psi),\Psi_n)(t,u)+\sum_{{\color{black}1\leqslant|\beta|\leqslant n}}\mathscr{L}_3(\Zr^{\beta}(\psi),\Psi_n)(t,u) + \mathring{\mathscr{L}}_3(\psi,\Psi_n)(t,u).
\end{align*}

\subsubsection{Summary}

Combining the estimates for $\sigma_{2,1}, \sigma_{2,2}$ and $\sigma_{2,3}$, the error terms of {\bf Type $\mathbf{II}_2$} can be bounded as follows:
\begin{align*}
	{\mathscr{N}}(t,u)+\underline{\mathscr{N}}(t,u)\lesssim&\mathring{M}\varepsilon^3 t^2 + \int_{\delta}^t \mathscr{E}_{\leqslant n}(\tau,u)d\tau + \int_0^u \mathscr{F}_{\leqslant n}(u')du'\\
	& + \sum_{|\beta|\leqslant n}\mathscr{L}_2(\Zr^{\beta}(\psi),\Psi_n)(t,u)+\sum_{{\color{black}1\leqslant|\beta|\leqslant n}}\mathscr{L}_3(\Zr^{\beta}(\psi),\Psi_n)(t,u) + \mathring{\mathscr{L}}_3(\psi,\Psi_n)(t,u).
\end{align*}

\subsection{Estimates on {\bf  Type $\mathbf{II}_3$} terms}

For {\bf Type $\mathbf{II}_3$} type terms, we have to bound the following integrals:
\[\mathscr{N}_n(t,u)=-\int_{D(t,w)}  \frac{\mu}{\mur} \cdot  \Zr^\beta\big({}^{(\mathring{Z})} \sigma_{3}\big) \cdot \Lh\Psi_n,  \ \ \underline{\mathscr{N}}_n(t,u)=-\int_{D(t,u)} \frac{\mu}{\mur} \cdot  \Zr^\beta\big({}^{(\mathring{Z})} \sigma_{3}\big)  \cdot \Lb\Psi_n.\]
We remark that the estimates on the Type $\mathbf{II}_3$ terms are different from the previous ones. The negative sign in the above expression for $\underline{\mathscr{N}}_n(t,u)$ is crucial, see  {\color{black}Section \ref{section sigma34}} for the bounds on $\sigma_{3,4}$.

We regroup the terms of \eqref{eq: sigma 3 circle} as follows:
\begin{equation} 
\begin{split}
{}^{(\Zr)} \sigma_{3}&=\underbrace{-\frac{1}{2}\left( \Lr \left(\pi_{\Lbr \Xr} \right)+ \Lbr \left(\pi_{\Lr \Xr} \right)- \Xr(\pi_{\Lr \Lbr})\right) \Xr(\Psi_{m})- \frac{1}{2}  \Xr (\pi_{\Lbr \Xr} )\cdot\Lr(\Psi_{m})}_{\sigma_{3,1}}\\
&\underbrace{- \frac{1}{2} \Xr ( \pi_{\Lr \Xr} )\cdot \Lbr(\Psi_{m})}_{\sigma_{3,2}}+\underbrace{\Lr\left(\frac{1}{4\mur} \pi_{\Lbr \Lbr}\right)  \Lr(\Psi_{m})}_{\sigma_{3,3}}+\underbrace{\frac{1}{4\kappar}\Lbr\left( c^{-1}\pi_{\Lr \Lr}\right)  \Lbr(\Psi_{m})}_{\sigma_{3,4}}{\color{black},}
\end{split}
\end{equation}
where  ${\pi}$ stands for $\,^{(\mathring{Z})}{\pi}$. In view of \eqref{eq: varphor_n expression}, we have $|\beta|+m+1\leqslant \Ntop$.

\subsubsection{The bounds on $\sigma_{3,1}$}\label{section sigma31}
The terms in {\color{black}$\sigma_{3,1}$}  can be schematically represented as $D\times W$ where
\[D\in \big\{\Lr\big(\,^{(\mathring{Z})} \pi_{\Lbr \Xr}\big) ,\Lbr\big(\,^{(\mathring{Z})} \pi_{\Lr \Xr}\big), \Xr\big(\,^{(\mathring{Z})}\pi_{\Lr \Lbr}\big),  \Xr \big(\,^{(\mathring{Z})}\pi_{\Lbr \Xr} \big)\big\},  \ \ W\in \{\Xr(\Psi_{m}),\Lr(\Psi_{m})\}.\]

We prove that for all $F= D$ or $W$, for all multi-index $\alpha$, we have
\begin{equation}\label{eq:111sigma31}
\begin{cases}
&\|\Zr^{\alpha}(F)\|_{L^2(\Sigma_t)}\lesssim \mathring{M}\varepsilon, \ \ \ {\rm ord} \big(\Zr^{\alpha}(F)\big)\leqslant \Ntop+1;\\
&\|\Zr^{\alpha}(F)\|_{L^\infty(\Sigma_t)}\lesssim \mathring{M}\varepsilon, \ \ \  {\rm ord}\big(\Zr^{\alpha}(F)\big)\leqslant \Ninf.
\end{cases}
\end{equation}
In view of \eqref{comparison bound 1: L 2} and \eqref{ineq: L infty bound}, \eqref{eq:111sigma31} automatically holds for $F=W$. If $F=D$, in view of the tables of deformation tensors in Section \ref{section:commutators and their  deformation tensors}, {\color{black}the set  of $D$'s can be written as}  
\[\big\{\Lr\big(c^{-1}\kappar\Zr(\psi_2)\big), \Xr\big(c^{-1}\kappar\Zr(\psi_2)\big), \Lbr\big(\Zr(\psi_2)\big), \kappar\Xr\Zr(\psi_2)\big\}\]
where we have ignored the irrelevant constants. Since 
\[\mathring{Y}\big(c^{-1}\kappar\Zr(\psi_2)\big)=-c^{-2}\mathring{Y}(c)\kappar\Zr(\psi_2) +c^{-1}\mathring{Y}(\kappar)\Zr(\psi_2)+c^{-1}\kappar\mathring{Y}\Zr(\psi_2),  \ \ \mathring{Y}= \Lr, \Xr,\]
and $\Lbr\big(\Zr(\psi_2)\big)=c^{-1}\kappa \Lr\big(\Zr(\psi_2)\big)+2\Tr\Zr(\psi_2)$, it suffices to check \eqref{eq:111sigma31} for
\[D\in \big\{c^{-2}\kappar\mathring{Y}(c)\Zr(\psi_2), c^{-1}\Zr(\psi_2), c^{-1}\kappar\mathring{Y}\Zr(\psi_2), \Tr\Zr(\psi_2), \kappar\Xr\Zr(\psi_2)\big|\mathring{Y}=\Lr, \Xr\big\}.\]
This is straightforward from \eqref{comparison bound 1: L 2}, \eqref{ineq: L infty bound}, \eqref{ineq: L infty bound for Zbeta L Zalpha} and \eqref{ineq: L 2 bound for Zbeta L Zalpha}.

We apply \eqref{eq:111sigma31} and Remark \ref{remark:techical nonlinear} to each single term of $\big|\Zr^{\beta}\big(  D\times W\big)\big|\lesssim \sum_{\beta_1+\beta_2=\beta} |\Zr^{\beta_1}(D)||\Zr^{\beta_2}(W)|$. This shows that
\begin{equation}\label{eq: sigma 31 aux DW}
\|\Zr^{\beta}(D\times W)\|_{L^2(\Sigma_t)} \lesssim \mathring{M}\varepsilon^2.
\end{equation}
We still bound $\big|\frac{\mu}{\mur}\big|$ in  $\mathscr{N}_n(t,u)$ and $\underline{\mathscr{N}}_n(t,u)$ by $1$. Hence, the contribution of $\sigma_{3,1}$ in the error integrals {\color{black}is bounded} as follows:
\begin{equation}\label{eq: sigma 31 final}
{\mathscr{N}}(t,u)+\underline{\mathscr{N}}(t,u)\lesssim  \int_{\delta}^t\big|\frac{\mu}{\mur}\big| \|\Zr^{\beta}(D\times W)\|_{L^2(\Sigma_\tau)} \big(\|\Lh\Psi_n\|_{L^2(\Sigma_\tau)}+\|\Lb\Psi_n\|_{L^2(\Sigma_\tau)}\big)d\tau \lesssim \mathring{M}\varepsilon^3t^2.
\end{equation}

\subsubsection{The bounds on $\sigma_{3,2}$}\label{section sigma32}
We still ignore the irrelevant constants in this subsection. By the tables in Section \ref{section:commutators and their  deformation tensors}, we have ${}^{(\Zr)} \sigma_{3,2}=\Xr\Zr(\psi_2)\cdot\Lbr(\Psi_{m})$. Therefore,
\begin{align*} \Zr^\beta\big({}^{(\Zr)} \sigma_{3,2}\big)&=\Xr\Zr^{\beta+1}(\psi_2) \Lbr(\Psi_{m})+\underbrace{\sum_{\beta_1+\beta_2=\beta, |\beta_2|\geqslant 1} \Xr\Zr^{\beta_1+1}(\psi_2) \Zr^{\beta_2}\Lbr(\Psi_{m})}_{\sigma'_{3,2}}.
\end{align*}
We first consider the contribution of $\sigma'_{3,2}$. It is similar to $\sigma_{3,1}$. We notice that for  $\beta_1+\beta_2=\beta$ and $|\beta_1|\geqslant 1$, we have
\begin{equation}\label{eq:111sigma32}
\begin{cases}
&\|\Xr\Zr^{\beta_1+1}(\psi_2)\|_{L^2(\Sigma_t)}+\| \Zr^{\beta_2}\Lbr(\Psi_{m})\|_{L^2(\Sigma_t)}\lesssim \mathring{M}\varepsilon , \ \ \    |\beta_1|+1\leqslant \Ntop, \  |\beta_2|+m\leqslant \Ntop;\\
&\|\Xr\Zr^{\beta_1+1}(\psi_2)\|_{L^\infty(\Sigma_t)}+\| \Zr^{\beta_2}\Lbr(\Psi_{m})\|_{L^\infty(\Sigma_t)}\lesssim \mathring{M}\varepsilon , \ \ \   |\beta_1|+2\leqslant \Ninf, \ |\beta_2|+m+1\leqslant \Ninf.
\end{cases}
\end{equation}
In view of \eqref{comparison bound 1: L 2} and \eqref{ineq: L infty bound}, the estimates for $\Xr\Zr^{\beta_1+1}(\psi_2)$ {\color{black}are trivial.} For $\Zr^{\beta_2}\Lbr(\Psi_{m})$, we have
\begin{align*}
\Zr^{\beta_2}\Lbr(\Psi_{m})&=2\Zr^{\beta_m}\Tr(\Psi_{m})+\kappar\Zr^{\beta_m}\big(c^{-1}\Lr \Psi_{m}\big)=2\Zr^{\beta_2}\Tr(\Psi_{m})+\kappar\!\!\!\!\!\!\sum_{\beta_2'+\beta_2''=\beta_2}\!\!\!\!\!\!\Zr^{\beta'_2}\big(c^{-1}\big)\Zr^{\beta''_2}\Lr\Psi_{m}.
\end{align*}
Therefore, by applying  \eqref{comparison bound 1: L 2}, \eqref{ineq: L infty bound}, \eqref{ineq: L infty bound for Zbeta L Zalpha} and \eqref{ineq: L 2 bound for Zbeta L Zalpha}, the bounds \eqref{eq:111sigma32} are proved. Just as the proof of \eqref{eq: sigma 31 aux DW}, we conclude that $\|\sigma'_{3,2}\|_{L^2(\Sigma_t)} \lesssim \mathring{M}\varepsilon^2$.
Hence, by the same argument for \eqref{eq: sigma 31 final},  the  contribution of $\sigma'_{3,2}$ in $\mathscr{N}_n(t,u)$ and $\underline{\mathscr{N}}_n(t,u)$ are bounded by $\mathring{M}\varepsilon^3t^2$.  With this bound, the contribution of $\sigma_{3,2}$ are estimated as follows:
\begin{align*}
\int_{\mathcal{D}(t,u)} \big|\frac{\mu}{\mur}\big|  \big| \Zr^\beta ({}\sigma_{3,2})\big|\big( |\Lh\Psi_n|+ |\Lb\Psi_n|\big)&\lesssim \mathring{M}\varepsilon^3 t^2+\int_{\mathcal{D}(t,u)} \big| \Xr\Zr^{\beta+1}(\psi_2) \Lbr(\Psi_{m})\big|\big(|\Lh\Psi_n|+|\Lb\Psi_n|\big).
\end{align*}
If $m\geqslant 1$, by applying  \eqref{comparison bound 1: L 2}, \eqref{ineq: L infty bound}, \eqref{ineq: L infty bound for Zbeta L Zalpha} and \eqref{ineq: L 2 bound for Zbeta L Zalpha}, we still have $\| \Xr\Zr^{\beta+1}(\psi_2) \Lbr(\Psi_{m})\|_{L^2(\Sigma_t)} \lesssim \mathring{M}\varepsilon^2$. Hence, the corresponding terms in the integral are bounded by $\mathring{M}\varepsilon^3t^2$. It remains to consider the case where $m=0$, i.e.,  $\Psi_{0}=\psi \in \{w,\wb,\psi_2\}$. We bound $\Lbr(\Psi_{m})$ in $L^\infty$ by $1$ in this case. Therefore, it suffices to bound the following integral:
\begin{align*}
\int_{\mathcal{D}(t,u)} \big| \Xr\Zr^{\beta+1}(\psi_2) \big| \big(|\Lh\Psi_n|+|\Lb\Psi_n|\big)
&\lesssim \mathring{M}\varepsilon^3 t^2+{\color{black}\mathscr{L}_3}(\Psi_n,\Zr^n(\psi_2))(t,u) +\int_{0}^u  \mathscr{F}_{\leqslant n}(t,u')du'.
\end{align*}
In the last step, we have used \eqref{def:L1}, \eqref{ineq: estimates for L1}, \eqref{def:L2 L3}, \eqref{comparison bound 1: L infty} and \eqref{comparison bound 1: L 2}. 

To summarize, the contribution of $\sigma_{3,2}$ {\color{black}in the error integrals is bounded} as follows:
\begin{align*}
{\mathscr{N}}(t,u)+\underline{\mathscr{N}}(t,u)&\lesssim \mathring{M}\varepsilon^3 t^2+\mathscr{L}_3(\Psi_n,\Zr^n(\psi_2))(t,u) +\int_{0}^u  \mathscr{F}_{\leqslant n}(t,u')du'.
\end{align*}

\subsubsection{The bounds on $\sigma_{3,3}$}\label{section sigma33}
By the tables in Section \ref{section:commutators and their  deformation tensors}, we have
\[ \Lr\big(\frac{1}{2\mur} \pi_{\Lbr \Lbr}\big)  =\begin{cases} c^{-2}(y-2\Xr(c))+\kappar\Lr\big(c^{-2}(y-2\Xr(c))\big), \ \ &\Zr=\Xr;\\
\boxed{-2c^{-2}\Tr(c)}+c^{-2}z+\Lr\big(c^{-2}(z-2\Tr(c))\big),\ \ &\Zr=\Tr.\end{cases}\]
In view of the definition of $y$ and $z$, \eqref{comparison bound 1: L 2}, \eqref{ineq: L infty bound}, \eqref{ineq: L infty bound for Zbeta L Zalpha} and \eqref{ineq: L 2 bound for Zbeta L Zalpha}, we can repeat the proof of \eqref{eq:111sigma31} to show that each single term $F$ in the above formula, except for the one in the box, satisfies the following estimates: 
\begin{equation*}
\begin{cases}
&\|\Zr^{\alpha}(F)\|_{L^2(\Sigma_t)}\lesssim \mathring{M}\varepsilon, \ \ \ {\rm ord}\big(\Zr^{\alpha}(F)\big)\leqslant \Ntop+1;\\
&\|\Zr^{\alpha}(F)\|_{L^\infty(\Sigma_t)}\lesssim \mathring{M}\varepsilon, \ \ \  {\rm ord}\big(\Zr^{\alpha}(F)\big)\leqslant \Ninf.
\end{cases}
\end{equation*}
By \eqref{comparison bound 1: L 2}, \eqref{ineq: L infty bound for Zbeta L Zalpha} and \eqref{ineq: L 2 bound for Zbeta L Zalpha}, these estimates also hold for $\Lr(\Psi_{m})$.  Therefore, we apply Remark \ref{remark:techical nonlinear} to each single term of $\big|\Zr^{\beta}\big(F\cdot \Lr(\Psi_{m})\big)\big|\lesssim \sum_{\beta_1+\beta_2=\beta} |\Zr^{\beta_1}(F)||\Zr^{\beta_2}(\Lr(\Psi_{m}))|$ to derive
\[\|\Zr^{\beta}(F\cdot\Lr(\Psi_{m}))\|_{L^2(\Sigma_t)} \lesssim \mathring{M}\varepsilon^2.\]
Therefore, except for the boxed term, the contribution of $\sigma_{3,3}$ to the error integral are bounded by
\begin{equation}\label{bound sigma 3 3 extra epsilon}
{\mathscr{N}}(t,u)+\underline{\mathscr{N}}(t,u)\lesssim  \int_{\delta}^t\big|\frac{\mu}{\mur}\big| \|\Zr^{\beta}(F\cdot \Lr(\Psi_{m}))\|_{L^2(\Sigma_\tau)} \big(\|\Lh\Psi_n\|_{L^2(\Sigma_\tau)}+\|\Lb\Psi_n\|_{L^2(\Sigma_\tau)}\big)d\tau \lesssim \mathring{M}\varepsilon^3t^2.
\end{equation}
We {\color{black}write the boxed term} $c^{-2}\Tr(c)$ as $-\Tr(c^{-1})$. Hence, {\color{black}for the error integrals} of $\sigma_{3,3}$, it remains to control the contribution from the boxed term:
\begin{align*}
\int_{\mathcal{D}(t,u)} \big| \Zr^{\beta} \big(\Tr(c^{-1}) \Lr(\Psi_{m}) \big) \big| \big(|\Lh\Psi_n|+ |\Lb\Psi_n| \big)\leqslant \sum_{\beta'+\beta''=\beta}\int_{\mathcal{D}(t,u)} \big| \Zr^{\beta'}\Tr(c^{-1}) \big| \big|\Zr^{\beta''} \big( \Lr(\Psi_{m}) \big) \big| \big(|\Lh\Psi_n|+ |\Lb\Psi_n| \big).
\end{align*}
This term have already been controlled in Section \ref{section sigma23}, see the term $\mathbf{A}_3$ after the equation \eqref{eq:Z beta'' L Psi}. As a conclusion, it is bounded by the righthand side of \eqref{bounds on A prime 2 3}.

Putting all the bounds together, the contribution of $\sigma_{3,3}$ in the error integrals are bounded as follows:
\begin{align*}
{\mathscr{N}}(t,u)+\underline{\mathscr{N}}(t,u)&\lesssim \mathring{M}\varepsilon^3 t^2+\sum_{|\beta|\leqslant n}\mathscr{L}_2(\Zr^{\beta}(\psi),\Psi_n)(t,u)+\int_0^u \mathscr{F}_{\leqslant n}(u')du'.
\end{align*}

\subsubsection{The bounds on $\sigma_{3,4}$}\label{section sigma34}


It remains to bound the following integrals:
\begin{equation}\label{precise error integrals}
\mathscr{N}_n(t,u)=-\int_{D(t,u)}  \frac{\mu}{\mur} \cdot  \Zr^\beta\big({}^{(\mathring{Z})} \sigma_{3,4}\big) \cdot \Lh\Psi_n,  \ \ \underline{\mathscr{N}}_n(t,u)=-\int_{D(t,u)} \frac{\mu}{\mur} \cdot  \Zr^\beta\big({}^{(\mathring{Z})} \sigma_{3,4}\big)  \cdot \Lb\Psi_n.
\end{equation}
By the tables in Section \ref{section:commutators and their  deformation tensors}, we have $\frac{1}{4\kappar}\Lbr\big( c^{-1}\,^{\Zr_0}\pi_{\Lr \Lr}\big) =-\frac{1}{2\kappar}\Lbr \Zr_0(v^1+c)$. Hence,
\begin{equation}\label{eq:decomp of sigma 34}
\begin{split}
\Zr^\beta\big({}^{(\Zr_0)} \sigma_{3,4}\big)&= -\sum_{\beta_1+\beta_2=\beta}\frac{1}{2\kappar}\Zr^{\beta_1} \Lbr \Zr_0(v^1+c)  \cdot \Zr^{\beta_2} \Lbr(\Psi_{m})\\
&=\underbrace{-\frac{1}{2\kappar}\Zr^{\beta} \Lbr \Zr_0(v^1+c)  \Lbr(\Psi_{m})}_{\sigma'^{(\beta)}_{3,4}}- \!\!\!\!\underbrace{\sum_{\beta_1+\beta_2=\beta, |\beta_2|\geqslant 1}\!\!\!\frac{1}{2\kappar} \Zr^{\beta_1} \Lbr \Zr_0(v^1+c)  \Zr^{\beta_2}\big(\Lbr(\Psi_{m})\big) }_{\sigma''^{(\beta)}_{3,4}}.
\end{split}
\end{equation}
For an arbitrary smooth function $f$, by writing {\color{black}$\Lbr$} in terms of $\Lr$ and $\Tr$, we have
\begin{equation}\label{eq:Zr alpha Lbr psi}
\begin{split}
\Zr^\gamma \Lbr f&=2\Tr\Zr^\gamma f+\sum_{\gamma'+\gamma''=\gamma}\kappar\Zr^{\gamma'}\left(c^{-1}\right)\Zr^{\gamma''}\Lr f.
\end{split}
\end{equation}
We use this formula to study each term appeared in $\sigma''^{(\beta)}_{3,4}$. The first case is for $F=\Zr^{\beta_1} \Lbr \Zr_0(v^1+c)$ where $f=\Zr_0(v^1+c)$ and $\gamma=\beta_1$. The second case is for $F=\Zr^{\beta_2}\big(\Lbr(\Psi_{m})\big)$ where $\gamma=\beta_2$ and $f=\Psi_{m}$. Based on the assumption that $|\beta_2|\geqslant 1$, we can repeat the proof of \eqref{eq:111sigma31} and use \eqref{comparison bound 1: L 2}, \eqref{ineq: L infty bound}, \eqref{ineq: L infty bound for Zbeta L Zalpha} and \eqref{ineq: L 2 bound for Zbeta L Zalpha} to show that each single $F$ satisfies
\begin{equation}\label{eq: sigma 3 4 aux 123}
\begin{cases}
&\| F\|_{L^2(\Sigma_t)}\lesssim \mathring{M}\varepsilon t, \ \  {\rm ord}\left( F\right)\leqslant \Ntop+1;\\
&\| F\|_{L^\infty(\Sigma_t)}\lesssim \mathring{M}\varepsilon t, \ \   {\rm ord}\left( F\right)\leqslant \Ninf.
\end{cases}
\end{equation}
 We then use Remark \ref{remark:techical nonlinear} and this gives $\|\sigma''^{(\beta)}_{3,4}\|_{L^2(\Sigma_t)} \lesssim \mathring{M}\varepsilon^2 t$. Therefore,  the contributions of $\sigma''^{(\beta)}_{3,4}$ to the error integral are bounded by
\begin{equation*}
{\mathscr{N}}(t,u)+\underline{\mathscr{N}}(t,u)\lesssim  \int_{\delta}^t\big|\frac{\mu}{\mur}\big| \|\sigma''^{(\beta)}_{3,4}\|_{L^2(\Sigma_\tau)} \big(\|\Lh\Psi_n\|_{L^2(\Sigma_\tau)}+\|\Lb\Psi_n\|_{L^2(\Sigma_\tau)}\big)d\tau \lesssim \mathring{M}\varepsilon^3t^2.
\end{equation*}

Finally, we turn to $\sigma'^{(\beta)}_{3,4}$. We observe that the same argument also shows that \eqref{eq: sigma 3 4 aux 123} holds for $F=\Lbr(\Psi_{m})$ unless $m=0$ and $\Psi_{m}=\wb$. It also holds for $F=\Zr^{\beta} \Lbr \Zr_0(v^1+c)$. Therefore, unless $m=0$ and $\Psi_{m}=\wb$, the contribution of $\sigma'^{(\beta)}_{3,4}$ can be bounded exactly in the same way as $\sigma''^{(\beta)}_{3,4}$. Hence, it suffices to assume that  
\[\sigma'^{(\beta)}_{3,4}=-\frac{1}{2\kappar}\Zr^{\beta} \Lbr \Zr_0(v^1+c)  \Lbr(\wb),\]
where we keep the precise constant $-\frac{1}{2}$. We apply \eqref{eq:Zr alpha Lbr psi} for $f=\Zr_0(v^1+c)$ and $\gamma=\beta$. This shows that
\begin{align*}
\sigma'^{(\beta)}_{3,4}=\underbrace{-\frac{\Lbr(\wb)}{\kappar}\Tr\Zr^{\beta} \Zr_0(v^1+c)}_{\sigma'^{(\beta)}_{3,4;1}}  + \underbrace{\Lbr(\wb)\sum_{\beta'+\beta''=\beta}\Zr^{\beta'}\big(c^{-1}\big)\Zr^{\beta''}\Lr\Zr_0(v^1+c)}_{\sigma'^{(\beta)}_{3,4;2}}.
\end{align*}
We remark {\color{black}that the constants of the last sum $\sigma'^{(\beta)}_{3,4;2}$ are irrelevant.}

To bound $\sigma'^{(\beta)}_{3,4;2}$, we first use \eqref{eq: commutation formular for Lr Zr} to commute derivatives.  Therefore
\begin{align*}
\sigma'^{(\beta)}_{3,4;2}=\Lbr(\wb)\sum_{\beta'+\beta''=\beta}\Zr^{\beta'}\big(c^{-1}\big)\big[\Lr\Zr^{\beta''}\Zr_0(v^1+c)+\sum_{\substack{\beta''_1+\beta''_2=\beta'' \\ |\beta''_1|\leqslant |\beta''|-1}}\Zr^{\beta''_1}(\lambda) \Zr^{\beta''_2}\Zr_0(v^1+c)\big],
\end{align*}
where $\lambda \in \{\yr, \zr, \chir,\etar\}$. We notice that the contribution of the term $\Lr\Zr^{\beta''}\Zr_0(v^1+c)$ have already been controlled in the $\mathbf{A}_3$ term of Section \ref{section sigma23} so that it is bounded by the righthand side of \eqref{bounds on A prime 2 3}. To estimate the contribution of the term $\Zr^{\beta''_1}(\lambda) \Zr^{\beta''_2}\Zr_0(v^1+c)$, in view of the fact that $|\beta''_1|\leqslant |\beta''|-1\leqslant \Ntop-1$ and $|\beta''_2|\geqslant 1$, for $F=\Zr^{\beta''_1}(\lambda)$ or $\Zr^{\beta''_2}\Zr_0(v^1+c)$, we can use \eqref{comparison bound 1: L 2}, \eqref{ineq: L infty bound}, \eqref{ineq: L^2 bound on lambda final} and \eqref{ineq: L infty bound on lambda final} to show that 
\begin{equation*}
\begin{cases}
&\| F\|_{L^2(\Sigma_t)}\lesssim \mathring{M}\varepsilon, \ \  {\rm ord}\left( F\right)\leqslant \Ntop+1;\\
&\| F\|_{L^\infty(\Sigma_t)}\lesssim \mathring{M}\varepsilon, \ \   {\rm ord}\left( F\right)\leqslant \Ninf.
\end{cases}
\end{equation*}
Therefore, $\|\Zr^{\beta''_1}(\lambda) \Zr^{\beta''_2}\Zr_0(v^1+c)\|_{L^2(\Sigma_t)}\lesssim \mathring{M}\varepsilon^2$. Similar to \eqref{bound sigma 3 3 extra epsilon}, its contribution in the error integrals are bounded by $\mathring{M}\varepsilon^3t^2$. Combing these cases, we derive that
\begin{align*}
\Big|\int_{D(t,u)}   \frac{\mu}{\mur} \cdot  \sigma'^{(\beta)}_{3,4;2} \cdot\big(|\Lh\Psi_n|+ |\Lb\Psi_n| \big)]\Big| \lesssim \mathring{M}\varepsilon^3 t^2+\sum_{|\beta|\leqslant n}\mathscr{L}_2(\Zr^{\beta}(\psi),\Psi_n)(t,u)+\int_0^u \mathscr{F}_{\leqslant n}(u')du'.
\end{align*}

For $\sigma'^{(\beta)}_{3,4;1}$, by \eqref{precise error integrals}, its contribution in the error integrals are exactly 
\[\mathbf{I}= \int_{D(t,u)}  \frac{\mu}{\mur} \cdot  \frac{\Lbr(\wb)}{\kappar}\Tr\Zr^{\beta} \Zr_0(v^1+c)  \cdot\big( \Lh\Zr^\beta\Zr_0\wb+ \Lb\Zr^\beta\Zr_0\wb\big),\]
where we also used the fact that $\Psi_n= \Zr^\beta\Zr_0\wb$. We use the formula $v^1+c=\frac{\gamma+1}{2}\wb+\frac{\gamma-3}{2}w$ to replace $v^1+c$ in $\mathbf{I}$. This leads to
\begin{align*}
\mathbf{I}&=\int_{D(t,u)}  \frac{\mu}{\mur} \cdot  \frac{\Lbr(\wb)}{\kappar}\Tr\Zr^{\beta} \Zr_0\left(\frac{\gamma+1}{2}\wb+\frac{\gamma-3}{2}w\right)  \cdot\left( \Lh\Zr^\beta\Zr_0\wb+ \Lb\Zr^\beta\Zr_0\wb\right)\\
&=\underbrace{\frac{\gamma+1}{2}\int_{D(t,u)}  \frac{\mu}{\mur} \cdot  \frac{\Lbr(\wb)}{\kappar}\Tr\Zr^{\beta} \Zr_0\wb \cdot   \Lb\Zr^\beta\Zr_0\wb}_{\mathbf{I}_1}+\underbrace{\frac{\gamma-3}{2}\int_{D(t,u)}  \frac{\mu}{\mur} \cdot  \frac{\Lbr(\wb)}{\kappar}\Tr\Zr^{\beta} \Zr_0 w  \cdot  \Lb\Zr^\beta\Zr_0\wb}_{\mathbf{I}_2}\\
&\ \ +\underbrace{\int_{D(t,u)}   \frac{\mu}{\mur} \cdot  \Lbr(\wb)\cdot\Tr\Zr^{\beta} \Zr_0\left(\frac{\gamma+1}{2}\wb+\frac{\gamma-3}{2}w\right)  \cdot \frac{1}{\kappar}\Lh\Zr^\beta\Zr_0\wb}_{\mathbf{I}_3}.
\end{align*}
We bound $\mathbf{I}_1,\mathbf{I}_2$ and $\mathbf{I}_3$ in different ways. 
\begin{itemize}[noitemsep,wide=0pt, leftmargin=\dimexpr\labelwidth + 2\labelsep\relax]
\item For $\mathbf{I}_3$, {\color{black}using \eqref{comparison bound 1: L infty} to convert the $\Tr$ derivatives into $L,\Lb$ and $\Xh$ leads to}
\[\big|\Tr\big[\Zr^{\beta} \Zr_0\big(\frac{\gamma+1}{2}\wb+\frac{\gamma-3}{2}w\big) \big]\big|\lesssim t|L\Psi_n|+|\Lb \Psi_n|+ \varepsilon t|\Xh \Psi_n|,\]
where we recall that $|\beta|=n-1$. Therefore, by bounding $\frac{\mu}{\mur}$ and $\Lbr(\wb)$ by a universal constant, we derive that
\begin{align*}
|\mathbf{I}_3|&\lesssim \int_{D(t,u)}   \big(t|L\Psi_n|+|\Lb \Psi_n|+ \varepsilon t|\Xh \Psi_n|\big)\cdot \frac{1}{\kappar}\Lh\Zr^\beta\Zr_0\wb\\
&\lesssim \mathring{M}\varepsilon^3 t^2+ \mathscr{L}_2\big(\Zr^\beta\Zr_0(\wb),\Psi_n\big)(t,u) +\int_{0}^u  \mathscr{F}_{\leqslant n}(t,u')du'.
\end{align*}

\item For $\mathbf{I}_2$, we use the fact that the Riemann invariant $w$ is almost invariant along the null direction $\Lb$. In fact, we have
\begin{align*}
\Tr\Zr^{\beta} \Zr_0(\wb)=\Zr^{\beta+1}\Tr(\wb)=\frac{1}{2}\Zr^{\beta+1}(\Lbr\wb-c^{-1}\kappar \Lr\wb)=\frac{1}{2}\Zr^{\beta+1}\big(\frac{1}{2}\kappar\Xr(\psi_2)-c^{-1}\kappar \Lr(\wb)\big).
\end{align*}
In the last step, we have used the second equation of \eqref{Euler equations:form 4}. Thus, we can regard $\Tr\Zr^{\beta} \Zr_0(\wb)$ as a sum of $F_1=\kappar\Xr\Zr^{\beta+1} (\psi_2)$ and $F_2=\kappar\Zr^{\beta+1}\big( c^{-1} \Lr(\wb)\big)$. According to \eqref{comparison bound 1: L infty}, the contribution of $F_1$ to $\mathbf{I}_2$ is obviously bounded by $\mathring{M}\varepsilon^3 t^2+\int_{0}^u  \mathscr{F}_{\leqslant n}(t,u')du'$. The contribution of $F_2$ to $\mathbf{I}_2$ can be bounded by
\begin{align*}
\int_{D(t,u)}   \big|\Zr^{\beta+1}\big( c^{-1} \Lr(\wb)\big)\big| \big|\Lb\Zr^\beta\Zr_0(\wb)\big|=\sum_{\beta'+\beta''=\beta+1}\int_{D(t,u)}   \big|\Zr^{\beta'}\big( c^{-1}\big)\big|\big|\Zr^{\beta''}\big( \Lr(\wb)\big)\big| \big|\Lb\Zr^\beta\Zr_0(\wb)\big|{\color{black}.}
\end{align*}
We notice that the terms in the sum have already been controlled in the $\sigma'_{2,3;2}$ term of Section \ref{section sigma23}. As a conclusion, we have
\begin{equation*}
\mathbf{I}_2\lesssim \mathring{M}\varepsilon^3 t^2+\sum_{|\beta|\leqslant n}\mathscr{L}_2(\Zr^{\beta}(\psi),\Psi_n)(t,u)+\int_0^u \mathscr{F}_{\leqslant n}(u')du'.
\end{equation*}

\item For $\mathbf{I}_1$, by writing $\Lb=c^{-1}\kappa L+2T$, it can be decomposed as follows:
\begin{align*}
\mathbf{I}_{1}
&=\underbrace{{\color{black}(\gamma+1)\int_{D(t,u)}  \frac{\mu}{\mur} \cdot  \frac{\Lbr(\wb)}{\kappar}|\Tr\Zr^{\beta}\wb|^2}}_{\mathbf{I}_{1,1}}+\underbrace{\frac{\gamma+1}{2}\int_{D(t,u)}   \frac{\mu}{\mur} \frac{\kappa}{\kappar}\cdot  \frac{\Lbr(\wb)}{c}\Tr\Zr^{\beta} \Zr_0\wb \cdot    L\Zr^\beta\Zr_0\wb}_{\mathbf{I}_{1,2}}\\
&+\underbrace{\int_{D(t,u)}  \frac{\mu}{\mur} \cdot  \frac{\Lbr(\wb)}{\kappar}\Tr\Zr^{\beta} \Zr_0\wb \cdot   (T-\Tr)\Zr^\beta\Zr_0\wb}_{\mathbf{I}_{1,3}}.
\end{align*}
We notice that $\mathbf{I}_{1,2}$ can be bounded exactly in the same way as  $\mathbf{I}_3$.

To bound  $\mathbf{I}_{1,3}$, by \eqref{change of frame 1}, \eqref{bound on kappa more precise}, \eqref{bound on T1+1}, \eqref{bound on T2}, we have $|(\Tr -T)f|\lesssim \mathring{M}\varepsilon |Tf|+\mathring{M}\varepsilon t|\Xh f|$. In view of $\eqref{comparison bound 1: L 2}$, we obtain that $\mathbf{I}_{1,3}\lesssim \mathring{M}\varepsilon^3 t^2$.

For $\mathbf{I}_{1,1}$, its absolute value can not be bounded through the Gronwall type inequalities. We observe that 
\[\Lbr(\wb)=2\Tr(\wb)+c^{-1}\kappar \Lr(\wb)=2\Tr(\wb)+\mathring{M}\varepsilon t.\]
Therefore,  $\Lbr(\wb)$ is negative provided $\mathring{M}\varepsilon$ is sufficiently small. The negative sign reflects the fundamental physical nature of rarefaction wave: the density of the gas decreases along the transversal direction. Thus, $\mathbf{I}_{1,1}$ is a negative quantity   so that it can be ignored. 
\end{itemize}

Putting all the bounds together, the contribution of $\sigma_{3,4}$ {\color{black}in the error integrals is bounded} as follows:
\begin{align*}
{\mathscr{N}}(t,u)+\underline{\mathscr{N}}(t,u)&\lesssim \mathring{M}\varepsilon^3 t^2+\sum_{|\beta|\leqslant n}\mathscr{L}_2(\Zr^{\beta}(\psi),\Psi_n)(t,u)+\int_0^u \mathscr{F}_{\leqslant n}(u')du'.
\end{align*}

\subsubsection{Summary}

Combining the estimates for $\sigma_{3,1}, \sigma_{3,2}$, $\sigma_{3,3}$ and $\sigma_{3,4}$, the error terms of {\bf Type $\mathbf{II}_3$} can be bounded as follows:
\begin{align*}
{\mathscr{N}}(t,u)+\underline{\mathscr{N}}(t,u)&\lesssim \mathring{M}\varepsilon^3 t^2+\sum_{|\beta|\leqslant n}\mathscr{L}_2(\Zr^{\beta}(\psi),\Psi_n)(t,u)+\mathscr{L}_3(\Psi_n,\Zr^n(\psi_2))(t,u)+\int_0^u \mathscr{F}_{\leqslant n}(u')du'.
\end{align*}

\subsection{Conclusion of higher order energy estimates}\label{subsection:conclusion-higher-order-energy-estimates}
Combining the estimates for  {\bf Type $\mathbf{I}$} and {\bf Type $\mathbf{II}$}, the contributions of nonlinear terms can be bounded as follows:
\begin{equation}\label{bounds-N-Nbar}
	\begin{split}
		{\mathscr{N}}(t,u)+\underline{\mathscr{N}}(t,u)&\lesssim \mathring{M}\varepsilon^3 t^2 + \int_{\delta}^t \mathscr{E}_{\leqslant n}(\tau,u)d\tau + \int_0^u \mathscr{F}_{\leqslant n}(t,u')du'\\
		&  + \sum_{|\beta|\leqslant n}\mathscr{L}_2(\Zr^{\beta}(\psi),\Psi_n)(t,u)+\sum_{1\leq|\beta|\leqslant n}\mathscr{L}_3(\Zr^{\beta}(\psi),\Psi_n)(t,u) + \mathring{\mathscr{L}}_3(\psi,\Psi_n)(t,u).
	\end{split}
\end{equation}
For convenience, we introduce the following notations:
	\begin{equation}\label{def:Edot-Fdot}
		\dot{\mathscr{E}}_{\leqslant n}(\psi)(t,u) = \sum_{1 \leqslant |\alpha| \leqslant n} \mathscr{E}_{\alpha}(\psi)(t,u), \quad \dot{\mathscr{F}}_{\leqslant n}(\psi)(t,u) = \sum_{1 \leqslant |\alpha| \leqslant n} \mathscr{F}_{\alpha}(\psi)(t,u).
	\end{equation}
Therefore, by \eqref{ineq: estimates for L2 L3}, we have
\begin{align*}
	\sum_{j=2}^3\sum_{1\leqslant|\beta|\leqslant n}\mathscr{L}_j(\Zr^{\beta}(\psi),\Psi_n)(t,u) \lesssim \frac{1}{a_0}\int_{0}^{u}\dot{\mathscr{F}}_{\leqslant n}(t,u')du'+a_0\int_\delta^t\frac{\dot{\mathscr{E}}_{\leqslant n}(t',u)}{t'}dt',
\end{align*}
where $a_0 > 0$ is a constant to be determined. 
In view of the zeroth order energy estimates \eqref{eq: 0 order bound},  we have
\begin{align*}
	\int_{\delta}^t \mathscr{E}_{0}(\tau,u)d\tau + \int_0^u \mathscr{F}_{0}(t,u')du' \lesssim \varepsilon^2 t^2.
\end{align*}
Also, similar to the proof of \eqref{ineq: estimates for L2 L3}, we have
\begin{align*}
	\mathscr{L}_2(\psi,\Psi_n)(t,u) + \mathring{\mathscr{L}}_3(\psi,\Psi_n)(t,u) &\lesssim \frac{1}{a_0}\int_{0}^{u} \mathscr{F}_0(t,u')du'+a_0\int_\delta^t  \frac{\mathscr{E}_n(t',u)}{t'}dt' \\
	&\lesssim \varepsilon^2 t^2 + a_0\int_\delta^t  \frac{\mathscr{E}_n(t',u)}{t'}dt'.
\end{align*}
Therefore, the righthand side of \eqref{bounds-N-Nbar} can be bounded as follows:
\begin{align*}
	{\mathscr{N}}(t,u)+\underline{\mathscr{N}}(t,u)&\lesssim \varepsilon^2 t^2 + \frac{1}{a_0}\int_{0}^{u}\dot{\mathscr{F}}_{\leqslant n}(t,u')du'+a_0\int_\delta^t\frac{\dot{\mathscr{E}}_{\leqslant n}(t',u)}{t'}dt'.
\end{align*}
In view of the fundamental energy inequality \eqref{ineq: energy ineq for total}, there exist \emph{universal constants} $C_0$, $C_1$ and $C_2$, such that if $\mathring{M}\varepsilon$ is sufficiently small, for $1 \leqslant n \leqslant \Ntop$, we have
\begin{align*}
	\mathscr{E}_n(t,u) + \mathscr{F}_n(t,u) &\leqslant \mathscr{E}_n(\delta,u)+ \mathscr{F}_n(t,0)+C_1\varepsilon^2t^2 + C_0\Big(\frac{1}{a_0}\int_{0}^{u}\dot{\mathscr{F}}_{\leqslant n}(t,u')du'+a_0\int_\delta^t\frac{\dot{\mathscr{E}}_{\leqslant n}(t',u)}{t'}dt'\Big).
\end{align*}
Summing for $1 \leqslant n \leqslant \Ntop$, we have
\begin{align*}
	\dot{\mathscr{E}}_{\leqslant n}(t,u) + \dot{\mathscr{F}}_{\leqslant n}(t,u) &\leqslant C_2\varepsilon^2t^2 + \frac{C_0}{a_0}\int_{0}^{u}\dot{\mathscr{F}}_{\leqslant n}(t,u')du' + a_0C_0\int_\delta^t\frac{\dot{\mathscr{E}}_{\leqslant n}(t',u)}{t'}dt'.
\end{align*}
{\color{black}We apply Lemma \ref{refined Gronwall} by setting} $a_0 = \frac{1}{2C_0}$ and $u_0^* = \frac{\log 2}{2 C_0^2}$. Then we have
\begin{align*}
	\dot{\mathscr{E}}_{\leqslant n}(t,u) + \dot{\mathscr{F}}_{\leqslant n}(t,u) &\leqslant \underbrace{C_2\varepsilon^2}_{:=A}t^2 + \underbrace{2C_0^2}_{:=B}\int_{0}^{u}\dot{\mathscr{F}}_{\leqslant n}(t,u')du' + \underbrace{\frac{1}{2}}_{:=C}\int_\delta^t\frac{\dot{\mathscr{E}}_{\leqslant n}(t',u)}{t'}dt'.
\end{align*}
where $A, B$ and $C$ are the constants in Lemma \ref{refined Gronwall}. Moreover, $e^{B u^*}C\leqslant 1$.  Therefore, for all $(t,u)\in [\delta,t^*]\times [0,u_0^*]$, we have
\begin{equation*}
	\dot{\mathscr{E}}_{\leqslant n}(t,u) + \dot{\mathscr{F}}_{\leqslant n}(t,u) \leqslant  6 C_2 t^{2} \varepsilon^2.
\end{equation*}
We can repeat the above argument a finite number of times on intervals $[u_0^*, u_1^*], \cdots, [u_N^*,u^*]$. Notice that the only growth comes from the flux $\dot{\mathscr{F}}_{\leqslant n}(t,u_j^*)$, enlarging by a power of $6$. 

Therefore, for all $(t,u)\in [\delta,t^*]\times [0,u^*]$, we have
\begin{equation}
	\dot{\mathscr{E}}_{\leqslant n}(t,u) + \dot{\mathscr{F}}_{\leqslant n}(t,u) \lesssim  \varepsilon^2t^2{\color{black}.}
\end{equation}
{\color{black}This closes the bootstrap assumption} $\mathbf{(B_2)}$ in \eqref{ansatz B2}.

\section{Closing the bootstrap ansatz on the pointwise bounds}\label{section:closing-pointwise-bootstrap-ansatz}

\subsection{Preparations}
We recall that $(\Xh,T)$ and $(\Xr, \Tr)$ are related by
\begin{equation}\label{eq: transformation from Xh T to Xr Tr}
\begin{cases}
\Xh &=-\Th^1 \Xr-\frac{1}{\kappar}\Th^2\Tr,\\
T &= \kappa \Th^2 \Xr-\frac{\kappa}{\kappar}\Th^1 \Tr.
\end{cases}
\end{equation}
For a vector $Y$ defined on $\Sigma_t$, using the frame $(\Xr,\Tr)$, we can decompose it as $Y=Y^{\Xr}\Xr+Y^{\Tr}\Tr$. Therefore, we have
\begin{equation}\label{the coefficeints}
\Xh^{\Xr}=-\Th^1, \ \ \Xh^{\Tr}=-\frac{1}{\kappar}\Th^2, \ \ T^{\Xr}= \kappa \Th^2, \ \ T^{\Tr}= -\frac{\kappa}{\kappar}\Th^1.
\end{equation}

According to \eqref{bound on kappa more precise} and \eqref{preliminary geometric bounds},  we have the following bound on $\Sigma_t$:
\begin{equation}\label{compare Xh T and Xr Tr 0th order}
|\Xh^{\Xr}-1|\lesssim \mathring{M}t^2\varepsilon^2, \ |\Xh^{\Tr}|\lesssim \mathring{M}\varepsilon, \ |T^{\Xr}|\lesssim \mathring{M}t^2\varepsilon, \ |T^{\Tr}-1|\lesssim \mathring{M}t\varepsilon.
\end{equation}
In fact, in view of the fact that $Z(\kappar)=0$ for $Z\in \mathscr{Z}=\{\Xh,T\}$, we can apply \eqref{preliminary geometric bounds with derivatives} and we conclude that, for all multi-index $\alpha$ with $1\leqslant |\alpha|\leqslant 2$, we have the following estimates on $\Sigma_t$:
\begin{equation}\label{compare Xh T and Xr Tr 1st and 2nd order}
|Z^\alpha\big(\Xh^{\Xr}\big)|\lesssim \mathring{M}t^2\varepsilon^2, \ |Z^\alpha\big(\Xh^{\Tr}\big)|\lesssim \mathring{M}\varepsilon, \ |Z^\alpha\big(T^{\Xr}\big)|\lesssim \mathring{M}t^2\varepsilon, \ |Z^\alpha\big(T^{\Tr}\big)|\lesssim \mathring{M}t\varepsilon.
\end{equation}
We remark that, {\color{black}compared to the others,} the bounds on $Z^\alpha(\Xh^{\Tr})$'s lack the decay factor $t$.

We also recall the bounds from \eqref{LZTi byproduct}, \eqref{LZTi byproduct 2}, \eqref{bound on LZ kappa} and \eqref{bound on LZ2 kappa} that, for all multi-index $\alpha$ with $1\leqslant |\alpha|\leqslant 2$ and for all $Z\in \mathscr{Z}$:
\begin{equation}\label{bounds on LZalpha  Thi and kappa}
\big|L(Z^\alpha(\widehat{T}^{1}))\big|\lesssim  \mathring{M}\varepsilon^2 t,  \ \big|L(Z^\alpha(\widehat{T}^{2}))\big|\lesssim  \mathring{M}\varepsilon, \  \big|L(Z^\alpha(\kappa))\big|\lesssim \mathring{M}\varepsilon t.
\end{equation}
In view of \eqref{preliminary geometric bounds with derivatives}, \eqref{the coefficeints} and \eqref{compare Xh T and Xr Tr 1st and 2nd order}, we also have
\begin{equation}\label{bounds on LZalpha  coefficients}
\big|LZ^\alpha\big(\Xh^{\Xr}\big)\big|\lesssim \mathring{M}\varepsilon^2 t, \ \ \big|LZ^\alpha\big(\Xh^{\Tr}\big)\big|\lesssim \mathring{M}t^{-1}\varepsilon, \ \ \big|LZ^\alpha\big(T^{\Xr}\big)\big|\lesssim \mathring{M}\varepsilon t, \ \ \big|LZ^\alpha\big(T^{\Tr}\big)\big|\lesssim \mathring{M}\varepsilon t.
\end{equation}

In view of the expression \eqref{the coefficeints}, estimates on the coefficients $\Xh^{\Xr}$ and $T^{\Xr}$ can be derived directly from those of $\kappa$, $\Th^1$ and $\Th^2$. In the next lemma, we will connect the pointwise bounds of $\Xh^{\Tr}$ and $T^{\Tr}$ to the maximal characteristic speed $v^1+c=-\psi_1+c$.

\begin{lemma}
For  all $Z \in \mathscr{Z}$ and all multi-index $\alpha$ with $1 \leqslant |\alpha| \leqslant 2$, for all $t\in [\delta,t^*]$, we have
\begin{equation}\label{eq: connect Xh Tr coefficient to v1+c}
		\left|Z^{\alpha}\big(\Xh^{\Tr}\big)(t,u,\vartheta)+\frac{Z^{\alpha}(\Th^2)(\delta,u,\vartheta)}{t} +\frac{1}{t}\int_{\delta}^t Z^{\alpha}\Xh(v^1+c)(\tau,u,\vartheta)d\tau \right| \lesssim \mathring{M} t \varepsilon^2,
\end{equation}
	and 
\begin{equation}\label{eq: connect T Tr coefficient to v1+c}
		\left|Z^{\alpha}\big(T^{\Tr}\big)(t,u,\vartheta)-\frac{Z^{\alpha}(\kappa)(\delta,u,\vartheta)}{t} -\frac{1}{t}\int_{\delta}^t Z^{\alpha}T(v^1+c)(\tau,u,\vartheta) d\tau \right|\lesssim \mathring{M} t \varepsilon^2.
\end{equation}
\end{lemma}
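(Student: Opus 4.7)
The plan is to derive precise propagation equations along $L$ for $Z^\alpha(\Th^2)$ and $Z^\alpha(\kappa\Th^1)$, since by \eqref{the coefficeints} one has $\Xh^{\Tr}=-\Th^2/t$ and $T^{\Tr}=-\kappa\Th^1/t$, and then integrate from $\delta$ to $t$ and divide by $t$. Starting from \eqref{precise form of kappa and Ti}, the identity $L(\Th^2)=[\Xh(v^1+c)+\mathbf{err}_{\Th}]\Xh^2$ together with $\Xh^2=-\Th^1=1+O(\mathring{M}\varepsilon^2 t^2)$ from \eqref{bound on T1+1} and $|\mathbf{err}_{\Th}|\lesssim\mathring{M} t\varepsilon^2$ from \eqref{bound on err 0th order} yields
\begin{equation*}
\bigl|L(\Th^2) - \Xh(v^1+c)\bigr| \lesssim \mathring{M} t\varepsilon^2.
\end{equation*}
Combining the propagation equations for $\kappa$ and $\Th^1$ from \eqref{precise form of kappa and Ti} via Leibniz, using $|\Xh^1|=|\Th^2|\lesssim\mathring{M}\varepsilon t$ together with the same improved bound $|\Th^1+1|\lesssim\mathring{M}\varepsilon^2 t^2$, then gives
\begin{equation*}
\bigl|L(\kappa\Th^1) - T(v^1+c)\bigr|\lesssim \mathring{M} t^2\varepsilon^2.
\end{equation*}
The essential algebraic identity here is $-T(v^1+c)\Th^1 - T(v^1+c) = -T(v^1+c)(\Th^1+1)$, which borrows the extra $t^2$ smallness from $|\Th^1+1|$.

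For $1\leqslant|\alpha|\leqslant 2$, I would commute $Z^\alpha$ through these identities. The commutators $[L,\Xh]=-\chi\,\Xh$ and $[L,T]=-(\zeta+\eta)\Xh$ from \eqref{eq:commutator formulas} are controlled by \eqref{preliminary geometric bounds} and \eqref{preliminary geometric bounds with derivatives}, while the Leibniz expansion on the source, combined with the sharpened bounds $|Z^\beta(\Th^1+1)|\lesssim \mathring{M}\varepsilon^2 t^2$ from \eqref{bound on Z Ti}, \eqref{bound on Z2 Ti} and pointwise control of all derivatives of $\Th^i$, $\kappa$, $\mathbf{err}_{\Th}$, $\mathbf{err}_{\kappa}$, produces
\begin{equation*}
\bigl|L(Z^\alpha\Th^2) - Z^\alpha\Xh(v^1+c)\bigr| + \bigl|L(Z^\alpha(\kappa\Th^1)) - Z^\alpha T(v^1+c)\bigr| \lesssim \mathring{M} t\varepsilon^2.
\end{equation*}

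Integrating these inequalities on $[\delta,t]$ and dividing by $t$, together with $Z^\alpha\Xh^{\Tr}=-Z^\alpha\Th^2/t$ and $Z^\alpha T^{\Tr}=-Z^\alpha(\kappa\Th^1)/t$, yields \eqref{eq: connect Xh Tr coefficient to v1+c} directly. For \eqref{eq: connect T Tr coefficient to v1+c}, one further converts $Z^\alpha(\kappa\Th^1)(\delta)$ to $-Z^\alpha(\kappa)(\delta)$ through the expansion
\begin{equation*}
Z^\alpha(\kappa\Th^1)(\delta) = -Z^\alpha(\kappa)(\delta) + Z^\alpha\bigl(\kappa(\Th^1+1)\bigr)(\delta),
\end{equation*}
where $\mathbf{(I_\infty)}$ bounds the correction by $O(\varepsilon^3\delta^3)$, which after division by $t\geqslant\delta$ is absorbed in $O(\mathring{M} t\varepsilon^2)$.

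The main technical obstacle is precision: every error must be $O(\mathring{M} t^2\varepsilon^2)$ \emph{before} dividing by $t$, so that after the division it remains $O(\mathring{M} t\varepsilon^2)$. This hinges on exploiting the \emph{improved} $t^2$-decay of $\Th^1+1$ and $Z^\alpha(\Th^1)$ (rather than the generic linear-in-$t$ decay of $\Th^2$), and the $t$-decay of $|\Xh^1|=|\Th^2|$ whenever it multiplies $\Xh(v^1+c)$. Careful bookkeeping of the Leibniz expansion, to verify that no $t$-factor is lost in any cross term, is the technical core of the argument.
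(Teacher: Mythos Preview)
Your proposal is correct and follows essentially the same approach as the paper. For \eqref{eq: connect Xh Tr coefficient to v1+c} the arguments are identical: write $L(\Th^2)=\Xh(v^1+c)+O(\mathring{M}\varepsilon^2 t)$, commute $Z^\alpha$ using $[L,\Xh]=-\chi\Xh$ and $[L,T]=-(\zeta+\eta)\Xh$, integrate, and divide by $t$. For \eqref{eq: connect T Tr coefficient to v1+c} there is a minor organizational difference: the paper first expands $Z^\alpha(T^{\Tr})=-\frac{1}{\kappar}Z^\alpha(\kappa\Th^1)$ via Leibniz to isolate $Z^\alpha(\kappa)/\kappar$ (absorbing the $(\Th^1+1)$-contributions at time $t$ into the error), and then derives the propagation equation for $Z^\alpha(\kappa)$ alone; you instead propagate the product $Z^\alpha(\kappa\Th^1)$ and defer the Leibniz expansion to $t=\delta$. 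Both routes rely on exactly the same improved bounds $|Z^\beta(\Th^1+1)|\lesssim\mathring{M}\varepsilon^2 t^2$ and lead to the same estimate. One small slip: your claimed bound $O(\varepsilon^3\delta^3)$ on $Z^\alpha(\kappa(\Th^1+1))(\delta)$ should be $O(\varepsilon^2\delta^3)$ (the dominant term is $\kappa(\delta)\cdot Z^\alpha(\Th^1+1)(\delta)\lesssim\delta\cdot\varepsilon^2\delta^2$), but after dividing by $t\geqslant\delta$ this is still $\lesssim\varepsilon^2 t$, so the argument is unaffected.
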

\begin{proof}
{\color{black}We start with the second equation} of \eqref{precise form of kappa and Ti}. Since $\Xh^2=-\Th^1$, we have
\begin{equation*}
L(\Th^2)=\Xh(v^1+c) -\Xh(v^1+c)(\Th^1+1)+ \mathbf{err}_{\Th} \cdot \Xh^2=\Xh(v^1+c)+\mathbf{Err}_{\Th} .
\end{equation*}
We commute the equation first with $Z\in \mathscr{Z}$ and then with $Z'\in \mathscr{Z}$. Therefore,
\[
L(Z(\Th^2))=Z\Xh(v^1+c)+Z(\mathbf{Err}_{\Th}) -\,^{(Z)}f\cdot \Xh(\Th^2),
\]
and
\[
L(Z'Z(\Th^2))=Z'Z\Xh(v^1+c)+Z'Z(\mathbf{Err}_{\Th}) -Z'\big(\,^{(Z)}f\cdot \Xh(\Th^2)\big)-\,^{(Z')}f\cdot \Xh Z(\Th^2).
\]
where $\,^{(\Xh)}f=\chi$ and $\,^{(T)}f=\zeta+\eta$. In view of \eqref{bound on T1+1}, \eqref{bound on T2}, \eqref{preliminary geometric bounds with derivatives} and $\mathbf{(B_\infty)}$, it is straightforward to check that $Z(\mathbf{Err}_{\Th})$,  $\,^{(Z)}f\cdot \Xh(\Th^2)$, $Z'Z(\mathbf{Err}_{\Th})$, $Z'\big(\,^{(Z)}f\cdot \Xh(\Th^2)\big)$ and $\,^{(Z')}f\cdot \Xh Z(\Th^2)$ are bounded pointwisely by $\mathring{M}\varepsilon^2t$. Therefore, for all multi-index $\alpha$ with $1 \leqslant |\alpha| \leqslant 2$, we have
\[\big|L(Z^\alpha(\Th^2))(\tau, u,\vartheta)-Z^\alpha\Xh(v^1+c)(\tau, u,\vartheta)\big|\lesssim \mathring{M}\varepsilon^2t.\]
We integrate this inequality from $\delta$ to $t$ and we obtain that
\begin{equation}\label{eq: connect Xh Tr coefficient to v1+c aux 1}
\big|Z^\alpha(\Th^2)(t, u,\vartheta)-Z^\alpha(\Th^2)(\delta, u,\vartheta)-\int_{\delta}^tZ^\alpha\Xh(v^1+c)(\tau, u,\vartheta)d\tau\big|\lesssim \mathring{M}\varepsilon^2t.
\end{equation}
We divide both sides by $-\kappar=-t$. This yields the first inequality of the lemma.

To prove the second inequality, we first notice the following schematic formula:
\[Z^{\alpha}\big(T^{\Tr}\big)=-Z^\alpha\big(\frac{\kappa}{\kappar}\Th^1\big)=\frac{Z^\alpha(\kappa)}{\kappar}-\frac{Z^\alpha(\kappa)}{\kappar}(\Th^1+1)+\sum_{\beta+\gamma=\alpha, |\beta|\geqslant 1}\frac{Z^\gamma(\kappa)}{\kappar}Z^\beta(\Th^1).\]
The last two terms are bounded by $\mathring{M}\varepsilon^2t$. Therefore, it suffices to compute $Z^\alpha(\kappa)$. This is based on the second equation of \eqref{precise form of kappa and Ti}. It can be derived exactly in the same way as  for \eqref{eq: connect Xh Tr coefficient to v1+c}. This completes the proof of the lemma.
\end{proof}

Since we have already closed the energy ansatz $\mathbf{(B_2)}$. Therefore, the constant $\mathring{M}$ in \eqref{ineq: L infty bound}, \eqref{ineq: L^2 bound on lambda final}, \eqref{ineq: L infty bound on lambda final}, \eqref{ineq: L infty bound for Zbeta L Zalpha} and \eqref{ineq: L 2 bound for Zbeta L Zalpha} can be improved to be a universal constant. Therefore, we have the following bounds:

\begin{lemma}
For all multi-index $\alpha,\beta, \gamma$ with $1\leqslant |\alpha|\leqslant 3$, $|\beta|\leqslant 2$ and $|\gamma|\leqslant 2$, for all $\psi \in \{w,\wb,\psi_2\}$, for $\lambda \in \{\yr,\zr,\chir,\etar\}$, except for the case $\Zr^\alpha \psi=T\wb$, we have
\begin{equation*}
\|\mathring{Z}^\alpha(\psi)\|_{L^\infty(\Sigma_t)}\lesssim  \begin{cases} \varepsilon, \ \ & \text{if}~\Zr^\alpha=\Xr^\alpha;\\
\varepsilon t, \ \ & \text{otherwise};   
\end{cases}, \ \   \|\mathring{Z}^\beta(\lambda)\|_{L^\infty(\Sigma_t)}\lesssim \varepsilon, \ \ \| \Lr \Zr^\gamma \psi \|_{L^2(\Sigma_t)}\lesssim \varepsilon.
\end{equation*}
\end{lemma}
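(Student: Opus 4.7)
The plan is to recognize that this is a final bookkeeping lemma: now that the energy bootstrap $\mathbf{(B_2)}$ has been closed in Section~\ref{subsection:conclusion-higher-order-energy-estimates} with a \emph{universal} constant $C_0$ (replacing the bootstrap constant $M$), I would revisit the entire chain of pointwise estimates derived in Sections~\ref{section:preparations-for-energy-estimates}--\ref{section:lower-order-estimates} and verify that every occurrence of the bootstrap quantity $\mathring{M}$ can be upgraded to a universal constant. No new analytic input is required; the task is to propagate the improved energy bounds through the already-established hierarchy.

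First I would re-run the preliminary geometric estimates of Sections~5.1.1--5.1.4. Every Gronwall-type integration used there (for $c, \chi, \slashed{g}, \kappa, \Th^i$ and their $\leqslant 2$ order derivatives) depends only on the bootstrap $\mathbf{(B_\infty)}$ via the factor $\mathring{M}\varepsilon$ in the integrand; since the closed energies provide universal $L^\infty$-type control after applying Sobolev (see below), the smallness condition ``$\mathring{M}\varepsilon$ sufficiently small'' becomes simply ``$\varepsilon$ sufficiently small'', and all constants may be taken universal. In particular the frame-change bounds \eqref{comparison bound 1: L infty}--\eqref{comparison bound 1: L 2} and the Sobolev inequality \eqref{ineq:Sobolev inequalities} hold with universal constants.

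For the first inequality, given $|\alpha|\leqslant 3$, note that $|\alpha|+2\leqslant 5\leqslant\Ntop$, so the Sobolev embedding \eqref{ineq:Sobolev inequalities} bounds $\|\mathring{Z}^\alpha\psi\|_{L^\infty(\Sigma_t)}$ by $\sum_{k+l\leqslant 2}\|\Xr^k\Tr^l\mathring{Z}^\alpha\psi\|_{L^2(\Sigma_t)}$. By \eqref{comparison bound 1: L 2} these $L^2$-norms are controlled by $t^{-1}\sqrt{\mathscr{E}_{\leqslant|\alpha|+2}(t,u)}+\sqrt{\underline{\mathscr{E}}_{\leqslant|\alpha|+2}(t,u)}$, and the closed main theorem gives the universal bound $\lesssim\varepsilon$. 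When $\Zr^\alpha=\Xr^\alpha$ the $\Xr$-derivatives commute with the Sobolev $\Xr^k\Tr^l$ block, and in the other cases at least one $\Tr$ (or $\Xr$ extracted from within) can be pulled out, gaining an extra factor of $t$ via the bound $\|\Tr(\cdot)\|_{L^2}\lesssim \sqrt{\underline{\mathscr{E}}_{\leqslant\cdot}}\lesssim \varepsilon t$; this reproduces the case split in the statement. For the second inequality, for $\lambda\in\{\chir,\etar\}=\{-\Xr\psi_2,-\Tr\psi_2\}$ the bound is an immediate corollary of the first. For $\lambda\in\{\yr,\zr\}$, I would rerun the induction of Section~\ref{section: yr zr}: the inequalities \eqref{ineq: L^2 bound on lambda final} and \eqref{ineq: L infty bound on lambda final} were derived purely from \eqref{ineq: inductive bound on yr}--\eqref{ineq: inductive bound on zr}, the comparison lemma, and $\mathbf{(B_2)}$, all of which now carry universal constants; since $|\beta|\leqslant 2\leqslant\Ninf-1$, Sobolev converts the $L^2$ bound $\lesssim\varepsilon$ into an $L^\infty$ bound $\lesssim\varepsilon$. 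The third inequality is exactly \eqref{ineq: L 2 bound for Zbeta L Zalpha}, whose proof combines \eqref{eq: commutation formular for Lr Zr} applied to $\Lr\Zr^\gamma\psi$ with the nonlinear H\"older estimate of Remark~\ref{remark:techical nonlinear}; since all inputs (the $\Zr^\alpha\lambda$ bounds, $\Zr^\alpha\psi$ bounds, and Euler equations \eqref{Euler equations:form 3 with derivatives}) are now available with universal constants, the conclusion follows.

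The only point requiring a little care, and what I would call the main obstacle, is ensuring consistency of the smallness threshold on $\varepsilon$. At each stage of Sections~5--8 we required ``$\mathring{M}\varepsilon$ small'' to absorb Gronwall factors and to run the implicit-function/Gronwall arguments (notably for $\chi$, $\kappa$, $\Th^i$ and for $\|\slashed{g}-1\|$). After the upgrade $\mathring{M}\leadsto C_0$, all these smallness conditions become conditions of the form $\varepsilon\leqslant\varepsilon_0(C_0)$ for a single universal $\varepsilon_0$; choosing $\varepsilon<\varepsilon_0$ once and for all closes the argument. With this chosen, the displayed bounds all hold with universal implicit constants, which is precisely what closes the pointwise bootstrap $\mathbf{(B_\infty)}$ and completes the proof of the main theorem.
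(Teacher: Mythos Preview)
Your proposal is correct and takes essentially the same approach as the paper. The paper's proof is literally the sentence preceding the lemma: once $\mathbf{(B_2)}$ has been closed with a universal constant, the $\mathring{M}$ appearing in \eqref{ineq: L infty bound}, \eqref{ineq: L^2 bound on lambda final}, \eqref{ineq: L infty bound on lambda final}, \eqref{ineq: L infty bound for Zbeta L Zalpha} and \eqref{ineq: L 2 bound for Zbeta L Zalpha} can be replaced by a universal constant, and the three displayed bounds are then direct specializations of those inequalities to the stated index ranges. Your write-up is more detailed (re-running the geometric preliminaries, explicitly invoking Sobolev and the $\yr,\zr$ induction), but the underlying logic is identical.
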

In view of \eqref{eq: transformation from Xh T to Xr Tr}, \eqref{compare Xh T and Xr Tr 0th order} and $L-\Lr =c\big(\frac{\Th^1+1}{\kappar}\Tr -\Th^2\Xr\big)$, we also have
\begin{corollary}
For all multi-index $\alpha, \gamma$ with $0\leqslant |\alpha|\leqslant 2$, $|\beta|\leqslant 1$ and $|\gamma|\leqslant 2$, for all $\psi \in \{w,\wb,\psi_2\}$, for $\lambda \in \{\yr,\zr,\chir,\etar\}$, except for the case $Z\Zr^\alpha \psi=T\wb$, we have
\begin{equation}\label{circled Linfty bound}
\|Z\mathring{Z}^\alpha(\psi)\|_{L^\infty(\Sigma_t)}\lesssim  \begin{cases} \varepsilon, \ \ & \text{if}~\Zr^\alpha=\Xr^\alpha \ \text{and} \ Z=\Xh;\\
\varepsilon t, \ \ & \text{otherwise};   
\end{cases}, \ \   \| L \Zr^\gamma \psi \|_{L^2(\Sigma_t)}\lesssim \varepsilon,
\end{equation}
and
\begin{equation}\label{circled Linfty bound on lambda}
\|Z\mathring{Z}^\beta(\lambda)\|_{L^\infty(\Sigma_t)}\lesssim \varepsilon.
\end{equation}
\end{corollary}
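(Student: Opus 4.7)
The strategy is a direct transfer of the previously established bounds in the $(\Xr,\Tr)$-frame (from the preceding lemma and from \eqref{eq: B2 equi bis}) to the $(\Xh,T)$-frame, via the change-of-frame identity \eqref{eq: transformation from Xh T to Xr Tr} and the decomposition $L-\Lr = c\big(\tfrac{\Th^1+1}{\kappar}\Tr - \Th^2 \Xr\big)$. The key point is bookkeeping of the sizes of the coefficients collected in \eqref{compare Xh T and Xr Tr 0th order}, so the proof is essentially algebraic once those ingredients are in hand.

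For the first pointwise estimate, I would write $Z = Z^{\Xr}\Xr + Z^{\Tr}\Tr$ with coefficients \eqref{the coefficeints}, so that
\[
Z\mathring{Z}^\alpha(\psi) = Z^{\Xr}\,\Xr \mathring{Z}^\alpha(\psi) + Z^{\Tr}\,\Tr \mathring{Z}^\alpha(\psi).
\]
By the preceding lemma each factor on the right is $O(\varepsilon)$ or $O(\varepsilon t)$ according to whether the string $\Xr\mathring{Z}^\alpha$ or $\Tr\mathring{Z}^\alpha$ still equals a pure $\Xr$-string. Multiplying by the sizes $|Z^{\Xr}| \lesssim t^2\varepsilon$, $|\Xh^{\Xr}|\lesssim 1$, $|Z^{\Tr}|\lesssim 1$, $|\Xh^{\Tr}|\lesssim \varepsilon$ from \eqref{compare Xh T and Xr Tr 0th order}, the worst term is seen to be at most $\varepsilon t$ in general. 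In the privileged sub-case $Z=\Xh$ and $\mathring{Z}^\alpha=\Xr^\alpha$, the identity becomes $\Xh \Xr^\alpha\psi = -\Th^1 \Xr^{\alpha+1}\psi - \tfrac{\Th^2}{\kappar}\Tr\Xr^\alpha\psi$; the first summand is $O(\varepsilon)$ (pure $\Xr$-string) and the second is $O\big(\varepsilon \cdot \varepsilon t\big) = O(\varepsilon^2 t)$, so the sharper $\varepsilon$-bound holds. The only obstruction to bounding $Z\psi$ when $\alpha=0$ is the exceptional case $T\wb$, which is excluded precisely because $\Tr\wb$ fails the $\varepsilon t$ decay of the preceding lemma.

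For the $L^2$-bound on $L\Zr^\gamma\psi$, I would use $L = \Lr + c\big(\tfrac{\Th^1+1}{\kappar}\Tr - \Th^2\Xr\big)$. The $\Lr$-piece is directly controlled by $\|\Lr \Zr^\gamma\psi\|_{L^2(\Sigma_t)}\lesssim \varepsilon$ from the lemma. For the correction, the bounds \eqref{bound on T1+1}, \eqref{bound on T2}, together with \eqref{eq: B2 equi bis} (giving $\|\Tr\Zr^\gamma\psi\|_{L^2}\lesssim \varepsilon t$ and $\|\Xr\Zr^\gamma\psi\|_{L^2}\lesssim \varepsilon$), yield contributions of size $\tfrac{\varepsilon^2 t^2}{t}\cdot \varepsilon t$ and $\varepsilon t\cdot\varepsilon$, both absorbed in $\varepsilon$. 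The $\lambda$-bound \eqref{circled Linfty bound on lambda} is the same mechanism: $\|\mathring{Z}^\beta(\lambda)\|_{L^\infty}\lesssim \varepsilon$ and $\|\Xr\mathring{Z}^\beta(\lambda)\|_{L^\infty}+\|\Tr\mathring{Z}^\beta(\lambda)\|_{L^\infty}\lesssim \varepsilon$ (since $|\beta|+1\leqslant 2\leqslant \Ninf-1$), multiplied by the frame coefficients of size $O(1)$ or $O(\varepsilon)$, produces $\varepsilon$.

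There is no real obstacle here; the only mild technicality is that one must verify the excluded case $Z\Zr^\alpha\psi = T\wb$ is exactly where the cancellation argument fails (the $\Th^1 T^{\Tr}\Tr\wb$ contribution is of order $1$ and destroys the $\varepsilon t$ bound). The rest is a routine product-rule inspection.
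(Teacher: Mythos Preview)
Your proposal is correct and follows exactly the paper's approach: the paper's proof is the single sentence ``In view of \eqref{eq: transformation from Xh T to Xr Tr}, \eqref{compare Xh T and Xr Tr 0th order} and $L-\Lr =c\big(\frac{\Th^1+1}{\kappar}\Tr -\Th^2\Xr\big)$, we also have\ldots'', and you have faithfully unpacked this into the explicit term-by-term bookkeeping. The case analysis (the privileged case $Z=\Xh$ with a pure $\Xr$-string, the general $\varepsilon t$ case, and the excluded $T\wb$ case) is exactly what that one-line reference encodes.
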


We have the following useful Gronwall type lemma:
\begin{lemma}\label{lemma: another new Gronwall}
Let $F(t)$ and $G(t)$ be two non-negative continuous functions defined on $[\delta,t^*]$. We assume that, for all $t\in [\delta,t^*]$, 
\begin{align*}
		F(t) \leqslant \frac{F_0(\delta)}{t} + \frac{1}{t}\int_{\delta}^{t}F(\tau)d\tau + G(t),
\end{align*}
where $F_0(\delta)$ is a constant. Then, for all $t>\delta$, we have
\begin{align*}
		F(t) \leqslant \frac{F_0(\delta)}{\delta} + \int_{\delta}^{t}\frac{G(\tau)}{\tau}d\tau + G(t).
\end{align*}
\end{lemma}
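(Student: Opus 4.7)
The plan is to reduce the inequality to a linear first-order ODE for the antiderivative of $F$ and apply the integrating-factor method. Set $H(t) = \int_{\delta}^{t} F(\tau)\,d\tau$, so $H(\delta) = 0$ and $H'(t) = F(t)$. The hypothesis of the lemma then rewrites as the differential inequality
\begin{equation*}
H'(t) - \frac{1}{t}H(t) \leqslant \frac{F_0(\delta)}{t} + G(t), \qquad t \in [\delta, t^*].
\end{equation*}

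Next, I multiply both sides by the integrating factor $1/t$, which recognises the left-hand side as a total derivative: the identity $\left(\tfrac{H(t)}{t}\right)' = \tfrac{1}{t}H'(t) - \tfrac{1}{t^2}H(t)$ converts the inequality into
\begin{equation*}
\left(\frac{H(t)}{t}\right)' \leqslant \frac{F_0(\delta)}{t^2} + \frac{G(t)}{t}.
\end{equation*}
Integrating from $\delta$ to $t$ and using $H(\delta) = 0$ gives
\begin{equation*}
\frac{H(t)}{t} \leqslant F_0(\delta)\left(\frac{1}{\delta} - \frac{1}{t}\right) + \int_{\delta}^{t} \frac{G(\tau)}{\tau}\,d\tau.
\end{equation*}

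Rearranging yields $\tfrac{F_0(\delta)}{t} + \tfrac{H(t)}{t} \leqslant \tfrac{F_0(\delta)}{\delta} + \int_{\delta}^{t}\tfrac{G(\tau)}{\tau}\,d\tau$, and substituting this bound back into the original hypothesis $F(t) \leqslant \tfrac{F_0(\delta)}{t} + \tfrac{H(t)}{t} + G(t)$ produces exactly the desired conclusion. There is no serious obstacle here: the calculation is routine once one spots that the factor $1/t$ multiplying the integral on the right is precisely the integrating factor needed to linearise the inequality, and the cancellation of the $F_0(\delta)/t$ terms between the two sides is automatic from the computation above.
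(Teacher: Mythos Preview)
Your proof is correct and follows essentially the same approach as the paper: the paper defines $f(t)=t^{-1}\int_\delta^t F(\tau)\,d\tau$ (your $H(t)/t$), derives the same differential inequality $f'(t)\leqslant F_0(\delta)/t^2 + G(t)/t$, integrates it, and substitutes back into the hypothesis.
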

\begin{proof} We define $f(t) = t^{-1}\int_{\delta}^{t}F(\tau)d\tau$. We rewrite the inequality as 
\begin{equation}\label{eq: Gronwall aux}
F(t) \leqslant \frac{F_0(\delta)}{t} + f(t) + G(t){\color{black}.}
\end{equation}
By the definition of $f$, it is straightforward to check that $tf'(t) +f(t)= F(t)$. Plugging into the above equation, we obtain that	
\begin{align*}
f'(t) \leqslant \frac{F_0(\delta)}{t^2} + \frac{G(t)}{t}.
\end{align*}
In view of the fact that $f(\delta) =0$, we integrate the above equation from $\delta$ to $t$ to derive
\begin{align*}
f(t) \leqslant \big(\frac{1}{\delta} - \frac{1}{t}\big)F_0(\delta) + \int_{\delta}^{t}\frac{G(\tau)}{\tau}d\tau.
\end{align*}
Combined with \eqref{eq: Gronwall aux}, this completes the proof of the lemma.
\end{proof}

\subsection{Estimates on the second derivatives}
In the rest of the paper, we assume that $Z, Z_0\in \mathscr{Z}$.  In this subsection,  we will bound $\|YZ_0(\psi)\|_{L^{\infty}(\Sigma_t)}$ for all $t\in [\delta,t^*]$, where $Y=\Xh, T$ or $L$. 
Since $Z_0(\psi) = Z^{\mathring{T}}_0\mathring{T}(\psi) + Z^{\mathring{X}}_0\mathring{X}(\psi)$, we have
\begin{equation}\label{formula for YZ0 psi}
	|YZ_0(\psi)|\leqslant |Z^{\Tr}_0| |Y \mathring{T}(\psi)| + |Z^{\Xr}_0| |Y\mathring{X}(\psi)| + |\mathring{T}(\psi)| |Y(Z^{\mathring{T}}_0)|  + |\mathring{X}(\psi)| |Y(Z^{\mathring{X}}_0) |.
\end{equation}

\subsubsection{The case $\psi \in \{w, \psi_2\}$}\label{section: closing Binfty 2nd order w psi2}

For $Y=T$, since $\psi \in \{w, \psi_2\}$, in view of \eqref{compare Xh T and Xr Tr 0th order}, \eqref{compare Xh T and Xr Tr 1st and 2nd order}, \eqref{bounds on LZalpha  Thi and kappa} and \eqref{circled Linfty bound}, we derive that \begin{align*}
	|TZ_0(\psi)| &\lesssim |T\Zr_0(\psi)| + \mathring{M} t \varepsilon^2 \lesssim  t \varepsilon+ \mathring{M} t \varepsilon^2.
\end{align*}
where for $Z_0=\Xh$ and $T$, $\Zr_0$ represents $\Xr$ and $\Tr$ respectively. For sufficiently small $\varepsilon$, this shows that 
\begin{align*}
	|TZ_0(\psi)| \lesssim  t \varepsilon.
\end{align*}
For $Y=\Xh$, by applying \eqref{compare Xh T and Xr Tr 0th order}, \eqref{compare Xh T and Xr Tr 1st and 2nd order}, \eqref{bounds on LZalpha  Thi and kappa} and \eqref{circled Linfty bound}, we have two cases:
\begin{itemize}
\item $Z_0=\Xh$, for sufficiently small $\varepsilon$, we have 
\begin{align*}
	|\Xh^2(\psi)| &\lesssim |\Xh\Xh_0\psi| + \mathring{M}\varepsilon^2 \lesssim  \varepsilon+\mathring{M}\varepsilon^2\lesssim \varepsilon.
\end{align*}
\item $Z_0=T$. According to \eqref{eq:commutator formulas}, $\underline{\chi} =  \kappa(\slashed{k} + \theta)$, we have
\[|[T,\Xh](\psi)|\leqslant |\kappa\theta\cdot \Xh(\psi)| \lesssim \mathring{M}\varepsilon^2 t.\]
We have already proved that $|T\Xh(\psi)| \lesssim  t \varepsilon$. Therefore, 
\begin{equation}\label{the commutator trick}
|\Xh T(\psi)|\leqslant |T\Xh(\psi)|+|[T,\Xh](\psi)|\lesssim \varepsilon t,
\end{equation}
for sufficiently small $\varepsilon$.
\end{itemize}
Finally, we take $Y=L$ in \eqref{formula for YZ0 psi} to derive
\begin{align*}
	|LZ_0(\psi)|\leqslant |Z^{\Tr}_0| |L \mathring{T}(\psi)| + |Z^{\Xr}_0| |L\mathring{X}(\psi)| + |\mathring{T}(\psi)| |L(Z^{\mathring{T}}_0)|  + |\mathring{X}(\psi)| |L(Z^{\mathring{X}}_0) |.
\end{align*}
By applying \eqref{compare Xh T and Xr Tr 0th order}, \eqref{compare Xh T and Xr Tr 1st and 2nd order}, \eqref{bounds on LZalpha  Thi and kappa} and \eqref{circled Linfty bound}, it is straightforward to check that, for sufficiently small $\varepsilon$, 
\begin{align*}
	|LZ_0(\psi)|\lesssim |L \Zr_0(\psi)| + \mathring{M}\varepsilon^2\lesssim \varepsilon.
\end{align*}
We have closed the bootstrap assumption $\mathbf{(B_\infty)}$ for $YZ_0(\psi)$ where $\psi \in \{w, \psi_2\}$.

\subsubsection{The case $\psi = \wb$}\label{section: closing B2 for alpha2 psi=wb}
Since $v^1+c=\frac{\gamma-3}{2}w+\frac{\gamma+1}{2}\wb$, in view of the bounds on $w$ derived in Section \ref{section: closing Binfty 2nd order w psi2}, in order to close the part of $YZ_0(\wb)$ in $\mathbf{(B_\infty)}$, it suffices to bound $v^1+c$ in the place of $\wb$.  We remark that the maximal characteristic speed $v^1+c$ appears naturally as the main term for evolution equations of geometric quantities such as $\Th^i$ and $\kappa$.

We first bound $YZ_0(v^1+c)$ for $Y =T$ or $\Xh$. By $Z_0 = Z^{\mathring{T}}_0\cdot\mathring{T} + Z^{\mathring{X}}_0\cdot\mathring{X}$, we have
\begin{align*}
YZ_0(v^1+c)&= Z^{\mathring{T}}_0  Y \mathring{T}(v^1+c) + Z^{\mathring{X}}_0   Y\mathring{X}(v^1+c) + \mathring{T}(v^1+c)   Y(Z^{\mathring{T}}_0)  + \mathring{X}(v^1+c)   Y(Z^{\mathring{X}}_0)\\
&= Z^{\mathring{T}}_0   Y \mathring{T}(v^1+c) + Z^{\mathring{X}}_0   Y\mathring{X}(v^1+c) + (\mathring{T}(v^1+c)+1)   Y(Z^{\mathring{T}}_0)  + \mathring{X}(v^1+c)   Y(Z^{\mathring{X}}_0)- Y(Z^{\mathring{T}}_0).
\end{align*}

We notice that the presence of $\mathring{T}(v^1+c)$ formally cause a loss in $t$ and $\varepsilon$. This difficulty can be resolved by applying Lemma \ref{lemma: another new Gronwall}, provided the source term $G(t)$ vanishes as $t \to 0^+$.  By applying \eqref{compare Xh T and Xr Tr 0th order}, \eqref{compare Xh T and Xr Tr 1st and 2nd order}, \eqref{bounds on LZalpha  Thi and kappa} and $\mathbf{(B_\infty)}$, it is straightforward to check that
\begin{equation}\label{eq:YZ0 v1+c}
	|YZ_0(v^1+c)| \leqslant |Y\ZR_0(v^1+c)| + |Y(Z^{\mathring{T}}_0)| + \mathring{M} t \varepsilon^2.
\end{equation}
According to $Y,Z_0\in \{\Xh, T\}$, it suffices to check the following four cases:
\begin{itemize}
\item $Y=T$ and $Z_0=T$.

We can use \eqref{eq: connect T Tr coefficient to v1+c} to replace $T(T^{\Tr})$ in \eqref{eq:YZ0 v1+c}. Hence,
\begin{align*}
	|T^2(v^1+c)| &\leqslant \frac{\big|T(\kappa)|_{t=\delta}\big|}{t} + \frac{1}{t}\int_{\delta}^t|T^2(v^1+c)|d\tau + |T\Tr(v^1+c)| + \mathring{M} t \varepsilon^2.
\end{align*}
We notice that, by \eqref{circled Linfty bound}, $|T\Tr(v^1+c)|\lesssim \varepsilon t$ and it is merely linear in $\varepsilon$. Therefore, we can rewrite the above equation as
\begin{align*}
	|T^2(v^1+c)| &\leqslant \frac{\big|T(\kappa)|_{t=\delta}\big|}{t} + \frac{1}{t}\int_{\delta}^t|T^2(v^1+c)|d\tau +G(t),
\end{align*}
with $|G(t)|\lesssim  |T\Tr(v^1+c)| +\mathring{M}t\varepsilon^2$. We can apply Lemma \ref{lemma: another new Gronwall} and this leads to
\begin{align*}
	|T^2(v^1+c)| \leqslant \frac{\big|T(\kappa)|_{t=\delta}\big|}{\delta} + \int_{\delta}^{t}\frac{|T\Tr(v^1+c)|}{\tau}d\tau + |T\Tr(v^1+c)|+ \mathring{M} t \varepsilon^2{\color{black}.}
\end{align*}
Once again, by \eqref{circled Linfty bound},  we have $|T\Tr(v^1+c)|\lesssim \varepsilon t$. The key fact about this inequality is the $t$ factor on the righthand side. {\color{black}Plugging this bound} in the above inequality, in view of the $T(\kappa)$ in $\mathbf{(I_\infty)}$, for sufficiently small $\varepsilon$, we obtain that
\begin{align*}
	|T^2(v^1+c)| \lesssim \varepsilon t.
\end{align*}

\item $Y=T$ and $Z_0=\Xh$.

We can use \eqref{eq: connect Xh Tr coefficient to v1+c} to replace $T(X^{\Tr})$ in \eqref{eq:YZ0 v1+c}.  We proceed exactly as in the previous case and we obtain that
\begin{align*}
	|T\Xh(v^1+c)| &\leqslant \frac{\big|T(\Th^2)|_{t=\delta}\big|}{t} + \frac{1}{t}\int_{\delta}^t|T\Xh(v^1+c)|d\tau +G(t),
\end{align*}
with $|G(t)|\lesssim  |T\Xr(v^1+c)| +\mathring{M}t\varepsilon^2$. By \eqref{circled Linfty bound},  we have $|T\Xr(v^1+c)|\lesssim \varepsilon t$ and this estimate has a decay factor $t$ on the righthand side. Therefore, we can repeat the previous proof to use  Lemma \ref{lemma: another new Gronwall} to show that
\begin{align*}
	|T\Xh(v^1+c)| \lesssim \varepsilon t.
\end{align*}

\item $Y=\Xh$ and $Z_0=T$.

We use the commutator $[T,\Xh]$ from \eqref{eq:commutator formulas} as in \eqref{the commutator trick}. In fact,
\begin{align*}
	|\Xh T(v^1+c)| &\leqslant |T\Xh(v^1+c)| + |[T,\Xh](v^1+c)| \lesssim \varepsilon t.
\end{align*}

\item $Y=\Xh$ and $Z_0=\Xh$.

This is the most difficult case and it uses the full strength of the estimates on $\yr$.  We can use \eqref{eq: connect Xh Tr coefficient to v1+c} to replace $\Xh(X^{\Tr})$ in \eqref{eq:YZ0 v1+c}.  We proceed exactly as in the previous case and we obtain that
\begin{align*}
	|\Xh^2(v^1+c)| &\leqslant \frac{\big|\Xh(\Th^2)|_{t=\delta}\big|}{t} + \frac{1}{t}\int_{\delta}^t|\Xh^2(v^1+c)|d\tau +G(t),
\end{align*}
with $|G(t)|\lesssim  |\Xh\Xr(v^1+c)| +\mathring{M}t\varepsilon^2$. Since $\Xr(v^1+c)=\kappar \yr$, $|G(t)|\lesssim  t|\Xh(\yr)| +\mathring{M}t\varepsilon^2$. Therefore, this estimate has a decay factor $t$ on the righthand side thanks to {\color{black}the \emph{extra decay} of $y$.} Therefore, since $\frac{\big|\Xh(\Th^2)|_{t=\delta}\big|}{\delta}\approx \varepsilon$. 
we can repeat the previous proof to use  Lemma \ref{lemma: another new Gronwall} to show that
\begin{align*}
	|\Xh^2(v^1+c)| \lesssim \varepsilon.
\end{align*}
\end{itemize}

It remains to consider the case for $Y=L$. We commute $Z_0$ with the first equation of \eqref{Euler equations:form 2} and we obtain the following schematic formula:
\begin{equation}\label{commute with Z_0}
L (Z_0(\wb)) = Z_0\big[c\frac{T(\wb)}{\kappa}(\widehat{T}^1+1)\big]+Z_0\big[c \frac{T(\psi_2)}{\kappa}\widehat{T}^2\big] +Z_0\big(c \Xh(\psi_2)\Xh^2\big)+Z_0\big(c\Xh(\wb)\Xh^1\big)+\,^{(Z_0)}f\cdot \Xh(\wb),
\end{equation}
where $\,^{(\Xh)}f=\chi$ and $\,^{(T)}f=\zeta+\eta$. We can use Leibniz rule to write the $Z_0$ derivative of the product into a sum of terms. It is straightforward to see that all the terms have been controlled in the previous steps. It follows that 
\begin{align*}
	|L(Z_0(\wb))| \lesssim \varepsilon.
\end{align*}

We now have closed the bootstrap assumption $\mathbf{(B_\infty)}$ for $YZ_0(\wb)$.

\subsection{Estimates on the third derivatives}

In this subsection,  we will bound $\|YZ_1Z_0(\psi)\|_{L^{\infty}(\Sigma_t)}$ for all $t\in [\delta,t^*]$ where $Y=\Xh, T$ or $L$ and 
 $Z_1,Z_0\in \{T,\Xh\}$.
 
 We expand $Z_1$ and $Z_0$ in terms of $\Xr$ and $\Tr$. First of all, we write $Z_0$ as $ Z_0^{\Tr}\Tr+Z_0^{\Xr}\Xr$. This yields
\begin{equation}\label{eq: Y Z1 Z0 psi}
\begin{split}
	Y Z_1 Z_0(\psi) =&  Z_0^{\TR} \cdot Y Z_1\TR(\psi) + Z_0^{\XR} \cdot Y Z_1\XR(\psi)\\
	&+ \underbrace{Y(Z_0^{\TR}) Z_1\TR(\psi) + Y(Z_0^{\XR}) Z_1\XR(\psi) + Y\big[Z_1(Z_0^{\TR})\TR(\psi) + Z_1(Z_0^{\XR})\XR(\psi)\big]}_{\mathbf{err}_1}.
\end{split}
\end{equation}
The first two terms on the righthand side are the main terms. They can be represented as $YZ_1\Zr(\psi)$ in the schematic way. Next, for $\ZR \in \{\Tr,\Xr\}$, we write $Z_1$ as $ Z_1^{\Tr}\Tr+Z_1^{\Xr}\Xr$. This yields
\begin{align*}
YZ_1\ZR(\psi) &= Z_1^{\TR} \cdot Y\TR\ZR(\psi) + Z_1^{\XR} \cdot Y\XR\ZR(\psi) + \underbrace{Y(Z_1^{\TR}) \cdot \TR\ZR(\psi) + Y(Z_1^{\XR}) \cdot \XR\ZR(\psi)}_{\mathbf{err}_{2,\Zr}}.
\end{align*}
We plug this results into \eqref{eq: Y Z1 Z0 psi} and we obtain that
\begin{equation}\label{eq: key equation for Y Z1 Z0 psi}
	Y Z_1 Z_0(\psi) = Z_1^{\ZR_1}Z_0^{\ZR_0} \cdot Y\ZR_1\ZR_0(\psi) + \!\!\!\underbrace{\sum_{(\mathring{Y}_1,\mathring{Y}_2) \neq (\ZR_1,\ZR_0)}\!\!\! Z_1^{\mathring{Y}_1}Z_0^{\mathring{Y}_0} \cdot Y\mathring{Y}_1\mathring{Y}_0(\psi)}_{\mathbf{err}_3}  + \mathbf{err}_1 + \mathbf{err}_2,
\end{equation}
where $\mathbf{err}_2 =  Z_0^{\Tr}\cdot \mathbf{err}_{2,\Tr} +  Z_0^{\Xr}\cdot \mathbf{err}_{2,\Xr}$.

\subsubsection{The case $\psi \in \{w, \psi_2\}$}\label{section: closing Binfty 3rd order w psi2}
We first consider the case where $Y=\Xh$ or $T$.  We control the error terms in \eqref{eq: key equation for Y Z1 Z0 psi}.

Since $\psi \neq \wb$, we have $|\Tr(\psi)| \lesssim \varepsilon t$. By applying \eqref{compare Xh T and Xr Tr 0th order}, \eqref{compare Xh T and Xr Tr 1st and 2nd order}, \eqref{bounds on LZalpha  Thi and kappa} and $\mathbf{(B_\infty)}$, it is straightforward to check that $|\mathbf{err}_1| \lesssim  \mathring{M}  \varepsilon^2 t$. 

For $\mathbf{err}_3$,  since $(\mathring{Y}_1,\mathring{Y}_2) \neq (\ZR_1,\ZR_0)$, according to  \eqref{compare Xh T and Xr Tr 0th order}, $\big|Z_1^{\mathring{Y}_1}Z_0^{\mathring{Y}_0}\big|\lesssim \mathring{M} \varepsilon$. Therefore, unless $\mathring{Y}_1=\mathring{Y}_2=\Xr$, by \eqref{circled Linfty bound}, $|Y\mathring{Y}_1\mathring{Y}_0(\psi)|\lesssim \varepsilon t$. Therefore, except for $\mathring{Y}_1=\mathring{Y}_2=\Xr$, the other terms of $\mathbf{err}_3$ are all bounded by $\mathring{M}\varepsilon^2 t$. If $\mathring{Y}_1=\mathring{Y}_2=\Xr$, since $(\mathring{Y}_1,\mathring{Y}_2) \neq (\ZR_1,\ZR_0)$, therefore, by \eqref{compare Xh T and Xr Tr 0th order}, at least one of $\big|Z_1^{\mathring{Y}_1}\big|$ and $\big|Z_0^{\mathring{Y}_0}\big|$ are bounded by $\mathring{M}\varepsilon t$. Hence, this term is also bounded by $\mathring{M}\varepsilon^2 t$. As a conclusion, we have $|\mathbf{err}_3| \lesssim  \mathring{M}  \varepsilon^2 t$. 

Similarly, we have $|\mathbf{err}_2| \lesssim  \mathring{M}  \varepsilon^2 t$. Hence, \eqref{eq: key equation for Y Z1 Z0 psi} implies that
\[|Y Z_1 Z_0(\psi)| \lesssim |Y\ZR_1\ZR_0(\psi)|+  \mathring{M}  \varepsilon^2 t.\]
In view of \eqref{circled Linfty bound}, for sufficiently small $\varepsilon$, this gives the desired bound for $YZ^\alpha(\psi)$ where $Y\in \{\Xh,T\}$, $|\alpha|=2$ and $\psi\in \{w,\psi_2\}$.

For $Y=L$,  we use \eqref{bounds on LZalpha  coefficients} to bound $\mathbf{err}_1$, $\mathbf{err}_2$ and $\mathbf{err}_3$. In fact, $LZ^\alpha\big(\Xh^{\Tr}\big)$ is the worst possible terms appearing in $\mathbf{err}_i$'s. The other terms can be bounded immediately by  $\mathring{M}  \varepsilon^2$. On the other side, $LZ^\alpha\big(\Xh^{\Tr}\big)$'s only appear in $\mathbf{err}_1$ and $\mathbf{err}_2$ through the following two possible forms: $LZ_1(\Xh^{\Tr})\Tr(\psi)$ and $LZ_1^{\TR} \cdot \Tr\Zr(\psi)$. Since $\psi\neq \wb$, we have $|\Tr(\psi)|\lesssim \mathring{M}\varepsilon t$ and $|\Tr\Zr(\psi)|\lesssim \mathring{M}\varepsilon t$. This extra factor $t$ shows that 
\[|\mathbf{err}_1|+|\mathbf{err}_2|+|\mathbf{err}_3| \lesssim  \mathring{M}  \varepsilon^2.\]
Thus,
\[|L Z_1 Z_0(\psi)| \lesssim |L\ZR_1\ZR_0(\psi)|+  \mathring{M}  \varepsilon^2.\]
In view of \eqref{circled Linfty bound}, for sufficiently small $\varepsilon$, this gives the desired bound for $LZ^\alpha(\psi)$ where $Y\in \{\Xh,T\}$, $|\alpha|=2$ and $\psi\in \{w,\psi_2\}$.

We have closed the bootstrap assumption $\mathbf{(B_\infty)}$ for $YZ^\alpha(\psi)$ where $\psi \in \{w, \psi_2\}$ and $|\alpha|=2$.

\subsubsection{The case $\psi = \wb$}

We proceed in a similar way as in Section \ref{section: closing B2 for alpha2 psi=wb}. To close the corresponding parts in $\mathbf{(B_\infty)}$, it suffices to bound $v^1+c$ in the place of $\wb$. Therefore, we set $\psi=v^1+c$ in \eqref{eq: key equation for Y Z1 Z0 psi}.

We start with the case where $Y=\Xh$ or $T$.

First of all, we can repeat the same argument for the terms $\mathbf{err}_3$ and $\mathbf{err}_2$ in Section \ref{section: closing Binfty 3rd order w psi2}. This gives immediately that 
\[|\mathbf{err}_2|+|\mathbf{err}_3| \lesssim  \mathring{M}  \varepsilon^2t.\]
Next, to bound $\mathbf{err}_1$, we notice that except for $YZ_1\big(Z_0^{\Tr}\big)\cdot \Tr(v^1+c)$, the rest of the terms in $\mathbf{err}_1$ can also be bounded exactly in the same way as in Section \ref{section: closing Binfty 3rd order w psi2}. Hence, we can rewrite \eqref{eq: key equation for Y Z1 Z0 psi} as 
\begin{equation*}
	Y Z_1 Z_0(v^1+c) = Z_1^{\ZR_1}Z_0^{\ZR_0} \cdot Y\ZR_1\ZR_0(v^1+c) + YZ_1(Z_0^{\TR})\cdot \TR(v^1+c)+ \mathbf{err},
\end{equation*}
where $|\mathbf{err}| \lesssim  \mathring{M}  \varepsilon^2t$. Since $|\TR(v^1+c)+1|\lesssim \varepsilon t$, we can rewrite this equation as 
\begin{equation}\label{eq: key equation for Y Z1 Z0 v1 c}
	Y Z_1 Z_0(v^1+c) = Z_1^{\ZR_1}Z_0^{\ZR_0} \cdot Y\ZR_1\ZR_0(v^1+c) -YZ_1(Z_0^{\TR})+ \mathbf{err},
\end{equation}
where $|\mathbf{err}| \lesssim  \mathring{M}  \varepsilon^2t$.

According to $Y$, we consider the following two cases:
\begin{itemize}
\item $Y=T$.

If $Z_0 = T$, in view of \eqref{eq: connect T Tr coefficient to v1+c}, \eqref{eq: key equation for Y Z1 Z0 v1 c} shows that
\begin{align*}
	|TZ_1T(v^1+c)| \leqslant \frac{\big|TZ_1(\kappa)|_{t=\delta}\big|}{t} + \frac{1}{t}\int_{\delta}^t|T\ZR_1\TR(v^1+c)|d\tau + |T\ZR_1\TR(v^1+c)| + {\mathbf{err}'},
\end{align*}
with $|{\mathbf{err}'}|\lesssim \mathring{M}\varepsilon^2 t$. Let  $G(t)= |T\ZR_1\TR(v^1+c)| + {\mathbf{err}'}$.  We can apply Lemma \ref{lemma: another new Gronwall} and this leads to
\begin{align*}
	|TZ_1T(v^1+c)| \leqslant \frac{\big|TZ_1(\kappa)|_{t=\delta}\big|}{\delta} + \int_{\delta}^{t}\frac{|T\ZR_1\TR(v^1+c)|}{\tau}d\tau + |T\Zr_1\Tr(v^1+c)|+ \mathring{M} t \varepsilon^2{\color{black}.}
\end{align*}
By \eqref{circled Linfty bound},  we have $|T\ZR_1\TR(v^1+c)|\lesssim \varepsilon t$. In view of $TZ_1(\kappa)$ in $\mathbf{(I_\infty)}$, for sufficiently small $\varepsilon$, we obtain that
\begin{align*}
	|TZ_1T(v^1+c)| \lesssim \varepsilon t.
\end{align*}

If $Z_0 = \Xh$, in view of \eqref{eq: connect Xh Tr coefficient to v1+c}, \eqref{eq: key equation for Y Z1 Z0 v1 c} shows that
\begin{align*}
	|TZ_1\Xh(v^1+c)| \leqslant \frac{\big|TZ_1(\Th^2)|_{t=\delta}\big|}{t} + \frac{1}{t}\int_{\delta}^t|T\ZR_1\Xr(v^1+c)|d\tau + |T\ZR_1\Xr(v^1+c)| + {\mathbf{err}'},
\end{align*}
with $|{\mathbf{err}'}|\lesssim \mathring{M}\varepsilon^2 t$. By \eqref{circled Linfty bound},  we have $|T\ZR_1\Xr(v^1+c)|\lesssim \varepsilon t$. We then repeat the previous computations to derive
\begin{align*}
	|TZ_1\Xh(v^1+c)| \lesssim \varepsilon t.
\end{align*}

\item $Y=\Xh$.

If at least one of $Z_0$ and $Z_1$ is $T$, we can repeat the proof for $Y=T$ to show that, for sufficiently small $\varepsilon$, we have
\begin{align*}
	|\Xh T^2(v^1+c)| +\Xh T\Xh(v^1+c)|+\Xh \Xh T(v^1+c)|\lesssim \varepsilon t.
\end{align*}

It remains to bound the most difficult term $\Xh^3(v^1+c)$.  We can use \eqref{eq: connect Xh Tr coefficient to v1+c} to proceed exactly as in the previous case and we obtain that
\begin{align*}
	|\Xh^3(v^1+c)| &\leqslant \frac{\big|\Xh^2(\Th^2)|_{t=\delta}\big|}{t} + \frac{1}{t}\int_{\delta}^t|\Xh^3(v^1+c)|d\tau +G(t),
\end{align*}
with $|G(t)|\lesssim  |\Xh\Xr^2(v^1+c)| +\mathring{M}t\varepsilon^2$. Since $\Xr^2(v^1+c)=\kappar \Xr(\yr)$, we have $|G(t)|\lesssim  t|\Xh\Xr(\yr)| +\mathring{M}t\varepsilon^2$. The better decay of {\color{black}$y$} from \eqref{circled Linfty bound on lambda} allows us to use  Lemma \ref{lemma: another new Gronwall} to show that
\begin{align*}
	|\Xh^3(v^1+c)| \lesssim \varepsilon.
\end{align*}
\end{itemize}

It remains to bound  $LZ_1Z_0(\wb)$. We commute $Z_1$ with the first equation of \eqref{commute with Z_0} to derive
\begin{align*}
L (Z_0(\wb)) =& Z_1Z_0\big[c\frac{T(\wb)}{\kappa}(\widehat{T}^1+1)\big]+Z_1Z_0\big[c \frac{T(\psi_2)}{\kappa}\widehat{T}^2\big] +Z_1Z_0\big(c \Xh(\psi_2)\Xh^2\big)+Z_1Z_0\big(c\Xh(\wb)\Xh^1\big)\\
&+Z_1(\,^{(Z_0)}f) \Xh(\wb)+\,^{(Z_0)}f\cdot Z_1\Xh(\wb)+\,^{(Z_1)}f\cdot \Xh Z_0(\wb),
\end{align*}
where $\,^{(\Xh)}f=\chi$ and $\,^{(T)}f=\zeta+\eta$. We can use Leibniz rule to write the derivatives of the product into a sum of terms. It is straightforward to see that all the terms have been controlled in the previous steps. It follows that 
\begin{align*}
	|L(Z_0(\wb))| \lesssim \varepsilon.
\end{align*}

We now have closed the bootstrap assumption $\mathbf{(B_\infty)}$ for $YZ_1Z_0(\wb)$. Hence, {\color{black}we have closed} the bootstrap assumption $\mathbf{(B_\infty)}$. This completes the proof of the {\bf Main Theorem}.

\section*{Acknowledgment}
The authors are grateful to the anonymous referees, who suggested many valuable improvements and corrections. PY is supported by NSFC11825103, NSFC12141102, New Cornerstone Investigator Program and Xiao-Mi Professorship. TWL is supported by NSFC 11971464.

\end{document}